\newenvironment{proofWithoutProof}
{
}
{\null\hfill\qedsymbol
}
\bfseries\color{black}}
\bfseries\color{black}}
\bfseries\color{black}}
\Crefname{proposition}{Prop.}{Props.} 
\Crefname{theorem}{Thm.}{Thms.}
\Crefname{appendix}{App.}{Apps.}
\Crefname{definition}{Def.}{Defs.}
\Crefname{remark}{Rem.}{Rems.}
\Crefname{figure}{Fig.}{Figs.}
\theoremstyle{plain}
\newtheorem{proposition}{Proposition}%
\newtheorem{conjecture}{Conjecture}%
\newtheorem{lemma}{Lemma}%
\newtheorem{theorem}{Theorem}%
\newtheorem*{theorem*}{Theorem}
\theoremstyle{definition}
\newtheorem{definition}{Definition}%
\theoremstyle{remark}
\newtheorem{remark}{Remark}%
\newcommand{\lbm}{LBM}
\newcommand{\fd}{FD}
\newcommand{\adHoc}{\emph{ad hoc}}
\newcommand{\confer}{\emph{cf.}}
\newcommand{\strong}[1]{\emph{#1}}
\newcommand{\vectorial}[1]{\bm{#1}}
\newcommand{\matricial}[1]{\bm{#1}}
\newcommand{\solutionCauchyProblem}{u}
\newcommand{\timeVariable}{t}
\newcommand{\spaceVariable}{x}
\newcommand{\initial}{\circ}
\newcommand{\solutionCauchyProblemInitial}{\solutionCauchyProblem^{\initial}}
\newcommand{\finalTime}{T}
\newcommand{\reals}{\mathbb{R}}
\newcommand{\numberConservationLaws}{M}
\newcommand{\naturals}{\mathbb{N}}
\newcommand{\naturalsWithoutZero}{\naturals^{*}}
\newcommand{\domainLength}{L}
\newcommand{\flux}{\varphi}
\newcommand{\cKClass}[1]{C^{#1}}
\newcommand{\advectionVelocity}{V}
\newcommand{\boundaryDatumInflow}{g}
\newcommand{\spaceGridPoint}[1]{\spaceVariable_{#1}}
\newcommand{\timeGridPoint}[1]{\timeVariable^{#1}}
\newcommand{\indexSpace}{j}
\newcommand{\numberSpacePoints}{J}
\newcommand{\indexTime}{n}
\newcommand{\spaceStep}{\Delta \spaceVariable}
\newcommand{\timeStep}{\Delta \timeVariable}
\newcommand{\definitionEquality}{:=}
\newcommand{\integerInterval}[2]{\llbracket #1, #2 \rrbracket}
\newcommand{\relatives}{\mathbb{Z}}
\newcommand{\latticeVelocity}{\lambda}
\newcommand{\distributionFunctionLetter}{f}
\newcommand{\discrete}[1]{\mathsf{#1}}
\newcommand{\distributionFunctionDiscrete}{\discrete{\distributionFunctionLetter}}
\newcommand{\collided}{\star}
\newcommand{\conservedMomentDiscrete}{\discrete{u}}
\newcommand{\relaxationParameter}{\omega}
\newcommand{\coefficientExtrapolation}{c}
\newcommand{\orderExtrapolation}{\sigma}
\newcommand{\orderExtrapolationBiased}{\overline{\orderExtrapolation}}
\newcommand{\sourceTermBoundaryDiscrete}{\discrete{S}}
\newcommand{\outflowExtrapolationTag}{O-BC-E}
\newcommand{\nonConservedMomentDiscrete}{\discrete{v}}
\newcommand{\atEquilibrium}{\textnormal{eq}}
\newcommand{\coefficientCombinationElimination}{\gamma}
\newcommand{\catalanTwoIndices}[2]{T(#1, #2)}
\newcommand{\testFunction}{\psi}
\newcommand{\bigO}[1]{O(#1)}
\newcommand{\laplaceTransformed}[1]{\tilde{#1}}
\newcommand{\complex}{\mathbb{C}}
\newcommand{\realPartLetter}{\text{Re}}
\newcommand{\realPart}[1]{\realPartLetter(#1)}
\newcommand{\imagPartLetter}{\text{Im}}
\newcommand{\imagPart}[1]{\imagPartLetter(#1)}
\newcommand{\timeShiftOperator}{z}
\newcommand{\courantNumber}{C}
\newcommand{\frequency}{\xi}
\newcommand{\fourierShift}{\kappa}
\newcommand{\solutionCharStable}{\fourierShift_-}
\newcommand{\solutionCharUnstable}{\fourierShift_+}
\newcommand{\coefficientStable}{A_-}
\newcommand{\coefficientUnstable}{A_+}
\newcommand{\reflectionCoefficientLetter}{{R}}
\newcommand{\reflectionCoefficientLetterOut}{\reflectionCoefficientLetter_{\text{out}}}
\newcommand{\reflectionCoefficientLetterIn}{\reflectionCoefficientLetter_{\text{in}}}
\newcommand{\commonTerm}{\Pi}
\newcommand{\schemeMatrixFD}{\matricial{E}_{2\numberSpacePoints}}
\newcommand{\schemeMatrixFDSize}[1]{\matricial{E}_{#1}}
\newcommand{\schemeMatrixFDToeplitz}{\matricial{E}^{\times}_{2\numberSpacePoints}}
\newcommand{\schemeMatrixFDCirculant}{\matricial{E}^{\circ}_{2\numberSpacePoints}}
\newcommand{\identityMatrix}[1]{\matricial{Id}_{#1}}
\newcommand{\nullMatrix}[1]{\matricial{0}_{#1}}
\newcommand{\matrixSpace}[2]{\mathcal{M}_{#1}(#2)}
\newcommand{\transpose}[1]{#1^{\intercal}}
\newcommand{\twoStepSolutionVector}{\vectorial{\discrete{U}}}
\newcommand{\indexRow}{i}
\newcommand{\indexColumn}{j}
\newcommand{\determinant}{\textnormal{det}}
\newcommand{\adjugate}{\textbf{\text{adj}}}
\newcommand{\trace}{\textnormal{tr}}
\newcommand{\twoVariablesSolutionVector}{\vectorial{\discrete{F}}}
\newcommand{\schemeMatrixLBM}{\matricial{E}_{\textnormal{LBM}}}
\newcommand{\spectrum}{\textnormal{sp}}
\newcommand{\canonicalBasisVector}[1]{\vectorial{e}_{#1}}
\newcommand{\differential}{\text{d}}
\newcommand{\sign}{\text{sgn}}
\newcommand{\targetEigenvalue}{\timeShiftOperator^{\odot}}
\newcommand{\targetFourier}{\fourierShift^{\odot}}
\newcommand{\ball}[2]{{B}_{#1}(#2)}
\newcommand{\chebyshevPolynomialSecondKind}[1]{U_{#1}}
\newcommand{\schemeMatrixLBMBlock}[1]{\schemeMatrixLBM^{#1}}
\newcommand{\coefficientOutflowFDZero}{\alpha}
\newcommand{\coefficientOutflowFDMinusOne}{\beta}
\newcommand{\numberCoefficientsOutFlowFDZero}{\numberSpacePoints_{\text{out}}^{\coefficientOutflowFDZero}}
\newcommand{\numberCoefficientsOutFlowFDMinusOne}{\numberSpacePoints_{\text{out}}^{\coefficientOutflowFDMinusOne}}
\newcommand{\schemeMatrixFDBlockZero}{\matricial{A}_{\numberSpacePoints}}
\newcommand{\schemeMatrixFDBlockMinusOne}{\matricial{B}_{\numberSpacePoints}}
\newcommand{\schemeMatrixFDToeplitzBlockZero}{\matricial{A}^{{\times}}_{\numberSpacePoints}}
\newcommand{\schemeMatrixFDToeplitzBlockMinusOne}{\matricial{B}^{{\times}}_{\numberSpacePoints}}
\newcommand{\schemeMatrixFDBlockZeroEntry}{a}
\newcommand{\schemeMatrixFDBlockMinusOneEntry}{b}
\newcommand{\parturbationBoundaryOutflow}{\vectorial{b}_{\text{out}}}
\newcommand{\parturbationBoundaryInflow}{\vectorial{b}_{\text{in}}}
\newcommand{\modifiedTimeShiftOperator}{\eta}
\newcommand{\curlyMapsTo}{\mapstochar\mathrel{\mspace{0.45mu}}\leadsto}
\newcommand{\asymptoticSpectrumLetter}{\textnormal{asp}}
\newcommand{\asymptoticSpectrum}{\underset{\numberSpacePoints\to+\infty}{\asymptoticSpectrumLetter}}
\newcommand{\groupVelocity}{V_{\text{g}}}
\providecommand{\keywords}[1]{\textbf{\textit{Keywords: }} \emph{#1}}
\providecommand{\amsCat}[1]{\textbf{\textit{MSC: }} #1}
\numberwithin{equation}{section}
\title{{Consistency and stability of boundary conditions for a two-velocities lattice Boltzmann scheme}}
\author{\textsc{Thomas Bellotti} \\ Université Paris-Saclay, CNRS, CentraleSupélec, Laboratoire EM2C, 91190, Gif-sur-Yvette, France\footnote{Past affiliation: IRMA, Université de Strasbourg, 67000 Strasbourg, France.}}
\begin{document}

\maketitle


\begin{abstract}
    We theoretically explore boundary conditions for lattice Boltzmann methods, focusing on a toy two-velocities scheme to tackle a linear one-dimensional advection equation. By mapping lattice Boltzmann schemes to Finite Difference schemes, we facilitate rigorous consistency and stability analyses. We develop kinetic boundary conditions for inflows and outflows, highlighting the trade-off between accuracy and stability, which we successfully overcome. 
    Consistency analysis relies on modified equations, whereas stability is assessed using GKS (Gustafsson, Kreiss, and Sundström) theory and---when this approach fails on coarse meshes---spectral and pseudo-spectral analyses of the scheme's matrix that explain effects germane to low resolutions.
\end{abstract}

\keywords{boundary conditions, lattice Boltzmann, Finite Difference, GKS, reflection coefficient, quasi-Toeplitz}\\
\amsCat{65M06, 65M12, 65N12}

\section*{Introduction}

So far, like numerous topics concerning lattice Boltzmann (in short, LBM) schemes, the study of boundary conditions has been strongly polarized towards applications, focusing on multidimensional problems, curved boundaries \cite{bouzidi2001momentum}, and elaborated schemes.
For example, theoretical stability has only been studied for specific systems and boundary conditions \cite{junk2009convergence, junk2008convergence} using the well-known ``stability structure''. 
Moreover, difficulties in using this technique to analyze the problem we address in this paper have been reported \cite{rheinlander2010stability}.
Believing that it is now time to understand things theoretically, which is increasingly difficult with the complexity of the schemes, we focus on the simplest \lbm{} method available: a two-velocities scheme.
On this numerical method, we aim at assessing consistency and stability of boundary conditions.
We find that the answer can pass through \strong{corresponding Finite Difference} (in short, FD) schemes on the variable of interest, turning unknowns in \lbm{} schemes into time-steps for \fd{}s.
In our context, the \lbm{} scheme features two unknowns $\conservedMomentDiscrete$ and $\nonConservedMomentDiscrete$, where only $\conservedMomentDiscrete \approx \solutionCauchyProblem$, with $\solutionCauchyProblem$ the solution of the partial differential equation, and $\nonConservedMomentDiscrete$ is a merely numerical unknown.
The scheme acts as $(\conservedMomentDiscrete^{\indexTime}, \nonConservedMomentDiscrete^{\indexTime}) \curlyMapsTo (\conservedMomentDiscrete^{\indexTime + 1}, \nonConservedMomentDiscrete^{\indexTime + 1})$, whereas its corresponding \fd{} scheme proceeds as $(\conservedMomentDiscrete^{\indexTime-1}, \conservedMomentDiscrete^{\indexTime}) \curlyMapsTo \conservedMomentDiscrete^{\indexTime + 1}$.
The latter schemes are called ``corresponding'' because if $\conservedMomentDiscrete^{\indexTime}$ is the solution of the \lbm{}, it also exactly fulfills the \fd{} scheme.

Our previous works \cite{bellotti2022finite, bellotti2023truncation} have shown a systematic way of turning \lbm{} schemes into \fd{} ones on unbounded domains or bounded domains with periodic boundary conditions. 
This secures a rigorous framework for the numerical analysis of \lbm{} methods.
With non-trivial boundary conditions, we lack this methodical path, due to the loss of space invariance stemming from the boundaries. 
The development of such a transformation in a general setting is beyond the scope of the present paper, which however shows that---whenever it can be explicitly constructed by \adHoc{} computations---\fd{} counterparts are indeed a powerful tool to understand boundary conditions imposed at the \lbm{} level.

Conversely, the main points of the present work are the following.
\begin{itemize}
    \item We develop \strong{kinetic boundary conditions} to handle inflows and outflows.
    The word ``kinetic'' must be understood as ``without modifying the \lbm{} algorithm'', \emph{i.e.} enforced during the transport phase using distribution functions entering the domain.
    \item We show that a \strong{compromise} exists between \strong{accuracy} and \strong{stability}.
    Still, we propose corrections based on \strong{boundary source terms} to recover the needed accuracy while retaining stability.

    \item We study stability blending several approaches.
    \begin{itemize}
        \item First, through the \strong{GKS (Gustafsson, Kreiss, and Sundstr\"om) analysis} \cite{gustafsson1972stability, trefethen1984instability}. However, it loses its predictive power on coarse meshes, \confer{} \cite{beam1982stability}, and stable boundary conditions may turn into unstable ones and \strong{viceversa}, due to interactions between boundaries.
        \item In these cases, we inspect the spectrum of the \strong{matrix associated with the scheme} \cite{vilar2015development}. 
        Furthermore, we link the order of poles in the \strong{reflection coefficient} \cite{trefethen1984instability}---a GKS notion---with the number of eigenvalues of the scheme matrix  tending to isolated points as the number of degrees of freedom increases.
    We also perform analyses plotting pseudo-spectra \cite{doi:10.2514/6.1997-1939, trefethen2005spectra}.
    \end{itemize}
    
\end{itemize}

We focus on the approximation of  $\vectorial{\solutionCauchyProblem} = \vectorial{\solutionCauchyProblem}(\timeVariable, \spaceVariable) \in \reals^{\numberConservationLaws}$, the solution of the 1D system of $\numberConservationLaws \in \naturalsWithoutZero$ \strong{conservation laws}
\begin{subnumcases}{}
    \partial_{\timeVariable}\vectorial{\solutionCauchyProblem}(\timeVariable, \spaceVariable) + \partial_{\spaceVariable}(\vectorial{\flux}(\vectorial{\solutionCauchyProblem}(\timeVariable, \spaceVariable))) = 0 & $\timeVariable \in (0, \finalTime ],  \quad \spaceVariable \in (0, \domainLength)$, \label{eq:conservationLaw}
    \\
    \vectorial{\solutionCauchyProblem}(\timeVariable = 0, \spaceVariable) = \vectorial{\solutionCauchyProblem}^{\initial}(\spaceVariable) & $\spaceVariable \in (0, \domainLength)$, \label{eq:initialCondition} \\
    \textnormal{suitable boundary conditions on } \{0, \domainLength\} & $\timeVariable \in (0, \finalTime ]$.
\end{subnumcases}
Here, $\finalTime > 0$ is the final time, $\domainLength > 0$ the domain length, $\vectorial{\flux} : \reals^{\numberConservationLaws} \to \reals^{\numberConservationLaws}$ a flux of class $\cKClass{1}$, and $\vectorial{\solutionCauchyProblem}^{\initial} : (0, \domainLength) \to \reals^{\numberConservationLaws}$ the initial datum.
We mostly concentrate on the scalar case, thus $\numberConservationLaws = 1$, with linear flux $\flux(\solutionCauchyProblem) = \advectionVelocity \solutionCauchyProblem$.
When $\advectionVelocity < 0$, the problem under scrutiny reads
\begin{subnumcases}{}
    \partial_{\timeVariable}{\solutionCauchyProblem}(\timeVariable, \spaceVariable) + \advectionVelocity \partial_{\spaceVariable}{\solutionCauchyProblem}(\timeVariable, \spaceVariable) = 0 & $\timeVariable \in (0, \finalTime ],  \quad \spaceVariable \in (0, \domainLength)$, \label{eq:advection}
    \\
    {\solutionCauchyProblem}(\timeVariable = 0, \spaceVariable) = {\solutionCauchyProblem}^{\initial}(\spaceVariable) & $\spaceVariable \in (0, \domainLength)$, \label{eq:advectionInitial} \\
    \solutionCauchyProblem(\timeVariable, \spaceVariable = \domainLength) = \boundaryDatumInflow (\timeVariable) & $\timeVariable \in (0, \finalTime ]$,\label{eq:DirichletInflowCondition}
\end{subnumcases}
where $\boundaryDatumInflow : (0, \finalTime] \to \reals$ is the trace on the inflow point $\spaceVariable = \domainLength$.
As $\spaceVariable = 0$ is an outflow, no ``physical'' boundary condition is needed here.

To ``give a flavour'' of the results proved in the paper, the compromise between accuracy and stability for a scalar linear problem is informally stated as follows.
\begin{theorem*}
    Consider a two-velocities \lbm{} scheme tackling \eqref{eq:advection}, \eqref{eq:advectionInitial}, and \eqref{eq:DirichletInflowCondition} while $\advectionVelocity<0$, second-order accurate under periodic boundary conditions.
    At the boundary, utilize an exact anti-bounce-back condition on the inflow and an extrapolation of order $\orderExtrapolation\in \naturalsWithoutZero$ on the missing distribution function on the outflow.
    Consider smooth initial data at equilibrium.
    Then
    \begin{center}\setlength{\tabcolsep}{3pt}
        {\renewcommand{\arraystretch}{1.1}
        \begin{tabular}{|c|ccc|}
            \hline
            & \textsc{Consistency} & \textsc{Stability} & \textsc{Order of converg.} ($L^2$)\\
            & \Cref{prop:outflowExtrapolationModifiedEquation} & \Cref{prop:stabInstGKS} & \cite[Theorem 1]{coulombel2015leray} \\
            \hline
            \hline
            \multirow{4}{*}{$\orderExtrapolation = 1$} & Trunc. error initially at the outflow: $\bigO{\spaceStep}$ & \multirow{4}{*}{GKS-stable} & \multirow{4}{*}{$\bigO{\spaceStep^{3/2}}$} \\
            & Trunc. error initially in the bulk: $\bigO{\spaceStep^2}$ & & \\
            & Trunc. error eventually in time at the outflow: $\bigO{\spaceStep^2}$ & &\\
            & Trunc. error eventually in time in the bulk: $\bigO{\spaceStep^3}$ & & \\
            \hline
            \multirow{4}{*}{$\orderExtrapolation \geq 2$} & Trunc. error initially at the outflow: $\bigO{\spaceStep^2}$ & \multirow{4}{*}{GKS-unstable} & \multirow{4}{*}{$\bigO{\spaceStep^{2}}$} \\
            & Trunc. error initially in the bulk: $\bigO{\spaceStep^2}$ & & \\
            & Trunc. error eventually in time at the outflow: $\bigO{\spaceStep^2}$ & &\\
            & Trunc. error eventually in time in the bulk: $\bigO{\spaceStep^3}$ & & \\
            \hline
        \end{tabular}}
    \end{center}
\end{theorem*}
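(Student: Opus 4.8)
The plan is to read this ``flavour'' statement as a synthesis of three essentially independent analyses---consistency, GKS stability, and convergence rate---each carried out in full elsewhere in the paper, and to see the proof as the act of gluing them together once the common bridge has been built. The first step is therefore to construct, by the \adHoc{} computations alluded to in the introduction, the \fd{} scheme corresponding to the chosen two-velocities \lbm{} scheme with $\relaxationParameter = 2$, the second-order anti-bounce-back inflow at $\spaceVariable = \domainLength$, and the order-$\orderExtrapolation$ extrapolation of the incoming distribution function at the outflow $\spaceVariable = 0$. As in \cite{bellotti2022finite, bellotti2023truncation}, eliminating $\nonConservedMomentDiscrete$ turns $(\conservedMomentDiscrete^{\indexTime-1}, \conservedMomentDiscrete^{\indexTime}) \curlyMapsTo \conservedMomentDiscrete^{\indexTime+1}$ into a two-time-step \fd{} scheme on the sole conserved moment, with boundary closures inherited from the kinetic ones and with the equilibrium initialization of smooth data fixing the artificial second time level $\conservedMomentDiscrete^{-1}$ up to a controlled error; all subsequent reasoning is conducted on this \fd{} scheme.

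For the consistency column I would follow the modified-equation and truncation-error route of \cite{bellotti2023truncation}, now localized near $\spaceVariable = 0$. In the bulk the computation is standard: inserting a smooth solution of \eqref{eq:advection} into the interior stencil and Taylor-expanding shows that $\relaxationParameter = 2$ annihilates the first-order defect, leaving an interior truncation error that is $\bigO{\spaceStep^{3}}$ once the initial transient is over and only $\bigO{\spaceStep^{2}}$ during it, the drop being caused by the approximate reconstruction of $\conservedMomentDiscrete^{-1}$. At the outflow one substitutes the exact solution into the extrapolation closure: since an order-$\orderExtrapolation$ extrapolation is exact on polynomials of degree $\orderExtrapolation-1$, the raw defect is $\bigO{\spaceStep^{\orderExtrapolation}}$, which already yields the $\bigO{\spaceStep^2}$ entries for $\orderExtrapolation \geq 2$; for $\orderExtrapolation = 1$ one must track that the $\bigO{\spaceStep}$ term present during start-up is, after the transient, promoted to $\bigO{\spaceStep^2}$ because the surviving leading coefficient is a pure time derivative that the scheme advances consistently---this time-dependent improvement being exactly \Cref{prop:outflowExtrapolationModifiedEquation}.

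The stability column is a GKS analysis \cite{gustafsson1972stability, trefethen1984instability} of the two half-line problems. After a Laplace transform in time ($\timeShiftOperator$) and resolution of the interior characteristic equation, one isolates the root $\solutionCharStable(\timeShiftOperator)$ lying in the open unit disk for $|\timeShiftOperator| > 1$; writing the order-$\orderExtrapolation$ extrapolation closure in these variables produces the Kreiss--Lopatinskii determinant, equivalently the reflection coefficient $\reflectionCoefficientLetter$ at the outflow, while the anti-bounce-back inflow reduces to the classical second-order Dirichlet closure and is checked not to contribute boundary modes. One then verifies that for $\orderExtrapolation = 1$ the determinant has no zero on $|\timeShiftOperator| \geq 1$, so the Godunov--Ryabenkii/Kreiss condition holds and the scheme is GKS-stable, whereas for $\orderExtrapolation \geq 2$ a zero appears on the unit circle at the low-frequency point $\timeShiftOperator = 1$---a pole of the reflection coefficient sitting exactly on $|\timeShiftOperator| = 1$---so the Kreiss condition fails and the scheme is (weakly) GKS-unstable; this dichotomy is \Cref{prop:stabInstGKS}.

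The order-of-convergence column is then obtained by feeding the two previous outputs into \cite[Theorem 1]{coulombel2015leray}, whose hypotheses reduce to a (possibly weak) GKS stability together with the interior and boundary truncation orders, and whose conclusion is an $L^2$ convergence rate of the form $\min(p, q + 1/2)$ with $p$ the interior order and $q$ the boundary order. For $\orderExtrapolation = 1$ this gives $\min(2, 1 + 1/2) = 3/2$; for $\orderExtrapolation \geq 2$ the boundary order $q = 2$ makes the interior order $p = 2$ the bottleneck, and the Leray--Gårding estimate underlying the quoted theorem absorbs the borderline (on-circle, not strictly exterior) instability, yielding the rate $2$. I expect the genuine obstacles to be twofold. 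First, the \adHoc{} elimination of $\nonConservedMomentDiscrete$ \emph{at the boundary}---for which the introduction itself flags the absence of a systematic recipe---must be performed carefully enough that the inherited \fd{} closure is simultaneously explicit and amenable to Taylor and Laplace analysis; this is the structural linchpin of the argument. Second, the $\orderExtrapolation \geq 2$ row is delicate precisely because ``GKS-unstable'' and ``$L^2$-convergent at order $2$'' must coexist: one has to identify which weakened stability estimate survives the on-circle zero of the Kreiss--Lopatinskii determinant and confirm that it is exactly the hypothesis demanded by \cite[Theorem 1]{coulombel2015leray}, rather than the strict GKS bound. The bookkeeping that separates the start-up truncation error from the long-time one, though technical, is routine given \cite{bellotti2023truncation}.
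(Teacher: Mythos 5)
Your architecture is the paper's: build the corresponding \fd{} scheme, read off consistency from modified equations (\Cref{prop:outflowExtrapolationModifiedEquation}), decide GKS stability by normal modes (\Cref{prop:stabInstGKS}), and feed both into \cite[Theorem 1]{coulombel2015leray}. The synthesis is right, but the stability column as you describe it would not produce the stated dichotomy. In the theorem's setting $\advectionVelocity<0$, hence $\courantNumber<0$, and the unstable mode for $\orderExtrapolation\geq 2$ at $\relaxationParameter=2$ is \emph{not} at $\timeShiftOperator=1$: it sits at $\timeShiftOperator=1-\relaxationParameter=-1$. The root $(\timeShiftOperator,\fourierShift)=(1,1)$ of the eigenvalue problem (\Cref{lemma:boundaryRoots}) exists for \emph{every} $\orderExtrapolation$, but for $\courantNumber<0$ one has $\solutionCharUnstable(1)=1$ (\Cref{lemma:bulkStudy}), so that mode pairs with the left-going root and is never an admissible eigensolution; checking only $\timeShiftOperator=1$ therefore yields ``stable'' for all $\orderExtrapolation$ and misses the instability entirely. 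What separates $\orderExtrapolation=1$ from $\orderExtrapolation\geq2$ is the extra root $(\timeShiftOperator,\fourierShift)=(1-\relaxationParameter,1)$, which at $\relaxationParameter=2$ lands on the unit circle at $\timeShiftOperator=-1$ with $\fourierShift=1=\solutionCharStable(-1)$, a strictly right-going admissible eigensolution; this is tied to the non-dissipativity of the $\relaxationParameter=2$ bulk scheme (the mode $(-1,-1)$ lies on the characteristic variety), which is why the instability occurs only at $\relaxationParameter=2$ exactly. The mode at $\timeShiftOperator=1$ becomes the unstable one only when $\courantNumber>0$, i.e.\ when the outflow condition is misused at an inflow---not the case here. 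You must run the Kreiss--Lopatinskii check at all candidate roots and, crucially, identify for each whether the associated $\fourierShift$ is $\solutionCharStable$ or $\solutionCharUnstable$; this identification, via $\solutionCharStable\solutionCharUnstable=\commonTerm(\relaxationParameter,\courantNumber)$ and the sign properties of $\commonTerm$, is the actual content of \Cref{prop:stabInstGKS}.

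A secondary point on the consistency column: the $\bigO{\spaceStep}$ entry for $\orderExtrapolation=1$ at the initial time is not a statement about polynomial exactness of the extrapolation. It comes from the initialization at equilibrium \eqref{eq:initializationEquilibrium}, which turns the first boundary step into a scheme whose modified equation is $\partial_\timeVariable\testFunction+\tfrac12\partial_\spaceVariable(\flux(\testFunction)-\latticeVelocity\testFunction)=\bigO{\spaceStep}$, i.e.\ consistent with the \emph{wrong} transport equation; the later promotion to $\bigO{\spaceStep^2}$ happens because the eventual-in-time boundary scheme at $\relaxationParameter=2$ turns out to be consistent with the correct equation at first order, not because a ``pure time derivative'' is advanced consistently. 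Getting this mechanism right matters, since it is exactly what the boundary source-term corrections of the paper are designed to repair, and it is what produces the localized $\bigO{\spaceStep}$ defect at a single space--time point whose $\ell^2$ norm gives the $\spaceStep^{3/2}$ rate. Your flag on the $\orderExtrapolation\geq2$ row (order-two convergence coexisting with GKS instability) is legitimate; the paper does not fully close that gap either.
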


We also informally state \Cref{thm:countingThroughReflection}, linking the order of poles of the reflection coefficient to the number of eigenvalues tending to isolated points.
\begin{theorem*}
    Under suitable assumptions, the number $\orderExtrapolation$ of eigenvalues clustering around a limit isolated point $\targetEigenvalue \in \complex$ associated to the outflow boundary condition equals the order of the pole of the reflection coefficient $\reflectionCoefficientLetterOut$ of the outflow boundary condition at $\targetEigenvalue$ (\emph{i.e.} $\reflectionCoefficientLetterOut(\timeShiftOperator)\sim (\timeShiftOperator-\targetEigenvalue)^{-\orderExtrapolation}$ as $\timeShiftOperator\to\targetEigenvalue$).
\end{theorem*}
In the case $|\targetEigenvalue|=1$, the previous theorem allows to understand instabilities linked to the outflow boundary condition, which grow polynomially with order $\orderExtrapolation - 1$. Indeed, the order of the pole of the reflection coefficient is very easy to compute, compared to a full study of the spectrum.

The paper is structured as follows.
The numerical scheme and numerical boundary conditions are presented in \Cref{sec:scheme}.
The \lbm{} scheme is then turned into its corresponding \fd{} scheme in \Cref{sec:correspondingFD}.
This allows to rigorously study consistency, as presented in \Cref{sec:consistency}, and stability, \confer{} \Cref{sec:stability}.
Conclusions and perspectives are proposed in \Cref{sec:conclusions}.

\section{Two-velocities lattice Boltzmann scheme}\label{sec:scheme}

This section describes the \lbm{} scheme.
We introduce the space-time discretization in \Cref{sec:spaceTimeDiscretization}.
\Cref{sec:bulkLBM} treats the scheme used in the bulk of the domain, while \Cref{sec:boundaryLBM} present strategies adopted on the boundary points.
Finally, \Cref{sec:initializationLBM} is devoted to the initalisation of the numerical scheme.

\subsection{Space and time discretization}\label{sec:spaceTimeDiscretization}

\begin{figure} 
    \begin{center}
        \begin{tikzpicture}[scale=0.82]
            \fill[red,opacity=0.5] (0, 0) rectangle (8, 3);
            \shade[bottom color=red, top color=white, opacity=0.5] (0,3) rectangle (8,3.8);

            \draw[->] (-2.5,0) -- (10,0) node[right] {$\spaceVariable$};
            \draw[->] (0,0) -- (0,4) node[above] {$\timeVariable$};
            \foreach \x in {-1, 0,...,3, 5, 6,..., 9}
                \foreach \y in {0,0.5,...,3}
                    \fill (\x,\y) circle (2pt);

            \node at (4, 1.5) {$\cdots$};
            \node[below] at (4, 0) {$\cdots$};
            \node[below] at (3, 0) {$\cdots$};
            \node[below] at (5, 0) {$\cdots$};

            \foreach \x in {-1, 0, 1, 2}
                \node[below] at (\x,0) {$\spaceGridPoint{\x}$};
            \node[below] at (0, -1em) {$0$};
            \node[below] at (8, -1em) {$\domainLength$};
            \foreach \x in {-3, -2, -1}
                \node[below] at (9+\x,0) {$\spaceGridPoint{\numberSpacePoints \x}$};
            \node[below] at (9,0) {$\spaceGridPoint{\numberSpacePoints}$};
            \foreach \y in {1, 2, ..., 6}
                \node[left] at (0, .5*\y) {$\timeGridPoint{\y}$};

            \draw[|<->|] (9.2,1) -- (9.2,1.5) node[midway, right] {$\timeStep$};
            \draw[|<->|] (6,3.2) -- (7,3.2) node[midway, above] {$\spaceStep$};
        \end{tikzpicture}
    \end{center}\caption{\label{fig:mesh}Computational mesh (dots) on $(0, \finalTime]\times (0, \domainLength)$ (in red).}
\end{figure}

The  spatial domain $(0, \domainLength)$ is discretized with $\numberSpacePoints \in \naturalsWithoutZero$ grid-points $\spaceGridPoint{\indexSpace} \definitionEquality \indexSpace \spaceStep$, with $\indexSpace \in \integerInterval{0}{\numberSpacePoints - 1}$ and $\spaceStep \definitionEquality \domainLength/(\numberSpacePoints - 1)$.
We include the boundary points $\spaceVariable = 0$ and $\domainLength$ in the computational domain \emph{via} $\spaceGridPoint{0} = 0$ and $\spaceGridPoint{\numberSpacePoints - 1} = \domainLength$.
Moreover, the definition of $\spaceGridPoint{\indexSpace}$ works for $\indexSpace \in \relatives$, and thus we can define ghost points outside $[0, \domainLength]$.

Concerning time discretization, we utilize grid-points $\timeGridPoint{\indexTime} \definitionEquality \indexTime\timeStep$ with $\indexTime \in \naturals$.
The time step $\timeStep$ is linked to the space step $\spaceStep$ by $\timeStep \definitionEquality \spaceStep/\latticeVelocity$ with $\latticeVelocity > 0$ called ``lattice velocity''.
This scaling between space and time discretization, known as ``acoustic scaling'', is relevant in this context where information---see \eqref{eq:conservationLaw}---travels at \strong{finite speed} and the numerical method is \strong{time-explicit}.
Therefore, CFL (Courant-Friedrichs-Lewy) restrictions are needed on $\latticeVelocity$ to ensure stability.
This whole spatio-temporal setting is depicted in \Cref{fig:mesh}.

\subsection{Bulk scheme}\label{sec:bulkLBM}

Starting from inside the domain, we utilize the so-called $\textnormal{D}_1\textnormal{Q}_2^{\numberConservationLaws}$ scheme \cite{graille2014approximation}, based on two distribution functions $\vectorial{\distributionFunctionDiscrete}^{+} \in \reals^{\numberConservationLaws}$ and $\vectorial{\distributionFunctionDiscrete}^{-} \in \reals^{\numberConservationLaws}$, each associated with a positively and negatively-moving unknown.
At each stage of the algorithm, we define $\vectorial{\conservedMomentDiscrete} \definitionEquality \vectorial{\distributionFunctionDiscrete}^{+} + \vectorial{\distributionFunctionDiscrete}^{-}$, and it reads:
\begin{align}
    \textnormal{Collision} \qquad \qquad &\vectorial{\distributionFunctionDiscrete}_{\indexSpace}^{\pm, \indexTime \collided} = (1-\relaxationParameter) \vectorial{\distributionFunctionDiscrete}_{\indexSpace}^{\pm, \indexTime} + \relaxationParameter \bigl ( \overbrace{\tfrac{1}{2} \vectorial{\conservedMomentDiscrete}_{\indexSpace}^{\indexTime} \pm  \tfrac{1}{2\latticeVelocity} \vectorial{\flux}(\vectorial{\conservedMomentDiscrete}_{\indexSpace}^{\indexTime})}^{\vectorial{\distributionFunctionDiscrete}^{\pm, \atEquilibrium}(\vectorial{\conservedMomentDiscrete}_{\indexSpace}^{\indexTime}) \definitionEquality}\bigr ), \quad \text{for}\quad\indexSpace \in \integerInterval{0}{\numberSpacePoints - 1}.\tag{C}\label{eq:collision}\\
    \textnormal{Transport} \qquad \qquad &
    \vectorial{\distributionFunctionDiscrete}_{\indexSpace}^{+, \indexTime + 1} = \vectorial{\distributionFunctionDiscrete}_{\indexSpace- 1}^{+, \indexTime \collided}, \quad \text{for} \quad \indexSpace \in \integerInterval{1}{\numberSpacePoints - 1}, \qquad \text{and}\qquad \vectorial{\distributionFunctionDiscrete}_{\indexSpace}^{-, \indexTime + 1} = \vectorial{\distributionFunctionDiscrete}_{\indexSpace+ 1}^{-, \indexTime \collided}, \quad \text{for}\quad\indexSpace \in \integerInterval{0}{\numberSpacePoints - 2}.
    \tag{T}\label{eq:transport}
\end{align}
The relaxation parameter $\relaxationParameter \in (0, 2]$.
Notice that $\vectorial{\conservedMomentDiscrete}_{\indexSpace}^{\indexTime}$ is conserved throughout the collision \eqref{eq:collision}, and has to be interpreted as an approximation of $\vectorial{\solutionCauchyProblem}(\timeGridPoint{\indexTime}, \spaceGridPoint{\indexSpace})$, when the latter is smooth.
The approximation is---besides boundary conditions---\strong{first-order accurate} in $\spaceStep$ when $\relaxationParameter \in (0, 2)$ and \strong{second-order accurate} for $\relaxationParameter = 2$, see \cite{graille2014approximation}.

\subsection{Boundary schemes}\label{sec:boundaryLBM}

From \eqref{eq:transport}, we see that the scheme is not yet defined on the boundary grid-points $\spaceGridPoint{0}$ (for $\vectorial{\distributionFunctionDiscrete}^{+}$) and $\spaceGridPoint{\numberSpacePoints-1}$ (for $\vectorial{\distributionFunctionDiscrete}^{-}$).
As stated in the Introduction, we do not change schemes at boundaries, boiling down to devise ``\strong{prepared}'' values $\vectorial{\distributionFunctionDiscrete}_{-1}^{+, \indexTime\collided}$ and $\vectorial{\distributionFunctionDiscrete}_{\numberSpacePoints}^{-, \indexTime\collided}$ in ghost cells after collision, \confer{} \Cref{fig:mesh}.
For the sake of presentation, we consider the scalar case $\numberConservationLaws = 1$ and assume that we need to set an inflow condition at $\spaceVariable = \domainLength$ and no boundary condition (outflow) at $\spaceVariable = 0$.

\subsubsection{Inflow boundary condition}

The inflow boundary condition is of Dirichlet type \eqref{eq:DirichletInflowCondition}.
Yet, it cannot be directly enforced on $\conservedMomentDiscrete$, but---according to out policy---we make good use of the ghost value $\distributionFunctionDiscrete_{\numberSpacePoints}^{-, \indexTime\collided}$ to achieve the desired result.
We follow the approach introduced in \cite[Chapter 6]{helie2023schema} and consider
\begin{equation}\tag{I-BC}\label{eq:inflowConditionScheme}
    \distributionFunctionDiscrete_{\numberSpacePoints - 1}^{-, \indexTime + 1} = \distributionFunctionDiscrete_{\numberSpacePoints}^{-, \indexTime \collided} = - \distributionFunctionDiscrete_{\numberSpacePoints - 2}^{+, \indexTime \collided} + \boundaryDatumInflow(\timeGridPoint{\indexTime + 1}).
\end{equation}
The aim of \eqref{eq:inflowConditionScheme} being to preserve overall second-order accuracy when $\relaxationParameter = 2$, it is \strong{different} from the standard ``anti-bounce-back'', see \cite{dubois2020anti}, which would be $\distributionFunctionDiscrete_{\numberSpacePoints - 1}^{-, \indexTime + 1} = \distributionFunctionDiscrete_{\numberSpacePoints}^{-, \indexTime \collided} = - \distributionFunctionDiscrete_{\numberSpacePoints - 1}^{+, \indexTime \collided} + \boundaryDatumInflow(\timeGridPoint{\indexTime})$ and only first-order accurate.
Two essential ideas make \eqref{eq:inflowConditionScheme} suitable in our case.
First, information travels \strong{two cells at once}, since the ghost point $\spaceGridPoint{\numberSpacePoints}$ is populated using values defined at $\spaceGridPoint{\numberSpacePoints - 2}$. Second, the value of the trace is enforced ``\strong{in the future}'', using $\boundaryDatumInflow(\timeGridPoint{\indexTime + 1})$ instead of the time-marching approach, which would rather employ $\boundaryDatumInflow(\timeGridPoint{\indexTime})$.

\subsubsection{Outflow boundary conditions}

On the outflow, no physical boundary condition has to be enforced.
However, \lbm{} schemes call for \strong{numerical boundary conditions}.
The aim is twofold. 
On the one hand, the boundary condition should allow information to quit the domain.
On the other hand, it ought not produce spurious waves counter-propagating against physical ones that can lower the order of the method or---even worse---foster instabilities \cite{trefethen1984instability}.

The rationale is to make the scheme behave as much as \strong{no boundary were present}.
We therefore \strong{extrapolate} the missing ghost value $\distributionFunctionDiscrete_{-1}^{+, \indexTime\collided}$ from those inside the computational domain following \cite{goldberg1977boundary}, and employ 
\begin{equation}\label{eq:extrapolationBoundaryCondition}
    \distributionFunctionDiscrete_{0}^{+, \indexTime + 1} = \distributionFunctionDiscrete_{-1}^{+, \indexTime \collided} = \sum_{\indexSpace = 0}^{\orderExtrapolation - 1} \coefficientExtrapolation_{\indexSpace} \distributionFunctionDiscrete_{\indexSpace}^{+, \indexTime \collided} + \sourceTermBoundaryDiscrete_0^{\indexTime + 1} 
    \qquad \textnormal{with} \qquad \coefficientExtrapolation_{\indexSpace} \definitionEquality (-1)^{\indexSpace} \binom{\orderExtrapolation}{\indexSpace + 1}.\tag{\outflowExtrapolationTag$\orderExtrapolation$}
\end{equation}
In \eqref{eq:extrapolationBoundaryCondition}, $\orderExtrapolation \in \naturalsWithoutZero$ is the order of the extrapolation, and $\sourceTermBoundaryDiscrete_0^{\indexTime + 1}$ is a source term depending only on $\conservedMomentDiscrete^{\indexTime}, \conservedMomentDiscrete^{\indexTime-1}$, \emph{etc}.
Again, observe that---contrarily to \cite{coulombel2020neumann} and respecting our policy---extrapolations are not enforced on $\conservedMomentDiscrete$.
The low-order extrapolations read in this case:
\begin{align}
    \distributionFunctionDiscrete_{0}^{+, \indexTime + 1} &= \distributionFunctionDiscrete_{-1}^{+, \indexTime \collided} = \distributionFunctionDiscrete_{0}^{+, \indexTime \collided} + \sourceTermBoundaryDiscrete_0^{\indexTime + 1} \tag{\outflowExtrapolationTag1},\label{eq:papillon1} \\
    \distributionFunctionDiscrete_{0}^{+, \indexTime + 1} &= \distributionFunctionDiscrete_{-1}^{+, \indexTime \collided} = 2\distributionFunctionDiscrete_{0}^{+, \indexTime \collided} - \distributionFunctionDiscrete_{1}^{+, \indexTime \collided} + \sourceTermBoundaryDiscrete_0^{\indexTime + 1}, \tag{\outflowExtrapolationTag2}\label{eq:papillon2}\\
    \distributionFunctionDiscrete_{0}^{+, \indexTime + 1} &= \distributionFunctionDiscrete_{-1}^{+, \indexTime \collided} = 3\distributionFunctionDiscrete_{0}^{+, \indexTime \collided} - 3\distributionFunctionDiscrete_{1}^{+, \indexTime \collided} + \distributionFunctionDiscrete_{2}^{+, \indexTime \collided} + \sourceTermBoundaryDiscrete_0^{\indexTime + 1}. \tag{\outflowExtrapolationTag3}
\end{align}
Condition \eqref{eq:papillon2} looks similar but is different from the ones proposed by \cite{guo2002extrapolation, junk2008outflow}. In particular, the first paper uses extrapolations for the conserved quantities ($\conservedMomentDiscrete$ in our case) and then reconstructs equilibrium and non-equilibrium distribution functions from this. The second work extrapolates distribution functions from those at time $\timeGridPoint{\indexTime+1}$, thus after collision and transport, rather than at $\timeGridPoint{\indexTime}$ after collision only as we do.

Another possible boundary condition relies on the \lbm{} scheme recast on the conserved and non-conserved moments $\conservedMomentDiscrete$ and $\nonConservedMomentDiscrete \definitionEquality \latticeVelocity (\distributionFunctionDiscrete^+ - \distributionFunctionDiscrete^-)$.
This latter unknown has equilibrium  $\nonConservedMomentDiscrete^{\atEquilibrium}(\conservedMomentDiscrete) = \latticeVelocity (\distributionFunctionDiscrete^{+, \atEquilibrium} (\conservedMomentDiscrete)- \distributionFunctionDiscrete^{-, \atEquilibrium}(\conservedMomentDiscrete)) = \flux(\conservedMomentDiscrete)$.
We imagine to enforce---at the future time $\timeGridPoint{\indexTime + 1}$---a Neumann boundary condition on $\conservedMomentDiscrete$ using a first-order extrapolation, and set $\nonConservedMomentDiscrete$ at its equilibrium.
This gives $\tilde{\conservedMomentDiscrete}_0^{\indexTime + 1} = \conservedMomentDiscrete_1^{\indexTime + 1} = \distributionFunctionDiscrete_0^{+, \indexTime\collided} + \distributionFunctionDiscrete_2^{-, \indexTime\collided}$ and $\tilde{\nonConservedMomentDiscrete}_0^{\indexTime + 1} = \nonConservedMomentDiscrete^{\atEquilibrium} (\tilde{\conservedMomentDiscrete}_0^{\indexTime + 1}) = \nonConservedMomentDiscrete^{\atEquilibrium} (\conservedMomentDiscrete_1^{\indexTime + 1}) = \nonConservedMomentDiscrete^{\atEquilibrium} (\distributionFunctionDiscrete_0^{+, \indexTime\collided} + \distributionFunctionDiscrete_2^{-, \indexTime\collided})$, where the tilde is added to stress that at the end $\conservedMomentDiscrete_0^{\indexTime + 1}\neq \tilde{\conservedMomentDiscrete}_0^{\indexTime + 1}$ in general.
Otherwise said, these identities  are solely employed to devise a value for $\distributionFunctionDiscrete_{-1}^{+, \indexTime\collided}$, and do not change those of the distribution functions inside the computational domain: we have---rewriting on the positive distribution function and adding a source term
\begin{equation}\label{eq:conditionKinRod}
    \distributionFunctionDiscrete_{0}^{+, \indexTime + 1} = \distributionFunctionDiscrete_{-1}^{+, \indexTime \collided} = \tfrac{1}{2}\tilde{\conservedMomentDiscrete}_0^{\indexTime + 1} +\tfrac{1}{2\latticeVelocity} \tilde{\nonConservedMomentDiscrete}_0^{\indexTime + 1}  + \sourceTermBoundaryDiscrete_0^{\indexTime + 1} = \tfrac{1}{2}(\distributionFunctionDiscrete_0^{+, \indexTime\collided} + \distributionFunctionDiscrete_2^{-, \indexTime\collided}) + \tfrac{1}{2\latticeVelocity} \nonConservedMomentDiscrete^{\atEquilibrium} (\distributionFunctionDiscrete_0^{+, \indexTime\collided} + \distributionFunctionDiscrete_2^{-, \indexTime\collided}) + \sourceTermBoundaryDiscrete_0^{\indexTime + 1}.\tag{O-BC-F}
\end{equation}

\subsection{Initialization}\label{sec:initializationLBM}

Finally, to wholly define the numerical scheme, we link the initial distribution functions to the initial datum of the Cauchy problem \eqref{eq:initialCondition} given by $\vectorial{\solutionCauchyProblem}^{\initial}$.
Initializing \strong{at equilibrium} is the longstanding ``no-brainer'' choice:
\begin{equation}\tag{IE}\label{eq:initializationEquilibrium}
    \vectorial{\distributionFunctionDiscrete}_{\indexSpace}^{\pm, 0} = \tfrac{1}{2} \vectorial{\solutionCauchyProblem}^{\initial}(\spaceGridPoint{\indexSpace}) \pm \tfrac{1}{2\latticeVelocity}\vectorial{\flux} (\vectorial{\solutionCauchyProblem}^{\initial}(\spaceGridPoint{\indexSpace})), \qquad \indexSpace \in \integerInterval{0}{\numberSpacePoints - 1}.
\end{equation}
Though this choice has been proved compatible with second-order accuracy in the bulk \cite{bellotti2024initialisation}, see also \cite{van2009smooth, junk2015l2} for discussions on initial conditions for \lbm{} schemes, it is hitherto not clear  that it does not destroy second-order together with boundary conditions.

\section{Corresponding Finite Difference scheme}\label{sec:correspondingFD}

The context being now set, we transform the \lbm{} scheme into its \strong{corresponding \fd{} scheme} solely on $\conservedMomentDiscrete$.
From now on, only the scalar case $\numberConservationLaws = 1$ is considered, even though some results easily extend to $\numberConservationLaws > 1$.
The link between the discretized initial datum and the continuous one is $\conservedMomentDiscrete_{\indexSpace}^{0} = \solutionCauchyProblemInitial(\spaceGridPoint{\indexSpace})$.

\subsection{Bulk and inflow}

\begin{proposition}\label{prop:bulkAndInflow}
    Consider the \lbm{} scheme given by \eqref{eq:initializationEquilibrium}, \eqref{eq:collision}, \eqref{eq:transport}, the inflow boundary condition \eqref{eq:inflowConditionScheme}, and the outflow boundary condition \eqref{eq:extrapolationBoundaryCondition}.
    Then, its corresponding \fd{} scheme on $\conservedMomentDiscrete$ reads 
    \begin{align}
        \conservedMomentDiscrete_{\indexSpace}^1 = \tfrac{1}{2}(\conservedMomentDiscrete_{\indexSpace - 1}^0 + \conservedMomentDiscrete_{\indexSpace + 1}^0) + \tfrac{1}{2\latticeVelocity} (\flux(\conservedMomentDiscrete_{\indexSpace - 1}^0) - \flux(\conservedMomentDiscrete_{\indexSpace + 1}^0)), \qquad \conservedMomentDiscrete_{\numberSpacePoints - 1}^1 &= \boundaryDatumInflow(\timeGridPoint{1}), \label{eq:bulkFDInitial}\\
        \conservedMomentDiscrete_{\indexSpace}^{\indexTime + 1} = \tfrac{2-\relaxationParameter}{2}(\conservedMomentDiscrete_{\indexSpace - 1}^{\indexTime} + \conservedMomentDiscrete_{\indexSpace + 1}^{\indexTime}) + (\relaxationParameter - 1) \conservedMomentDiscrete_{\indexSpace}^{\indexTime - 1} + \tfrac{\relaxationParameter}{2\latticeVelocity} (\flux(\conservedMomentDiscrete_{\indexSpace - 1}^{\indexTime}) - \flux(\conservedMomentDiscrete_{\indexSpace + 1}^{\indexTime})), \qquad \conservedMomentDiscrete_{\numberSpacePoints - 1}^{\indexTime + 1} &= \boundaryDatumInflow(\timeGridPoint{\indexTime + 1}), \qquad \indexTime \in \naturalsWithoutZero, \label{eq:bulkFDScheme}
    \end{align}
    with $\indexSpace \in \integerInterval{1}{\numberSpacePoints - 2}$.
\end{proposition}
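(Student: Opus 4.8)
The plan is to eliminate the non-conserved moment $\nonConservedMomentDiscrete \definitionEquality \latticeVelocity(\distributionFunctionDiscrete^{+} - \distributionFunctionDiscrete^{-})$ from the \lbm{} dynamics and retain only $\conservedMomentDiscrete = \distributionFunctionDiscrete^{+} + \distributionFunctionDiscrete^{-}$. As a first step I recast \eqref{eq:collision}--\eqref{eq:transport} in the moment variables. Since $\distributionFunctionDiscrete^{+,\atEquilibrium}(\conservedMomentDiscrete) + \distributionFunctionDiscrete^{-,\atEquilibrium}(\conservedMomentDiscrete) = \conservedMomentDiscrete$ and $\latticeVelocity\bigl(\distributionFunctionDiscrete^{+,\atEquilibrium}(\conservedMomentDiscrete) - \distributionFunctionDiscrete^{-,\atEquilibrium}(\conservedMomentDiscrete)\bigr) = \flux(\conservedMomentDiscrete)$, collision reads $\conservedMomentDiscrete_{\indexSpace}^{\indexTime\collided} = \conservedMomentDiscrete_{\indexSpace}^{\indexTime}$ and $\nonConservedMomentDiscrete_{\indexSpace}^{\indexTime\collided} = (1-\relaxationParameter)\nonConservedMomentDiscrete_{\indexSpace}^{\indexTime} + \relaxationParameter\flux(\conservedMomentDiscrete_{\indexSpace}^{\indexTime})$; adding and subtracting the two lines of \eqref{eq:transport} yields, for $\indexSpace \in \integerInterval{1}{\numberSpacePoints - 2}$, the pair $\conservedMomentDiscrete_{\indexSpace}^{\indexTime + 1} = \tfrac12(\conservedMomentDiscrete_{\indexSpace - 1}^{\indexTime} + \conservedMomentDiscrete_{\indexSpace + 1}^{\indexTime}) + \tfrac{1}{2\latticeVelocity}(\nonConservedMomentDiscrete_{\indexSpace - 1}^{\indexTime\collided} - \nonConservedMomentDiscrete_{\indexSpace + 1}^{\indexTime\collided})$ and $\nonConservedMomentDiscrete_{\indexSpace}^{\indexTime + 1} = \tfrac{\latticeVelocity}{2}(\conservedMomentDiscrete_{\indexSpace - 1}^{\indexTime} - \conservedMomentDiscrete_{\indexSpace + 1}^{\indexTime}) + \tfrac12(\nonConservedMomentDiscrete_{\indexSpace - 1}^{\indexTime\collided} + \nonConservedMomentDiscrete_{\indexSpace + 1}^{\indexTime\collided})$. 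No boundary condition enters these interior relations.

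The heart of the argument is the elimination identity: for all $\indexSpace \in \integerInterval{1}{\numberSpacePoints - 2}$ and $\indexTime \in \naturalsWithoutZero$, $\nonConservedMomentDiscrete_{\indexSpace - 1}^{\indexTime} - \nonConservedMomentDiscrete_{\indexSpace + 1}^{\indexTime} = \latticeVelocity(\conservedMomentDiscrete_{\indexSpace - 1}^{\indexTime} + \conservedMomentDiscrete_{\indexSpace + 1}^{\indexTime} - 2\conservedMomentDiscrete_{\indexSpace}^{\indexTime - 1})$. I would prove it by expanding both sides on the distribution functions: after dividing by $\latticeVelocity$, the claim collapses to $\distributionFunctionDiscrete_{\indexSpace - 1}^{-, \indexTime} + \distributionFunctionDiscrete_{\indexSpace + 1}^{+, \indexTime} = \conservedMomentDiscrete_{\indexSpace}^{\indexTime - 1}$. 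For $\indexSpace \in \integerInterval{1}{\numberSpacePoints - 2}$ the two transport steps $\distributionFunctionDiscrete_{\indexSpace + 1}^{+, \indexTime} = \distributionFunctionDiscrete_{\indexSpace}^{+, \indexTime - 1\collided}$ and $\distributionFunctionDiscrete_{\indexSpace - 1}^{-, \indexTime} = \distributionFunctionDiscrete_{\indexSpace}^{-, \indexTime - 1\collided}$ are both \emph{standard} (the required indices $\indexSpace\pm1$ lie in $\integerInterval{0}{\numberSpacePoints-1}$, so no ghost cell is touched), whence the left-hand side equals $\distributionFunctionDiscrete_{\indexSpace}^{+, \indexTime - 1\collided} + \distributionFunctionDiscrete_{\indexSpace}^{-, \indexTime - 1\collided} = \conservedMomentDiscrete_{\indexSpace}^{\indexTime - 1\collided} = \conservedMomentDiscrete_{\indexSpace}^{\indexTime - 1}$ by conservation of $\conservedMomentDiscrete$ through collision. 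This is exactly why the statement restricts to $\indexSpace \in \integerInterval{1}{\numberSpacePoints - 2}$: the ghost values fixed by \eqref{eq:extrapolationBoundaryCondition} and \eqref{eq:inflowConditionScheme} affect only the updates at $\indexSpace = 0$ and $\indexSpace = \numberSpacePoints - 1$.

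It then remains to assemble the pieces. Inserting the collision relation $\nonConservedMomentDiscrete_{\indexSpace\mp1}^{\indexTime\collided} = (1-\relaxationParameter)\nonConservedMomentDiscrete_{\indexSpace\mp1}^{\indexTime} + \relaxationParameter\flux(\conservedMomentDiscrete_{\indexSpace\mp1}^{\indexTime})$ followed by the elimination identity into $\conservedMomentDiscrete_{\indexSpace}^{\indexTime + 1} = \tfrac12(\conservedMomentDiscrete_{\indexSpace - 1}^{\indexTime} + \conservedMomentDiscrete_{\indexSpace + 1}^{\indexTime}) + \tfrac{1}{2\latticeVelocity}(\nonConservedMomentDiscrete_{\indexSpace - 1}^{\indexTime\collided} - \nonConservedMomentDiscrete_{\indexSpace + 1}^{\indexTime\collided})$ and collecting the coefficients of $\conservedMomentDiscrete^{\indexTime}$, $\conservedMomentDiscrete^{\indexTime - 1}$ and of the fluxes produces \eqref{eq:bulkFDScheme} for $\indexTime \in \naturalsWithoutZero$. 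The first step $\indexTime = 0 \to 1$ is handled separately because the elimination identity is unavailable: by the equilibrium initialization \eqref{eq:initializationEquilibrium} one has $\nonConservedMomentDiscrete_{\indexSpace}^{0} = \flux(\conservedMomentDiscrete_{\indexSpace}^{0})$, hence $\nonConservedMomentDiscrete_{\indexSpace}^{0\collided} = (1-\relaxationParameter)\flux(\conservedMomentDiscrete_{\indexSpace}^{0}) + \relaxationParameter\flux(\conservedMomentDiscrete_{\indexSpace}^{0}) = \flux(\conservedMomentDiscrete_{\indexSpace}^{0})$ for every $\relaxationParameter$, and substituting this into the $\conservedMomentDiscrete$-update gives \eqref{eq:bulkFDInitial}. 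For the inflow node $\indexSpace = \numberSpacePoints - 1$, the standard transport gives $\distributionFunctionDiscrete_{\numberSpacePoints - 1}^{+, \indexTime + 1} = \distributionFunctionDiscrete_{\numberSpacePoints - 2}^{+, \indexTime\collided}$, while \eqref{eq:inflowConditionScheme} gives $\distributionFunctionDiscrete_{\numberSpacePoints - 1}^{-, \indexTime + 1} = - \distributionFunctionDiscrete_{\numberSpacePoints - 2}^{+, \indexTime\collided} + \boundaryDatumInflow(\timeGridPoint{\indexTime + 1})$, so their sum is $\boundaryDatumInflow(\timeGridPoint{\indexTime + 1})$ for every $\indexTime \geq 0$, which covers the inflow part of both \eqref{eq:bulkFDInitial} and \eqref{eq:bulkFDScheme}.

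The only genuinely delicate point --- and the one I would verify most carefully --- is that the elimination identity survives at the extreme interior nodes $\indexSpace = 1$ and $\indexSpace = \numberSpacePoints - 2$ with no boundary datum leaking in; the reduction to $\distributionFunctionDiscrete_{\indexSpace - 1}^{-, \indexTime} + \distributionFunctionDiscrete_{\indexSpace + 1}^{+, \indexTime} = \conservedMomentDiscrete_{\indexSpace}^{\indexTime - 1}$ makes this transparent once one checks that the indices $\indexSpace \pm 1$ appearing in the transport step remain in $\integerInterval{0}{\numberSpacePoints - 1}$ precisely when $\indexSpace \in \integerInterval{1}{\numberSpacePoints - 2}$. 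Everything else is the bookkeeping of substituting collision into transport, which parallels the space-invariant construction of \cite{bellotti2022finite, bellotti2023truncation}.
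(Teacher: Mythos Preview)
Your proof is correct and, in fact, cleaner than the paper's. The paper works entirely in the moment variables $(\conservedMomentDiscrete,\nonConservedMomentDiscrete)$: it invokes \cite{bellotti2022finite,bellotti2024initialisation} for the ``safe'' interior indices $\indexSpace\in\integerInterval{2}{\numberSpacePoints-3}$, then treats the border cases $\indexSpace=1$ and $\indexSpace=\numberSpacePoints-2$ separately, because in moment form one must eliminate $\nonConservedMomentDiscrete_{0}^{\indexTime}$ (resp.\ $\nonConservedMomentDiscrete_{\numberSpacePoints-1}^{\indexTime}$), and these quantities individually depend on the outflow (resp.\ inflow) ghost values. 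The paper checks $\indexSpace=\numberSpacePoints-2$ by a chain of substitutions through \eqref{eq:bulkSchemeForU}--\eqref{eq:bulkSchemeForV} and the boundary relations, and defers $\indexSpace=1$ to the proof of \Cref{prop:outflowExtrapolation}. Your approach sidesteps this case distinction by reducing the elimination identity to $\distributionFunctionDiscrete_{\indexSpace-1}^{-,\indexTime}+\distributionFunctionDiscrete_{\indexSpace+1}^{+,\indexTime}=\conservedMomentDiscrete_{\indexSpace}^{\indexTime-1}$ at the level of distribution functions: although $\nonConservedMomentDiscrete_{0}^{\indexTime}$ alone depends on the outflow through $\distributionFunctionDiscrete_{0}^{+,\indexTime}$, the combination $\nonConservedMomentDiscrete_{\indexSpace-1}^{\indexTime}-\nonConservedMomentDiscrete_{\indexSpace+1}^{\indexTime}-\latticeVelocity(\conservedMomentDiscrete_{\indexSpace-1}^{\indexTime}+\conservedMomentDiscrete_{\indexSpace+1}^{\indexTime})$ cancels every boundary-affected term, leaving only $\distributionFunctionDiscrete_{\indexSpace-1}^{-,\indexTime}$ and $\distributionFunctionDiscrete_{\indexSpace+1}^{+,\indexTime}$, whose updates are standard transport precisely when $\indexSpace\in\integerInterval{1}{\numberSpacePoints-2}$. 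This makes the range of validity transparent and treats all interior nodes uniformly; the paper's route, in exchange, stays closer to the machinery later reused for \Cref{prop:outflowExtrapolation}.
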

The proof of \Cref{prop:bulkAndInflow} is provided in \Cref{app:proof:bulkAndInflow}.
The bulk scheme at the first time step \eqref{eq:bulkFDInitial} is a Lax-Friedrichs scheme for \eqref{eq:conservationLaw}.
This comes from the initial datum at equilibrium \eqref{eq:initializationEquilibrium}.
The bulk scheme eventually in time \eqref{eq:bulkFDScheme} is a convex combination, when $\relaxationParameter \in [0, 1]$, between a leap-frog scheme for the wave equation $\partial_{\timeVariable\timeVariable} \solutionCauchyProblem - \latticeVelocity^2\partial_{\spaceVariable\spaceVariable}\solutionCauchyProblem = 0$ and a Lax-Friedrichs scheme for \eqref{eq:conservationLaw}, and---when $\relaxationParameter \in [1, 2]$---between a Lax-Friedrichs scheme for \eqref{eq:conservationLaw} and a leap-frog scheme for \eqref{eq:conservationLaw}.
The scheme at $\spaceGridPoint{\numberSpacePoints - 1}$ is exact for the inflow condition \eqref{eq:DirichletInflowCondition}.
\begin{remark}[Assumptions of \Cref{prop:bulkAndInflow}]
    Assumptions on the outflow boundary conditions appear in \Cref{prop:bulkAndInflow}.
    Moreover, the claim does not hold for \eqref{eq:conditionKinRod}.
    Both are due to the fact that the choice of outflow boundary condition can impact the numerical scheme in the bulk, that is, for $\indexSpace \in \integerInterval{1}{\numberSpacePoints - 2}$.
\end{remark}

\subsection{Outflow}

We first provide the corresponding \fd{} scheme for \eqref{eq:extrapolationBoundaryCondition}.
The proof is in \Cref{app:proof:outflowExtrapolation}.
\begin{proposition}\label{prop:outflowExtrapolation}
    Consider the \lbm{} scheme given by \eqref{eq:initializationEquilibrium}, \eqref{eq:collision}, \eqref{eq:transport}, and the outflow boundary condition \eqref{eq:extrapolationBoundaryCondition}.
    Then, its corresponding \fd{} scheme on $\conservedMomentDiscrete$ reads as follows.
    For the initial time-step:
    \begin{equation}\label{eq:initialSchemeBoundaryExtrapolation}
        \conservedMomentDiscrete_{0}^{1} = \tfrac{1}{2} \Bigl ( \sum_{\substack{\indexSpace = 0 \\ \indexSpace \neq 1}}^{\orderExtrapolation - 1} \coefficientExtrapolation_{\indexSpace} \conservedMomentDiscrete_{\indexSpace}^{0} + (\coefficientExtrapolation_1 + 1) \conservedMomentDiscrete_{1}^{0} \Bigr ) + \tfrac{1}{2\latticeVelocity} \Bigl ( \sum_{\substack{\indexSpace = 0 \\ \indexSpace \neq 1}}^{\orderExtrapolation - 1} \coefficientExtrapolation_{\indexSpace} \flux(\conservedMomentDiscrete_{\indexSpace}^{0}) + (\coefficientExtrapolation_1 - 1) \flux(\conservedMomentDiscrete_{1}^{0}) \Bigr )  + \sourceTermBoundaryDiscrete_0^{ 1}.
    \end{equation}
    Eventually in time, for $\indexTime \in \naturalsWithoutZero$.
    \begin{itemize}
        \item For $\orderExtrapolation = 1$, we have:
        \begin{equation}
            \conservedMomentDiscrete_0^{\indexTime + 1} = \tfrac{\relaxationParameter}{2} \conservedMomentDiscrete_0^{\indexTime} + \tfrac{2-\relaxationParameter}{2} \conservedMomentDiscrete_1^{\indexTime} + \tfrac{\relaxationParameter}{2\latticeVelocity} (\flux(\conservedMomentDiscrete_0^{\indexTime}) - \flux(\conservedMomentDiscrete_1^{\indexTime})) + \sourceTermBoundaryDiscrete_0^{\indexTime + 1} + (1-\relaxationParameter) \sourceTermBoundaryDiscrete_0^{\indexTime}. \label{eq:bulkBoundarySigma1}
        \end{equation}
        \item For $\orderExtrapolation \geq 2$, we have:
        \begin{multline}\label{eq:bulkBoundarySigmaGenericTwo}
            \conservedMomentDiscrete_{0}^{\indexTime +1} = \tfrac{1}{2} \Bigl ( \sum_{\substack{\indexSpace = 0 \\ \indexSpace \neq 1}}^{\orderExtrapolation - 1} (\coefficientExtrapolation_{\indexSpace} + (1-\relaxationParameter)\coefficientCombinationElimination_{\indexSpace})\conservedMomentDiscrete_{\indexSpace}^{\indexTime} + (\coefficientExtrapolation_1 + 1 + (1-\relaxationParameter)\coefficientCombinationElimination_1) \conservedMomentDiscrete_{1}^{\indexTime} \Bigr ) \\
            + \tfrac{\relaxationParameter - 1}{2} \Bigl ((\coefficientExtrapolation_0 + \coefficientCombinationElimination_0 ) \conservedMomentDiscrete_{1}^{\indexTime - 1} + (\coefficientExtrapolation_1 + \coefficientCombinationElimination_1 - 1) \conservedMomentDiscrete_{2}^{\indexTime - 1} 
            + \sum_{\indexSpace = 3}^{\orderExtrapolation} (\coefficientExtrapolation_{\indexSpace - 1} + \coefficientCombinationElimination_{\indexSpace - 1}) \conservedMomentDiscrete_{\indexSpace}^{\indexTime - 1}\Bigr ) \\
            + \tfrac{\relaxationParameter}{2\latticeVelocity} \Bigl ( \sum_{\substack{\indexSpace = 0 \\ \indexSpace \neq 1}}^{\orderExtrapolation - 1} \coefficientExtrapolation_{\indexSpace} \flux(\conservedMomentDiscrete_{\indexSpace}^{\indexTime}) + (\coefficientExtrapolation_1 - 1) \flux(\conservedMomentDiscrete_{1}^{\indexTime}) \Bigr )  + \sourceTermBoundaryDiscrete_0^{\indexTime + 1} + (1-\relaxationParameter)\sourceTermBoundaryDiscrete_0^{\indexTime},
        \end{multline}
        where $\coefficientCombinationElimination_{0} = \catalanTwoIndices{\orderExtrapolation-2}{1}$, $ \coefficientCombinationElimination_1 = 1 - \catalanTwoIndices{\orderExtrapolation - 3}{2}$, $\coefficientCombinationElimination_{\indexSpace} = (-1)^{\indexSpace}\catalanTwoIndices{\orderExtrapolation-\indexSpace-2}{\indexSpace + 1}$ for $\indexSpace \in \integerInterval{2}{\lfloor \tfrac{\orderExtrapolation-1}{2}\rfloor - 1}$, and $\coefficientCombinationElimination_{\orderExtrapolation - \indexSpace} = (-1)^{\orderExtrapolation + 1 - \indexSpace} \catalanTwoIndices{\orderExtrapolation - \indexSpace}{\indexSpace - 1}$ for $\indexSpace \in \integerInterval{1}{\lfloor \tfrac{\orderExtrapolation-1}{2}\rfloor + 1 + \tfrac{1 + (-1)^{\orderExtrapolation}}{2}}$, with $\catalanTwoIndices{n}{p} = 
        \tfrac{(n+p)!(n-p+1)}{p!(n+1)!}$ 
        the entries of the Catalan's triangle, see \cite{weissteinCatalan}.
    \end{itemize}
\end{proposition}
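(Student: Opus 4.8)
The plan is to eliminate the distribution functions in favour of $\conservedMomentDiscrete$ by unwinding, near the left (outflow) boundary, the transport \eqref{eq:transport}, the collision \eqref{eq:collision} and the extrapolation rule \eqref{eq:extrapolationBoundaryCondition}, treating the first time step apart from the generic step $\indexTime\geq 1$ and organising the latter around one elementary ``differencing'' identity; the case $\orderExtrapolation=1$ will come out as the degenerate instance of the generic computation and will need no combinatorics.

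\textbf{Initial step.} Since \eqref{eq:initializationEquilibrium} places the distributions at equilibrium and the equilibrium is a fixed point of \eqref{eq:collision}, one has $\distributionFunctionDiscrete_{\indexSpace}^{\pm,0\collided}=\tfrac12\conservedMomentDiscrete_{\indexSpace}^{0}\pm\tfrac{1}{2\latticeVelocity}\flux(\conservedMomentDiscrete_{\indexSpace}^{0})$, independently of $\relaxationParameter$. Plugging this, together with \eqref{eq:extrapolationBoundaryCondition} and the (valid) transport $\distributionFunctionDiscrete_{0}^{-,1}=\distributionFunctionDiscrete_{1}^{-,0\collided}$, into $\conservedMomentDiscrete_{0}^{1}=\distributionFunctionDiscrete_{0}^{+,1}+\distributionFunctionDiscrete_{0}^{-,1}=\distributionFunctionDiscrete_{-1}^{+,0\collided}+\distributionFunctionDiscrete_{1}^{-,0\collided}$ and collecting the index-$1$ terms---which pick up an extra $+1$ on $\conservedMomentDiscrete_1^{0}$ and $-1$ on $\flux(\conservedMomentDiscrete_1^{0})$ from the ``$-$'' equilibrium---gives \eqref{eq:initialSchemeBoundaryExtrapolation} at once.

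\textbf{Generic step $\indexTime\geq 1$.} Transport together with \eqref{eq:extrapolationBoundaryCondition} gives $\conservedMomentDiscrete_{0}^{\indexTime+1}=\sum_{\indexSpace=0}^{\orderExtrapolation-1}\coefficientExtrapolation_{\indexSpace}\distributionFunctionDiscrete_{\indexSpace}^{+,\indexTime\collided}+\distributionFunctionDiscrete_{1}^{-,\indexTime\collided}+\sourceTermBoundaryDiscrete_0^{\indexTime+1}$. Splitting each collided quantity through \eqref{eq:collision} into $\relaxationParameter$ times its equilibrium value plus $(1-\relaxationParameter)$ times the pre-collision one, the equilibrium contribution is, after collecting the index-$1$ terms, $\tfrac{\relaxationParameter}{2}\bigl(\sum_{\indexSpace\neq 1}\coefficientExtrapolation_{\indexSpace}\conservedMomentDiscrete_{\indexSpace}^{\indexTime}+(\coefficientExtrapolation_1+1)\conservedMomentDiscrete_{1}^{\indexTime}\bigr)+\tfrac{\relaxationParameter}{2\latticeVelocity}\bigl(\sum_{\indexSpace\neq 1}\coefficientExtrapolation_{\indexSpace}\flux(\conservedMomentDiscrete_{\indexSpace}^{\indexTime})+(\coefficientExtrapolation_1-1)\flux(\conservedMomentDiscrete_{1}^{\indexTime})\bigr)$, which already supplies the $\tfrac{\relaxationParameter}{2\latticeVelocity}$-line of \eqref{eq:bulkBoundarySigmaGenericTwo} and the ``$\tfrac{\relaxationParameter}{2}\coefficientExtrapolation_\indexSpace$'' portion of its first line; the remainder is $(1-\relaxationParameter)A$ with $A:=\sum_{\indexSpace=0}^{\orderExtrapolation-1}\coefficientExtrapolation_{\indexSpace}\distributionFunctionDiscrete_{\indexSpace}^{+,\indexTime}+\distributionFunctionDiscrete_{1}^{-,\indexTime}$. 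The whole task is to rewrite $A$ using only $\conservedMomentDiscrete^{\indexTime}$, $\conservedMomentDiscrete^{\indexTime-1}$ and $\sourceTermBoundaryDiscrete_0^{\indexTime}$. The engine is the differencing identity
\[
\distributionFunctionDiscrete_{\indexSpace}^{+,\indexTime}-\distributionFunctionDiscrete_{\indexSpace+2}^{+,\indexTime}=\conservedMomentDiscrete_{\indexSpace}^{\indexTime}-\conservedMomentDiscrete_{\indexSpace+1}^{\indexTime-1},\qquad \indexSpace\in\integerInterval{0}{\numberSpacePoints-2},
\]
a rearrangement of the relation used in the proof of \Cref{prop:bulkAndInflow}: it follows from the transports $\distributionFunctionDiscrete_{\indexSpace}^{-,\indexTime}=\distributionFunctionDiscrete_{\indexSpace+1}^{-,(\indexTime-1)\collided}$ and $\distributionFunctionDiscrete_{\indexSpace+1}^{+,(\indexTime-1)\collided}=\distributionFunctionDiscrete_{\indexSpace+2}^{+,\indexTime}$ together with the collision conservation $\distributionFunctionDiscrete^{+,\collided}+\distributionFunctionDiscrete^{-,\collided}=\conservedMomentDiscrete$ at times $\indexTime$ and $\indexTime-1$. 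Telescoping it expresses every $\distributionFunctionDiscrete_{\indexSpace}^{+,\indexTime}$ as $\distributionFunctionDiscrete_{0}^{+,\indexTime}$ (if $\indexSpace$ is even) or $\distributionFunctionDiscrete_{1}^{+,\indexTime}$ (if $\indexSpace$ is odd) plus an explicit finite combination of $\conservedMomentDiscrete^{\indexTime}$ and $\conservedMomentDiscrete^{\indexTime-1}$; after first writing $\distributionFunctionDiscrete_{1}^{-,\indexTime}=\conservedMomentDiscrete_1^{\indexTime}-\distributionFunctionDiscrete_1^{+,\indexTime}$, and using $\sum_{\indexSpace}\coefficientExtrapolation_{\indexSpace}=1$ and $\sum_{\indexSpace}(-1)^{\indexSpace}\coefficientExtrapolation_{\indexSpace}=2^{\orderExtrapolation}-1$, the two surviving weights on $\distributionFunctionDiscrete_{0}^{+,\indexTime}$ and $\distributionFunctionDiscrete_{1}^{+,\indexTime}$ turn out to be $2^{\orderExtrapolation-1}$ and $-2^{\orderExtrapolation-1}$. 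The single relation linking $\distributionFunctionDiscrete_{0}^{+,\indexTime}$ and $\distributionFunctionDiscrete_{1}^{+,\indexTime}$ comes from \eqref{eq:extrapolationBoundaryCondition} at the previous step---which is where the source $\sourceTermBoundaryDiscrete_0^{\indexTime}$ enters---namely, substituting $\distributionFunctionDiscrete^{+,(\indexTime-1)\collided}_{\indexSpace}=\distributionFunctionDiscrete^{+,\indexTime}_{\indexSpace+1}$ into it and telescoping once more yields $2^{\orderExtrapolation-1}\bigl(\distributionFunctionDiscrete_{0}^{+,\indexTime}-\distributionFunctionDiscrete_{1}^{+,\indexTime}\bigr)=(\text{explicit }\conservedMomentDiscrete\text{ part})+\sourceTermBoundaryDiscrete_0^{\indexTime}$, so the powers of $2$ cancel and $A$ becomes a pure combination of $\conservedMomentDiscrete^{\indexTime}$, $\conservedMomentDiscrete^{\indexTime-1}$ and $\sourceTermBoundaryDiscrete_0^{\indexTime}$. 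Plugging back into $\conservedMomentDiscrete_0^{\indexTime+1}=(\text{equilibrium part})+(1-\relaxationParameter)A+\sourceTermBoundaryDiscrete_0^{\indexTime+1}$ and merging the $\conservedMomentDiscrete^{\indexTime}$ contributions of the two pieces reproduces \eqref{eq:bulkBoundarySigmaGenericTwo}, the coefficients $(1-\relaxationParameter)\coefficientCombinationElimination_{\indexSpace}$ being precisely those carried along by $A$; for $\orderExtrapolation=1$ the extrapolation stencil is a single point, no telescoping is needed, $A=\conservedMomentDiscrete_1^{\indexTime}+\sourceTermBoundaryDiscrete_0^{\indexTime}$ directly, and \eqref{eq:bulkBoundarySigma1} drops out after one substitution.

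\textbf{The main obstacle} is the last, purely combinatorial step: one must evaluate in closed form the coefficients of $\conservedMomentDiscrete_{\indexSpace}^{\indexTime}$ and $\conservedMomentDiscrete_{\indexSpace}^{\indexTime-1}$ accumulated through the two telescopings (once to reduce every $\distributionFunctionDiscrete_{\indexSpace}^{+,\indexTime}$ to $\distributionFunctionDiscrete_{0}^{+,\indexTime}$ or $\distributionFunctionDiscrete_{1}^{+,\indexTime}$, once inside the boundary relation), each weighted by a product of two of the $\coefficientExtrapolation_{\indexSpace}=(-1)^{\indexSpace}\binom{\orderExtrapolation}{\indexSpace+1}$ and restricted to index ranges of fixed parity. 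The claim is exactly that these alternating partial sums of products of binomial coefficients collapse to the Catalan-triangle entries $\catalanTwoIndices{n}{p}=\tfrac{(n+p)!(n-p+1)}{p!(n+1)!}$; I would prove this either from the generating-function identity $\sum_{\indexSpace}\coefficientExtrapolation_{\indexSpace}\timeShiftOperator^{\indexSpace}=\bigl(1-(1-\timeShiftOperator)^{\orderExtrapolation}\bigr)/\timeShiftOperator$ combined with the known generating function of Catalan's triangle, or by induction on $\orderExtrapolation$, with the floor functions and the $\tfrac{1+(-1)^{\orderExtrapolation}}{2}$ corrections in the statement keeping track of the parity split between even- and odd-indexed stencil nodes.
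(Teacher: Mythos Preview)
Your approach is correct and genuinely different from the paper's. The paper works in the moment variables $(\conservedMomentDiscrete,\nonConservedMomentDiscrete)$: it writes $\conservedMomentDiscrete_0^{\indexTime+1}$ with a residual $\nonConservedMomentDiscrete^{\indexTime}$-term, pushes it back to time $\indexTime-1$, and then eliminates the remaining $\nonConservedMomentDiscrete^{\indexTime-1}$-combination by solving an overdetermined linear system in the unknowns $\coefficientCombinationElimination_{\indexSpace}$; the proof that this system is solvable (nonzero determinant via the matrix--determinant lemma) is the technical core. By contrast, you stay with the distribution functions and use the single telescoping identity $\distributionFunctionDiscrete_{\indexSpace}^{+,\indexTime}-\distributionFunctionDiscrete_{\indexSpace+2}^{+,\indexTime}=\conservedMomentDiscrete_{\indexSpace}^{\indexTime}-\conservedMomentDiscrete_{\indexSpace+1}^{\indexTime-1}$ to reduce everything to $\distributionFunctionDiscrete_{0}^{+,\indexTime}-\distributionFunctionDiscrete_{1}^{+,\indexTime}$, which the previous boundary relation then supplies; the factor $2^{\orderExtrapolation-1}$ on both sides does cancel as you say. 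This is more elementary and constructive---no linear system, no solvability argument---but the paper's route has the advantage that the $\coefficientCombinationElimination_{\indexSpace}$ are \emph{defined} as the unique solution of a concrete system, which makes later manipulations (e.g.\ the sums $\sum\coefficientCombinationElimination_{\indexSpace}$, $\sum\indexSpace\,\coefficientCombinationElimination_{\indexSpace}$ used in \Cref{prop:outflowExtrapolationModifiedEquation}) easy to read off from the equations themselves.

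One small imprecision: the $\coefficientCombinationElimination_{\indexSpace}$ are not literally the coefficients of $\conservedMomentDiscrete_{\indexSpace}^{\indexTime}$ carried by your quantity $A$; matching your decomposition $\conservedMomentDiscrete_0^{\indexTime+1}=(\text{equilibrium})+(1-\relaxationParameter)A+\sourceTermBoundaryDiscrete_0^{\indexTime+1}$ against \eqref{eq:bulkBoundarySigmaGenericTwo} shows that the coefficient of $\conservedMomentDiscrete_{\indexSpace}^{\indexTime}$ in $A$ equals $\tfrac12(\coefficientExtrapolation_{\indexSpace}+\coefficientCombinationElimination_{\indexSpace})$ (with the usual offset at $\indexSpace=1$), and similarly for the $\conservedMomentDiscrete^{\indexTime-1}$ terms. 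This does not affect the argument, only the bookkeeping. As for the Catalan-triangle identification, the paper is equally informal there---it merely states that ``by formally computing the coefficients for different $\orderExtrapolation$'' one finds the displayed formula---so your plan (generating functions or induction on $\orderExtrapolation$) is no weaker than what the paper offers.
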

For specific values of $\orderExtrapolation$ and $\relaxationParameter$, the schemes \eqref{eq:bulkBoundarySigma1} and \eqref{eq:bulkBoundarySigmaGenericTwo} read as conditions already introduced in the \fd{} literature.
In particular, for $(\orderExtrapolation,\relaxationParameter) = (1, 0)$, we recover  \cite[Equation (2.5)]{trefethen1984instability} and \cite[Equation (11.2.2b)]{strikwerda2004finite}; for  $(\orderExtrapolation,\relaxationParameter) = (1, 2)$, we obtain \cite[Equation (6.3b)]{gustafsson1972stability} and \cite[Equation (12.2.2d)]{strikwerda2004finite}; for $(\orderExtrapolation, \relaxationParameter) = (2, 1)$, we get \cite[Equation (6.3b)]{gustafsson1972stability} and \cite[Equation (12.2.2d)]{strikwerda2004finite}.

We now deal with \eqref{eq:conditionKinRod}.
For this condition, we rely on the specific problem at hand: a linear problem with $\flux(\solutionCauchyProblem) = \advectionVelocity \solutionCauchyProblem$.
Difficulties in establishing a more general result come from the fact that this boundary condition is built enforcing equilibrium in the future, thus strongly depends on the choice of $\flux$.
The corresponding \fd{} scheme at the boundary away from the initial time is conjectured using \strong{optimization}.
Specifically, ignoring source terms for the sake of presentation, we assume that the conserved moment of the \lbm{} scheme $\conservedMomentDiscrete$ fulfills, for $\indexTime \geq 2$, the constraint $\conservedMomentDiscrete_0^{\indexTime + 1} = \sum_{\indexSpace \in \naturals} \coefficientOutflowFDZero_{\indexSpace} (\relaxationParameter, \courantNumber)\conservedMomentDiscrete_{\indexSpace}^{\indexTime} + \sum_{\indexSpace \in \naturals} \coefficientOutflowFDMinusOne_{\indexSpace} (\relaxationParameter, \courantNumber) \conservedMomentDiscrete_{\indexSpace}^{\indexTime - 1}$, with $\courantNumber \definitionEquality \advectionVelocity/\latticeVelocity$ is the Courant number and coefficients $\coefficientOutflowFDZero_{\indexSpace}, \coefficientOutflowFDMinusOne_{\indexSpace}$ that are polynomials in $\relaxationParameter$ and $\courantNumber$. We run \lbm{} simulations with random initial data for different $\relaxationParameter$ and $\courantNumber$, each time  
\begin{equation*}
    \text{minimizing}\quad\sum_{\indexTime = 3}^{\finalTime/\timeStep} \Bigl |\conservedMomentDiscrete_0^{\indexTime} - \Bigl (\sum_{\indexSpace \in \naturals} \coefficientOutflowFDZero_{\indexSpace} (\relaxationParameter, \courantNumber)\conservedMomentDiscrete_{\indexSpace}^{\indexTime-1} + \sum_{\indexSpace \in \naturals} \coefficientOutflowFDMinusOne_{\indexSpace} (\relaxationParameter, \courantNumber) \conservedMomentDiscrete_{\indexSpace}^{\indexTime - 2} \Bigr ) \Bigr |^2.
\end{equation*}
We thus obtain the following expressions which systematically yield vanishing residuals.
\begin{conjecture}\label{conj:kinrod}
    Consider a linear problem with $\flux(\solutionCauchyProblem) = \advectionVelocity \solutionCauchyProblem$ and the \lbm{} scheme given by \eqref{eq:initializationEquilibrium}, \eqref{eq:collision}, \eqref{eq:transport}, the inflow boundary condition \eqref{eq:inflowConditionScheme}, and the outflow boundary condition \eqref{eq:conditionKinRod}.
    Then, the corresponding \fd{} scheme on the conserved moment $\conservedMomentDiscrete$ reads, for the first time step 
    \begin{align}
        \conservedMomentDiscrete_0^1 &= \tfrac{1}{4}(1+2\tfrac{\advectionVelocity}{\latticeVelocity} + \tfrac{\advectionVelocity^2}{\latticeVelocity^2}) \conservedMomentDiscrete_0^0 + \tfrac{1}{2}(1-\tfrac{\advectionVelocity}{\latticeVelocity}) \conservedMomentDiscrete_1^0 + \tfrac{1}{4}(1-\tfrac{\advectionVelocity^2}{\latticeVelocity^2}) \conservedMomentDiscrete_2^0 + \sourceTermBoundaryDiscrete_0^1, \label{eq:kinrodFDBoundaryInitial}\\
        \conservedMomentDiscrete_{\indexSpace}^1 &= \tfrac{1}{2}(\conservedMomentDiscrete_{\indexSpace - 1}^0 + \conservedMomentDiscrete_{\indexSpace + 1}^0) + \tfrac{\advectionVelocity}{2\latticeVelocity} (\conservedMomentDiscrete_{\indexSpace - 1}^0 -\conservedMomentDiscrete_{\indexSpace + 1}^0), \qquad \indexSpace \in \integerInterval{1}{\numberSpacePoints - 2}, \nonumber\\
        \conservedMomentDiscrete_{\numberSpacePoints - 1}^1 &= \boundaryDatumInflow(\timeGridPoint{1}).\nonumber
    \end{align}
    For the second time step
    \begin{multline}\label{eq:kinrodFDBoundarySecond}
        \conservedMomentDiscrete_0^2 = \tfrac{1}{16}   {( \tfrac{\advectionVelocity^{4} {\relaxationParameter}}{{\latticeVelocity}^{4}} +  \tfrac{2   \advectionVelocity^{3} {\relaxationParameter}}{{\latticeVelocity}^{3}} -  \tfrac{4   \advectionVelocity^{2} {\relaxationParameter}}{{\latticeVelocity}^{2}} -  \tfrac{2   \advectionVelocity {\relaxationParameter}}{{\latticeVelocity}} + 3   {\relaxationParameter} +  \tfrac{2   \advectionVelocity^{3}}{{\latticeVelocity}^{3}} +  \tfrac{6   \advectionVelocity^{2}}{{\latticeVelocity}^{2}} +  \tfrac{6   \advectionVelocity}{{\latticeVelocity}} + 2)} \conservedMomentDiscrete_0^0 \\
        -  \tfrac{1}{4}   {( \tfrac{\advectionVelocity^{3} {\relaxationParameter}}{{\latticeVelocity}^{3}} +  \tfrac{\advectionVelocity^{2} {\relaxationParameter}}{{\latticeVelocity}^{2}} -  \tfrac{\advectionVelocity {\relaxationParameter}}{{\latticeVelocity}} - {\relaxationParameter})} \conservedMomentDiscrete_1^0  -  \tfrac{1}{16}  {( \tfrac{\advectionVelocity^{4} {\relaxationParameter}}{{\latticeVelocity}^{4}} -  \tfrac{6   \advectionVelocity^{2} {\relaxationParameter}}{{\latticeVelocity}^{2}} + 5   {\relaxationParameter} +  \tfrac{2   \advectionVelocity^{3}}{{\latticeVelocity}^{3}} +  \tfrac{2   \advectionVelocity^{2}}{{\latticeVelocity}^{2}} +  \tfrac{6   \advectionVelocity}{{\latticeVelocity}} - 10)} \conservedMomentDiscrete_2^0 \\
        +  \tfrac{1}{8}   {( \tfrac{\advectionVelocity^{3} {\relaxationParameter}}{{\latticeVelocity}^{3}} +  \tfrac{\advectionVelocity^{2} {\relaxationParameter}}{{\latticeVelocity}^{2}} -  \tfrac{\advectionVelocity {\relaxationParameter}}{{\latticeVelocity}} - {\relaxationParameter} -  \tfrac{2   \advectionVelocity^{2}}{{\latticeVelocity}^{2}} + 2)} \conservedMomentDiscrete_3^0 + \tfrac{1}{4}(\tfrac{\advectionVelocity^2\relaxationParameter}{\latticeVelocity^2}-\relaxationParameter + \tfrac{2\advectionVelocity}{\latticeVelocity}+2)\sourceTermBoundaryDiscrete_0^1 + \sourceTermBoundaryDiscrete_0^2.
    \end{multline}
    \begin{multline}\label{eq:kinrodFDBoundaryPlusOneSecond}
        \conservedMomentDiscrete_1^2 = \tfrac{1}{8}  {(\tfrac{\advectionVelocity^{3} {\relaxationParameter}}{{\latticeVelocity}^{3}} + \tfrac{\advectionVelocity^{2} {\relaxationParameter}}{{\latticeVelocity}^{2}} - \tfrac{\advectionVelocity {\relaxationParameter}}{{\latticeVelocity}} - {\relaxationParameter} + \tfrac{2  \advectionVelocity^{2}}{{\latticeVelocity}^{2}} + \tfrac{4  \advectionVelocity}{{\latticeVelocity}} + 2)} \conservedMomentDiscrete_0^0  - \tfrac{1}{2}  {(\tfrac{\advectionVelocity^{2} {\relaxationParameter}}{{\latticeVelocity}^{2}} - {\relaxationParameter})} \conservedMomentDiscrete_1^0   \\
        - \tfrac{1}{8}  {(\tfrac{\advectionVelocity^{3} {\relaxationParameter}}{{\latticeVelocity}^{3}} - \tfrac{\advectionVelocity^{2} {\relaxationParameter}}{{\latticeVelocity}^{2}} - \tfrac{\advectionVelocity {\relaxationParameter}}{{\latticeVelocity}} + {\relaxationParameter} + \tfrac{2  \advectionVelocity^{2}}{{\latticeVelocity}^{2}} - 2)} \conservedMomentDiscrete_2^0 + \tfrac{1}{4}  {(\tfrac{\advectionVelocity^{2} {\relaxationParameter}}{{\latticeVelocity}^{2}} - {\relaxationParameter} - \tfrac{2  \advectionVelocity}{{\latticeVelocity}} + 2)} \conservedMomentDiscrete_3^0 + \tfrac{1}{2}  {(\tfrac{\advectionVelocity {\relaxationParameter}}{{\latticeVelocity}} - {\relaxationParameter} + 2)}\sourceTermBoundaryDiscrete_0^1
    \end{multline}
    \begin{equation*}
        \conservedMomentDiscrete_{\indexSpace}^{2} = \tfrac{2-\relaxationParameter}{2}(\conservedMomentDiscrete_{\indexSpace - 1}^{1} + \conservedMomentDiscrete_{\indexSpace + 1}^{1}) + (\relaxationParameter - 1) \conservedMomentDiscrete_{\indexSpace}^{0} + \tfrac{\relaxationParameter\advectionVelocity}{2\latticeVelocity} (\conservedMomentDiscrete_{\indexSpace - 1}^{0} - \conservedMomentDiscrete_{\indexSpace + 1}^{0}), \qquad \indexSpace \in \integerInterval{3}{\numberSpacePoints-2}, \qquad 
        \conservedMomentDiscrete_{\numberSpacePoints - 1}^2 = \boundaryDatumInflow(\timeGridPoint{2})
    \end{equation*}
    And eventually in time $\indexTime \geq 2$
    \begin{multline}\label{eq:kinrodFDBOundaryEventually}
        \conservedMomentDiscrete_0^{\indexTime + 1} = \tfrac{1}{4} (\tfrac{\advectionVelocity}{\latticeVelocity} +1)((\tfrac{\advectionVelocity}{\latticeVelocity} - 1)\relaxationParameter + 2) \conservedMomentDiscrete_0^{\indexTime} + (-\tfrac{1}{2} (\tfrac{\advectionVelocity}{\latticeVelocity} + 1)\relaxationParameter + 1) \conservedMomentDiscrete_1^{\indexTime} - \tfrac{1}{4} (\tfrac{\advectionVelocity}{\latticeVelocity} +1) ((\tfrac{\advectionVelocity}{\latticeVelocity} + 1)\relaxationParameter - 2) \conservedMomentDiscrete_2^{\indexTime} \\
        + \tfrac{1}{2}  (\tfrac{\advectionVelocity}{\latticeVelocity} +1) \relaxationParameter (\relaxationParameter-1)\conservedMomentDiscrete_0^{\indexTime-1} - \tfrac{1}{2}  (\tfrac{\advectionVelocity}{\latticeVelocity} +1) (\relaxationParameter-1)(\relaxationParameter-2)\conservedMomentDiscrete_1^{\indexTime-1} + \sourceTermBoundaryDiscrete_0^{\indexTime + 1} - (\relaxationParameter-1)^2 \sourceTermBoundaryDiscrete_0^{\indexTime - 1},
    \end{multline}
    \begin{equation}
        \conservedMomentDiscrete_{\indexSpace}^{\indexTime + 1} = \tfrac{2-\relaxationParameter}{2}(\conservedMomentDiscrete_{\indexSpace - 1}^{\indexTime} + \conservedMomentDiscrete_{\indexSpace + 1}^{\indexTime}) + (\relaxationParameter - 1) \conservedMomentDiscrete_{\indexSpace}^{\indexTime - 1} + \tfrac{\relaxationParameter\advectionVelocity}{2\latticeVelocity} (\conservedMomentDiscrete_{\indexSpace - 1}^{\indexTime} - \conservedMomentDiscrete_{\indexSpace + 1}^{\indexTime})\quad \indexSpace \in \integerInterval{1}{\numberSpacePoints - 2}, \qquad \conservedMomentDiscrete_{\numberSpacePoints - 1}^{\indexTime + 1} = \boundaryDatumInflow(\timeGridPoint{\indexTime + 1}).
    \end{equation}
\end{conjecture}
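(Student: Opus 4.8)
The plan is to run the moment-elimination of \cite{bellotti2022finite} that underlies \Cref{prop:bulkAndInflow}, adapted to the outflow node $\spaceGridPoint{0}$. First I would pass to the moments $\conservedMomentDiscrete = \distributionFunctionDiscrete^{+} + \distributionFunctionDiscrete^{-}$ and $\nonConservedMomentDiscrete = \latticeVelocity(\distributionFunctionDiscrete^{+} - \distributionFunctionDiscrete^{-})$, i.e.\ $\distributionFunctionDiscrete^{\pm} = \tfrac{1}{2}\conservedMomentDiscrete \pm \tfrac{1}{2\latticeVelocity}\nonConservedMomentDiscrete$: for the linear flux $\flux(\solutionCauchyProblem) = \advectionVelocity\solutionCauchyProblem$ the collision \eqref{eq:collision} conserves $\conservedMomentDiscrete$ and acts as $\nonConservedMomentDiscrete^{\collided} = (1-\relaxationParameter)\nonConservedMomentDiscrete + \relaxationParameter\advectionVelocity\conservedMomentDiscrete$, while \eqref{eq:conditionKinRod} becomes $\distributionFunctionDiscrete_{0}^{+,\indexTime + 1} = \tfrac{1}{2}(1+\courantNumber)(\distributionFunctionDiscrete_{0}^{+,\indexTime\collided} + \distributionFunctionDiscrete_{2}^{-,\indexTime\collided}) + \sourceTermBoundaryDiscrete_{0}^{\indexTime + 1}$ with $\courantNumber = \advectionVelocity/\latticeVelocity$. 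At the inflow node and deep in the interior nothing is new: \eqref{eq:conditionKinRod} does not touch the transport there, so the reduction produces \eqref{eq:bulkFDInitial} and \eqref{eq:bulkFDScheme} (with the linear flux) exactly as in the proof of \Cref{prop:bulkAndInflow}. The subtlety is that the ghost value $\distributionFunctionDiscrete_{-1}^{+,\indexTime\collided}$ that \eqref{eq:conditionKinRod} injects is itself transported and collided and so creeps one cell inward per time-step, in principle perturbing every node (this dependence of the bulk scheme on the outflow choice is exactly why \eqref{eq:conditionKinRod}, unlike \eqref{eq:extrapolationBoundaryCondition}, falls outside \Cref{prop:bulkAndInflow}); the whole content of the conjecture is that this perturbation cancels after two steps everywhere except at $\spaceGridPoint{0}$, leaving \eqref{eq:bulkFDScheme} at all $\indexSpace \in \integerInterval{1}{\numberSpacePoints-2}$ for $\indexTime \geq 2$, that at $\spaceGridPoint{0}$ it condenses into the two-time-level relation \eqref{eq:kinrodFDBOundaryEventually}, and that the first two steps are the stated transients.

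The first two time-steps I would prove by a direct, \emph{finite} computation. At $\indexTime = 0$ the equilibrium initialization \eqref{eq:initializationEquilibrium} gives $\nonConservedMomentDiscrete_{\indexSpace}^{0} = \advectionVelocity\conservedMomentDiscrete_{\indexSpace}^{0}$, so the collision is the identity and $\distributionFunctionDiscrete_{\indexSpace}^{\pm,0} = \tfrac{1}{2}(1\pm\courantNumber)\conservedMomentDiscrete_{\indexSpace}^{0}$; substituting into $\conservedMomentDiscrete_{0}^{1} = \distributionFunctionDiscrete_{0}^{+,1} + \distributionFunctionDiscrete_{1}^{-,0\collided}$ together with \eqref{eq:conditionKinRod} and \eqref{eq:transport} and expanding yields \eqref{eq:kinrodFDBoundaryInitial}. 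At $\indexTime = 1$ one genuine collision has occurred, but $\distributionFunctionDiscrete_{\indexSpace}^{\pm,1}$ near $\spaceGridPoint{0}$ are still explicit linear combinations of $\conservedMomentDiscrete^{0}$ at boundedly many nodes and of $\sourceTermBoundaryDiscrete_{0}^{1}$ (one application of \eqref{eq:transport} to the $\indexTime=0$ state, using \eqref{eq:conditionKinRod} only at $\spaceGridPoint{0}$); plugging these, $\conservedMomentDiscrete_{0}^{1}$, and the already-known $\conservedMomentDiscrete_{1}^{1}, \conservedMomentDiscrete_{2}^{1}$ into \eqref{eq:conditionKinRod}, \eqref{eq:transport} and the collision produces \eqref{eq:kinrodFDBoundarySecond}, the analogous manipulation at $\spaceGridPoint{1}$ produces \eqref{eq:kinrodFDBoundaryPlusOneSecond}, and the remaining interior nodes at the second step are covered by \eqref{eq:bulkFDScheme}. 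The one delicate point here is the source-term bookkeeping: $\sourceTermBoundaryDiscrete_{0}$ enters \eqref{eq:conditionKinRod} at $\spaceGridPoint{0}$ and is then transported and collided, so its coefficients at times $\indexTime + 1$, $\indexTime$ and $\indexTime - 1$ must be carried exactly as in the proof of \Cref{prop:outflowExtrapolation}.

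The hard part will be the regime $\indexTime \geq 2$, and this is why the statement is only conjectured. Chasing $\distributionFunctionDiscrete_{0}^{+}$ backwards, \eqref{eq:conditionKinRod} followed by a collision shows that $\distributionFunctionDiscrete_{0}^{+,\indexTime\collided}$ satisfies a first-order-in-time recurrence with multiplier $\tfrac{1}{2}(1+\courantNumber)(1-\relaxationParameter)$ and a forcing that is local in $\conservedMomentDiscrete^{\indexTime}$, in $\sourceTermBoundaryDiscrete_{0}^{\indexTime}$, and in the interior quantity $\distributionFunctionDiscrete_{2}^{-,(\indexTime-1)\collided}$; hence $\distributionFunctionDiscrete_{0}^{+}$ --- equivalently the non-conserved moment at $\spaceGridPoint{0}$ --- carries a geometric memory of the \emph{whole} past and is \emph{not} a finite-stencil function of $\conservedMomentDiscrete$ (it is recovered from $\conservedMomentDiscrete$ only up to a summation constant pinned down by the boundary, precisely the feature that makes the bulk reduction of \cite{bellotti2022finite} non-trivial, and that the loss of spatial invariance at $\spaceGridPoint{0}$ keeps one from removing by the same device). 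To reach \eqref{eq:kinrodFDBOundaryEventually} one must exhibit the linear combination of $\conservedMomentDiscrete_{0}^{\indexTime+1}, \conservedMomentDiscrete_{0}^{\indexTime}, \conservedMomentDiscrete_{0}^{\indexTime-1}$ --- coupled to $\conservedMomentDiscrete_{1}, \conservedMomentDiscrete_{2}$, which themselves obey \eqref{eq:bulkFDScheme} --- that telescopes this geometric memory into a two-time-level recurrence, and check that the surviving coefficients, in particular the weight $-(\relaxationParameter-1)^{2}$ on $\sourceTermBoundaryDiscrete_{0}^{\indexTime-1}$ in \eqref{eq:kinrodFDBOundaryEventually}, come out exactly as displayed. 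I expect this collapse to rest on combinatorial identities of the Catalan type already met in \Cref{prop:outflowExtrapolation}; carrying it through in closed form is the step I expect to resist a short argument, and it is presumably where a rigorous proof stalls, so one falls back on pinning the coefficients down by least-squares fitting over many pairs $(\relaxationParameter, \courantNumber)$ and random initial data --- whence the conjecture status. Short of a full proof, a realistic check is to verify symbolically that \eqref{eq:kinrodFDBOundaryEventually} together with \eqref{eq:bulkFDScheme} reproduces the $\conservedMomentDiscrete$-dynamics of the \lbm{} scheme for the first few values of $\indexTime \geq 2$, and that lifting \eqref{eq:kinrodFDBOundaryEventually} back to distribution functions through $\distributionFunctionDiscrete^{\pm} = \tfrac{1}{2}\conservedMomentDiscrete \pm \tfrac{1}{2\latticeVelocity}\nonConservedMomentDiscrete$ is compatible with \eqref{eq:conditionKinRod}.
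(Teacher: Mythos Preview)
The paper does not prove this statement: it is explicitly a conjecture, and the coefficients in \eqref{eq:kinrodFDBOundaryEventually} were obtained precisely by the least-squares fitting over random initial data and varying $(\relaxationParameter,\courantNumber)$ that you describe near the end of your proposal. Your overall diagnosis therefore matches the paper's: the first two time steps are a finite substitution exercise in the moment variables, the eventual-in-time relation is the obstruction, and optimization is the fallback.

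The one place your proposal diverges from the paper's own assessment is your expectation that the collapse to a two-level recurrence ``rests on combinatorial identities of the Catalan type already met in \Cref{prop:outflowExtrapolation}.'' The paper's remark immediately following the conjecture argues the opposite: any proof would have to be \emph{non-standard}, because \eqref{eq:kinrodFDBOundaryEventually} contains coefficients \emph{quadratic} in $\courantNumber=\advectionVelocity/\latticeVelocity$, whereas the elimination in \Cref{prop:bulkAndInflow} and \Cref{prop:outflowExtrapolation} can only ever produce coefficients linear in $\advectionVelocity$, since $\advectionVelocity$ enters there solely through the linear flux $\flux$. The quadratic dependence originates in \eqref{eq:conditionKinRod} itself---applying the equilibrium $\nonConservedMomentDiscrete^{\atEquilibrium}(\cdot)=\advectionVelocity(\cdot)$ to a combination that already carries a factor of $\courantNumber$ compounds the dependence---so the algebraic mechanism is genuinely different from the extrapolation case, and the Catalan-triangle machinery of \Cref{prop:outflowExtrapolation} is not expected to transfer.
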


\begin{remark}[On a possible proof of \Cref{conj:kinrod}]
    Looking at \eqref{eq:kinrodFDBOundaryEventually}, we see that a possible proof of \Cref{conj:kinrod} would be non-standard, due to the presence of quadratic terms in $\advectionVelocity$.
    In the standard proofs germane to \Cref{prop:bulkAndInflow} and \ref{prop:outflowExtrapolation}, the dependence on $\advectionVelocity$ could only be linear because stemming from linear terms in $\flux$.
\end{remark}

\section{Consistency of the boundary conditions}\label{sec:consistency}

\lbm{} schemes being turned into \fd{} methods, we now study their consistency, theoretically, in \Cref{sec:consistencyTheoretical}, using modified equations \cite{warming1974modified}.
This analysis clarifies the role of initialization, which may bring order losses, that we correct in \Cref{sec:compensationEquilibrium} with \emph{ad hoc} source terms.
We corroborate these findings through numerical experiments as presented in \Cref{sec:numericalSimConsistency}.

\subsection{Modified equations}\label{sec:consistencyTheoretical}

\begin{proposition}[Modified equation for outflow \eqref{eq:extrapolationBoundaryCondition}]\label{prop:outflowExtrapolationModifiedEquation}
    Consider the \lbm{} scheme given by \eqref{eq:initializationEquilibrium}, \eqref{eq:collision}, \eqref{eq:transport}, and the outflow boundary condition \eqref{eq:extrapolationBoundaryCondition}.
    Take $\sourceTermBoundaryDiscrete_0^{\indexTime} = 0$ for $\indexTime \in \naturalsWithoutZero$.
    Then, the modified equations obtained using the corresponding \fd{} scheme  from \Cref{prop:outflowExtrapolation}, computed at the outflow $\spaceVariable = 0$, are as follows.
    \begin{itemize}
        \item For $\orderExtrapolation = 1$:
        \begin{align}
            \partial_{\timeVariable} \testFunction(0, 0) + \tfrac{1}{2} \partial_{\spaceVariable} (\flux(\testFunction) - \latticeVelocity \testFunction) (0, 0) &= \bigO{\spaceStep},\label{eq:modifiedSigmaOneInitial} \\
            \partial_{\timeVariable} \testFunction(\timeVariable, 0) + \tfrac{\latticeVelocity}{2}(\relaxationParameter - 2)\partial_{\spaceVariable} \testFunction(\timeVariable, 0) + \tfrac{\relaxationParameter}{2}\partial_{\spaceVariable}(\flux(\testFunction))(\timeVariable, 0) &= \bigO{\spaceStep}, \qquad \timeVariable > 0. \label{eq:modifiedSigmaOne}
        \end{align}
        \item For $\orderExtrapolation \geq 2$:
        \begin{equation}
            \partial_{\timeVariable} \testFunction(\timeVariable, 0) + \partial_{\spaceVariable}(\flux(\testFunction))(\timeVariable, 0) = \bigO{\spaceStep}, \qquad \timeVariable \geq 0.
        \end{equation}
    \end{itemize}
\end{proposition}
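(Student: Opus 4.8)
The plan is to feed a smooth test function $\testFunction$ into the corresponding \fd{} boundary schemes supplied by \Cref{prop:outflowExtrapolation} with $\sourceTermBoundaryDiscrete_0^{\indexTime} \equiv 0$, i.e.\ to replace each $\conservedMomentDiscrete_{\indexSpace}^{\indexTime}$ by $\testFunction(\timeGridPoint{\indexTime}, \spaceGridPoint{\indexSpace})$, Taylor-expand every node around the outflow point, and read off the leading partial differential equation after dividing by the suitable power of $\spaceStep$. The acoustic scaling $\timeStep = \spaceStep/\latticeVelocity$ turns the $\timeStep\,\partial_{\timeVariable}\testFunction$ produced by the time-shift into a coefficient of $\spaceStep$; since $\flux \in \cKClass{1}$, one has $\flux(\testFunction(\timeVariable, \spaceVariable + \indexSpace\spaceStep)) = \flux(\testFunction) + \indexSpace\spaceStep\,\partial_{\spaceVariable}(\flux(\testFunction)) + \bigO{\spaceStep^2}$ at the node, so the chain rule absorbs all nonlinearities and only first-order expansions are needed.

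The computation rests on two elementary identities for the extrapolation weights $\coefficientExtrapolation_{\indexSpace} = (-1)^{\indexSpace}\binom{\orderExtrapolation}{\indexSpace+1}$: namely $\sum_{\indexSpace=0}^{\orderExtrapolation-1}\coefficientExtrapolation_{\indexSpace} = 1$ for every $\orderExtrapolation \in \naturalsWithoutZero$, and $\sum_{\indexSpace=0}^{\orderExtrapolation-1}\indexSpace\,\coefficientExtrapolation_{\indexSpace} = -1$ for $\orderExtrapolation \geq 2$, both following from the binomial theorem (equivalently, from the fact that \eqref{eq:extrapolationBoundaryCondition} reproduces polynomials of degree $\leq \orderExtrapolation - 1$ at the ghost node). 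With these, the two $\orderExtrapolation = 1$ cases are immediate: in \eqref{eq:initialSchemeBoundaryExtrapolation} and \eqref{eq:bulkBoundarySigma1} the only node carrying a spatial increment is $\spaceGridPoint{1}$, so the expansions collapse to $\timeStep\,\partial_{\timeVariable}\testFunction = \tfrac{\spaceStep}{2}\partial_{\spaceVariable}\testFunction - \tfrac{\spaceStep}{2\latticeVelocity}\partial_{\spaceVariable}(\flux(\testFunction)) + \bigO{\spaceStep^2}$ and $\timeStep\,\partial_{\timeVariable}\testFunction = \tfrac{2-\relaxationParameter}{2}\spaceStep\,\partial_{\spaceVariable}\testFunction - \tfrac{\relaxationParameter}{2\latticeVelocity}\spaceStep\,\partial_{\spaceVariable}(\flux(\testFunction)) + \bigO{\spaceStep^2}$, respectively, and dividing by $\timeStep = \spaceStep/\latticeVelocity$ yields \eqref{eq:modifiedSigmaOneInitial} and \eqref{eq:modifiedSigmaOne}.

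For $\orderExtrapolation \geq 2$ the initial step \eqref{eq:initialSchemeBoundaryExtrapolation} is handled the same way: rewriting it as $\conservedMomentDiscrete_0^1 = \tfrac{1}{2}\bigl(\sum_{\indexSpace}\coefficientExtrapolation_{\indexSpace}\conservedMomentDiscrete_{\indexSpace}^0 + \conservedMomentDiscrete_1^0\bigr) + \tfrac{1}{2\latticeVelocity}\bigl(\sum_{\indexSpace}\coefficientExtrapolation_{\indexSpace}\flux(\conservedMomentDiscrete_{\indexSpace}^0) - \flux(\conservedMomentDiscrete_1^0)\bigr)$ and invoking $\sum\coefficientExtrapolation_{\indexSpace} = 1$, $\sum\indexSpace\,\coefficientExtrapolation_{\indexSpace} = -1$, the $\partial_{\spaceVariable}\testFunction$ contributions cancel while the $\partial_{\spaceVariable}(\flux(\testFunction))$ ones sum to $-\tfrac{\spaceStep}{\latticeVelocity}\partial_{\spaceVariable}(\flux(\testFunction))$, leaving $\partial_{\timeVariable}\testFunction + \partial_{\spaceVariable}(\flux(\testFunction)) = \bigO{\spaceStep}$. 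The eventual-in-time scheme \eqref{eq:bulkBoundarySigmaGenericTwo} is the genuinely technical case: one must show that the $\coefficientCombinationElimination$-weighted level-$\indexTime$ terms together with the level-$(\indexTime-1)$ terms (which mix $\coefficientExtrapolation$'s and $\coefficientCombinationElimination$'s) combine, to leading order in $\spaceStep$, exactly as the $(\relaxationParameter-1)\conservedMomentDiscrete_{\indexSpace}^{\indexTime-1}$ term does in the bulk scheme \eqref{eq:bulkFDScheme}. This boils down to the zeroth moment $\sum_{\indexSpace}\coefficientCombinationElimination_{\indexSpace}$ and to checking that the combined level-$(\indexTime-1)$ weights have zeroth moment $1$ (their first moment contributing only at $\bigO{\spaceStep}$), which forces summing the Catalan-triangle entries $\catalanTwoIndices{n}{p} = \tfrac{(n+p)!(n-p+1)}{p!(n+1)!}$ over the index ranges with floor functions appearing in the definition of the $\coefficientCombinationElimination_{\indexSpace}$; I expect this bookkeeping to be the main obstacle. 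A shortcut that avoids the explicit summation: since \eqref{eq:bulkBoundarySigmaGenericTwo} is, by how it arises from the \lbm{} scheme, an exact rewriting of the bulk update at the ghost layer fed with a value extrapolated to order $\orderExtrapolation \geq 2$ — an extrapolation that reproduces the affine part of $\testFunction$ at the ghost node — the leading-order truncation of the boundary scheme coincides with that of the bulk \fd{} scheme \eqref{eq:bulkFDScheme}, whose leading modified equation is $\partial_{\timeVariable}\testFunction + \partial_{\spaceVariable}(\flux(\testFunction)) = 0$ for every $\relaxationParameter$. Finally, one remarks that the $\orderExtrapolation = 1$ outflow schemes are inconsistent with the transport equation precisely because $\sum\indexSpace\,\coefficientExtrapolation_{\indexSpace} = -1$ fails for $\orderExtrapolation = 1$, which is why the separate $\relaxationParameter$-dependent modified equations \eqref{eq:modifiedSigmaOneInitial}--\eqref{eq:modifiedSigmaOne} appear in that case.
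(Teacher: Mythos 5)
Your overall strategy is the same as the paper's: substitute a smooth $\testFunction$ into the corresponding \fd{} boundary schemes of \Cref{prop:outflowExtrapolation}, Taylor-expand about the outflow node, and exploit the moment identities $\sum_{\indexSpace}\coefficientExtrapolation_{\indexSpace}=1$ and $\sum_{\indexSpace}\indexSpace\,\coefficientExtrapolation_{\indexSpace}=-\chi_{\orderExtrapolation\geq 2}$. Your handling of both $\orderExtrapolation=1$ equations and of the initial-time scheme \eqref{eq:initialSchemeBoundaryExtrapolation} for all $\orderExtrapolation$ is correct and coincides with the paper's computation. The problem lies entirely in the only hard case, the eventual-in-time scheme \eqref{eq:bulkBoundarySigmaGenericTwo} for $\orderExtrapolation\geq 2$, where your argument has two genuine gaps. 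First, the verification does \emph{not} reduce to zeroth moments: the coefficient of $\partial_{\spaceVariable}\testFunction$ in the leading modified equation is assembled from the \emph{first} spatial moments of the weights at both time levels $\indexTime$ and $\indexTime-1$ (the term (IV) in the paper's proof), and these produce $\spaceStep\,\partial_{\spaceVariable}\testFunction$ contributions of exactly the same order as $\timeStep\,\partial_{\timeVariable}\testFunction$; they cannot be relegated to the $\bigO{\spaceStep}$ remainder. Showing that this coefficient vanishes requires $\sum\indexSpace\,\coefficientExtrapolation_{\indexSpace}=-1$, $\sum(\indexSpace+1)\coefficientExtrapolation_{\indexSpace}=0$, $\sum\coefficientCombinationElimination_{\indexSpace}=2$, and the cancellation of the two occurrences of $\sum\indexSpace\,\coefficientCombinationElimination_{\indexSpace}$ between the $(1-\relaxationParameter)$- and $(\relaxationParameter-1)$-weighted blocks. (Also, the combined level-$(\indexTime-1)$ weights sum to $2$, not $1$, before the prefactor $\tfrac{\relaxationParameter-1}{2}$.)

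Second, the shortcut you offer in place of this bookkeeping assumes precisely what must be proved. The scheme \eqref{eq:bulkBoundarySigmaGenericTwo} is not the bulk update fed with an extrapolated value of $\conservedMomentDiscrete_{-1}$: the extrapolation acts on the post-collision distribution $\distributionFunctionDiscrete^{+,\collided}$, and the passage to a scheme on $\conservedMomentDiscrete$ alone requires eliminating the non-conserved moment $\nonConservedMomentDiscrete$ through the $\coefficientCombinationElimination$-combination, which couples the time levels in a way that has no counterpart in the bulk. Whether that elimination preserves the leading-order truncation is exactly the content of the proposition, and it is not automatic---indeed, for $\orderExtrapolation=1$ the same construction (extrapolation exact on constants) yields a boundary scheme that is \emph{inconsistent} with the conservation law for generic $\relaxationParameter$, and the paper's remark after \Cref{prop:outflowExtrapolationModifiedEquationKinrod} warns that quasi-equilibrium reasoning about $\nonConservedMomentDiscrete$ gives wrong answers at the outflow. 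The intuition behind your shortcut is sound, but turning it into a proof forces you back to the explicit moment computation on the $\coefficientCombinationElimination_{\indexSpace}$ that you were trying to avoid.
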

We use $\testFunction$ instead of $\solutionCauchyProblem$ in \Cref{prop:outflowExtrapolationModifiedEquation} to stress that the exact solution of the problem does not satisfy the modified equations.
The proof of \Cref{prop:outflowExtrapolationModifiedEquation}, which is based on simple Taylor expansions and properties of the entries of the Catalan's triangle, is given in the Supplementary Material.
Unlike physical inflow boundaries, schemes enforcing numerical outflow boundary conditions can \strong{lose one order of accuracy} without compromising the overall order, see \cite{gustafsson1975convergence}.
This is important while pondering \Cref{prop:outflowExtrapolationModifiedEquation} and \ref{prop:outflowExtrapolationModifiedEquationKinrod}.

For \eqref{eq:papillon1}, whenever $\relaxationParameter \in (0, 2)$, neither the modified equation \eqref{eq:modifiedSigmaOneInitial} nor \eqref{eq:modifiedSigmaOne} describe a scheme consistent with the target equation \eqref{eq:conservationLaw}. However, this does not reduce the order of the method, which is just first-order accurate.
When $\relaxationParameter = 2$, with second-order accurate bulk method, \eqref{eq:modifiedSigmaOne} indicates that the boundary scheme away from the initial time is first-order accurate, which is fine. 
Nevertheless, the initial boundary scheme is not consistent when $\sourceTermBoundaryDiscrete_0^{\indexTime}\equiv 0$, see \eqref{eq:modifiedSigmaOneInitial}. Therefore, this causes \strong{order reduction} whenever the initial datum $\solutionCauchyProblemInitial$ has  $\partial_{\spaceVariable} \solutionCauchyProblemInitial(0) \neq 0$.
In this circumstance, the order of convergence is $3/2$ in $\spaceStep$ for the $L^2$-norm.
This can be seen by constructing the local truncation error $\epsilon_{\indexSpace}^{\indexTime} \definitionEquality \solutionCauchyProblem(\timeGridPoint{\indexTime}, \spaceGridPoint{\indexSpace}) - \conservedMomentDiscrete_{\indexSpace}^{\indexTime}$ and assuming that we are solving the linear advection equation.
Thereby $\epsilon_{\indexSpace}^{\indexTime}$ fulfills the same numerical scheme as the solution $\conservedMomentDiscrete_{\indexSpace}^{\indexTime}$ with suitable source terms, all proportional to $\spaceStep^2$ except for the one---denoted by $s_0^1$---concerning $\epsilon_{0}^{1}$, of order $\spaceStep$.
To simplify, consider a semi-infinite problem to the right and the assumptions in \cite{coulombel2015leray} be fulfilled.
For smooth initial data, we have $s_{\indexSpace}^{0} = 0$ and $s_{\indexSpace}^{1} = 
-\tfrac{\spaceStep}{2}(\tfrac{\advectionVelocity}{\latticeVelocity} + 1) \partial_{\spaceVariable}\solutionCauchyProblemInitial(0) \mathds{1}_{\indexSpace = 0} + \bigO{\spaceStep^2}$, whereas all other truncation errors are $\bigO{\spaceStep^2}$, thus negligible.
Using \cite[Theorem 1]{coulombel2015leray}, we obtain
\begin{equation*}
    \sup_{\indexTime \in \naturals} \Bigl (\sum_{\indexSpace \in \naturals}\spaceStep|\epsilon_{\indexSpace}^{\indexTime}|^2 \Bigr )^{1/2} \leq \Bigl (C \Bigl ( \sum_{\indexSpace \in \naturals}\spaceStep|s_{\indexSpace}^{0}|^2 + \sum_{\indexSpace \in \naturals}\spaceStep|s_{\indexSpace}^{1}|^2\Bigr ) \Bigr)^{1/2} \leq \sqrt{C}\tfrac{\spaceStep^{3/2}}{2} \left |\tfrac{\advectionVelocity}{\latticeVelocity} + 1\right | |\partial_{\spaceVariable}\solutionCauchyProblemInitial(0)| + \bigO{\spaceStep^2},
\end{equation*}
where the first inequality relies the $L^2$ stability of the scheme, which does not hold for any other $L^p$ norms.

\begin{proposition}[Modified equation for outflow \eqref{eq:conditionKinRod}]\label{prop:outflowExtrapolationModifiedEquationKinrod}
    Consider the \lbm{} scheme given by \eqref{eq:initializationEquilibrium}, \eqref{eq:collision}, \eqref{eq:transport}, and the outflow boundary condition \eqref{eq:conditionKinRod}.
    Take $\sourceTermBoundaryDiscrete_0^{\indexTime} = 0$ for $\indexTime \in \naturalsWithoutZero$, and a linear flux $\flux(\solutionCauchyProblem) = \advectionVelocity\solutionCauchyProblem$.
    Then, the modified equation obtained using the corresponding \fd{} scheme from \Cref{conj:kinrod}, computed close to the outflow $\spaceVariable = 0$, are as follows.
    For \eqref{eq:kinrodFDBoundaryInitial}
    \begin{equation}
        \partial_{\timeVariable}\testFunction(0, 0) + \tfrac{1}{2} (\tfrac{\advectionVelocity^2}{\latticeVelocity} + \advectionVelocity - 2\latticeVelocity)\partial_{\spaceVariable}\testFunction(0, 0) = \bigO{\spaceStep}.
    \end{equation}
    For \eqref{eq:kinrodFDBoundarySecond}, we obtain 
    \begin{equation}
        \partial_{\timeVariable}\testFunction(0, 0) + \tfrac{1}{16} \bigl ( 2\tfrac{\advectionVelocity^3}{\latticeVelocity^2}  + 8 \tfrac{\advectionVelocity^2}{\latticeVelocity} + 6\advectionVelocity - 16\latticeVelocity  + \relaxationParameter (\tfrac{\advectionVelocity^4}{\latticeVelocity^3} - \tfrac{\advectionVelocity^3}{\latticeVelocity^2} - 7 \tfrac{\advectionVelocity^2}{\latticeVelocity} + \advectionVelocity + 6\latticeVelocity ) \bigr )\partial_{\spaceVariable}\testFunction(0, 0) = \bigO{\spaceStep}.
    \end{equation}
    For \eqref{eq:kinrodFDBoundaryPlusOneSecond}, we have 
    \begin{equation}
        \partial_{\timeVariable}\testFunction(0, \spaceStep) + \tfrac{1}{8} \bigl (  2\tfrac{\advectionVelocity^2}{\latticeVelocity} + 6  \advectionVelocity - 4\latticeVelocity + \relaxationParameter(\tfrac{\advectionVelocity^3}{\latticeVelocity^2} - 2\tfrac{\advectionVelocity^2}{\latticeVelocity} - \advectionVelocity + \latticeVelocity )  \bigr ) \partial_{\spaceVariable}\testFunction(0, \spaceStep) = \bigO{\spaceStep}.
    \end{equation}
    Eventually, for \eqref{eq:kinrodFDBOundaryEventually}, we have 
    \begin{equation*}
        \partial_{\timeVariable} \testFunction(\timeVariable, 0) - \latticeVelocity \frac{1 - \frac{\relaxationParameter}{2}\bigl (\frac{\advectionVelocity}{\latticeVelocity} + 1\bigr )  \bigl ( \relaxationParameter - 1 + \frac{\advectionVelocity}{\latticeVelocity}\bigr )}{1 + \bigl (\frac{\advectionVelocity}{\latticeVelocity} + 1\bigr )  \bigl ( \relaxationParameter - 1 \bigr )}\partial_{\spaceVariable} \testFunction(\timeVariable, 0)= \bigO{\spaceStep}, \qquad \timeVariable > 0.
    \end{equation*}
\end{proposition}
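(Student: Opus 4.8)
The plan is a modified-equation computation in the spirit of \cite{warming1974modified}, entirely analogous to the one underlying \Cref{prop:outflowExtrapolationModifiedEquation}, but now fed with the four boundary relations \eqref{eq:kinrodFDBoundaryInitial}, \eqref{eq:kinrodFDBoundarySecond}, \eqref{eq:kinrodFDBoundaryPlusOneSecond} and \eqref{eq:kinrodFDBOundaryEventually} supplied by \Cref{conj:kinrod}; consequently the proof is \emph{conditional} on that conjecture. For each of these relations I replace every occurrence of $\conservedMomentDiscrete_{\indexSpace}^{\indexTime}$ by $\testFunction(\timeGridPoint{\indexTime}, \spaceGridPoint{\indexSpace})$ with $\testFunction$ smooth, set $\sourceTermBoundaryDiscrete_0^{\indexTime} = 0$ as in the hypothesis, use the acoustic scaling $\timeStep = \spaceStep/\latticeVelocity$, and Taylor-expand every value of $\testFunction$ about the point where the modified equation is claimed: $(0,0)$ for \eqref{eq:kinrodFDBoundaryInitial} and \eqref{eq:kinrodFDBoundarySecond}, $(0, \spaceStep)$ for \eqref{eq:kinrodFDBoundaryPlusOneSecond}, and $(\timeGridPoint{\indexTime}, 0)$ for \eqref{eq:kinrodFDBOundaryEventually}. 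The first substep, carried out separately for each relation, is to check that the coefficients of the $\conservedMomentDiscrete$-terms on the right-hand side sum to one: this makes the $\bigO{1}$ contribution cancel, i.e.\ it is the zeroth-order consistency condition. The $\bigO{\spaceStep}$ contribution, equated to zero, is then the announced modified equation.

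For the three one-step relations \eqref{eq:kinrodFDBoundaryInitial}, \eqref{eq:kinrodFDBoundarySecond} and \eqref{eq:kinrodFDBoundaryPlusOneSecond} the computation is mechanical. Their left-hand sides are $\testFunction(\timeGridPoint{1},0)$, $\testFunction(\timeGridPoint{2},0)$ and $\testFunction(\timeGridPoint{2},\spaceStep)$, which at first order contribute $\partial_{\timeVariable}\testFunction$ with weights $\timeStep$, $2\timeStep$ and $2\timeStep$ respectively; since on the right-hand sides the time index is $0$, those contribute only $\partial_{\spaceVariable}\testFunction$ at first order, each $\conservedMomentDiscrete$-coefficient being multiplied by the signed grid distance of its node from the relevant space point. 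Dividing by the appropriate power of $\spaceStep$ and substituting $\timeStep = \spaceStep/\latticeVelocity$ yields the stated PDEs once the coefficients, polynomial in $\advectionVelocity/\latticeVelocity$ and $\relaxationParameter$, are collected; the only care needed is the bookkeeping of signs and the observation that the source-term weights, which differ from the $\conservedMomentDiscrete$-weights, drop out because $\sourceTermBoundaryDiscrete_0^{\indexTime} = 0$.

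The delicate case, which I expect to be the main obstacle, is the two-time-level boundary recursion \eqref{eq:kinrodFDBOundaryEventually}. Expanding about $(\timeGridPoint{\indexTime},0)$, the contributions from time level $\indexTime-1$ carry $-\timeStep\,\partial_{\timeVariable}\testFunction$ at first order, so the net coefficient of $\partial_{\timeVariable}\testFunction$ is $\timeStep$ times the total weight of the level-$(\indexTime+1)$ term minus the total weight of the level-$(\indexTime-1)$ terms, whereas the coefficient of $\partial_{\spaceVariable}\testFunction$ again collects signed-grid-distance-weighted contributions from levels $\indexTime$ and $\indexTime-1$. One thus obtains a relation of the form $(\text{net time-weight})\,\partial_{\timeVariable}\testFunction - \latticeVelocity\,(\text{space-weight})\,\partial_{\spaceVariable}\testFunction = \bigO{\spaceStep}$; dividing by the net time-weight — which requires checking that it does not vanish on the relevant range of $(\advectionVelocity/\latticeVelocity,\relaxationParameter)$, so that the division is legitimate — and simplifying the resulting rational function of $\advectionVelocity/\latticeVelocity$ and $\relaxationParameter$ gives the displayed equation.

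Throughout, I would corroborate the algebra by specialising to cases with a known outcome — e.g.\ $\relaxationParameter = 2$, or $\advectionVelocity/\latticeVelocity \to 0$, or the maximal-CFL value $\advectionVelocity = -\latticeVelocity$, where \eqref{eq:kinrodFDBOundaryEventually} collapses to pure transport $\conservedMomentDiscrete_0^{\indexTime+1} = \conservedMomentDiscrete_1^{\indexTime}$ — which fixes the normalisation and catches sign slips in the grid-distance bookkeeping. The conceptual content is thin; the entire difficulty is keeping the two-level Taylor bookkeeping honest and performing the rational-function simplification, all under the proviso that \Cref{conj:kinrod} holds.
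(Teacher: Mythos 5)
The paper offers no proof of this proposition (unlike \Cref{prop:bulkAndInflow}--\Cref{prop:outflowExtrapolationModifiedEquation}, there is no corresponding appendix section), and your plan is exactly the modified-equation computation used in \Cref{app:proof:outflowExtrapolationModifiedEquation}: substitute a smooth $\testFunction$, verify the order-zero coefficients sum to one, extract the $\bigO{\spaceStep}$ terms, with the two-level bookkeeping for \eqref{eq:kinrodFDBOundaryEventually}. I checked that this reproduces the first two displays: e.g.\ for \eqref{eq:kinrodFDBoundaryInitial} the coefficients $\tfrac14(1+\courantNumber)^2$, $\tfrac12(1-\courantNumber)$, $\tfrac14(1-\courantNumber^2)$ sum to $1$ and give $\sum_{\indexSpace}\indexSpace\,\coefficientOutflowFDZero_{\indexSpace}=\tfrac12(2-\courantNumber-\courantNumber^2)$, hence the stated $\tfrac12(\advectionVelocity^2/\latticeVelocity+\advectionVelocity-2\latticeVelocity)$. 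One wording in your plan should be fixed before you compute: for a scheme $\conservedMomentDiscrete_0^{\indexTime+1}=\sum\coefficientOutflowFDZero_{\indexSpace}\conservedMomentDiscrete_{\indexSpace}^{\indexTime}+\sum\coefficientOutflowFDMinusOne_{\indexSpace}\conservedMomentDiscrete_{\indexSpace}^{\indexTime-1}$ the net weight of $\timeStep\,\partial_{\timeVariable}\testFunction$ is $1+\sum_{\indexSpace}\coefficientOutflowFDMinusOne_{\indexSpace}$, not ``$1$ minus the level-$(\indexTime-1)$ weight'' (test on leap-frog, $\coefficientOutflowFDMinusOne_0=-1$, where the first-order time term must cancel). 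For \eqref{eq:kinrodFDBOundaryEventually} this weight is $1+(\courantNumber+1)(\relaxationParameter-1)$, which is indeed nonzero for $\courantNumber<0$, so the division you flag is legitimate there.

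A substantive warning: carried out faithfully on the schemes of \Cref{conj:kinrod} as printed, your method does \emph{not} reproduce the third and fourth displayed equations. Your own proposed sanity check at $\advectionVelocity=-\latticeVelocity$ exposes this: there \eqref{eq:kinrodFDBoundaryPlusOneSecond} degenerates to $\conservedMomentDiscrete_1^2=\conservedMomentDiscrete_3^0$ and \eqref{eq:kinrodFDBOundaryEventually} to $\conservedMomentDiscrete_0^{\indexTime+1}=\conservedMomentDiscrete_1^{\indexTime}$, both exact transport, so the advection coefficient in the modified equation must be exactly $-\latticeVelocity$; yet the printed third formula evaluates to $-\latticeVelocity-\relaxationParameter\latticeVelocity/8$ and the fourth to $-\latticeVelocity(\relaxationParameter-1)/\relaxationParameter$. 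Likewise, for \eqref{eq:kinrodFDBOundaryEventually} at $\relaxationParameter=1$, $\advectionVelocity=0$ the conjectured scheme is $\conservedMomentDiscrete_0^{\indexTime+1}=\tfrac14\conservedMomentDiscrete_0^{\indexTime}+\tfrac12\conservedMomentDiscrete_1^{\indexTime}+\tfrac14\conservedMomentDiscrete_2^{\indexTime}$, giving $\partial_{\timeVariable}\testFunction-\latticeVelocity\partial_{\spaceVariable}\testFunction=\bigO{\spaceStep}$, whereas the displayed ratio gives $-\latticeVelocity/4$; the correct denominator is $1+(\courantNumber+1)(\relaxationParameter-1)$ rather than $1+(\courantNumber+1)+(\relaxationParameter-1)$. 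So either \Cref{conj:kinrod} or the last two displays of the proposition contain transcription errors; do not treat the resulting mismatch as a failure of your (correct) method, but do not expect to ``prove'' those two formulas verbatim from the conjecture as stated.
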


\subsection{Boundary sources to compensate initializations at equilibrium}\label{sec:compensationEquilibrium}

We now construct a \strong{source term} $\sourceTermBoundaryDiscrete_0^{\indexTime}$ to achieve second-order accuracy when $\relaxationParameter = 2$ and using \eqref{eq:papillon1}.
We request that the initial scheme at the boundary \eqref{eq:initialSchemeBoundaryExtrapolation} be an upwind scheme, obtaining
\begin{equation}\label{eq:correctionPapillon1}
    \sourceTermBoundaryDiscrete_0^1 = \tfrac{1}{2}(\conservedMomentDiscrete_0^0 - \conservedMomentDiscrete_1^0) + \tfrac{1}{2\latticeVelocity} (\flux(\conservedMomentDiscrete_0^0) - \flux(\conservedMomentDiscrete_1^0)).
\end{equation}
Since the bulk \fd{} scheme at the boundary \eqref{eq:bulkBoundarySigma1} is first-order consistent, which is fine, we want to perturb it as little as possible, naturally enforcing $\sourceTermBoundaryDiscrete_0^{\indexTime + 1} + (1-\relaxationParameter)\sourceTermBoundaryDiscrete_0^{\indexTime} = 0$, hence  $\sourceTermBoundaryDiscrete_0^{\indexTime} = (\relaxationParameter - 1)^{\indexTime - 1}\sourceTermBoundaryDiscrete_0^1$ for $\indexTime \in\naturalsWithoutZero$.

Condition \eqref{eq:conditionKinRod} shares the same issue with  \eqref{eq:papillon1}.
We present the procedure only in the linear case.
Nevertheless, the dependence of the bulk scheme at the boundary \eqref{eq:kinrodFDBOundaryEventually} on the source term $\sourceTermBoundaryDiscrete_0^{\indexTime}$ can also be used in a non-linear context, for we can compute the initial schemes corresponding to the choice of initial datum and the coefficients in front of $\sourceTermBoundaryDiscrete_0^{\indexTime}$ in \eqref{eq:kinrodFDBOundaryEventually} do not depend on the advection velocity $\advectionVelocity$.
\strong{Source terms} are tuned to obtain an upwind scheme as boundary initial scheme \eqref{eq:kinrodFDBoundaryInitial} and an upwind scheme---applied twice---for the second scheme at the boundary \eqref{eq:kinrodFDBoundarySecond}.
This gives 
\begin{multline}\label{eq:correctionKinrod1}
    \sourceTermBoundaryDiscrete_0^1 = \tfrac{1}{4} ((-\tfrac{\advectionVelocity^2}{\latticeVelocity^2}+2\tfrac{\advectionVelocity}{\latticeVelocity}+3)\conservedMomentDiscrete_0^0 + (-2\tfrac{\advectionVelocity}{\latticeVelocity} - 2) \conservedMomentDiscrete_1^0 + (\tfrac{\advectionVelocity^2}{\latticeVelocity^2} - 1)\conservedMomentDiscrete_2^0), \\
    \sourceTermBoundaryDiscrete_0^2 = (\tfrac{1}{2}+\tfrac{\advectionVelocity}{\latticeVelocity}+\tfrac{\advectionVelocity^2}{2\latticeVelocity^2} + \tfrac{\relaxationParameter\advectionVelocity}{4\latticeVelocity}(1 - \tfrac{\advectionVelocity^2}{\latticeVelocity^2})) \conservedMomentDiscrete_0^0 + \tfrac{1}{8}( 2 - 12 \tfrac{\advectionVelocity}{\latticeVelocity} - 14\tfrac{\advectionVelocity^2}{\latticeVelocity^2} + 3\relaxationParameter(-1-\tfrac{\advectionVelocity}{\latticeVelocity}+\tfrac{\advectionVelocity^2}{\latticeVelocity^2} + \tfrac{\advectionVelocity^3}{\latticeVelocity^3})) \conservedMomentDiscrete_1^0  \\
    - \tfrac{1}{4} (2 - 2\tfrac{\advectionVelocity}{\latticeVelocity} - 4\tfrac{\advectionVelocity^2}{\latticeVelocity^2} + \relaxationParameter (-1 + \tfrac{\advectionVelocity^2}{\latticeVelocity^2})) \conservedMomentDiscrete_2^0 - \tfrac{1}{8} (2 - 2\tfrac{\advectionVelocity^2}{\latticeVelocity^2} + \relaxationParameter (-1-\tfrac{\advectionVelocity}{\latticeVelocity} + \tfrac{\advectionVelocity^2}{\latticeVelocity^2} + \tfrac{\advectionVelocity^3}{\latticeVelocity^3} ))\conservedMomentDiscrete_3^0.
\end{multline}
With this choice, one can see that \eqref{eq:kinrodFDBoundaryPlusOneSecond} becomes first-order accurate.
We take $\sourceTermBoundaryDiscrete_0^{\indexTime} = (\relaxationParameter-1)^{\indexTime - 2}\sourceTermBoundaryDiscrete_0^2$ for $\indexTime$ even and $\sourceTermBoundaryDiscrete_0^{\indexTime} = (\relaxationParameter-1)^{\indexTime - 1}\sourceTermBoundaryDiscrete_0^1$ for $\indexTime$ odd.

\subsection{Numerical simulations}\label{sec:numericalSimConsistency}

\begin{table}[h]
    \begingroup
    \setlength{\tabcolsep}{3pt}
    \begin{center}\caption{\label{tab:convergenceBoundaryTransport}Error at final time for different simulations of the advection equation.}
        \begin{footnotesize}
            \begin{tabular}{|c|cc|cc|cc|cc|cc|}
                \hline
                \multicolumn{11}{|c|}{$\relaxationParameter = 2$ : \textsc{second-order bulk scheme}}\\
                \hline
                & \multicolumn{2}{|c|}{\eqref{eq:papillon1} w. $\sourceTermBoundaryDiscrete_0^{\indexTime} \equiv 0$} & \multicolumn{2}{|c|}{\eqref{eq:papillon1} w. \eqref{eq:correctionPapillon1}} & \multicolumn{2}{|c|}{\eqref{eq:papillon2} w. $\sourceTermBoundaryDiscrete_0^{\indexTime} \equiv 0$} & \multicolumn{2}{|c|}{\eqref{eq:conditionKinRod} w. $\sourceTermBoundaryDiscrete_0^{\indexTime} \equiv 0$} & \multicolumn{2}{|c|}{\eqref{eq:conditionKinRod} w. \eqref{eq:correctionKinrod1}}\\ 
                \hline
                $\spaceStep$ & $L^2$ error & Emp. order & $L^2$ error & Emp. order & $L^2$ error & Emp. order & $L^2$ error & Emp. order & $L^2$ error & Emp. order \\
                \hline
                2.041E-02 &	2.432E-04 &	     & 6.645E-05	& 	    & 6.561E-05	&      & 7.669E-04	&      & 7.581E-05 & 	  \\
                1.266E-02 &	1.189E-04 &	1.50 & 2.557E-05	& 2.00	& 2.525E-05	& 2.00 & 3.748E-04	& 1.50 & 2.905E-05 & 	2.01\\
                7.874E-03 &	5.840E-05 &	1.50 & 9.894E-06	& 2.00	& 9.772E-06	& 2.00 & 1.840E-04	& 1.50 & 1.121E-05 & 	2.01\\
                4.926E-03 &	2.895E-05 &	1.50 & 3.873E-06	& 2.00	& 3.825E-06	& 2.00 & 9.105E-05	& 1.50 & 4.382E-06 & 	2.00\\
                3.077E-03 &	1.429E-05 &	1.50 & 1.511E-06	& 2.00	& 1.492E-06	& 2.00 & 4.499E-05	& 1.50 & 1.708E-06 & 	2.00\\
                1.923E-03 &	7.095E-06 &	1.49 & 5.832E-07	& 2.03	& 5.754E-07	& 2.03 & 2.227E-05	& 1.50 & 6.605E-07 & 	2.02\\
                1.202E-03 &	3.501E-06 &	1.50 & 2.278E-07	& 2.00	& 2.248E-07	& 2.00 & 1.100E-05	& 1.50 & 2.579E-07 & 	2.00\\
                7.513E-04 &	1.725E-06 &	1.51 & 9.009E-08	& 1.97	& 8.899E-08	& 1.97 & 5.432E-06	& 1.50 & 1.017E-07 & 	1.98\\
                4.695E-04 &	8.533E-07 &	1.50 & 3.476E-08	& 2.03	& 3.432E-08	& 2.03 & 2.684E-06	& 1.50 & 3.934E-08 & 	2.02\\
                2.934E-04 &	4.215E-07 &	1.50 & 1.358E-08	& 2.00	& 1.341E-08	& 2.00 & 1.326E-06	& 1.50 & 1.536E-08 & 	2.00\\
                \hline
                Theoretical & --- &  3/2 & --- & 2 & --- & 2 & --- & 3/2 & --- & 2\\
                \hline
                \hline
                \multicolumn{11}{|c|}{$\relaxationParameter = 1.98$ : \textsc{first-order bulk scheme}}\\
                \hline
                & \multicolumn{2}{|c|}{\eqref{eq:papillon1} w. $\sourceTermBoundaryDiscrete_0^{\indexTime} \equiv 0$} & \multicolumn{2}{|c|}{\eqref{eq:papillon1} w. \eqref{eq:correctionPapillon1}} & \multicolumn{2}{|c|}{\eqref{eq:papillon2} w. $\sourceTermBoundaryDiscrete_0^{\indexTime} \equiv 0$} & \multicolumn{2}{|c|}{\eqref{eq:conditionKinRod} w. $\sourceTermBoundaryDiscrete_0^{\indexTime} \equiv 0$} & \multicolumn{2}{|c|}{\eqref{eq:conditionKinRod} w. \eqref{eq:correctionKinrod1}}\\ 
                \hline
                $\spaceStep$ & $L^2$ error & Emp. order & $L^2$ error & Emp. order & $L^2$ error & Emp. order & $L^2$ error & Emp. order & $L^2$ error & Emp. order \\
                \hline                
                2.041E-02	& 1.383E-04 &		    & 1.051E-04 &		    & 9.891E-05 &		    & 3.59E-04	& 	    & 1.694E-04	&      \\  
                1.266E-02	& 5.729E-05 &	1.85	& 5.190E-05 &	1.48	& 4.795E-05 &	1.52	& 1.23E-04	& 2.24	& 8.197E-05	& 1.52 \\
                7.874E-03	& 2.741E-05 &	1.55	& 2.708E-05 &	1.37	& 2.505E-05 &	1.37	& 4.73E-05	& 2.01	& 4.040E-05	& 1.49 \\
                4.926E-03	& 1.489E-05 &	1.30	& 1.489E-05 &	1.27	& 1.395E-05 &	1.25	& 2.29E-05	& 1.54	& 2.052E-05	& 1.44 \\
                3.077E-03	& 8.487E-06 &	1.19	& 8.487E-06 &	1.20	& 8.077E-06 &	1.16	& 1.21E-05	& 1.36	& 1.077E-05	& 1.37 \\
                1.923E-03	& 4.980E-06 &	1.13	& 4.980E-06 &	1.13	& 4.807E-06 &	1.10	& 6.58E-06	& 1.30	& 4.745E-06	& 1.74 \\
                1.202E-03	& 2.983E-06 &	1.09	& 2.983E-06 &	1.09	& 2.912E-06 &	1.07	& 3.67E-06	& 1.24	& 2.904E-06	& 1.04 \\
                7.513E-04	& 1.813E-06 &	1.06	& 1.813E-06 &	1.06	& 1.785E-06 &	1.04	& 2.10E-06	& 1.19	& 1.957E-06	& 0.84 \\
                4.695E-04	& 1.113E-06 &	1.04	& 1.113E-06 &	1.04	& 1.101E-06 &	1.03	& 1.23E-06	& 1.14	& 1.103E-06	& 1.22 \\
                2.934E-04	& 6.874E-07 &	1.02	& 6.874E-07 &	1.02	& 6.829E-07 &	1.02	& 7.35E-07	& 1.10	& 6.838E-07	& 1.02 \\
                \hline
                Theoretical & --- &  1 & --- & 1 & --- & 1 & --- & 1 & --- & 1\\
                \hline
            \end{tabular}
        \end{footnotesize}
    \end{center}
    \endgroup
\end{table}

We now verify the findings from \Cref{sec:consistencyTheoretical} and \ref{sec:compensationEquilibrium} using the original \lbm{} algorithm.
We first test convergence for the advection equation, using $\domainLength = 1$, $\courantNumber = -1/2$, and measuring the $L^2$ error at the final time $\finalTime = 1$.
The initial datum is given by $\solutionCauchyProblemInitial(\spaceVariable) = \sin(\spaceVariable)$.
The results in \Cref{tab:convergenceBoundaryTransport} are \strong{in agreement} with the expected convergence rates.
Empirical orders are computed in the usual fashion, \emph{i.e.} dividing the logarithm of the ratio of consecutive errors by the logarithm of the ratio of consecutive space steps.
In particular, we observe the order $3/2$ whenever $\relaxationParameter = 2$ and either \eqref{eq:papillon1} or \eqref{eq:conditionKinRod} is employed with source terms switched off.
When corrections are used, we see that the error constant for \eqref{eq:papillon1} is slightly better than the one for  \eqref{eq:conditionKinRod}.

\begin{table}[h] 
    \begingroup
    \setlength{\tabcolsep}{3pt}
    \begin{center}\caption{\label{tab:convergenceBoundaryBurgers}Error at final time for different simulations of the Burgers equation.}
        \begin{footnotesize}
            \begin{tabular}{|c|cc|cc|cc|cc|cc|}
                \hline
                \multicolumn{11}{|c|}{$\relaxationParameter = 2$ : \textsc{second-order bulk scheme}}\\
                \hline
                & \multicolumn{2}{|c|}{\eqref{eq:papillon1} w. $\sourceTermBoundaryDiscrete_0^{\indexTime} \equiv 0$} & \multicolumn{2}{|c|}{\eqref{eq:papillon1} w. \eqref{eq:correctionPapillon1}} & \multicolumn{2}{|c|}{\eqref{eq:papillon2} w. $\sourceTermBoundaryDiscrete_0^{\indexTime} \equiv 0$} & \multicolumn{2}{|c|}{\eqref{eq:conditionKinRod} w. $\sourceTermBoundaryDiscrete_0^{\indexTime} \equiv 0$} & \multicolumn{2}{|c|}{\eqref{eq:conditionKinRod} w. \eqref{eq:correctionKinrod1}} \\ 
                \hline
                $\spaceStep$ & $L^2$ error & Emp. order & $L^2$ error & Emp. order & $L^2$ error & Emp. order & $L^2$ error & Emp. order & $L^2$ error & Emp. order \\
                \hline
                2.041E-02& 	6.239E-04 &		    & 6.158E-04	& 	    & 6.184E-04 &		    & 8.697E-04 &	     & 6.189E-04 & 	\\
                1.266E-02& 	2.099E-04 &	2.28	& 2.046E-04	& 2.31	& 2.064E-04 &	2.30	& 3.669E-04 &	1.81 & 2.060E-04 & 	2.30\\
                7.874E-03& 	8.525E-05 &	1.90	& 7.902E-05	& 2.00	& 8.002E-05 &	2.00	& 1.712E-04 &	1.61 & 7.963E-05 & 	2.00\\
                4.926E-03& 	4.449E-05 &	1.39	& 3.049E-05	& 2.03	& 3.008E-05 &	2.09	& 9.472E-05 &	1.26 & 3.074E-05 & 	2.03\\
                3.077E-03& 	1.640E-05 &	2.12	& 1.206E-05	& 1.97	& 1.224E-05 &	1.91	& 4.103E-05 &	1.78 & 1.215E-05 & 	1.97\\
                1.923E-03& 	8.473E-06 &	1.41	& 4.717E-06	& 2.00	& 4.645E-06 &	2.06	& 2.132E-05 &	1.39 & 4.754E-06 & 	2.00\\
                1.202E-03& 	3.837E-06 &	1.69	& 1.835E-06	& 2.01	& 1.807E-06 &	2.01	& 1.032E-05 &	1.54 & 1.849E-06 & 	2.01\\
                7.513E-04& 	1.781E-06 &	1.63	& 7.184E-07	& 2.00	& 7.073E-07 &	2.00	& 5.032E-06 &	1.53 & 7.240E-07 & 	2.00\\
                4.695E-04& 	8.418E-07 &	1.59	& 2.808E-07	& 2.00	& 2.764E-07 &	2.00	& 2.464E-06 &	1.52 & 2.830E-07 & 	2.00\\
                2.934E-04& 	3.934E-07 &	1.62	& 1.092E-07	& 2.01	& 1.109E-07 &	1.94	& 1.200E-06 &	1.53 & 1.101E-07 & 	2.01\\
                \hline 
                Theoretical & --- & 3/2 & --- & 2 & --- & 2 & --- & 3/2 & --- & 2\\
                \hline
            \end{tabular}
        \end{footnotesize}
    \end{center}
    \endgroup
\end{table}

We also showcase a non-linear problem, the Burgers equation with $\flux(\solutionCauchyProblem) = -\solutionCauchyProblem^2/2$, using $\domainLength = 1$.
We take $\latticeVelocity = 1$, final time $\finalTime = 0.2$, and initial datum $\solutionCauchyProblemInitial(\spaceVariable) = \tfrac{1}{2} + \tfrac{1}{2}(\textnormal{tanh} \bigl ( \tfrac{2\spaceVariable}{1-4\spaceVariable^2}\bigr ) \mathds{1}_{|2\spaceVariable| < 1} + \textnormal{sgn} (2\spaceVariable) \mathds{1}_{|2\spaceVariable| \geq 1}\bigr ) $,  ensuring that the left (resp. right) boundary is an outflow (resp. inflow).
The results in \Cref{tab:convergenceBoundaryBurgers} agree with the theoretical predictions.

\section{Stability of the boundary conditions}\label{sec:stability}

Alongside consistency, stability is the other cornerstone of numerical analysis that we presently address.
We start---in \Cref{sec:GKS}---by using the GKS theory, which considers decoupled problems for \strong{each boundary}, set on the half-line.
The numerical validations in \Cref{sec:numericalSimulationsStability} sometimes feature  \strong{unforeseen} results.
In these circumstances, in \Cref{sec:matrixMethod}, we propose alternative tools, namely the so-called ``\strong{matrix method}'' and \strong{pseudo-spectra}, to provide a complementary point of view on this matter.
From the way we have obtained the \fd{} schemes, if $\conservedMomentDiscrete^{\indexTime}$ is the solution of the \lbm{}, then it \emph{also} exactly fulfills the \fd{} scheme. Therefore, instabilities of the \lbm{} schemes are also to be found using \fd{} schemes, so that, whatever the approach of choice to study stability, we apply it to \fd{} schemes.

Throughout the section, we consider a linear problem with $\flux(\solutionCauchyProblem) = \advectionVelocity \solutionCauchyProblem$, so that the bulk \fd{}s read 
\begin{equation}\label{eq:bulkSchemeAbstract}
    \conservedMomentDiscrete_{\indexSpace}^{\indexTime + 1} = \schemeMatrixFDBlockZeroEntry_{-1}\conservedMomentDiscrete_{\indexSpace - 1}^{\indexTime} + \schemeMatrixFDBlockZeroEntry_{1}\conservedMomentDiscrete_{\indexSpace + 1}^{\indexTime} + \schemeMatrixFDBlockMinusOneEntry_0 \conservedMomentDiscrete_{\indexSpace}^{\indexTime - 1}, \qquad \indexSpace \in \integerInterval{1}{\numberSpacePoints - 2},
\end{equation}
where $\schemeMatrixFDBlockZeroEntry_{\pm1} \definitionEquality \tfrac{1}{2}(2-\relaxationParameter \mp \relaxationParameter\courantNumber)$ and $\schemeMatrixFDBlockMinusOneEntry_0 \definitionEquality \relaxationParameter - 1$.
All the considered outflow boundary conditions \eqref{eq:extrapolationBoundaryCondition} and \eqref{eq:conditionKinRod} recast as boundary schemes of the form 
\begin{equation}\label{eq:outflowSchemeAbstract}
    \conservedMomentDiscrete_0^{\indexTime + 1} = \sum_{\indexSpace = 0}^{\numberCoefficientsOutFlowFDZero - 1}\coefficientOutflowFDZero_{\indexSpace}\conservedMomentDiscrete_{\indexSpace}^{\indexTime} + \sum_{\indexSpace = 0}^{\numberCoefficientsOutFlowFDMinusOne - 1}\coefficientOutflowFDMinusOne_{\indexSpace}\conservedMomentDiscrete_{\indexSpace}^{\indexTime - 1}, 
\end{equation}
with suitable weights $\coefficientOutflowFDZero_{\indexSpace}$ and $\coefficientOutflowFDMinusOne_{\indexSpace}$, and $\numberCoefficientsOutFlowFDZero, \numberCoefficientsOutFlowFDMinusOne$ independent of $\numberSpacePoints$.
For the stability constraints for periodic boundary conditions are the \emph{conditio sine qua non} even in presence of non-trivial boundary conditions, let us provide them (see \cite{bellotti2024initialisation} for the proof).
\begin{proposition}[$L^2$ stability conditions for \eqref{eq:bulkSchemeAbstract} with periodicity]\label{prop:stabilityConditionsPeriodic}
    The scheme \eqref{eq:bulkSchemeAbstract} with periodic boundary conditions is stable in the $L^2$ norm if and only if $\relaxationParameter \in (0, 2)$ and $|\courantNumber|\leq 1$, or $\relaxationParameter = 2$ and $|\courantNumber|<1$.
\end{proposition}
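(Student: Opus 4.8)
The plan is a von Neumann (discrete Fourier) stability analysis of the two‑level recursion \eqref{eq:bulkSchemeAbstract}, combined with a dedicated treatment of the degenerate frequencies where the amplification matrix ceases to be diagonalizable. Exploiting periodicity and translation invariance, I would apply the spatial discrete Fourier transform $\fourierTransformed{\conservedMomentDiscrete}^{\indexTime}(\frequency)$ and Parseval's identity, so that $L^2$ stability uniform in $\numberSpacePoints$ is equivalent to the power‑boundedness, uniform in $\frequency \in [-\pi,\pi]$, of the companion matrices $\matricial{M}(\frequency) := \bigl(\begin{smallmatrix} p(\frequency) & \schemeMatrixFDBlockMinusOneEntry_0 \\ 1 & 0\end{smallmatrix}\bigr)$ governing $(\fourierTransformed{\conservedMomentDiscrete}^{\indexTime+1},\fourierTransformed{\conservedMomentDiscrete}^{\indexTime}) = \matricial{M}(\frequency)(\fourierTransformed{\conservedMomentDiscrete}^{\indexTime},\fourierTransformed{\conservedMomentDiscrete}^{\indexTime-1})$, where $p(\frequency) := \schemeMatrixFDBlockZeroEntry_{-1}e^{-i\frequency} + \schemeMatrixFDBlockZeroEntry_{1}e^{i\frequency} = (2-\relaxationParameter)\cos\frequency - i\relaxationParameter\courantNumber\sin\frequency$. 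Since a $2\times2$ companion matrix is non‑derogatory, $\matricial{M}(\frequency)$ is power‑bounded exactly when its spectral radius is $\le 1$ and every eigenvalue of modulus $1$ is simple — a double eigenvalue on the unit circle yields a genuine Jordan block, hence linear growth. The uniform‑in‑$\frequency$ bound I would obtain from compactness of $[-\pi,\pi]$ and analyticity of $\matricial{M}$, in the spirit of the Kreiss matrix theorem: either the set of frequencies whose amplification matrix has an eigenvalue on the unit circle is finite (and near each such frequency the two eigenvalues do not collide, so the spectral projectors stay bounded), or that set is everything, in which case one shows the two eigenvalues remain uniformly separated, hence $\matricial{M}(\frequency)$ uniformly diagonalizable.

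The characteristic polynomial is $\chi_{\frequency}(\timeShiftOperator) = \timeShiftOperator^2 - p(\frequency)\timeShiftOperator - \schemeMatrixFDBlockMinusOneEntry_0$ with $\schemeMatrixFDBlockMinusOneEntry_0 = \relaxationParameter - 1$, so the product of its two roots has modulus $|\relaxationParameter-1|$. For $\relaxationParameter \in (0,2)$ this modulus is $<1$, so at most one root can leave the closed unit disk and no double root can lie on the unit circle (that would force the product to have modulus $1$). I would apply the Schur--Cohn (Jury) criterion for the closed unit disk: writing $\chi_{\frequency} = \timeShiftOperator^2 + c_1\timeShiftOperator + c_0$ with $c_1 = -p(\frequency)$ and $c_0 = 1-\relaxationParameter$ real, the condition reads $|c_0|\le 1$ and $|c_1 - c_0\overline{c_1}| \le 1 - c_0^2$; a short computation gives $|c_1 - c_0\overline{c_1}|^2 = \relaxationParameter^2(2-\relaxationParameter)^2(\cos^2\frequency + \courantNumber^2\sin^2\frequency)$ and $(1-c_0^2)^2 = \relaxationParameter^2(2-\relaxationParameter)^2$, so — dividing by $\relaxationParameter^2(2-\relaxationParameter)^2>0$ — the criterion holds for every $\frequency$ iff $\cos^2\frequency + \courantNumber^2\sin^2\frequency \le 1$ for all $\frequency$, i.e. iff $|\courantNumber| \le 1$. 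When $|\courantNumber|\le 1$, equality (a root on the circle) occurs only at $\frequency\in\{0,\pi\}$ if $|\courantNumber|<1$ and at every $\frequency$ if $|\courantNumber|=1$; in either case that root is simple by the remark above, and when $|\courantNumber|=1$ the off‑circle root has modulus $|\relaxationParameter-1|<1$, giving the required uniform separation. This settles the case $\relaxationParameter \in (0,2)$.

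For $\relaxationParameter = 2$ one has $\schemeMatrixFDBlockMinusOneEntry_0 = 1$, so the product of the roots has modulus $1$ and the previous ``at most one root outside'' argument no longer applies; instead I would use that $p(\frequency) = -2i\courantNumber\sin\frequency$ is purely imaginary, whence $\chi_{\frequency}(\timeShiftOperator) = \timeShiftOperator^2 + 2i\courantNumber\sin\frequency\,\timeShiftOperator - 1$ has roots $\timeShiftOperator_{\pm}(\frequency) = -i\courantNumber\sin\frequency \pm \sqrt{1 - \courantNumber^2\sin^2\frequency}$. If $|\courantNumber|<1$, then $1-\courantNumber^2\sin^2\frequency \in [1-\courantNumber^2,1]$ is positive, so $\timeShiftOperator_{\pm}(\frequency)$ are two distinct points of the unit circle ($|\timeShiftOperator_{\pm}|^2 = \courantNumber^2\sin^2\frequency + (1-\courantNumber^2\sin^2\frequency) = 1$) separated by at least $2\sqrt{1-\courantNumber^2}>0$, so $\matricial{M}(\frequency)$ is uniformly diagonalizable and the scheme is stable; if $|\courantNumber|=1$, then at $\frequency = \pi/2$ the discriminant vanishes and $\matricial{M}(\pi/2)$ has the double eigenvalue $\mp i$ on the unit circle, hence a nontrivial Jordan block and instability; if $|\courantNumber|>1$, then at $\frequency=\pi/2$ one root has modulus $|\courantNumber| + \sqrt{\courantNumber^2-1}>1$ and the von Neumann condition already fails. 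Assembling the two cases yields exactly the stated conditions.

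I expect the main obstacles to be the borderline regimes — $|\courantNumber|=1$ with $\relaxationParameter<2$, and the whole $\relaxationParameter=2$ case, where $|c_0|=1$ and the generic Jury inequality degenerates to an equality — in which one must certify by hand that the unit‑modulus eigenvalue stays simple (or the pair stays separated), and the passage from pointwise power‑boundedness in $\frequency$ to a bound uniform in $\frequency$, hence in $\numberSpacePoints$; this is precisely where the companion/Kreiss structure and the eigenvalue‑separation estimates above enter. The algebraic steps — evaluating $p(\frequency)$, the Jury quantities, and the discriminant — are routine.
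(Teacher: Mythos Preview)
The paper states \Cref{prop:stabilityConditionsPeriodic} without proof; it is treated as a known fact about the bulk scheme (the $\relaxationParameter$-weighted combination of leap-frog and Lax-Friedrichs recalled after \Cref{prop:bulkAndInflow}, already analyzed in \cite{graille2014approximation}). There is therefore no authorial argument to compare with.

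Your von Neumann analysis is the standard route and is carried out correctly. The key computations check: with $p(\frequency)=(2-\relaxationParameter)\cos\frequency-i\relaxationParameter\courantNumber\sin\frequency$ and $c_0=1-\relaxationParameter$, one indeed gets $|c_1-c_0\overline{c_1}|^2=\relaxationParameter^2(2-\relaxationParameter)^2(\cos^2\frequency+\courantNumber^2\sin^2\frequency)$ and $(1-c_0^2)^2=\relaxationParameter^2(2-\relaxationParameter)^2$, so the Schur--Cohn inequality collapses to $|\courantNumber|\le 1$. The observation that the root product has modulus $|\relaxationParameter-1|<1$ for $\relaxationParameter\in(0,2)$---hence no double root on the unit circle---is exactly what rules out Jordan growth and makes the borderline $|\courantNumber|=1$ stable there. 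Your separate treatment of $\relaxationParameter=2$ via the explicit roots $-i\courantNumber\sin\frequency\pm\sqrt{1-\courantNumber^2\sin^2\frequency}$, with uniform separation $2\sqrt{1-\courantNumber^2}$ when $|\courantNumber|<1$ and a genuine Jordan block at $\frequency=\pi/2$ when $|\courantNumber|=1$, is clean and correct. The uniform-in-$\frequency$ power bound you obtain from compactness plus eigenvalue separation (finitely many tangency frequencies when $|\courantNumber|<1$, uniform gap when $|\courantNumber|=1$ and $\relaxationParameter<2$) is the right mechanism. The only omission is the trivial range check $\relaxationParameter\notin(0,2]$, but since the paper fixes $\relaxationParameter\in(0,2]$ throughout, this is not a gap in context.
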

Before proceeding, let us provide some bibliographical landmarks concerning the study of stability for \lbm{} schemes without boundary conditions. 
A widespread approach is the one \emph{à la von Neumann}, see \cite{benzi1992lattice, sterling1996stability, wissocq2019extended}, which is very close to the one we employ for \fd{} schemes, \confer{} \cite{bellotti2022finite}. It applies to linearized schemes and it hardly provides explicit stability conditions when the number of parameters in the numerical schemes increases. Since it uses the ``Fourier symbol'' of the scheme, it cannot handle problems with boundaries.
More recently weighted $L^2$ stability analyses based on the ``stability structure'' have been developed \cite{junk2009weighted,rheinlander2010stability}. They rely on the construction of a weighted norm where both transport and collision are contractions.
This notion of stability is intrinsically linear and does not take into account the joint role of collision and stream, thus gives only sufficient stability conditions. Moreover, it has not been successfully utilized to study our scheme with boundaries.
Also, the stability and convergence of scalar schemes have been analyzed using the concept of monotonicity, which allows to tackle non-linear problems focusing on the $L^1$ and $L^{\infty}$ norms.
This is the object of \cite{elton1995convergence, caetano2024result, bellotti2023monotonicity, aregba2024convergence, aregba2025monotonicity}. There is hope to extend this analysis to certain boundary conditions, linked to equilibria, see \cite{aregba2004kinetic}.
We also report other approaches, such as brute-force stability analyses \cite{simonis2021linear}, to the readers.

\subsection{GKS stability}\label{sec:GKS}

GKS theory stipulates that we can study the stability of each boundary condition by looking at the corresponding \strong{semi-infinite} problem. Presenting the case of the left boundary, we analyze $[0, +\infty)$,  hence we take $\indexSpace \in \naturals$.
Slightly different definitions of GKS stability exist, sometimes called ``strong stability'' \cite{coulombel2009stability, coulombel2011stability, boutin2024stability}.
We consider the one introduced in \cite[Definition 3.3]{gustafsson1972stability}, \cite[Definition 4.6]{trefethen1984instability}, and \cite[Definition 2]{boutin2024stability}.
In our context, it reads: it exists $\alpha_0 \geq 0$ and $C > 0$ such that, for $\timeStep, \spaceStep$ small enough
\begin{multline}\label{eq:GKS}
    \Bigl ( \frac{\alpha-\alpha_0}{1+\alpha \timeStep}\Bigr ) \timeStep \sum_{\indexTime = 2}^{+\infty} e^{-2\alpha \indexTime\timeStep} |\conservedMomentDiscrete_0^{\indexTime}|^2 + \Bigl ( \frac{\alpha-\alpha_0}{1+\alpha \timeStep}\Bigr )^2 \timeStep \spaceStep\sum_{\indexTime = 2}^{+\infty} \sum_{\indexSpace = 0}^{+\infty} e^{-2\alpha \indexTime\timeStep} |\conservedMomentDiscrete_{\indexSpace}^{\indexTime}|^2 \\
    \leq C \Bigl ( \Bigl ( \frac{\alpha-\alpha_0}{1+\alpha \timeStep}\Bigr ) \timeStep \sum_{\indexTime = 2}^{+\infty} e^{-2\alpha (\indexTime + 1)\timeStep} |g_0^{\indexTime}|^2 + \timeStep \spaceStep\sum_{\indexTime = 2}^{+\infty} \sum_{\indexSpace = 1}^{+\infty} e^{-2\alpha (\indexTime+1)\timeStep} |s_{\indexSpace}^{\indexTime}|^2 \Bigr ),
\end{multline}
for every $\alpha > \alpha_0$, defined for zero-initial data, meaning $\conservedMomentDiscrete_{\indexSpace}^{0} = \conservedMomentDiscrete_{\indexSpace}^{1} = 0$.
In \eqref{eq:GKS}, $g_0^{\indexTime}$ represents a source term in the boundary scheme, whereas $s_{\indexSpace}^{\indexTime}$ encodes a source term in the bulk scheme.
\begin{remark}
    \begin{itemize}
        \item Quantities in \eqref{eq:GKS} are summed throughout time, since this approach relies on a \strong{Laplace transformation}.
        \item The inequality \eqref{eq:GKS} contains decay factors $e^{-2\alpha \indexTime\timeStep}$. This could make solutions exploding with $\indexTime$ having finite sums.
        \item The previous stability estimate must hold only for $\timeStep$ and $\spaceStep$ \strong{small enough}. Few authors have pointed out this fact, see \cite{trefethen1984instability} and \cite{beam1982stability}.
        Cases are where a scheme is GKS-stable---for $\timeStep$ and $\spaceStep$ small enough---but instabilities are observed for large $\spaceStep$.
        \item It has been conjectured that GKS stability \strong{may imply} $L^2$ stability, \confer{} \cite{trefethen1984instability}.
    \end{itemize}

\end{remark}

The strength of GKS theory is that it allows theoretical stability/instability proofs by a \strong{normal mode analysis} as below, obtaining necessary and sufficient conditions.
Practically, to check whether \eqref{eq:GKS} holds, we first introduce the $\timeShiftOperator$-transformation, which reads $ \laplaceTransformed{\conservedMomentDiscrete}(\timeShiftOperator) = \sum_{\indexTime \in \naturals}\timeShiftOperator^{-\indexTime}\conservedMomentDiscrete^{\indexTime}$, assuming $\conservedMomentDiscrete^0 = 0$.
Before proceeding, let us point out that we only have to check the outflow condition, since the inflow condition in \eqref{eq:bulkFDScheme} is---under the stability conditions by \Cref{prop:stabilityConditionsPeriodic} and thanks to the so-called ``Goldberg-Tadmor lemma'' \cite{goldberg1981scheme} and \cite[Prop. 4.2]{coulombel:cel-00616497}---strongly stable.

\subsubsection{Inside the domain}

The $\timeShiftOperator$-transformed bulk \fd{} scheme \eqref{eq:bulkSchemeAbstract} is
\begin{equation}\label{eq:resolventEquation}
    \timeShiftOperator \laplaceTransformed{\conservedMomentDiscrete}_{\indexSpace}(\timeShiftOperator) = \schemeMatrixFDBlockZeroEntry_{-1}\laplaceTransformed{\conservedMomentDiscrete}_{\indexSpace-1}(\timeShiftOperator) + \schemeMatrixFDBlockZeroEntry_1 \laplaceTransformed{\conservedMomentDiscrete}_{\indexSpace+1}(\timeShiftOperator) + \schemeMatrixFDBlockMinusOneEntry_0 \timeShiftOperator^{-1} \laplaceTransformed{\conservedMomentDiscrete}_{\indexSpace}(\timeShiftOperator), \qquad \indexSpace \in \naturalsWithoutZero,
\end{equation}
sometimes named ``resolvent equation''.
Introducing the ansatz $\laplaceTransformed{\conservedMomentDiscrete}_{\indexSpace}(\timeShiftOperator) = \fourierShift^{\indexSpace}$ yields the \strong{characteristic equation}
\begin{equation}\label{eq:bulkCharEquation}
    \modifiedTimeShiftOperator(\timeShiftOperator) = \schemeMatrixFDBlockZeroEntry_{-1}\fourierShift^{-1} + \schemeMatrixFDBlockZeroEntry_{1}\fourierShift, \qquad \text{with} \qquad \modifiedTimeShiftOperator(\timeShiftOperator) \definitionEquality \timeShiftOperator - \schemeMatrixFDBlockMinusOneEntry_0 \timeShiftOperator^{-1}.
\end{equation}
Equation \eqref{eq:bulkCharEquation} should be intended as a \strong{quadratic equation} on $\fourierShift = \fourierShift(\timeShiftOperator)$.
It is also the dispersion relation of the scheme when taking\footnote{Here, $\omega$ must not be confused with the relaxation parameter.} $\timeShiftOperator = e^{i\omega\timeStep}$ and $\fourierShift = e^{i\frequency\spaceStep}$ for $\frequency, \omega \in \reals$.
Each mode $(\timeShiftOperator, \fourierShift) \in \complex \times \complex$---where $\timeShiftOperator$ gives the time dynamics and $\fourierShift$ the space structure---can be classified \cite[Chapter 2]{boutin:tel-04157587} according to its position with respect to the unit disk.
Notice that when $\schemeMatrixFDBlockZeroEntry_{1} \neq 0$, we can define $\commonTerm(\relaxationParameter, \courantNumber) \definitionEquality \frac{\schemeMatrixFDBlockZeroEntry_{-1}}{\schemeMatrixFDBlockZeroEntry_1}$, the product of the two roots of \eqref{eq:bulkCharEquation}.
The following kind of classification result is called Hersh's lemma \cite{hersh63mixed} by \cite{boutin2024stability} and its proof, given in the Supplementary material, relies on well-known perturbation arguments to distinguish $\solutionCharStable$ from $\solutionCharUnstable$.

\begin{lemma}[Description of the roots of the characteristic equation]\label{lemma:bulkStudy}
   Under the stability conditions given by \Cref{prop:stabilityConditionsPeriodic}.
    \begin{itemize}
        \item When $\schemeMatrixFDBlockZeroEntry_{\mp 1} = 0$, thus for $\relaxationParameter = \tfrac{2}{1\mp\courantNumber} \in [1, 2]$ (and $\mp\courantNumber\in[0, 1]$), the characteristic equation \eqref{eq:bulkCharEquation} has one solution $\fourierShift_{\pm}(\timeShiftOperator)$ such that $\pm|\fourierShift_{\pm}(\timeShiftOperator)| > 1$ for $|\timeShiftOperator|>1$.
        \item Otherwise, the characteristic equation \eqref{eq:bulkCharEquation} has two solutions $\solutionCharStable(\timeShiftOperator)$ and $\solutionCharUnstable(\timeShiftOperator)$, such that 
        \begin{align*}
            &|\solutionCharStable(\timeShiftOperator)| < 1, \qquad \text{for} \quad |\timeShiftOperator|>1, \qquad \text{(stable root)}, \\
            &|\solutionCharUnstable(\timeShiftOperator)| > 1, \qquad \text{for} \quad |\timeShiftOperator|>1, \qquad \text{(unstable root)}.
        \end{align*}
        We have 
        \begin{align*}
            &\solutionCharStable(\pm 1) = 
            \begin{cases}
                \pm \commonTerm(\relaxationParameter, \courantNumber), \qquad &\text{if}\quad \courantNumber < 0, \\
                \pm 1, \qquad &\text{if}\quad \courantNumber > 0.
            \end{cases} \qquad 
            &&\solutionCharUnstable(\pm 1) = 
            \begin{cases}
                \pm 1, \qquad &\text{if}\quad \courantNumber < 0, \\
                \pm \commonTerm(\relaxationParameter, \courantNumber), \qquad &\text{if}\quad \courantNumber > 0.
            \end{cases}\\
            &\solutionCharStable(\pm (1-\relaxationParameter)) = 
            \begin{cases}
                \pm 1, \qquad &\text{if}\quad \courantNumber < 0, \\
                \pm \commonTerm(\relaxationParameter, \courantNumber), \qquad &\text{if}\quad \courantNumber > 0.
            \end{cases} \qquad 
            &&\solutionCharUnstable(\pm (1-\relaxationParameter)) = 
            \begin{cases}
                \pm \commonTerm(\relaxationParameter, \courantNumber), \qquad &\text{if}\quad \courantNumber < 0, \\
                \pm 1, \qquad &\text{if}\quad \courantNumber > 0.
            \end{cases}
        \end{align*}
    \end{itemize}
\end{lemma}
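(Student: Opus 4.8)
The plan is to treat the characteristic equation \eqref{eq:bulkCharEquation} as a quadratic in $\fourierShift$ and exploit the product-of-roots relation $\commonTerm(\relaxationParameter, \courantNumber) = \schemeMatrixFDBlockZeroEntry_{-1}/\schemeMatrixFDBlockZeroEntry_1$ together with a continuity/perturbation argument. First I would handle the degenerate cases: if $\schemeMatrixFDBlockZeroEntry_{-1} = 0$ (equivalently $\relaxationParameter = 2/(1+\courantNumber)$) then \eqref{eq:bulkCharEquation} collapses to $\modifiedTimeShiftOperator(\timeShiftOperator) = \schemeMatrixFDBlockZeroEntry_1\fourierShift$, a genuinely linear equation with the single root $\fourierShift_+(\timeShiftOperator) = \modifiedTimeShiftOperator(\timeShiftOperator)/\schemeMatrixFDBlockZeroEntry_1$; symmetrically for $\schemeMatrixFDBlockZeroEntry_1 = 0$. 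In that branch I would simply check directly that $|\fourierShift_+(\timeShiftOperator)| > 1$ for $|\timeShiftOperator| > 1$ by a modulus estimate on $\modifiedTimeShiftOperator(\timeShiftOperator) = \timeShiftOperator - (\relaxationParameter-1)\timeShiftOperator^{-1}$, using $|\relaxationParameter - 1| \leq 1$.

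For the generic case, the core tool is the \textbf{separation of the two roots}: I claim that for $|\timeShiftOperator| > 1$ the quadratic $\schemeMatrixFDBlockZeroEntry_1 \fourierShift^2 - \modifiedTimeShiftOperator(\timeShiftOperator)\fourierShift + \schemeMatrixFDBlockZeroEntry_{-1} = 0$ has no root on the unit circle $|\fourierShift| = 1$. This is the standard non-characteristic (or "no purely oscillatory mode") property: if $|\fourierShift| = 1$ then $\schemeMatrixFDBlockZeroEntry_{-1}\fourierShift^{-1} + \schemeMatrixFDBlockZeroEntry_1\fourierShift$ lies in the closed disk $\{|\timeShiftOperator'| : |\timeShiftOperator'| \le 1\}$ after accounting for the spectral-radius normalization from \Cref{prop:stabilityConditionsPeriodic} — more precisely, $\modifiedTimeShiftOperator$ maps $|\timeShiftOperator| = 1$ onto the image of the symbol of the periodic scheme, which by \Cref{prop:stabilityConditionsPeriodic} sits inside $\{|\timeShiftOperator| \le 1\}$, so no solution with $|\fourierShift| = 1$ can correspond to $|\timeShiftOperator| > 1$. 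Given this, the number of roots inside the unit disk is locally constant in $\timeShiftOperator$ on $\{|\timeShiftOperator| > 1\}$ (which is connected), so it suffices to count them for one value, e.g.\ $\timeShiftOperator \to +\infty$: there $\fourierShift \sim \modifiedTimeShiftOperator(\timeShiftOperator)/\schemeMatrixFDBlockZeroEntry_1 \to \infty$ for one root and $\fourierShift \sim \schemeMatrixFDBlockZeroEntry_{-1}/\modifiedTimeShiftOperator(\timeShiftOperator) \to 0$ for the other (by the product relation), giving exactly one root strictly inside and one strictly outside. This proves the dichotomy $|\solutionCharStable| < 1 < |\solutionCharUnstable|$ throughout $\{|\timeShiftOperator| > 1\}$.

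For the boundary-value formulas at $\timeShiftOperator = \pm 1$ and $\timeShiftOperator = \pm(1-\relaxationParameter)$, I would compute explicitly. At these points $\modifiedTimeShiftOperator(\timeShiftOperator) = \timeShiftOperator - \schemeMatrixFDBlockMinusOneEntry_0\timeShiftOperator^{-1}$ takes the values $\modifiedTimeShiftOperator(\pm 1) = \pm(1 - (\relaxationParameter-1)) = \pm(2-\relaxationParameter)$ and $\modifiedTimeShiftOperator(\pm(1-\relaxationParameter)) = \pm((1-\relaxationParameter) - 1) = \mp\relaxationParameter \cdot$ (after simplifying $\schemeMatrixFDBlockMinusOneEntry_0(1-\relaxationParameter)^{-1} = (\relaxationParameter-1)(1-\relaxationParameter)^{-1} = -1$, so $\modifiedTimeShiftOperator(\pm(1-\relaxationParameter)) = \pm(1-\relaxationParameter) \pm 1 = \pm(2-\relaxationParameter)$ as well — I would double-check this identity carefully). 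Substituting each value into $\schemeMatrixFDBlockZeroEntry_1\fourierShift^2 - \modifiedTimeShiftOperator\fourierShift + \schemeMatrixFDBlockZeroEntry_{-1} = 0$ and using $\schemeMatrixFDBlockZeroEntry_{\pm 1} = \tfrac12(2-\relaxationParameter \mp \relaxationParameter\courantNumber)$ gives a quadratic that factors; one checks the two roots are $\pm 1$ and $\pm\commonTerm(\relaxationParameter,\courantNumber)$, and then the sign of $\courantNumber$ decides which of these is the stable one, by comparing $|\commonTerm(\relaxationParameter,\courantNumber)| = |\schemeMatrixFDBlockZeroEntry_{-1}/\schemeMatrixFDBlockZeroEntry_1|$ with $1$: since $\courantNumber < 0$ makes $|\schemeMatrixFDBlockZeroEntry_{-1}| < |\schemeMatrixFDBlockZeroEntry_1|$ one gets $|\commonTerm| < 1$, i.e.\ $\commonTerm$ is the limiting value of $\solutionCharStable$, and vice versa. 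Finally I would invoke continuity of the roots up to the boundary $|\timeShiftOperator| = 1$ away from the degenerate points, which is legitimate because the leading coefficient $\schemeMatrixFDBlockZeroEntry_1$ stays nonzero there, to transfer these values to $\solutionCharStable$ and $\solutionCharUnstable$.

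\textbf{The main obstacle} I anticipate is the limiting behaviour at $\timeShiftOperator = \pm 1$ (and $\pm(1-\relaxationParameter)$), where $|\timeShiftOperator| = 1$ and the strict separation $|\solutionCharStable| < 1 < |\solutionCharUnstable|$ can degenerate — indeed the values $\pm 1$ and $\pm\commonTerm$ land on or near the unit circle, so one of the roots touches $|\fourierShift| = 1$. There the clean Rouché-type counting argument fails and one must argue by continuous extension of the root branches from $|\timeShiftOperator| > 1$, checking that no branch-point collision occurs on the relevant boundary arc (the discriminant of the quadratic does not vanish there except possibly at isolated parameter values, which would need separate inspection when $\relaxationParameter = 2$, $|\courantNumber| < 1$). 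This case analysis in $(\relaxationParameter, \courantNumber)$, matching the parameter regimes of \Cref{prop:stabilityConditionsPeriodic}, is where the bookkeeping is heaviest; the rest is routine algebra.
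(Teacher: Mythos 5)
Your overall route is essentially the paper's: view \eqref{eq:bulkCharEquation} as a quadratic in $\fourierShift$, exploit the product relation $\solutionCharStable\solutionCharUnstable = \commonTerm(\relaxationParameter,\courantNumber)$, compute the roots explicitly at $\timeShiftOperator = \pm 1, \pm(1-\relaxationParameter)$, and separate the two roots by modulus for $|\timeShiftOperator|>1$. Your explicit computation is correct: $\modifiedTimeShiftOperator(\pm 1)=\modifiedTimeShiftOperator(\pm(1-\relaxationParameter))=\pm(2-\relaxationParameter)=\pm(\schemeMatrixFDBlockZeroEntry_{-1}+\schemeMatrixFDBlockZeroEntry_{1})$, so the quadratic factors as $(\fourierShift\mp1)(\schemeMatrixFDBlockZeroEntry_{1}\fourierShift\mp\schemeMatrixFDBlockZeroEntry_{-1})=0$ with roots $\pm1$ and $\pm\commonTerm$. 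You are more self-contained in two places: you re-derive the one-root-inside/one-root-outside splitting by the standard von Neumann-plus-counting argument (no root on $|\fourierShift|=1$ when $|\timeShiftOperator|>1$, then count at $\timeShiftOperator\to\infty$ using the product relation), where the paper simply cites Strikwerda and Trefethen; and in the degenerate case $\schemeMatrixFDBlockZeroEntry_{\mp1}=0$ your direct bound $|\modifiedTimeShiftOperator(\timeShiftOperator)|\geq|\timeShiftOperator|-\schemeMatrixFDBlockMinusOneEntry_0|\timeShiftOperator|^{-1}>1-\schemeMatrixFDBlockMinusOneEntry_0=2-\relaxationParameter=|\schemeMatrixFDBlockZeroEntry_{\pm1}|$ settles the claim in one line, arguably more cleanly than the paper's perturbation-at-$\timeShiftOperator=1$ plus continuity.

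The soft spot is the labelling step, which you flagged but deferred. Deciding which of $\{\pm1,\pm\commonTerm\}$ is $\solutionCharStable$ by comparing $|\commonTerm(\relaxationParameter,\courantNumber)|$ with $1$ works for $\relaxationParameter\in(0,2)$, where $|\commonTerm|$ is strictly below or above $1$ according to the sign of $\courantNumber$, but it is inconclusive exactly at $\relaxationParameter=2$, where $\commonTerm=-1$ and both candidate roots sit on the unit circle; that case is included in the lemma and is precisely the one exploited later (the unstable mode $\timeShiftOperator=-1$, $\fourierShift=\solutionCharStable=1$ in \Cref{prop:stabInstGKS}). Likewise, $\timeShiftOperator=\pm(1-\relaxationParameter)$ lies strictly inside the unit disk for $\relaxationParameter\in(0,2)$, so ``continuity of the branches up to $|\timeShiftOperator|=1$'' does not reach those points. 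The paper's uniform fix, which you should substitute for the modulus comparison, is the first-order perturbation $\timeShiftOperator=\timeShiftOperator_0(1+\epsilon)$, $\fourierShift=\fourierShift_0(1+\delta)$, reading off the sign of $\delta/\epsilon$ from the implicit derivative of \eqref{eq:bulkCharEquation}: this identifies which branch each explicit root belongs to even when both have unit modulus, and is exactly the ``separate inspection'' you anticipated. With that replacement your argument closes.
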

Under the stability conditions from \Cref{prop:stabilityConditionsPeriodic}, the scheme is dissipative (but not totally dissipative, since $(\timeShiftOperator, \fourierShift) = (-1, -1) $ is a root of \eqref{eq:bulkCharEquation}) when $\relaxationParameter \in (0, 2)$, and non-dissipative ($|\timeShiftOperator| = 1$ for every $|\fourierShift| = 1$) when $\relaxationParameter = 2$.
Thanks to \cite[Lemma 3.2]{trefethen1984instability}, the group velocity of a given mode $(\targetEigenvalue, \targetFourier)$ fulfilling \eqref{eq:bulkCharEquation} with $|\targetEigenvalue| = |\targetFourier| = 1$ is defined regardless of the dissipativity of the scheme, using
\begin{equation*}
    \groupVelocity(\targetEigenvalue, \targetFourier) \definitionEquality - \latticeVelocity \frac{\targetFourier}{\targetEigenvalue} \frac{\differential\timeShiftOperator}{\differential\fourierShift}(\targetEigenvalue, \targetFourier),
\end{equation*}
For the values in \Cref{lemma:bulkStudy}, we obtain $\groupVelocity(\pm 1, \pm 1) =\latticeVelocity\courantNumber =  \advectionVelocity$.
Only in the non-dissipative case $\relaxationParameter = 2$, we can also consider $\groupVelocity(\pm 1, \mp 1) = -\advectionVelocity$.
Regardless of the sign of $\courantNumber$, the group velocity is positive for $\solutionCharStable$ (meaning it is right-going), and negative for $\solutionCharUnstable$ (left-going).

\begin{lemma}[Properties of $\commonTerm(\relaxationParameter, \courantNumber) $]\label{lemma:propertiesCommonTerm}
    Under the stability conditions given by \Cref{prop:stabilityConditionsPeriodic}, the function $\commonTerm(\relaxationParameter, \courantNumber)$ has the following properties.
    \begin{itemize}
        \item{{Domain of definition}.} For $\courantNumber < 0$, it is defined for all $\relaxationParameter$. For $\courantNumber > 0$, it is defined for all $\relaxationParameter$ except at $\relaxationParameter = \tfrac{2}{\courantNumber + 1} \in [1, 2]$, where a vertical asymptote exists such that $\lim_{\relaxationParameter \to ( \tfrac{2}{\courantNumber + 1})^{\pm}} \commonTerm(\relaxationParameter, \courantNumber)  = \mp\infty$.
        \item{{Monotonicity}.} For $\courantNumber < 0$ (resp., $\courantNumber > 0$) the function is monotonically decreasing (resp., increasing) in $\relaxationParameter$.
        \item{{Sign}.} 
        \begin{center}
            \begin{tabular}{p{7cm}p{7cm}}
                \multicolumn{1}{c}{For $\courantNumber < 0$} & \multicolumn{1}{c}{For $\courantNumber > 0$} \\ \vspace{-0.5cm}
                {\begin{align*}
                    \commonTerm(\relaxationParameter, \courantNumber) &\leq 0, \qquad \text{for} \quad \tfrac{2}{1-\courantNumber}\leq \relaxationParameter\leq 2, \\
                    \commonTerm(\relaxationParameter, \courantNumber) &> 0, \qquad \text{for} \quad 0 < \relaxationParameter < \tfrac{2}{1-\courantNumber}.
                \end{align*}} 
                &  \vspace{-0.5cm} {\begin{align*}
                    \commonTerm(\relaxationParameter, \courantNumber) &< 0, \qquad \text{for} \quad \tfrac{2}{1+\courantNumber}< \relaxationParameter\leq 2, \\
                    \commonTerm(\relaxationParameter, \courantNumber) &> 0, \qquad \text{for} \quad 0 < \relaxationParameter < \tfrac{2}{1+\courantNumber}.
                \end{align*}}
            \end{tabular}
        \end{center}
        
        \vspace{-.75cm}
        \item{{Key values and bounds}.}  We have $\commonTerm(0, \courantNumber) = 1$ and $\commonTerm(2, \courantNumber) = -1$.
        Moreover, $\sign(\courantNumber)|\commonTerm(\relaxationParameter, \courantNumber)|>\sign(\courantNumber)$ for $\relaxationParameter \in (0, 2)$.
    \end{itemize}
\end{lemma}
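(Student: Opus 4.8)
The plan is to reduce every assertion to the analysis of an explicit closed form for $\commonTerm$. Recalling $\schemeMatrixFDBlockZeroEntry_{\pm 1} = \tfrac{1}{2}(2 - \relaxationParameter \mp \relaxationParameter \courantNumber)$, one obtains at once
\[
    \commonTerm(\relaxationParameter, \courantNumber) = \frac{\schemeMatrixFDBlockZeroEntry_{-1}}{\schemeMatrixFDBlockZeroEntry_{1}} = \frac{2 - \relaxationParameter(1 - \courantNumber)}{2 - \relaxationParameter(1 + \courantNumber)},
\]
a ratio of two affine functions of $\relaxationParameter$. From here I would simply study this rational function on the stability window $\relaxationParameter \in (0, 2]$, $|\courantNumber| \leq 1$ provided by \Cref{prop:stabilityConditionsPeriodic}; the case $\courantNumber = 0$ is trivial since then $\commonTerm \equiv 1$, so I would assume $\courantNumber \neq 0$ throughout.

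For the \emph{domain of definition}, the denominator $2 - \relaxationParameter(1+\courantNumber)$ vanishes only at $\relaxationParameter = \tfrac{2}{1+\courantNumber}$. I would observe that when $\courantNumber < 0$ this value is $> 2$ (interpreted as $+\infty$ when $\courantNumber = -1$, where the denominator is the constant $2$), hence lies outside $(0, 2]$ and $\commonTerm$ is everywhere defined on the window, whereas when $\courantNumber \in (0, 1)$ it lies in $(1, 2)$, producing the claimed vertical asymptote; substituting it into the numerator gives $2 - \tfrac{2(1-\courantNumber)}{1+\courantNumber} = \tfrac{4\courantNumber}{1+\courantNumber} > 0$, so reading off the sign of $2 - \relaxationParameter(1+\courantNumber)$ on either side yields $\lim_{\relaxationParameter \to (2/(1+\courantNumber))^{\pm}} \commonTerm = \mp\infty$. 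For \emph{monotonicity}, a one-line quotient-rule computation gives
\[
    \partial_{\relaxationParameter} \commonTerm(\relaxationParameter, \courantNumber) = \frac{4\courantNumber}{(2 - \relaxationParameter(1+\courantNumber))^2},
\]
whose sign equals that of $\courantNumber$, which is exactly the decreasing/increasing dichotomy.

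For the \emph{sign}, I would combine monotonicity, continuity, the pole structure just established, and the position of the unique zero of the numerator $\relaxationParameter = \tfrac{2}{1-\courantNumber}$: when $\courantNumber < 0$ this zero is in $(1, 2)$ and $\commonTerm$ decreases from $\commonTerm(0, \courantNumber) = 1$ with no pole in between, so $\commonTerm > 0$ before it and $\commonTerm \leq 0$ on $[\tfrac{2}{1-\courantNumber}, 2]$; when $\courantNumber \in (0, 1)$ the zero exceeds $2$, so on $(0, 2]$ one only sees $\commonTerm$ increasing from $1$ to $+\infty$ at the asymptote and then from $-\infty$ to $\commonTerm(2, \courantNumber) = -1$, giving $\commonTerm > 0$ on $(0, \tfrac{2}{1+\courantNumber})$ and $\commonTerm < 0$ on $(\tfrac{2}{1+\courantNumber}, 2]$. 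The \emph{key values} $\commonTerm(0, \courantNumber) = 1$ and $\commonTerm(2, \courantNumber) = \tfrac{2\courantNumber}{-2\courantNumber} = -1$ are immediate substitutions. For the \emph{bound}, I would set $u \definitionEquality 2 - \relaxationParameter(1-\courantNumber)$ and $v \definitionEquality 2 - \relaxationParameter(1+\courantNumber)$ and compute $u^2 - v^2 = (u-v)(u+v) = (2\relaxationParameter\courantNumber)(4 - 2\relaxationParameter) = 4\relaxationParameter\courantNumber(2 - \relaxationParameter)$, which has the sign of $\courantNumber$ for $\relaxationParameter \in (0, 2)$; hence $|\commonTerm|^2 = u^2/v^2 \lessgtr 1$ according to $\sign(\courantNumber)$, that is $\sign(\courantNumber)\,|\commonTerm(\relaxationParameter, \courantNumber)| > \sign(\courantNumber)$ on $(0, 2)$. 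I do not anticipate a real obstacle here; the only thing to watch is the bookkeeping of where $\tfrac{2}{1 \pm \courantNumber}$ lies relative to $(0, 2]$ in the two cases $\courantNumber \lessgtr 0$, together with the remark that for $\courantNumber > 0$ the sign and bound statements at $\relaxationParameter = \tfrac{2}{1+\courantNumber}$ should be read in the harmless sense of one-sided limits.
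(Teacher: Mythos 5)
Your proof is correct: all claims reduce, as you do, to elementary analysis of the explicit rational function $\commonTerm(\relaxationParameter,\courantNumber)=\frac{2-\relaxationParameter(1-\courantNumber)}{2-\relaxationParameter(1+\courantNumber)}$, and your derivative $\partial_{\relaxationParameter}\commonTerm = 4\courantNumber/(2-\relaxationParameter(1+\courantNumber))^2$ and the identity $u^2-v^2=4\relaxationParameter\courantNumber(2-\relaxationParameter)$ both check out. The paper states this lemma without any proof, treating it as routine; your computation is precisely the verification it leaves implicit, so there is no divergence of approach to report.
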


Now that we have classified and characterized the roots of the characteristic equation \eqref{eq:bulkCharEquation} through Lemmas \ref{lemma:bulkStudy} and 
\ref{lemma:propertiesCommonTerm}, we look for general solutions of the resolvent equation \eqref{eq:resolventEquation}, which---see \cite[Lemma 3.1]{trefethen1984instability}---are of the form
\begin{equation}\label{eq:generalSolutionResolvent}
    \laplaceTransformed{\conservedMomentDiscrete}_{\indexSpace}(\timeShiftOperator) = \coefficientStable (\timeShiftOperator)\solutionCharStable(\timeShiftOperator)^{\indexSpace} + \coefficientUnstable (\timeShiftOperator)\solutionCharUnstable(\timeShiftOperator)^{\indexSpace}, \qquad \indexSpace \in \naturals,
\end{equation}
as long as $\solutionCharStable(\timeShiftOperator)$ and $\solutionCharUnstable(\timeShiftOperator)$ are distinct.
Since we would like that $(\laplaceTransformed{\conservedMomentDiscrete}_{\indexSpace}(\timeShiftOperator))_{\indexSpace\in\naturals}  \in L^2(\spaceStep\naturals)$ for $|\timeShiftOperator|\geq 1$, we take $\coefficientUnstable (\timeShiftOperator)= 0$.
This yields a so-called ``admissible solution'' \cite[Chapter 11]{strikwerda2004finite} and fulfills the following definition.
\begin{definition}[Admissible solution \cite{strikwerda2004finite}]\label{def:admissibleSolution}
    Let $\laplaceTransformed{\conservedMomentDiscrete}_{\indexSpace}(\timeShiftOperator)$ fulfill the resolvent equation \eqref{eq:resolventEquation}.
    Then, $\laplaceTransformed{\conservedMomentDiscrete}_{\indexSpace}(\timeShiftOperator)$ is said to be an ``admissible solution'' if, when $|\timeShiftOperator|>1$, then $(\laplaceTransformed{\conservedMomentDiscrete}_{\indexSpace}(\timeShiftOperator))_{\indexSpace\in\naturals}  \in L^2(\spaceStep\naturals)$, whereas, once $|\timeShiftOperator| = 1$, then $\laplaceTransformed{\conservedMomentDiscrete}_{\indexSpace}(\timeShiftOperator)$ is the limit of an admissible solution defined for arguments outside the closed unit disk. Said differently, when  $|\timeShiftOperator| = 1$, $\laplaceTransformed{\conservedMomentDiscrete}_{\indexSpace}(\timeShiftOperator) = \lim_{\epsilon \to 0^{+}} \laplaceTransformed{\discrete{w}}_{\indexSpace}(\timeShiftOperator (1+\epsilon))$ with $(\laplaceTransformed{\discrete{w}}_{\indexSpace}(\timeShiftOperator (1+\epsilon)))_{\indexSpace\in\naturals} \in L^2(\spaceStep\naturals)$ for all $\epsilon > 0$.
\end{definition}

\subsubsection{At the outflow boundary}

The admissible solution features---in our case---one free parameter $\coefficientStable (\timeShiftOperator)$, fixed considering the $\timeShiftOperator$-transformation of the boundary scheme \eqref{eq:outflowSchemeAbstract}
\begin{equation}\label{eq:boundaryTransformed}
    \timeShiftOperator \laplaceTransformed{\conservedMomentDiscrete}_0(\timeShiftOperator) = \sum_{\indexSpace = 0}^{\numberCoefficientsOutFlowFDZero - 1}\coefficientOutflowFDZero_{\indexSpace}\laplaceTransformed{\conservedMomentDiscrete}_{\indexSpace}(\timeShiftOperator) + \timeShiftOperator^{-1}\sum_{\indexSpace = 0}^{\numberCoefficientsOutFlowFDMinusOne - 1}\coefficientOutflowFDMinusOne_{\indexSpace}\laplaceTransformed{\conservedMomentDiscrete}_{\indexSpace}(\timeShiftOperator).
\end{equation}
The so-called ``eigenvalue problem'', \confer{} \cite[Equation (13.1.35)]{gustafsson2013time} is made up of the transformed bulk equation \eqref{eq:resolventEquation} plus the transformed boundary scheme \eqref{eq:boundaryTransformed}.
An admissible solution, satisfying the boundary equation \eqref{eq:boundaryTransformed}, and not identically zero is called ``eigensolution'', \confer{} \cite{vilar2015development, li2022stability}.
Eventually \strong{GKS stability/instability} is checked using the following result.
\begin{theorem}[Theorem 11.3.3 from \cite{strikwerda2004finite}]
    A scheme with boundary, tackling the advection equation and $L^2$-stable in its periodic version, is GKS-stable (\emph{i.e.} \eqref{eq:GKS} holds) if and only if it does not admit any non-trivial admissible solution satisfying the homogeneous boundary condition (\emph{i.e.} an eigensolution).
\end{theorem}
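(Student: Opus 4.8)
The plan is to pass to the $\timeShiftOperator$-transform and reduce the estimate \eqref{eq:GKS} to a \emph{uniform resolvent estimate} for the eigenvalue problem, whose validity is then controlled by a single scalar quantity, the Kreiss--Lopatinskii determinant of the boundary scheme. First I would fix $\alpha>\alpha_0$, put $\timeShiftOperator$ on the circle $|\timeShiftOperator|=e^{\alpha\timeStep}$, and invoke Parseval's identity for the $\timeShiftOperator$-transform: up to the harmless weights appearing in \eqref{eq:GKS}, its left-hand side becomes the integral over that circle of $|\laplaceTransformed{\conservedMomentDiscrete}_0(\timeShiftOperator)|^2+\spaceStep\sum_{\indexSpace\geq 0}|\laplaceTransformed{\conservedMomentDiscrete}_{\indexSpace}(\timeShiftOperator)|^2$ and its right-hand side the analogous integral of the transformed data. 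Hence \eqref{eq:GKS} is equivalent to the existence of $C>0$ with
\begin{equation*}
    |\laplaceTransformed{\conservedMomentDiscrete}_0(\timeShiftOperator)|^2 + \spaceStep\sum_{\indexSpace\geq 0}|\laplaceTransformed{\conservedMomentDiscrete}_{\indexSpace}(\timeShiftOperator)|^2 \leq C\Bigl(|\laplaceTransformed{g}_0(\timeShiftOperator)|^2 + \spaceStep\sum_{\indexSpace\geq 1}|\laplaceTransformed{f}_{\indexSpace}(\timeShiftOperator)|^2\Bigr)
\end{equation*}
for every admissible solution of \eqref{eq:resolventEquation}--\eqref{eq:boundaryTransformed} and every $|\timeShiftOperator|>1$; carrying the decay factors $e^{-2\alpha\indexTime\timeStep}$ through and reducing to zero initial data is classical and I would not dwell on it.

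Next I would solve the eigenvalue problem explicitly. By \Cref{prop:stabilityConditionsPeriodic} and \Cref{lemma:bulkStudy}, for $|\timeShiftOperator|>1$ the characteristic equation \eqref{eq:bulkCharEquation} has exactly one stable root $\solutionCharStable(\timeShiftOperator)$, so by \eqref{eq:generalSolutionResolvent} and $L^2$-admissibility $\laplaceTransformed{\conservedMomentDiscrete}_{\indexSpace}(\timeShiftOperator)=\coefficientStable(\timeShiftOperator)\solutionCharStable(\timeShiftOperator)^{\indexSpace}$, plus an explicit particular solution when $\laplaceTransformed{f}\neq 0$. Substituting into \eqref{eq:boundaryTransformed} yields a scalar equation $\Delta(\timeShiftOperator)\coefficientStable(\timeShiftOperator)=(\text{linear functional of the data})$ with
\begin{equation*}
    \Delta(\timeShiftOperator) \definitionEquality \timeShiftOperator - \sum_{\indexSpace = 0}^{\numberCoefficientsOutFlowFDZero - 1}\coefficientOutflowFDZero_{\indexSpace}\solutionCharStable(\timeShiftOperator)^{\indexSpace} - \timeShiftOperator^{-1}\sum_{\indexSpace = 0}^{\numberCoefficientsOutFlowFDMinusOne - 1}\coefficientOutflowFDMinusOne_{\indexSpace}\solutionCharStable(\timeShiftOperator)^{\indexSpace}.
\end{equation*}
Since $|\solutionCharStable|<1$ we have $\laplaceTransformed{\conservedMomentDiscrete}_0=\coefficientStable$ and $\spaceStep\sum_{\indexSpace}|\laplaceTransformed{\conservedMomentDiscrete}_{\indexSpace}|^2\lesssim|\coefficientStable|^2$, so the displayed resolvent estimate is equivalent to a uniform lower bound $|\Delta(\timeShiftOperator)|\geq c>0$ over $|\timeShiftOperator|\geq 1$, the value on $|\timeShiftOperator|=1$ being taken through the limiting procedure of \Cref{def:admissibleSolution} (which is exactly what extends $\solutionCharStable$ continuously there).

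It then remains to identify the zeros of $\Delta$ with eigensolutions and to upgrade ``no zeros'' to a uniform bound. The first point is tautological: $\Delta(\targetEigenvalue)=0$ with $|\targetEigenvalue|\geq 1$ means $\solutionCharStable(\targetEigenvalue)^{\indexSpace}$ is a non-trivial admissible solution of the homogeneous boundary problem — an eigensolution — and vice versa. For ``eigensolution $\Rightarrow$ not GKS-stable'' I would build an explicit violator of \eqref{eq:GKS}: if $|\targetEigenvalue|>1$, the sequence $\conservedMomentDiscrete_{\indexSpace}^{\indexTime}=\targetEigenvalue^{\indexTime}\solutionCharStable(\targetEigenvalue)^{\indexSpace}$, truncated in space and corrected so the initial data vanish (the correction being a bounded source), already grows geometrically in time while the forcing stays bounded; if $|\targetEigenvalue|=1$, one perturbs to $\timeShiftOperator=\targetEigenvalue(1+\epsilon)$ and reads off \eqref{eq:GKS} the boundary-output-to-source ratio, which blows up as $\epsilon\to 0^+$. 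For the converse I would argue by compactness: $\Delta$ is an algebraic function of $\timeShiftOperator$, continuous on $\{|\timeShiftOperator|\geq 1\}$ up to the admissibility limit, and as $|\timeShiftOperator|\to\infty$ one has $\solutionCharStable(\timeShiftOperator)\to 0$ and $\Delta(\timeShiftOperator)/\timeShiftOperator\to 1$, so $\Delta$ extends continuously and without zeros to the one-point compactification $\{|\timeShiftOperator|\geq 1\}\cup\{\infty\}$; hence $\inf|\Delta|>0$, which feeds back through Parseval to \eqref{eq:GKS}. The hard part will be the analysis near $|\timeShiftOperator|=1$: there $\solutionCharStable$ and $\solutionCharUnstable$ can coalesce (the discriminant of \eqref{eq:bulkCharEquation} vanishing, e.g.\ at the mode $(\timeShiftOperator,\fourierShift)=(-1,-1)$ noted after \Cref{lemma:bulkStudy}), the ansatz \eqref{eq:generalSolutionResolvent} degenerates, and one must run the Kreiss argument — selecting the admissible branch as a one-sided limit from $|\timeShiftOperator|>1$ and checking that a vanishing discriminant alone does not spoil the estimate — which is precisely where the periodic $L^2$-stability of \Cref{prop:stabilityConditionsPeriodic} is genuinely used.
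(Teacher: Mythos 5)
You should first note that the paper does not prove this statement at all: it is quoted as Theorem 11.3.3 of Strikwerda's book, so there is no in-paper argument to measure yours against. Your sketch follows the standard normal-mode route of Gustafsson, Kreiss and Sundstr\"om---Parseval in time, explicit resolution of the resolvent equation, reduction to the non-vanishing of the Kreiss--Lopatinskii determinant $\Delta(\timeShiftOperator)$ on $|\timeShiftOperator|\geq 1$---which is indeed how the cited reference proceeds, so the strategy is the right one.

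Two steps are genuine gaps rather than omitted routine. First, the weights $\bigl(\tfrac{\alpha-\alpha_0}{1+\alpha\timeStep}\bigr)$ and its square in \eqref{eq:GKS} are not harmless: after Parseval they become factors that vanish as $|\timeShiftOperator|\to 1^{+}$ (essentially $|\timeShiftOperator|-1$ and $(|\timeShiftOperator|-1)^2$), and the pointwise-in-$\timeShiftOperator$ estimate equivalent to \eqref{eq:GKS} must carry them. Your unweighted version is false in general for the non-dissipative case $\relaxationParameter=2$ that is central to this paper: the interior term $\spaceStep\sum_{\indexSpace}|\coefficientStable|^2|\solutionCharStable(\timeShiftOperator)|^{2\indexSpace}=\spaceStep|\coefficientStable|^2/(1-|\solutionCharStable(\timeShiftOperator)|^2)$ blows up as $|\timeShiftOperator|\to 1$, and it is precisely the $(|\timeShiftOperator|-1)^2$ weight, combined with a lower bound on $1-|\solutionCharStable(\timeShiftOperator)|$ in terms of $|\timeShiftOperator|-1$ (transversal at non-glancing points, of square-root type at glancing ones), that restores the inequality; so the claimed equivalence, as stated, does not hold. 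Second, the passage from ``$\Delta$ has no zeros on the compactified set $|\timeShiftOperator|\geq 1$'' to a uniform lower bound, and the failure of \eqref{eq:GKS} at a generalized eigenvalue with $|\targetEigenvalue|=1$, both hinge on the behaviour at points of the unit circle where $\solutionCharStable$ and $\solutionCharUnstable$ coalesce (here $(\timeShiftOperator,\fourierShift)=(-1,-1)$ when $\relaxationParameter=2$), where the ansatz \eqref{eq:generalSolutionResolvent} degenerates. You explicitly defer this, but it is the actual mathematical content of the theorem---everything else in the outline is bookkeeping. As a description of the strategy your proposal is sound; as a proof it stops exactly where the published one starts working.
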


Instabilities arising at the outflow are thus associated with right-going modes propagating inside the domain.
We are now able to study GKS stability for the outflow boundary conditions that we have considered.
For \eqref{eq:extrapolationBoundaryCondition}, the following result allows to conclude.
Its proof is standard \cite{strikwerda2004finite}, given in the Supplementary material, and relies on inserting $\laplaceTransformed{\conservedMomentDiscrete}_{\indexSpace}(\timeShiftOperator) = \fourierShift^{\indexSpace}$ into \eqref{eq:boundaryTransformed}.
\begin{lemma}\label{lemma:boundaryRoots}
    For $\orderExtrapolation\geq 1$, the eigenvalue problems associated with \eqref{eq:extrapolationBoundaryCondition} have the following roots: $(\timeShiftOperator, \fourierShift) = (1, 1)$ and $(\timeShiftOperator, \fourierShift) = (\relaxationParameter - 1, -\commonTerm(\relaxationParameter, \courantNumber))$, and for $\orderExtrapolation \geq 2$, $(\timeShiftOperator, \fourierShift) = (1-\relaxationParameter, 1)$.
    Moreover, when $\orderExtrapolation = 1, 2$, all the roots of the eigenvalue problems are those listed above. 
\end{lemma}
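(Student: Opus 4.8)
The plan is to turn the eigenvalue problem into a coupled pair of algebraic relations in $(\timeShiftOperator,\fourierShift)$, read off the listed solutions, and then eliminate $\fourierShift$ to prove completeness when $\orderExtrapolation=1,2$.

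\emph{Step 1 (reduction).} By \eqref{eq:generalSolutionResolvent} with $\coefficientUnstable\equiv 0$ an eigensolution is $\laplaceTransformed{\conservedMomentDiscrete}_{\indexSpace}(\timeShiftOperator)=\coefficientStable(\timeShiftOperator)\,\solutionCharStable(\timeShiftOperator)^{\indexSpace}$; inserting $\laplaceTransformed{\conservedMomentDiscrete}_{\indexSpace}=\fourierShift^{\indexSpace}$ into \eqref{eq:boundaryTransformed} and using \eqref{eq:resolventEquation}, a root is a pair $(\timeShiftOperator,\fourierShift)$ with $\fourierShift\neq 0$ solving both the bulk characteristic equation \eqref{eq:bulkCharEquation} and the boundary characteristic equation $\timeShiftOperator^{2}=\timeShiftOperator\,P(\fourierShift)+Q(\fourierShift)$, where $P(\fourierShift)\definitionEquality\sum_{\indexSpace}\coefficientOutflowFDZero_{\indexSpace}\fourierShift^{\indexSpace}$ and $Q(\fourierShift)\definitionEquality\sum_{\indexSpace}\coefficientOutflowFDMinusOne_{\indexSpace}\fourierShift^{\indexSpace}$ have coefficients supplied by \Cref{prop:outflowExtrapolation}: for $\orderExtrapolation=1$ one has $Q\equiv 0$ and $P$ affine (from \eqref{eq:bulkBoundarySigma1}), whereas for $\orderExtrapolation\geq 2$ the coefficients are the Catalan-triangle weights of \eqref{eq:bulkBoundarySigmaGenericTwo}. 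Which of these roots are genuine eigensolutions --- those with $\fourierShift=\solutionCharStable$ and $|\timeShiftOperator|\geq 1$ --- depends on $\sign\courantNumber$ through \Cref{lemma:bulkStudy} and is the concern of the subsequent GKS analysis; the present lemma records all solutions of the coupled system. It is worth noting that pulling the kinetic condition \eqref{eq:extrapolationBoundaryCondition} back to a plane wave of the two-velocities scheme collapses, via $\sum_{\indexSpace=0}^{\orderExtrapolation-1}\coefficientExtrapolation_{\indexSpace}\fourierShift^{\indexSpace+1}=1-(1-\fourierShift)^{\orderExtrapolation}$, to the single relation $(1-\fourierShift)^{\orderExtrapolation}\fourierTransformed{\distributionFunctionDiscrete}^{+}=0$; its branch $\fourierShift=1$, fed into \eqref{eq:bulkCharEquation}, gives $\timeShiftOperator\in\{1,1-\relaxationParameter\}$, and the elimination of the non-conserved moment that produces \eqref{eq:bulkBoundarySigmaGenericTwo} adds the root $(\timeShiftOperator,\fourierShift)=(\relaxationParameter-1,-\commonTerm)$, while for $\orderExtrapolation=1$ it discards $(1-\relaxationParameter,1)$ because \eqref{eq:bulkBoundarySigma1} carries no $\conservedMomentDiscrete^{\indexTime-1}$ term; this already explains the $\orderExtrapolation$-threshold.

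\emph{Step 2 (the listed pairs are roots).} For $(1,1)$: \eqref{eq:bulkCharEquation} holds since $\schemeMatrixFDBlockZeroEntry_{-1}+\schemeMatrixFDBlockZeroEntry_{1}+\schemeMatrixFDBlockMinusOneEntry_{0}=(2-\relaxationParameter)+(\relaxationParameter-1)=1$, and $1=P(1)+Q(1)$ is the statement that the boundary scheme reproduces constant states, i.e. $\sum_{\indexSpace}\coefficientOutflowFDZero_{\indexSpace}+\sum_{\indexSpace}\coefficientOutflowFDMinusOne_{\indexSpace}=1$, which for \eqref{eq:bulkBoundarySigmaGenericTwo} follows from $\sum_{\indexSpace}\coefficientExtrapolation_{\indexSpace}=1$ and $\sum_{\indexSpace}\coefficientCombinationElimination_{\indexSpace}=0$. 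For $(\relaxationParameter-1,-\commonTerm)$ (with $\relaxationParameter\neq 1$, the case $\relaxationParameter=1$ being a degenerate limit at $\timeShiftOperator=0$): at $\timeShiftOperator=\schemeMatrixFDBlockMinusOneEntry_{0}$ one has $\modifiedTimeShiftOperator(\timeShiftOperator)=\schemeMatrixFDBlockMinusOneEntry_{0}-1=-(\schemeMatrixFDBlockZeroEntry_{-1}+\schemeMatrixFDBlockZeroEntry_{1})$, and $\fourierShift=-\commonTerm=-\schemeMatrixFDBlockZeroEntry_{-1}/\schemeMatrixFDBlockZeroEntry_{1}$ is one of the two roots of $\schemeMatrixFDBlockZeroEntry_{-1}\fourierShift^{-1}+\schemeMatrixFDBlockZeroEntry_{1}\fourierShift=-(\schemeMatrixFDBlockZeroEntry_{-1}+\schemeMatrixFDBlockZeroEntry_{1})$ (the other being $-1$), so \eqref{eq:bulkCharEquation} holds; one then checks $\schemeMatrixFDBlockMinusOneEntry_{0}^{2}=\schemeMatrixFDBlockMinusOneEntry_{0}\,P(-\commonTerm)+Q(-\commonTerm)$ from the explicit coefficients. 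For $(1-\relaxationParameter,1)$ when $\orderExtrapolation\geq 2$: at $\fourierShift=1$ equation \eqref{eq:bulkCharEquation} forces $\timeShiftOperator\in\{1,1-\relaxationParameter\}$, and, using $P(1)+Q(1)=1$, the boundary equation at $\timeShiftOperator=1-\relaxationParameter$ reduces to the single identity $Q(1)=\relaxationParameter-1$ --- i.e. the coefficients attached to the level $\indexTime-1$ in \eqref{eq:bulkBoundarySigmaGenericTwo} sum to $\relaxationParameter-1$ --- which holds for $\orderExtrapolation\geq 2$ and fails ($Q\equiv 0$) for $\orderExtrapolation=1$.

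\emph{Step 3 (completeness for $\orderExtrapolation=1,2$).} For $\orderExtrapolation=1$, since $Q\equiv 0$ and $P$ is affine, the boundary equation gives $\fourierShift$ as a rational function of $\timeShiftOperator$; substituting into \eqref{eq:bulkCharEquation} cleared of $\timeShiftOperator^{-1}$ produces a cubic whose $\timeShiftOperator^{3}$ term cancels (when $\schemeMatrixFDBlockZeroEntry_{1}\neq 0$), leaving a quadratic with known root $\timeShiftOperator=1$ and, by the product of roots, second root $\timeShiftOperator=\schemeMatrixFDBlockMinusOneEntry_{0}=\relaxationParameter-1$, whose companion $\fourierShift$ evaluates to $-\commonTerm$; the degenerate cases $\schemeMatrixFDBlockZeroEntry_{\pm 1}=0$ are handled directly via \Cref{lemma:bulkStudy}. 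For $\orderExtrapolation=2$ both \eqref{eq:bulkCharEquation} and the boundary equation are quadratic in $\fourierShift$, so their $\fourierShift$-resultant is a polynomial in $\timeShiftOperator$; after discarding the spurious factor from clearing $\timeShiftOperator^{-1}$ (a common zero at $(\timeShiftOperator,\fourierShift)=(0,0)$, excluded) and a nonzero constant, it has degree $3$, hence is proportional to $(\timeShiftOperator-1)(\timeShiftOperator-(\relaxationParameter-1))(\timeShiftOperator-(1-\relaxationParameter))$, and the corresponding $\fourierShift$ values read off from the boundary equation are $1$, $-\commonTerm$, $1$.

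\emph{Main obstacle.} The genuinely nontrivial point is the combinatorial input to Step 2 for general $\orderExtrapolation$: the identities $\sum_{\indexSpace}\coefficientCombinationElimination_{\indexSpace}=0$, $Q(1)=\relaxationParameter-1$, and $\schemeMatrixFDBlockMinusOneEntry_{0}^{2}=\schemeMatrixFDBlockMinusOneEntry_{0}\,P(-\commonTerm)+Q(-\commonTerm)$ are $\orderExtrapolation$-dependent alternating sums of Catalan-triangle entries $\catalanTwoIndices{n}{p}$ against powers of $\commonTerm$, split over the three index ranges of \eqref{eq:bulkBoundarySigmaGenericTwo}. The natural tool is generating functions --- the extrapolation weights already satisfy $\sum_{\indexSpace}\coefficientExtrapolation_{\indexSpace}\fourierShift^{\indexSpace+1}=1-(1-\fourierShift)^{\orderExtrapolation}$, and $\catalanTwoIndices{n}{p}$ has a closed generating function --- but matching these against the way the proof of \Cref{prop:outflowExtrapolation} partitions the $\coefficientCombinationElimination_{\indexSpace}$ is where the bookkeeping is delicate; by contrast the $\orderExtrapolation=1$ argument and the $\orderExtrapolation=2$ resultant are elementary, if tedious.
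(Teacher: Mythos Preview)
Your approach mirrors the paper's: write the $\timeShiftOperator$-transformed boundary scheme, couple it with the bulk characteristic equation \eqref{eq:bulkCharEquation}, and for $\orderExtrapolation=1,2$ eliminate $\fourierShift$ by solving the boundary relation for $\fourierShift(\timeShiftOperator)$ and substituting into the bulk. Two details need correction, though neither breaks the argument.

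First, the identity you invoke is $\sum_{\indexSpace}\coefficientCombinationElimination_{\indexSpace}=2$, not $0$ (this value is used explicitly in the proof of \Cref{prop:outflowExtrapolationModifiedEquation}). With $\sum_{\indexSpace}\coefficientExtrapolation_{\indexSpace}=1$ and $\sum_{\indexSpace}\coefficientCombinationElimination_{\indexSpace}=2$ one computes $P(1)=2-\relaxationParameter$ and $Q(1)=\relaxationParameter-1$ from \eqref{eq:bulkBoundarySigmaGenericTwo}, so $P(1)+Q(1)=1$ and your reduction for the root $(1-\relaxationParameter,1)$ goes through verbatim.

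Second, for $\orderExtrapolation=2$ the coefficient $\coefficientOutflowFDMinusOne_{2}=\tfrac{\relaxationParameter-1}{2}(\coefficientExtrapolation_1+\coefficientCombinationElimination_1-1)=\tfrac{\relaxationParameter-1}{2}(-1+2-1)=0$, so the boundary relation is \emph{linear} in $\fourierShift$, not quadratic---this is precisely \eqref{eq:tmp3}. The paper therefore simply solves \eqref{eq:tmp3} for $\fourierShift(\timeShiftOperator)$ and substitutes into \eqref{eq:bulkCharEquation}, obtaining $\timeShiftOperator\in\{1,\pm(\relaxationParameter-1)\}$; your resultant picture collapses to this substitution, and the degree-$3$ count must be revisited (after clearing $\timeShiftOperator^{-1}$ and the spurious factor $\timeShiftOperator$, one actually lands on a quartic with $\timeShiftOperator=1$ double), but the list of distinct $(\timeShiftOperator,\fourierShift)$ pairs is unchanged. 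For general $\orderExtrapolation\geq 2$ the paper, like you, does not attempt completeness: it verifies the three listed roots by substitution, handling $(\relaxationParameter-1,-\commonTerm)$ by induction on $\orderExtrapolation$ via Catalan-triangle identities---exactly the combinatorial obstacle you identify.
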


The only thing left to understand---utilizing \Cref{lemma:bulkStudy}---is whether the roots in \Cref{lemma:boundaryRoots} correspond to the strictly right-going $\solutionCharStable$, leading to a GKS-unstable scheme, or to $\solutionCharUnstable$, ensuring stability.
\begin{proposition}[GKS stability-instability of \eqref{eq:extrapolationBoundaryCondition}]\label{prop:stabInstGKS}
    The GKS stability of the boundary conditions \eqref{eq:extrapolationBoundaryCondition} is as follows.
        \begin{align*}
            \bullet\quad\text{For}~\orderExtrapolation = 1.\qquad&\text{When}~\courantNumber < 0, \qquad \text{GKS-stable}.\\
            &\text{When}~\courantNumber > 0, \qquad \text{GKS-unstable, with unstable mode}~ \timeShiftOperator = 1, \quad \fourierShift=\solutionCharStable = 1.
        \end{align*}
        \begin{align*}
            \bullet\quad\text{For}~\orderExtrapolation = 2.\qquad&\text{When}~\courantNumber < 0,  &&\text{GKS-stable for}~\relaxationParameter \in (0, 2).\\
            & &&\text{GKS-unstable for}~\relaxationParameter = 2~\text{with unstable mode}~\timeShiftOperator = -1, \quad \fourierShift=\solutionCharStable = 1.\\
            &\text{When}~\courantNumber > 0,  &&\text{GKS-unstable, with unstable mode}~ \timeShiftOperator = 1, \quad \fourierShift=\solutionCharStable = 1.
        \end{align*}
        \begin{align*}
            \bullet\quad\text{For}~\orderExtrapolation \geq 3.\qquad&\text{When}~\courantNumber < 0,  &&\text{(probably) GKS-stable for}~\relaxationParameter \in (0, 2).\\
            & &&\text{GKS-unstable for}~\relaxationParameter = 2~\text{with unstable mode}~\timeShiftOperator = -1, \quad \fourierShift=\solutionCharStable = 1.\\
            &\text{When}~\courantNumber > 0,  &&\text{GKS-unstable, with unstable mode}~ \timeShiftOperator = 1, \quad \fourierShift=\solutionCharStable = 1.
        \end{align*}
\end{proposition}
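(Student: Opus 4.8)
The plan is to apply the GKS criterion recalled above (Theorem~11.3.3 of \cite{strikwerda2004finite}): the boundary scheme is GKS-stable if and only if the associated eigenvalue problem---the transformed bulk relation \eqref{eq:bulkCharEquation} together with the transformed boundary condition \eqref{eq:boundaryTransformed}---admits no non-trivial admissible solution, that is, no eigensolution. As soon as $\solutionCharStable(\timeShiftOperator) \neq \solutionCharUnstable(\timeShiftOperator)$, every admissible solution is, by \eqref{eq:generalSolutionResolvent} and \Cref{def:admissibleSolution}, of the shape $\laplaceTransformed{\conservedMomentDiscrete}_{\indexSpace}(\timeShiftOperator) = \coefficientStable(\timeShiftOperator)\solutionCharStable(\timeShiftOperator)^{\indexSpace}$, since the growing branch must be discarded ($\coefficientUnstable(\timeShiftOperator) = 0$). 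Hence a root $(\timeShiftOperator, \fourierShift)$ of the eigenvalue problem with $|\timeShiftOperator| \geq 1$ produces an eigensolution exactly when $\fourierShift = \solutionCharStable(\timeShiftOperator)$, the strictly right-going root, and is harmless when $\fourierShift = \solutionCharUnstable(\timeShiftOperator)$. So I would start from the list of roots supplied by \Cref{lemma:boundaryRoots}, note that all of them satisfy $|\timeShiftOperator| \leq 1$ because $\timeShiftOperator \in \{1, \relaxationParameter-1, 1-\relaxationParameter\}$ with $\relaxationParameter \in (0, 2]$, so that only the ones with $|\timeShiftOperator| = 1$ matter, and for each of those decide whether $\fourierShift$ equals $\solutionCharStable(\timeShiftOperator)$ or $\solutionCharUnstable(\timeShiftOperator)$ by reading off the explicit values tabulated in \Cref{lemma:bulkStudy}. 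Distinctness of the two branches at these points---needed so that the reduction ``admissible $=$ multiple of $\solutionCharStable^{\indexSpace}$'' is valid---follows from $\commonTerm(0,\courantNumber)=1$, $\commonTerm(2,\courantNumber)=-1$ and $\sign(\courantNumber)|\commonTerm(\relaxationParameter,\courantNumber)|>\sign(\courantNumber)$ in \Cref{lemma:propertiesCommonTerm}.

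Then I would run through the three root families. For $(\timeShiftOperator, \fourierShift) = (1,1)$, present for every $\orderExtrapolation \geq 1$: by \Cref{lemma:bulkStudy}, $\solutionCharStable(1) = 1$ when $\courantNumber > 0$, while $\solutionCharStable(1) = \commonTerm(\relaxationParameter,\courantNumber) \neq 1$ when $\courantNumber < 0$ (it equals $-1$ at $\relaxationParameter = 2$ and is $<1$ for $\relaxationParameter \in (0,2)$, by \Cref{lemma:propertiesCommonTerm}); hence for $\courantNumber > 0$ this is an eigensolution and the scheme is GKS-unstable with unstable mode $\timeShiftOperator = 1$, $\fourierShift = \solutionCharStable = 1$, whereas for $\courantNumber < 0$ one has $\fourierShift = 1 = \solutionCharUnstable(1)$, which is harmless. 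For $(\relaxationParameter-1, -\commonTerm(\relaxationParameter,\courantNumber))$: its modulus $|\relaxationParameter-1|$ is $<1$ when $\relaxationParameter \in (0,2)$, so it never contributes, and when $\relaxationParameter = 2$ it collapses, via $\commonTerm(2,\courantNumber)=-1$, onto the root $(1,1)$ already handled. For $(1-\relaxationParameter, 1)$, present for $\orderExtrapolation \geq 2$: again $|1-\relaxationParameter| < 1$ unless $\relaxationParameter = 2$, where $\timeShiftOperator = -1$ and \Cref{lemma:bulkStudy} gives $\solutionCharStable(-1) = 1$ when $\courantNumber < 0$ (and $\solutionCharStable(-1) = -1$ when $\courantNumber > 0$), so for $\courantNumber < 0$, $\relaxationParameter = 2$ it is an eigensolution and the scheme is GKS-unstable with unstable mode $\timeShiftOperator = -1$, $\fourierShift = \solutionCharStable = 1$. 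For $\orderExtrapolation \in \{1,2\}$, \Cref{lemma:boundaryRoots} certifies this list is exhaustive, so in the complementary regimes---$\courantNumber < 0$ for $\orderExtrapolation = 1$; $\courantNumber < 0$ with $\relaxationParameter \in (0,2)$ for $\orderExtrapolation = 2$---no eigensolution exists and GKS-stability follows; for $\orderExtrapolation \geq 3$ the two GKS-instability statements carry over verbatim from the eigensolutions just exhibited.

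The delicate point---and the reason the case $\orderExtrapolation \geq 3$, $\courantNumber < 0$, $\relaxationParameter \in (0,2)$ is only claimed to be ``(probably)'' GKS-stable---is that \Cref{lemma:boundaryRoots} does not assert completeness of the root list when $\orderExtrapolation \geq 3$. Removing the hedge would require proving that the determinant of the eigenvalue problem, a polynomial in $\timeShiftOperator$ whose degree grows with $\orderExtrapolation$ and whose coefficients are built from the Catalan-triangle weights $\coefficientCombinationElimination_{\indexSpace}$, has no zero $\timeShiftOperator$ with $|\timeShiftOperator| \geq 1$ at which the companion $\fourierShift$ is the stable branch $\solutionCharStable(\timeShiftOperator)$; this combinatorial verification is, I expect, the main obstacle, and I would leave it open. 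A secondary technicality is that in the non-dissipative regime $\relaxationParameter = 2$ all the relevant eigensolutions sit on $|\timeShiftOperator| = 1$ and must be read through the limiting clause of \Cref{def:admissibleSolution}; this is legitimate precisely because the values $\solutionCharStable(\pm 1)$ and $\solutionCharStable(\pm(1-\relaxationParameter))$ used above are defined in \Cref{lemma:bulkStudy} as exactly those one-sided limits, and at each of them $\solutionCharStable \neq \solutionCharUnstable$ by the bounds on $\commonTerm$.
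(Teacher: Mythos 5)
Your proposal is correct and follows essentially the same route as the paper: the paper's (implicit) proof is precisely to take the root list of \Cref{lemma:boundaryRoots}, retain only those with $|\timeShiftOperator|=1$, and use the tabulated values of $\solutionCharStable$ and $\solutionCharUnstable$ in \Cref{lemma:bulkStudy} (together with the sign/bound information on $\commonTerm$ from \Cref{lemma:propertiesCommonTerm}) to decide whether each root is an eigensolution. Your identification of the non-exhaustiveness of the root list for $\orderExtrapolation\geq 3$ as the source of the ``(probably)'' hedge, and of the limiting clause in \Cref{def:admissibleSolution} as the device handling the non-dissipative case $\relaxationParameter=2$, matches the paper's reasoning.
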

The  result when $\orderExtrapolation = 1$ is interesting, for a Neumann boundary condition applied to a leap-frog scheme is \strong{unstable} \cite{trefethen1984instability, coulombel2020neumann}.
Now, the boundary condition stemming from a Neumann boundary condition on the \strong{incoming distribution function} $\distributionFunctionDiscrete^{+}$ is different from the same condition on  $\conservedMomentDiscrete$.
Differently, employing the outflow conditions when the boundary is unfortunately an inflow ($\courantNumber > 0$), entails instability regardless of the value of $\relaxationParameter$.

We now tackle the analysis of \eqref{eq:conditionKinRod}.
Regrettably, the procedure does not give fully analytical results but is complemented by numerical studies.
The transformed boundary equation reads $(\courantNumber + 1) ( \tfrac{1}{2} (2-\relaxationParameter + \relaxationParameter\courantNumber) + \tfrac{1}{2}(2-\relaxationParameter - \relaxationParameter\courantNumber)\fourierShift(\timeShiftOperator)^2) + \dots = 0$, where $\dots$ terms are not made explicit for compactness.
In the listed term, we recognize part of the bulk equation \eqref{eq:bulkCharEquation}. Replacing this part gives a first order equation in $\fourierShift(\timeShiftOperator)$, providing
    \begin{equation}\label{eq:tmp5}
        \fourierShift(\timeShiftOperator) = 
        \frac{(\courantNumber + 1)\relaxationParameter^2 - (\courantNumber + 1)\relaxationParameter-2\timeShiftOperator^2}{(\courantNumber+1)\relaxationParameter^2 - (\courantNumber + 1)\timeShiftOperator^2 - 2(\courantNumber + 1)\relaxationParameter + ((\courantNumber + 1)\relaxationParameter - 2)\timeShiftOperator + \courantNumber + 1}.
    \end{equation}
    Inserting this expression into \eqref{eq:bulkCharEquation} gives a sixth-order polynomial equation in $\timeShiftOperator$.
    Explicit computations show that $\timeShiftOperator = 1, \pm(1-\relaxationParameter)$ are three out of the six roots of this polynomial.
    Inserting these values into \eqref{eq:tmp5} gives 
    \begin{equation}\label{eq:tmp6}
        \fourierShift(1) = 1, \qquad \fourierShift(1-\relaxationParameter) = \commonTerm(\relaxationParameter, \courantNumber), \qquad   \fourierShift(\relaxationParameter - 1) = - \commonTerm(\relaxationParameter, \courantNumber).
    \end{equation}
    \begin{itemize}
        \item When $\courantNumber < 0$, the roots from \eqref{eq:tmp6} correspond, for every unstable candidate $\timeShiftOperator = 1, \pm( 1-\relaxationParameter)$, to $\solutionCharUnstable$ rather than $\solutionCharStable$. These modes are consequently stable---as they are not admissible eigensolutions.
        We are left to check the \strong{remaining three roots} to conclude on stability.
        \item When $\courantNumber > 0$, \eqref{eq:tmp6} gives to a full set of admissible eigensolutions, hence the boundary condition is \strong{GKS-unstable}, and we can end the study here.
    \end{itemize}
    Assume that $\courantNumber < 0$, where we hope proving GKS stability.
    Factoring the roots $\timeShiftOperator = 1, \pm(1-\relaxationParameter)$ out from the sixth-order equation, we are left with a third-order equation, which reads, excluding the trivial case where $\courantNumber = -1$:
    \begin{multline}\label{eq:tmp7}
        4 \, \timeShiftOperator^{3}- {\left(\courantNumber^{2} {\relaxationParameter} + 2 \, \courantNumber - {\relaxationParameter} - 2\right)} \timeShiftOperator^{2}  
        + {\left(2 \, \courantNumber^{2} {\relaxationParameter}^{2} - \courantNumber^{2} {\relaxationParameter} - 2 \, \courantNumber {\relaxationParameter}^{2} + 2 \, \courantNumber {\relaxationParameter} - 4 \, {\relaxationParameter}^{2} - 2 \, \courantNumber + 7 \, {\relaxationParameter} - 2\right)} \timeShiftOperator \\
         -2 \, \courantNumber {\relaxationParameter}^{3} + 4 \, \courantNumber {\relaxationParameter}^{2} - 2 \, {\relaxationParameter}^{3}  - 2 \, \courantNumber {\relaxationParameter} + 4 \, {\relaxationParameter}^{2} - 2 \, {\relaxationParameter} = 0.
    \end{multline}
    Unfortunately, is it hard to analytically study the roots of \eqref{eq:tmp7} for general $\relaxationParameter$ and $\courantNumber$.
    We can numerically check that the condition is stable for given values of $\relaxationParameter$, see \Cref{fig:kinrodGKS}.
    Here, the ``solitary'' real root $\timeShiftOperator_3$ always lays inside the unit disk without causing instability.
    The same holds for the two complex conjugate roots $\timeShiftOperator_1$ and $\timeShiftOperator_2$ for small $|\courantNumber|$.
    When their modulii exceed one (the GKS procedure only studies $\timeShiftOperator\in\complex$ such that $|\timeShiftOperator|\geq 1$) for $|\courantNumber|$ close to one, the modulii of the corresponding $\fourierShift$'s from \eqref{eq:tmp5} are larger than one, so they are indeed not $\solutionCharStable$, but $\solutionCharUnstable$. The boundary condition is thus \strong{stable}.

    \begin{figure} 
        \begin{center}
            \includegraphics[width = 0.48\textwidth]{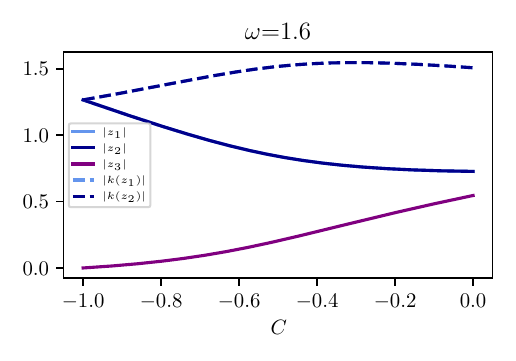}
            \includegraphics[width = 0.48\textwidth]{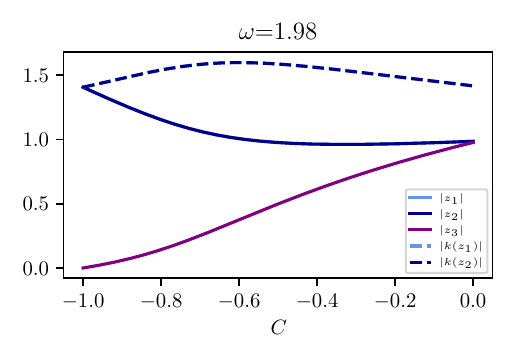} 
        \end{center}\caption{\label{fig:kinrodGKS}Modulus of the solutions of \eqref{eq:tmp7} and corresponding $\fourierShift$ obtained by \eqref{eq:tmp5}.}
    \end{figure}

\subsubsection{Reflection coefficients}\label{sec:reflectionCoefficient}

We finally introduce \strong{reflection coefficients} \cite{trefethen1984instability}, a powerful tool and subsequent evolution of GKS theory.
On the left boundary, inserting the general solution of the resolvent equation \eqref{eq:generalSolutionResolvent} into the transformed boundary condition \eqref{eq:boundaryTransformed}, the reflection coefficient is defined as the ratio between the amplitude $\coefficientStable(\timeShiftOperator)$ radiating from the boundary towards the inner domain, and the incident amplitude $\coefficientUnstable(\timeShiftOperator)$ from the bulk towards the boundary:
\begin{equation}\label{eq:reflectionCoefficient}
    \reflectionCoefficientLetterOut(\timeShiftOperator)\definitionEquality \frac{\coefficientStable(\timeShiftOperator)}{\coefficientUnstable(\timeShiftOperator)} = -\frac{\timeShiftOperator  - \sum_{\indexSpace = 0}^{\numberCoefficientsOutFlowFDZero - 1}\coefficientOutflowFDZero_{\indexSpace}\solutionCharUnstable(\timeShiftOperator)^{\indexSpace} - \timeShiftOperator^{-1}\sum_{\indexSpace = 0}^{\numberCoefficientsOutFlowFDMinusOne - 1}\coefficientOutflowFDMinusOne_{\indexSpace}\solutionCharUnstable(\timeShiftOperator)^{\indexSpace}}{\timeShiftOperator  - \sum_{\indexSpace = 0}^{\numberCoefficientsOutFlowFDZero - 1}\coefficientOutflowFDZero_{\indexSpace}\solutionCharStable(\timeShiftOperator)^{\indexSpace} - \timeShiftOperator^{-1}\sum_{\indexSpace = 0}^{\numberCoefficientsOutFlowFDMinusOne - 1}\coefficientOutflowFDMinusOne_{\indexSpace}\solutionCharStable(\timeShiftOperator)^{\indexSpace}}.
\end{equation}
The reflection coefficient is particularly interesting when $\timeShiftOperator$ is an unstable mode.
For the right boundary, we have
\begin{equation}\label{eq:reflectionCoefficientIn}
    \reflectionCoefficientLetterIn(\timeShiftOperator) \definitionEquality \frac{\coefficientUnstable(\timeShiftOperator) }{\coefficientStable(\timeShiftOperator)} =-\Bigl (\frac{\solutionCharStable(\timeShiftOperator)}{\solutionCharUnstable(\timeShiftOperator)}\Bigr )^{\numberSpacePoints - 1} = - \commonTerm(\relaxationParameter, \courantNumber)^{-\numberSpacePoints + 1} \solutionCharStable(\timeShiftOperator)^{2\numberSpacePoints - 2},
\end{equation}
when $\timeShiftOperator\neq0$, by virtue of the fact that, except when $\commonTerm$ is not defined, $\solutionCharStable(\timeShiftOperator)\solutionCharUnstable(\timeShiftOperator) = \commonTerm(\relaxationParameter, \courantNumber)$.

\subsection{Numerical simulations}\label{sec:numericalSimulationsStability}

Knowing whether boundary conditions are GKS-stable/unstable, we numerically assess  the relevance of this analysis on the actual behavior of \lbm{} schemes.
We simulate on a domain of length $\domainLength = 1$.
The initial datum is $\distributionFunctionDiscrete_{\indexSpace}^{\pm, 0} = \tfrac{1}{2}(1\pm\courantNumber)\mathds{1}_{\indexSpace=1}$ and is chosen to quickly develop possible instabilities. It represents a perturbation close to the left boundary.
Sources are switched off both at the left and right boundary, \emph{i.e.} $\sourceTermBoundaryDiscrete_0 \equiv 0$ and $\boundaryDatumInflow\equiv 0$. 
We start using Courant number $\courantNumber = -1/2$.

\begin{figure} 
    \definecolor{colorS2}{RGB}{246,156,115}
    \definecolor{colorS198}{RGB}{232,62,62}
    \definecolor{colorS196}{RGB}{161,25,91}
    \definecolor{colorS194}{RGB}{76,29,74}
    \begin{center}
        \includegraphics[width = 1.\textwidth]{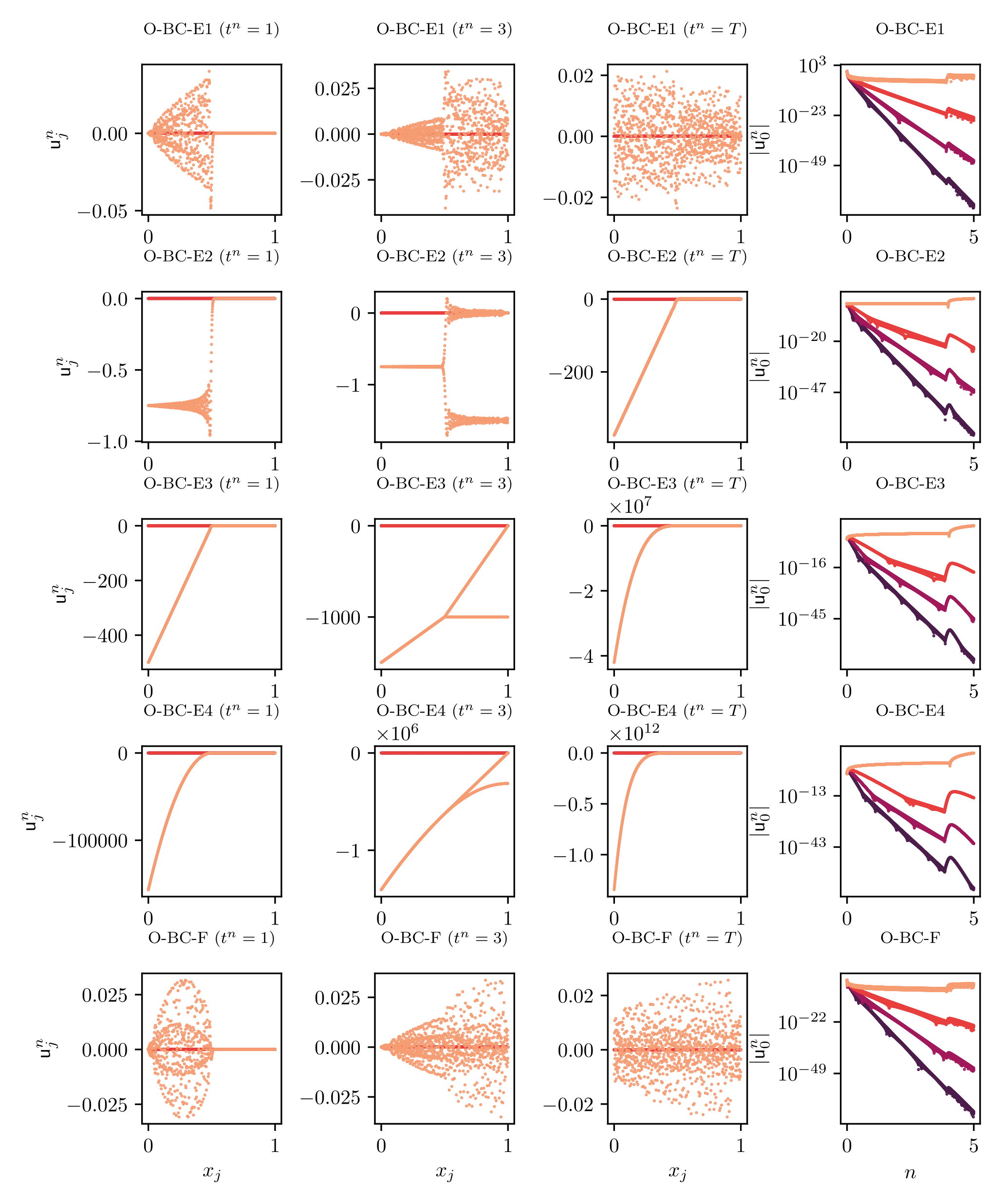}
    \end{center}\caption{\label{fig:instab_neg_vel}Snapshots of the solution (first three columns) and absolute value of the solution at the left boundary node as function of time (last column). Here, we employ $\courantNumber  = -1/2< 0$ and $\numberSpacePoints = 1000$. Colors: \textcolor{colorS2}{$\bullet$ for $\relaxationParameter = 2$}, \textcolor{colorS198}{$\bullet$ for $\relaxationParameter = 1.98$}, \textcolor{colorS196}{$\bullet$ for $\relaxationParameter = 1.96$}, and \textcolor{colorS194}{$\bullet$ for $\relaxationParameter = 1.94$}. In the first three columns, data for $\relaxationParameter<2$ are all superimposed.}
\end{figure}
\begin{figure} 
    \definecolor{colorS198}{RGB}{245,136,95}
    \definecolor{colorS196}{RGB}{203,26,79}
    \definecolor{colorS194}{RGB}{96,30,83}
    \begin{center}
        \includegraphics[width = 1.\textwidth]{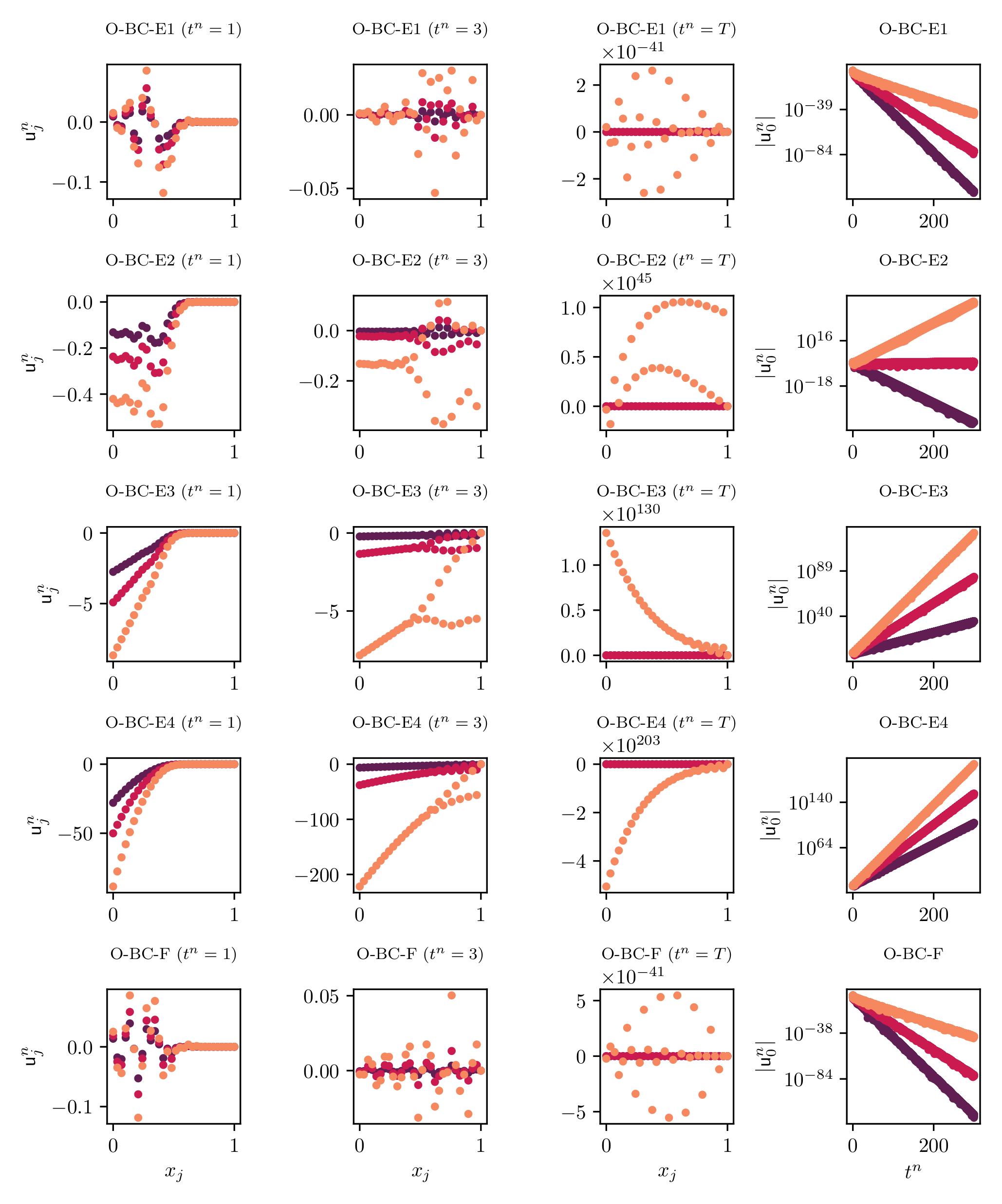}
    \end{center}\caption{\label{fig:instab_neg_vel_few_points}Snapshots of the solution (first three columns) and absolute value of the solution at the left boundary node as function of time (last column). Here, we employ $\courantNumber  = -1/2< 0$ and $\numberSpacePoints = 30$. Colors: \textcolor{colorS198}{$\bullet$ for $\relaxationParameter = 1.98$}, \textcolor{colorS196}{$\bullet$ for $\relaxationParameter = 1.96$}, and \textcolor{colorS194}{$\bullet$ for $\relaxationParameter = 1.94$}.}
\end{figure}

\begin{itemize}
    \item \strong{Fine resolution}: $\numberSpacePoints = 1000$ and $\finalTime = 5$.
    In \Cref{fig:instab_neg_vel}, we plot snapshots of the solution and the absolute value of the solution at the boundary cell $|\conservedMomentDiscrete_0^{\indexTime}|$ as function of time, for $\relaxationParameter \in (0, 2]$.

    Looking at the snapshot at time $\timeGridPoint{\indexTime} = 1$, the results are \strong{in agreement} with the GKS stability analysis, \confer{} \Cref{prop:stabInstGKS}: both \eqref{eq:papillon1} and \eqref{eq:conditionKinRod} are stable for any value of $\relaxationParameter$ and do not feature a stationary perturbation at the outflow, whereas \eqref{eq:extrapolationBoundaryCondition} for $\orderExtrapolation \geq 2$ is unstable for $\relaxationParameter = 2$ and stable otherwise. 
    In the unstable case $\relaxationParameter = 2$, the stationary perturbation at the outflow trends like $|\conservedMomentDiscrete_0^{\indexTime}| \propto \indexTime^{\orderExtrapolation - 2}$ for $\indexTime \lesssim \tfrac{2(\numberSpacePoints - 1)}{|\courantNumber|}$, thus grows when $\orderExtrapolation\geq 3$.
    This is unsurprising considering that $\reflectionCoefficientLetterOut(-1) = \infty$, and reads $\reflectionCoefficientLetterOut(\timeShiftOperator) \sim (\timeShiftOperator + 1)^{1-\orderExtrapolation} + \bigO{(\timeShiftOperator + 1)^{2-\orderExtrapolation}}$ about $\timeShiftOperator = -1$.
    The instability propagates to the right with group velocity $\groupVelocity(-1, 1) =-\advectionVelocity>0$, brought by the strictly right-going mode $\solutionCharStable$.
    If the domain were infinite to the right, we would see a growth of the $L^2$ norm $\propto \indexTime^{\orderExtrapolation - 3/2}$. 
    For $\orderExtrapolation = 2$, this would yield the claim by \cite[Theorem 2a]{trefethen1984instability}, since the unstable mode is strictly right-going. However, similarly to the results in \cite[Demonstration 4.1]{trefethen82phd}, this goes \strong{against} the prediction of \cite[Theorem 3a]{trefethen1984instability}, which applies because of the infinite reflection coefficient.
    This discrepancy is difficult to explain---except that maybe we do not employ the right initial datum to observe the expected growth---but has already been documented in the literature in similar contexts.

    Looking at the next snapshot at $\timeGridPoint{\indexTime} = 3$, in the case $\relaxationParameter = 2$, we see that the inflow reflects the incoming perturbation brought by $\solutionCharStable$ into one carried by $\solutionCharUnstable$, and those two interact.
    For instance, $\reflectionCoefficientLetterIn(-1) = -\commonTerm(2, -1/2)^{-\numberSpacePoints + 1}\solutionCharStable(-1)^{2\numberSpacePoints - 2} = (-1)^{\numberSpacePoints}$: the inflow reflects (though without amplification) the unstable mode generated by the inflow, and therefore triggers the \strong{exponential} growth envisioned by \cite[Proposition 8]{trefethen1985stability}, also empirically noticeable on the last column of \Cref{fig:instab_neg_vel}.

    \item \strong{Coarse resolution}: $\numberSpacePoints = 30$ and $\finalTime = 300$. 
    The results in \Cref{fig:instab_neg_vel_few_points} are provided for $\relaxationParameter < 2$.

    Both \eqref{eq:papillon1} and \eqref{eq:conditionKinRod} are stable.
    Still, quite surprisingly, \eqref{eq:extrapolationBoundaryCondition} for $\orderExtrapolation \geq 2$ may be unstable for values of $\relaxationParameter < 2$ quite close to two, which is \strong{not predicted} by the GKS theory. 
    Otherwise said, as heralded by \cite{trefethen1985stability}, GKS-stability does not imply P-stability (this notion is introduced in the sequel).
    Even at the beginning of the computation (\confer{} snapshot at $\timeGridPoint{\indexTime} = 1$), where the inflow does not play any role, we witness $|\conservedMomentDiscrete_0^{\indexTime}| \propto \indexTime^{\orderExtrapolation-2}$ for $\indexTime$ small (here $\indexTime \lesssim \tfrac{2(\numberSpacePoints - 1)}{|\courantNumber|}$ is not restrictive enough on this coarse mesh, for actual waves may travel faster than expected).
    For this is a transient behavior, pseudo-spectra are a relevant tool, see \Cref{sec:plotsPseudoSpectra}.
    
    Eventually, for the unstable simulations, the growth is \strong{exponential} in $\indexTime$, having $\reflectionCoefficientLetterOut(1-\relaxationParameter) = \infty$ and $\reflectionCoefficientLetterIn(1-\relaxationParameter) = -\commonTerm(\relaxationParameter, -1/2)^{-\numberSpacePoints+1}$ with $-1<\commonTerm(\relaxationParameter, -1/2) \approx -1$. This can again be explained using repeated reflections between boundaries, and  is also studied in \Cref{sec:plotsPseudoSpectra} with spectra.
\end{itemize}

\begin{figure} 
    \definecolor{colorS16}{RGB}{203,26,79}
    \begin{center}
        \includegraphics[width = 1.\textwidth]{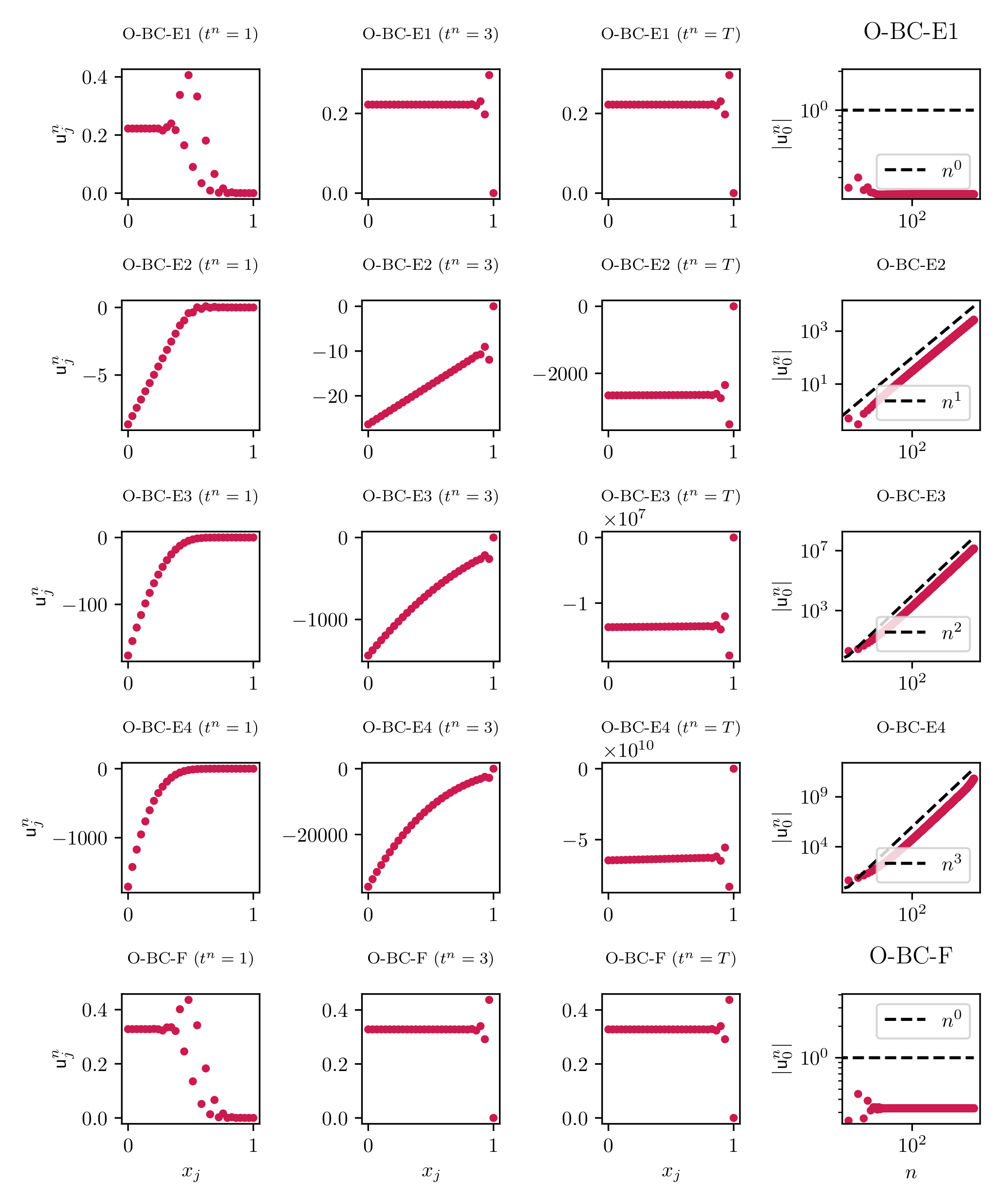}
    \end{center}\caption{\label{fig:instab_pos_vel_few_points}Snapshots of the solution (first three columns) and absolute value of the solution at the left boundary node as function of time (last column). Here, we employ $\courantNumber  = 1/2> 0$ and $\numberSpacePoints = 30$. Color: \textcolor{colorS16}{$\bullet$ for $\relaxationParameter = 1.6$}}
\end{figure}

Even if not much physically funded, we now consider $\courantNumber = 1/2$.
We employ a coarse resolution $\numberSpacePoints = 30$ and a final time $\finalTime = 300$.
The results are given in \Cref{fig:instab_pos_vel_few_points} using $\relaxationParameter = 1.6$.

We observe that $|\conservedMomentDiscrete_0^{\indexTime}| \propto \indexTime^{\orderExtrapolation-1}$ also for $\indexTime \gg 1$, whence boundaries have already interacted by repeated reflections. These growth rates are explained in \Cref{sec:reflCoeff} and \ref{sec:plotsPseudoSpectra} bridging spectral features, which are inherent to the long-time behavior of the scheme, with \strong{reflection coefficients}.
Formal computations show that---for $\courantNumber > 0$ and $\relaxationParameter \in (0, 2)$---they are
    \begin{align}
        &\reflectionCoefficientLetterOut(\timeShiftOperator) \sim (\timeShiftOperator - 1)^{-\orderExtrapolation} + \bigO{(\timeShiftOperator - 1)^{-\orderExtrapolation+1}}, \qquad &&\text{for}\quad \eqref{eq:extrapolationBoundaryCondition},\label{eq:tmp18} \\
        &\reflectionCoefficientLetterOut(\timeShiftOperator) \sim (\timeShiftOperator - 1)^{-1} + \bigO{1}, \qquad &&\text{for}\quad \eqref{eq:conditionKinRod}, \label{eq:tmp19}
    \end{align}
    close to $\timeShiftOperator = 1$.
    Despite these infinite reflection coefficients, the observed polynomial trends in $\indexTime$ are far from the catastrophic exponential growth predicted by \cite[Propositions 8]{trefethen1985stability}.
    This mild behavior stems from the inflow boundary condition that does not allow catastrophic repeated reflections take place.
    Indeed, $\reflectionCoefficientLetterIn(1) = - \commonTerm(\relaxationParameter, \courantNumber)^{-\numberSpacePoints + 1}$.
    For $\courantNumber>0$, we have $|\commonTerm(\relaxationParameter, \courantNumber)|>1$, thus $\lim_{\numberSpacePoints\to+\infty}\reflectionCoefficientLetterIn(1)=0$ exponentially, thus $\reflectionCoefficientLetterIn(1) \approx 0$ at finite $\numberSpacePoints$.
    This is precisely what the snapshot at $\timeGridPoint{\indexTime} = 3$ in \Cref{fig:instab_pos_vel_few_points} conveys: no reflected wave originates from the right boundary.
    
Somewhat surprisingly in view of their GKS-instability, we see that \eqref{eq:papillon1} and \eqref{eq:conditionKinRod} remain stable, because we operate on a bounded domain and the unstable mode lodged at the left boundary does not increase in time, \confer{} $|\conservedMomentDiscrete_0^{\indexTime}| \sim 1$, with the inflow boundary that does not reflect.
On a semi-infinite domain to the right, we would witness a growth of the $L^2$ norm $\propto \indexTime^{\orderExtrapolation - 1/2}$.
In the case $\orderExtrapolation = 1$, this is milder than the prediction of \cite[Theorem 3a]{trefethen1984instability}, which is relevant here.
Despite this dissimilitude, the outcomes of the simulations can be perfectly and very convincingly explained by the developments of the following section.

\subsection{Matrix method and pseudo-spectra}\label{sec:matrixMethod}

To explain the link between the order of poles of the reflection coefficient and order-of-growth of instabilities, as well as unexpected instabilities at low resolutions, we use the so-called ``matrix method'' \cite{sousa2009edge, vilar2015development, dakin2018inverse, lebarbenchon:tel-04214887}, where the numerical scheme is \strong{represented by a matrix}, whose powers we would like to bound. 
The main difficulty is to do so uniformly in $\numberSpacePoints$.
We also provide plots of the so-called pseudo-spectra \cite{trefethen2005spectra}.
Let us first introduce different matrices.

\begin{itemize}
    \item \strong{Original \lbm{} matrix}. The iteration matrix associated with the original \lbm{} scheme---endowed with boundary conditions---can be written both using the distribution functions $\distributionFunctionDiscrete^{\pm}$ or based on $\conservedMomentDiscrete$ and $\nonConservedMomentDiscrete$.
    Introducing $\twoVariablesSolutionVector^{\indexTime} = \transpose{(\distributionFunctionDiscrete_0^{+, \indexTime}, \dots, \distributionFunctionDiscrete_{\numberSpacePoints-1}^{+, \indexTime}, \distributionFunctionDiscrete_0^{-, \indexTime}, \dots, \distributionFunctionDiscrete_{\numberSpacePoints-1}^{-, \indexTime})}$, we write 
    \begin{equation*}
        \twoVariablesSolutionVector^{\indexTime + 1} = \schemeMatrixLBM \twoVariablesSolutionVector^{\indexTime}, \qquad \textnormal{with} \qquad
        \schemeMatrixLBM = 
        \left [
        \begin{array}{c|c}
            \schemeMatrixLBMBlock{++} & \schemeMatrixLBMBlock{+-} \\
            \hline
            \schemeMatrixLBMBlock{-+} & \schemeMatrixLBMBlock{--} 
        \end{array}
        \right ] \in \matrixSpace{2\numberSpacePoints}{\reals}, 
    \end{equation*}
    whose blocks are given in \Cref{app:exprMatrixLBM}.
    The matrix $\schemeMatrixLBM$ is block banded \emph{quasi}-Toeplitz.
    \begin{remark}[Zero eigenvalues of $\schemeMatrixLBM$]\label{rem:zeroEigLBM}
        When using \eqref{eq:extrapolationBoundaryCondition} and for $\numberSpacePoints$ large enough compared to $\orderExtrapolation$, the matrix $\schemeMatrixLBM$ is singular, both due to the outflow boundary condition and the inflow boundary condition.
    \end{remark}

    We cannot use the Perron-Frobenius theorem to characterize the spectrum $\spectrum(\schemeMatrixLBM)$ of $\schemeMatrixLBM$ since the matrix features negative coefficients due to the inflow condition, and is reducible.

    \item \strong{Corresponding \fd{} matrix}. 
    Introducing the vector $\twoStepSolutionVector^{\indexTime} = \transpose{(\conservedMomentDiscrete_0^{\indexTime}, \dots, \conservedMomentDiscrete_{\numberSpacePoints-1}^{\indexTime}, \conservedMomentDiscrete_0^{\indexTime - 1}, \dots, \conservedMomentDiscrete_{\numberSpacePoints-1}^{\indexTime - 1})}$ across two time-steps, we write the corresponding \fd{} scheme with vanishing boundary data as 
    \begin{equation}\label{eq:schemeMatricialFD}
        \twoStepSolutionVector^{\indexTime + 1} = \schemeMatrixFD \twoStepSolutionVector^{\indexTime}, \qquad \textnormal{with} \qquad
        \schemeMatrixFD = 
        \left [
        \begin{array}{c|c}
            \schemeMatrixFDBlockZero & \schemeMatrixFDBlockMinusOne \\
            \hline
            \identityMatrix{\numberSpacePoints} & \nullMatrix{\numberSpacePoints}
        \end{array}
        \right ] \in \matrixSpace{2\numberSpacePoints}{\reals}.
    \end{equation}
    The matrices $\schemeMatrixFDBlockZero$ and $\schemeMatrixFDBlockMinusOne$ are \emph{quasi}-Toeplitz (at most tridiagonal in their Toeplitz part) given by 
    \begin{equation*}
        \schemeMatrixFDBlockZero = 
        \begin{pmatrix}
            \coefficientOutflowFDZero_0 & \coefficientOutflowFDZero_1 & \coefficientOutflowFDZero_2 & \cdots &\coefficientOutflowFDZero_{\numberSpacePoints-1}\\
            \schemeMatrixFDBlockZeroEntry_{-1} & 0 & \schemeMatrixFDBlockZeroEntry_{1} & \\
            & \ddots & \ddots & \ddots & \\
            & & \schemeMatrixFDBlockZeroEntry_{-1} & 0 & \schemeMatrixFDBlockZeroEntry_{1} \\
            0 & \cdots & 0 & 0 & 0
        \end{pmatrix},  \qquad
        \schemeMatrixFDBlockMinusOne = 
        \begin{pmatrix}
            \coefficientOutflowFDMinusOne_0 & \coefficientOutflowFDMinusOne_1 & \coefficientOutflowFDMinusOne_2 & \cdots &\coefficientOutflowFDMinusOne_{\numberSpacePoints-1}\\
            0 & \schemeMatrixFDBlockMinusOneEntry_0 & 0 & \\
            & \ddots & \ddots & \ddots & \\
            & & 0 & \schemeMatrixFDBlockMinusOneEntry_0 & 0 \\
            0 & \cdots & 0 & 0 & 0
        \end{pmatrix},
    \end{equation*}
    so that $\schemeMatrixFD$ could be grandiloquently called ``block banded \emph{quasi}-Toeplitz companion''.
    The explicit determination of $\spectrum(\schemeMatrixFD)$ is out of reach: using the formula for the determinant of a block matrix and a row permutation, we come to 
    \begin{equation}\label{eq:quadraticEigenvalue}
        \determinant(\timeShiftOperator\identityMatrix{2\numberSpacePoints} - \schemeMatrixFD) = \determinant (\timeShiftOperator(\timeShiftOperator \identityMatrix{\numberSpacePoints} - \schemeMatrixFDBlockZero ) - \schemeMatrixFDBlockMinusOne ) = \determinant (\timeShiftOperator^2 \identityMatrix{\numberSpacePoints} - \timeShiftOperator\schemeMatrixFDBlockZero - \schemeMatrixFDBlockMinusOne ).
    \end{equation}
    This emphasizes that we face a \strong{quadratic eigenvalue problem} \cite{tisseur2001quadratic}.
    For $\schemeMatrixFD$ has real entries, its eigenvalues are either real or show up as pairs of conjugate complex numbers.
    \begin{remark}[Zero eigenvalues of $\schemeMatrixFD$]\label{rem:zeroEigFD}
        In this case, the singularity of $\schemeMatrixFD$ comes from the inflow boundary condition but generally not from the outflow.
    \end{remark}

We can fruitfully see $\schemeMatrixFD$ as a \strong{perturbation} of the matrix $\schemeMatrixFDToeplitz \in \matrixSpace{2\numberSpacePoints}{\reals}$ with the same block structure as in \eqref{eq:schemeMatricialFD}, with $\schemeMatrixFDBlockZero$ and $\schemeMatrixFDBlockMinusOne$ replaced by their Toeplitz versions $\schemeMatrixFDToeplitzBlockZero$ and $\schemeMatrixFDToeplitzBlockMinusOne$. This boils down to replace the first and last rows with the patterns inside the matrices, and obtain
    \begin{equation}\label{eq:boundaryEnteringAsPerturbations}
        \schemeMatrixFD = \schemeMatrixFDToeplitz + \canonicalBasisVector{1}\transpose{\parturbationBoundaryOutflow} + \canonicalBasisVector{\numberSpacePoints}\transpose{\parturbationBoundaryInflow},
    \end{equation}
    where $\transpose{\parturbationBoundaryOutflow} \definitionEquality (\coefficientOutflowFDZero_0, \coefficientOutflowFDZero_1 -\schemeMatrixFDBlockZeroEntry_1,  \coefficientOutflowFDZero_2, \cdots,\coefficientOutflowFDZero_{\numberSpacePoints-1}, \coefficientOutflowFDMinusOne_0 - \schemeMatrixFDBlockMinusOneEntry_0, \coefficientOutflowFDMinusOne_1, \cdots,\coefficientOutflowFDMinusOne_{\numberSpacePoints-1})$ and $\transpose{\parturbationBoundaryInflow} \definitionEquality (0, \cdots, 0, -\schemeMatrixFDBlockZeroEntry_{-1}, 0, 0, \cdots, 0, -\schemeMatrixFDBlockMinusOneEntry_0)$.
    Since $\schemeMatrixFD$ is a finite-rank perturbation of the block banded Toeplitz companion matrix $\schemeMatrixFDToeplitz$, which can be conveniently seen as an operator in the limit $\numberSpacePoints\to+\infty$ \cite{bottcher2005spectral}, in this limit, the spectrum of $\schemeMatrixFD$ is the one of $\schemeMatrixFDToeplitz$ plus possibly a bunch of \strong{isolated points}, stemming from boundary conditions, \confer{} \cite[Chapter 2]{lebarbenchon:tel-04214887}.
    \item Finally, we consider the block circulant banded Toeplitz companion version $\schemeMatrixFDCirculant$, with boundary conditions replaced by \strong{periodicity}.
\end{itemize}

\begin{figure} 
    \begin{center}
        \includegraphics[width = 0.48\textwidth]{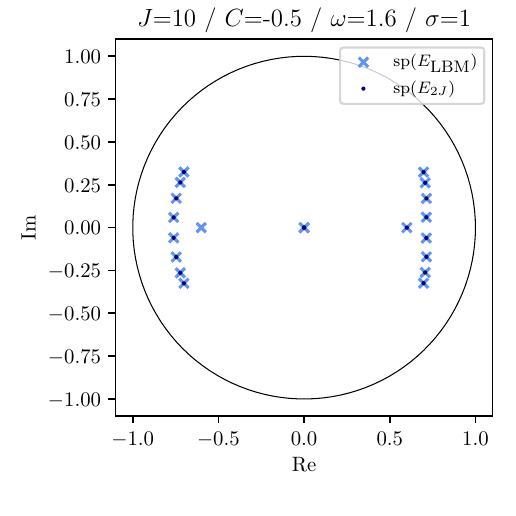}
        \includegraphics[width = 0.48\textwidth]{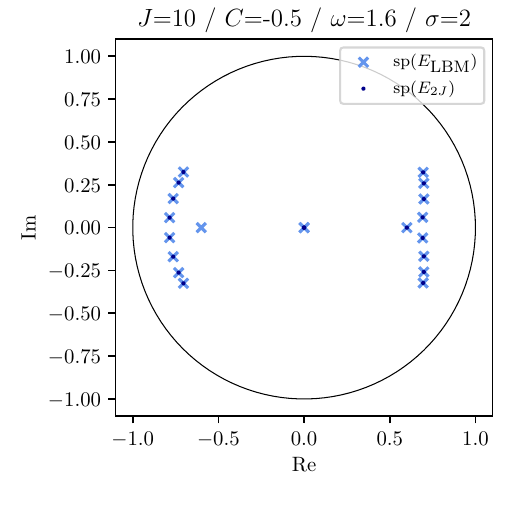} \\
        \includegraphics[width = 0.48\textwidth]{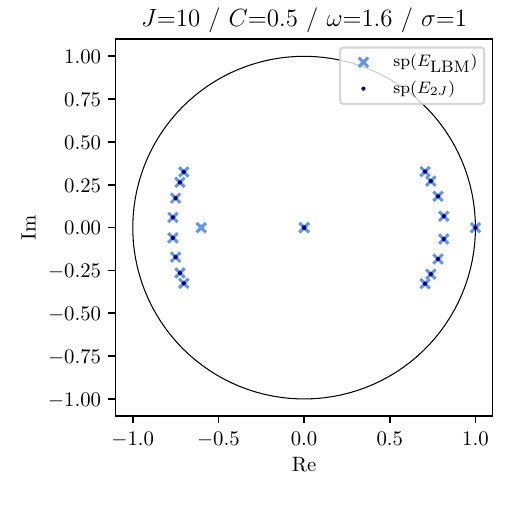}
        \includegraphics[width = 0.48\textwidth]{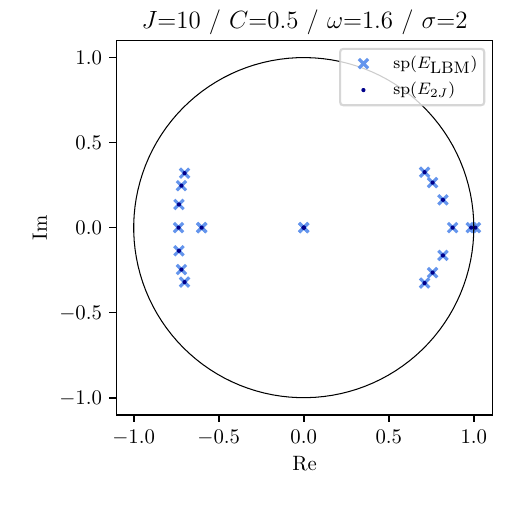}
    \end{center}\caption{\label{fig:spectrum1}Example of spectra for \eqref{eq:extrapolationBoundaryCondition} with $\orderExtrapolation = 1, 2$ that show slight differences between the original \lbm{} scheme and the \fd{} scheme.}
\end{figure}

With these different matrices at our disposal, a first question concerns differences between $\schemeMatrixLBM$ and $\schemeMatrixFD$ as far as their  spectra are concerned.
\strong{Which one do we need to consider?}
Discrepancies, if any, come from non-trivial boundary conditions, because in the periodic framework, eigenvalues are the same \cite{bellotti2022finite}.
In \Cref{fig:spectrum1}, we see that the only expected difference may concern the one about $1-\relaxationParameter$ that might  be in $\spectrum(\schemeMatrixLBM)$ but not in $\spectrum(\schemeMatrixFD)$.
Taking for example the case \eqref{eq:papillon1} with $\courantNumber < 0$, the reason is that, \confer{} \cite[Chapter 12]{bellotti:tel-04266822}, this boundary condition leads GKS-instability on the non-conserved variable $\nonConservedMomentDiscrete$ when $\relaxationParameter = 2$. However, for our \strong{sole interest} is on $\conservedMomentDiscrete$, we want to filter this instability on $\nonConservedMomentDiscrete$ pointless on $\conservedMomentDiscrete$.
Once again, the conserved moment $\conservedMomentDiscrete$ is the same regardless of computing it using the \lbm{} or the \fd{} scheme, and spectral discrepancies can only concern $\nonConservedMomentDiscrete$, which we are not interested in.
Another difference, \confer{} \Cref{rem:zeroEigLBM} and \ref{rem:zeroEigFD}, may show up in terms of multiplicity of the eigenvalue $\timeShiftOperator = 0$, which replaces the one close to $1-\relaxationParameter$ when the latter disappears.
In the sequel, we focus on the spectrum of $\schemeMatrixFD$.

\begin{figure} 
    \begin{center}
        \includegraphics[width = 0.48\textwidth]{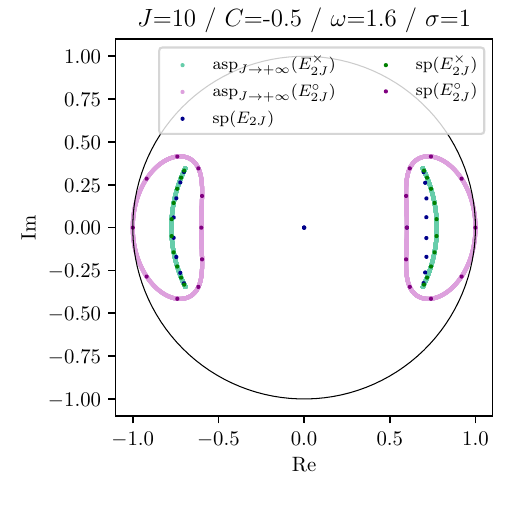}
    \end{center}\caption{\label{fig:spectrum2}Example of spectra for \eqref{eq:papillon1}}
\end{figure}


We now compare the spectrum of $\schemeMatrixFD$ to the ones of $\schemeMatrixFDToeplitz$ and $\schemeMatrixFDCirculant$ (and their asymptotic limits as $\numberSpacePoints \to +\infty$, \confer{} \Cref{sec:asymptoticSpectra}), see \Cref{fig:spectrum2}.
Despite $\numberSpacePoints = 10$, $\spectrum(\schemeMatrixFDToeplitz)$ is remarkably close to the asymptotic one.
This is true to a lesser extent for $\schemeMatrixFD$, because separation between boundary independent spectrum (asymptotically coinciding with the one of $\schemeMatrixFDToeplitz$) and boundary dependent one is still weak. 
This means that there is still a \strong{residual coupling} between boundaries.
As expected, the asymptotic spectrum of $\schemeMatrixFDCirculant$ encloses\footnote{This is not generally true at finite $\numberSpacePoints$.} the asymptotic spectrum of $\schemeMatrixFDToeplitz$ \cite{schmidt1960toeplitz}.
We identify three spectral regimes for $\schemeMatrixFD$, according to $\numberSpacePoints$:
\begin{enumerate}
    \item $\numberSpacePoints \sim 1$, \strong{unclustered} regime, where eigenvalues are still not perfectly separated into two classes (boundary independent and dependent).
    Moreover, there can be a strong interaction between boundaries.
    \item $\numberSpacePoints \sim 10$, \strong{clustered non-asymptotic} regime, where eigenvalues have separated into two classes but are still slightly away from limit structures, explaining discrepancies from GKS theory.
    \item $\numberSpacePoints \to +\infty$, \strong{clustered asymptotic} regime, with a strong correspondence between the decoupling of boundaries and the fact that GKS theory allows---provided that $\spaceStep$ be small---to consider each boundary on its own.
\end{enumerate}

At this stage, a comment on the so-called \strong{P-stability}---which \cite{beam1982stability, trefethen1985stability} see as the right one to be employed at finite $\numberSpacePoints$ on problems featuring two boundaries---is needed.
A P-stable scheme must have two independently GKS-stable boundaries and the eigenvalues $\timeShiftOperator$ of the overall (meaning with two boundary conditions and the bulk scheme) eigenvalue problems at fixed $\numberSpacePoints$ lay inside the closed unit disk.
As in \cite{beam1982stability}, the analysis starts from the general solution \eqref{eq:generalSolutionResolvent}, since we look at both boundaries simultaneously, which reads ${\conservedMomentDiscrete}_{\indexSpace}^{\indexTime} = \timeShiftOperator^{\indexTime} (\coefficientStable (\timeShiftOperator)\solutionCharStable(\timeShiftOperator)^{\indexSpace} + \coefficientUnstable (\timeShiftOperator)\commonTerm(\relaxationParameter, \courantNumber)^{\indexSpace}\solutionCharStable(\timeShiftOperator)^{-\indexSpace})$.
One then inserts it into the inflow homogeneous condition $\conservedMomentDiscrete_{\numberSpacePoints - 1}^{\indexTime + 1} = 0$, obtaining---upon neglecting $\timeShiftOperator = 0$ (of course an eigenvalue)---the relation $\coefficientStable (\timeShiftOperator)\solutionCharStable(\timeShiftOperator)^{\numberSpacePoints - 1} + \coefficientUnstable (\timeShiftOperator)\commonTerm(\relaxationParameter, \courantNumber)^{\numberSpacePoints - 1}\solutionCharStable(\timeShiftOperator)^{-\numberSpacePoints + 1} = 0$.
The general solution thus becomes ${\conservedMomentDiscrete}_{\indexSpace}^{\indexTime} = \timeShiftOperator^{\indexTime} \coefficientStable (\timeShiftOperator) (\solutionCharStable(\timeShiftOperator)^{\indexSpace} - \commonTerm(\relaxationParameter, \courantNumber)^{-\numberSpacePoints + 1 + \indexSpace}\solutionCharStable(\timeShiftOperator)^{2\numberSpacePoints - 2-\indexSpace})$.
Into the left boundary scheme \eqref{eq:outflowSchemeAbstract}, we obtain 
\begin{equation*}
    \timeShiftOperator = \sum_{\indexSpace = 0}^{\numberCoefficientsOutFlowFDZero - 1}\coefficientOutflowFDZero_{\indexSpace}(\solutionCharStable(\timeShiftOperator)^{\indexSpace} - \commonTerm(\relaxationParameter, \courantNumber)^{-\numberSpacePoints + 1 + \indexSpace}\solutionCharStable(\timeShiftOperator)^{2\numberSpacePoints - 2-\indexSpace}) + \timeShiftOperator^{-1}\sum_{\indexSpace = 0}^{\numberCoefficientsOutFlowFDMinusOne - 1}\coefficientOutflowFDMinusOne_{\indexSpace}(\solutionCharStable(\timeShiftOperator)^{\indexSpace} - \commonTerm(\relaxationParameter, \courantNumber)^{-\numberSpacePoints + 1 + \indexSpace}\solutionCharStable(\timeShiftOperator)^{2\numberSpacePoints - 2-\indexSpace}),
\end{equation*}
which is solved in $\solutionCharStable(\timeShiftOperator)$, and then plugged into the bulk scheme \eqref{eq:bulkSchemeAbstract} to yield the values of $\timeShiftOperator$.
We see that all the equations are of very high order $\propto \numberSpacePoints$, and practically difficult to deal with explicitly.
The eigenvalues $\timeShiftOperator$ would eventually be those of $\spectrum(\schemeMatrixFD)$ at finite $\numberSpacePoints$, hence the P-stability analysis boils down to check the modulii of the eigenvalues of $\schemeMatrixFD$.
Iterative procedures \cite{miller1971location} to study whether the roots of \eqref{eq:quadraticEigenvalue} belong to the unit disk, or---after a change of variable \cite{lin2021multiple}---the Routh-Hurwitz criterion, are too involved---similarly to a complete rigorous proof of P-stability/instability---and trivially prescribe $\relaxationParameter \in (0, 2]$.
We also acknowledge that---during the peer-review of this paper---a colleague made us discovering the work of \cite{benoit2022stability}, which deals with the stability of one-step schemes with two boundary conditions. Though we believe that these results can be adapted to our framework of multi-step schemes, and would account for the threshold effects that we observed on $\numberSpacePoints$, it would be very difficult to find a sharp value $\numberSpacePoints_0$ for which the scheme is unstable if $\numberSpacePoints\leq\numberSpacePoints_0$ and stable if $\numberSpacePoints>\numberSpacePoints_0$. Indeed, in the simple examples discussed in \cite{benoit2022stability}, stability for the two-boundaries problem is obtained for $\numberSpacePoints$ large enough.

\subsubsection{Asymptotic spectra for $\numberSpacePoints \to +\infty$}\label{sec:asymptoticSpectra}

Asymptotic spectra as $\numberSpacePoints\to+\infty$ are way easier to characterize that those for finite $\numberSpacePoints$'s.
We start by the scheme matrix in presence of \strong{periodic} boundary conditions $\schemeMatrixFDCirculant$.
The proof of the following result is classical within the context of circulant banded Toeplitz matrices, and given in the Supplementary material.

\begin{lemma}[Spectrum and asymptotic spectrum of $\schemeMatrixFDCirculant$]\label{lemma:spectrumCirculant}
    The spectrum of $\schemeMatrixFDCirculant$ is given by
    \begin{equation}\label{eq:spectrumCirculant}
        \spectrum(\schemeMatrixFDCirculant)  = \Bigl \{\tfrac{1}{2} \Bigl ( \bigl (\schemeMatrixFDBlockZeroEntry_{-1}e^{\tfrac{2\pi i k}{\numberSpacePoints}} + \schemeMatrixFDBlockZeroEntry_{1}e^{\tfrac{2\pi i (\numberSpacePoints - 1) k}{\numberSpacePoints}}  \bigr ) 
        \pm \sqrt{ \Bigl( \schemeMatrixFDBlockZeroEntry_{-1}e^{\tfrac{2\pi i k}{\numberSpacePoints}} + \schemeMatrixFDBlockZeroEntry_{1}e^{\tfrac{2\pi i (\numberSpacePoints - 1) k}{\numberSpacePoints}} \Bigr)^2 + 4\schemeMatrixFDBlockMinusOneEntry_0}\Bigr ) \quad \text{for} \quad k \in \integerInterval{0}{\numberSpacePoints - 1} \Bigr \}.
    \end{equation}
    Its asymptotic spectrum as $\numberSpacePoints \to +\infty$ reads
    \begin{equation}\label{eq:asymptoticSpectrumCirculant}
        \asymptoticSpectrum(\schemeMatrixFDCirculant) \definitionEquality \Bigl \{ \tfrac{1}{2} \Bigl ( \bigl (\schemeMatrixFDBlockZeroEntry_{-1}e^{-i \vartheta} + \schemeMatrixFDBlockZeroEntry_1e^{i \vartheta}  \bigr ) 
        \pm \sqrt{ \Bigl( \schemeMatrixFDBlockZeroEntry_{-1}e^{-i \vartheta} + \schemeMatrixFDBlockZeroEntry_{1}e^{i \vartheta} \Bigr)^2 + 4 \schemeMatrixFDBlockMinusOneEntry_0}\Bigr ) \quad \text{for} \quad  \vartheta \in [-\pi, \pi] \Bigr \}.
    \end{equation}
\end{lemma}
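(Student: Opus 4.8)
The plan is to exploit the fact that $\schemeMatrixFDCirculant$ is a \emph{block companion} matrix whose two $\numberSpacePoints\times\numberSpacePoints$ blocks are circulant, hence simultaneously diagonalizable by the discrete Fourier basis; this decouples the $2\numberSpacePoints\times 2\numberSpacePoints$ eigenvalue problem into $\numberSpacePoints$ independent scalar quadratics, and the asymptotic statement then follows by a density-plus-continuity argument.

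First I would record the block structure of $\schemeMatrixFDCirculant$: its top-left block is the circulant matrix $\matricial{A}^{\circ}$ with $0$ on the main diagonal, $\schemeMatrixFDBlockZeroEntry_{-1}$ on the cyclic sub-diagonal and $\schemeMatrixFDBlockZeroEntry_{1}$ on the cyclic super-diagonal, its top-right block is $\schemeMatrixFDBlockMinusOneEntry_0\identityMatrix{\numberSpacePoints}$ with $\schemeMatrixFDBlockMinusOneEntry_0 = \relaxationParameter-1$ (also circulant), and the bottom row of blocks is $(\identityMatrix{\numberSpacePoints}\ |\ \nullMatrix{\numberSpacePoints})$. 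Running the same block-determinant manipulation as in \eqref{eq:quadraticEigenvalue} gives $\determinant(\timeShiftOperator\identityMatrix{2\numberSpacePoints} - \schemeMatrixFDCirculant) = \determinant(\timeShiftOperator^2\identityMatrix{\numberSpacePoints} - \timeShiftOperator\matricial{A}^{\circ} - \schemeMatrixFDBlockMinusOneEntry_0\identityMatrix{\numberSpacePoints})$. Since every $\numberSpacePoints\times\numberSpacePoints$ circulant is diagonalized by the Fourier matrix, with the $k$-th eigenvalue of $\matricial{A}^{\circ}$ equal to the classical $\mu_k \definitionEquality \schemeMatrixFDBlockZeroEntry_{-1}e^{2\pi i k/\numberSpacePoints} + \schemeMatrixFDBlockZeroEntry_{1}e^{2\pi i(\numberSpacePoints-1)k/\numberSpacePoints}$ for $k\in\integerInterval{0}{\numberSpacePoints-1}$ (up to the relabelling $k\mapsto\numberSpacePoints-k$, which only swaps the roles of $\schemeMatrixFDBlockZeroEntry_{-1}$ and $\schemeMatrixFDBlockZeroEntry_1$ and leaves the \emph{set} of $\mu_k$'s unchanged), this determinant factors as $\prod_{k=0}^{\numberSpacePoints-1}(\timeShiftOperator^2 - \mu_k\timeShiftOperator - \schemeMatrixFDBlockMinusOneEntry_0)$. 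Solving each quadratic $\timeShiftOperator^2 - \mu_k\timeShiftOperator - \schemeMatrixFDBlockMinusOneEntry_0 = 0$ gives precisely the pair of values displayed in \eqref{eq:spectrumCirculant}; this also accounts for $\timeShiftOperator=0$ in the degenerate case $\relaxationParameter=1$, where $\schemeMatrixFDBlockMinusOneEntry_0 = 0$.

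For the asymptotic spectrum, I would use $e^{2\pi i(\numberSpacePoints-1)k/\numberSpacePoints} = e^{-2\pi i k/\numberSpacePoints}$ to write $\mu_k$ as a function of $\vartheta_k \definitionEquality 2\pi k/\numberSpacePoints$ alone, which after the same symmetric relabelling is $\schemeMatrixFDBlockZeroEntry_{-1}e^{-i\vartheta} + \schemeMatrixFDBlockZeroEntry_{1}e^{i\vartheta}$ evaluated at $\vartheta=\vartheta_k$. As $\numberSpacePoints\to+\infty$ the nodes $\{\vartheta_k\}$ become dense (indeed equidistributed) in $[-\pi,\pi]$, and the unordered pair of roots of $\timeShiftOperator^2 - \mu\timeShiftOperator - \schemeMatrixFDBlockMinusOneEntry_0$ depends continuously on $\mu$, hence on $\vartheta$; therefore the finite spectra \eqref{eq:spectrumCirculant} converge, in Hausdorff distance, to the image of the continuous curve $\vartheta \mapsto \tfrac12\bigl((\schemeMatrixFDBlockZeroEntry_{-1}e^{-i\vartheta} + \schemeMatrixFDBlockZeroEntry_1e^{i\vartheta}) \pm \sqrt{(\schemeMatrixFDBlockZeroEntry_{-1}e^{-i\vartheta} + \schemeMatrixFDBlockZeroEntry_1e^{i\vartheta})^2 + 4\schemeMatrixFDBlockMinusOneEntry_0}\,\bigr)$ over $[-\pi,\pi]$, i.e.\ \eqref{eq:asymptoticSpectrumCirculant}; the last sentence should be phrased according to whichever definition of $\asymptoticSpectrum$ the paper fixes (limit set of sequences of eigenvalues, or Hausdorff/Kuratowski limit of $\spectrum(\schemeMatrixFDCirculant)$). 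There is no genuine obstacle here: the computation is routine, and the only points requiring a little care are keeping the index and sign conventions in $\mu_k$ consistent — immaterial for the set — and making the $\numberSpacePoints\to\infty$ passage rigorous through continuity of roots of polynomials in their coefficients together with the density of $\{2\pi k/\numberSpacePoints\}$ in $[-\pi,\pi]$.
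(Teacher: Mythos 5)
Your proposal is correct and follows essentially the same route as the paper: the block-determinant reduction to the quadratic pencil $\timeShiftOperator^2\identityMatrix{\numberSpacePoints} - \timeShiftOperator\matricial{A}^{\circ}_{\numberSpacePoints} - \matricial{B}^{\circ}_{\numberSpacePoints}$, factorization via the circulant determinant formula (equivalently, Fourier diagonalization), and solution of the $\numberSpacePoints$ scalar quadratics, with the asymptotic spectrum obtained by letting the Fourier nodes fill $[-\pi,\pi]$. Your extra remarks on the index relabelling and on the density-plus-continuity justification of the limit are slightly more explicit than the paper's substitution $e^{2\pi i k/\numberSpacePoints}\mapsto e^{-i\vartheta}$, but they do not constitute a different argument.
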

It is interesting to consider the case $\relaxationParameter = 2$, where the round bean-like shapes described by \eqref{eq:asymptoticSpectrumCirculant} and visible in \Cref{fig:spectrum2} degenerate into bent segments.
We obtain 
    \begin{equation}\label{eq:tmp4}
        \asymptoticSpectrum(\schemeMatrixFDCirculant)|_{\relaxationParameter = 2} = \Bigl \{ -i \courantNumber \sin( \vartheta)  \pm \sqrt{ -\courantNumber^2\sin^2( \vartheta) + 1} \quad \text{for} \quad  \vartheta \in [-\pi, \pi] \Bigr \}.
    \end{equation}

\begin{figure}
    \begin{center}
        \begin{tikzpicture}[scale=0.82]
            \draw[->] (0,0) -- (14,0) node[anchor=west] {$\relaxationParameter$}; 
    
            \draw[<-] (1,-4.75) -- (1,0.1);
            \node[above] at (1,0.1) {$0$}; 
    
            \draw[<-] (5,-4.75) -- (5,0.1);
            \node[above] at (5,0.1) {$1$}; 
    
            \draw[<-] (9,-4.75) -- (9,0.1);
            \node[above] at (9,0.1) {$\tfrac{2}{1+|\courantNumber|}$}; 
    
            \draw[<-] (13,-4.75) -- (13,0.1);
            \node[above] at (13,0.1) {$2$}; 

            \draw[ProcessBlue] (3,-2) circle (1.5); 
            \draw[WildStrawberry, very thick] (1.7,-2) -- (2.8,-2); 
            \draw[WildStrawberry, very thick] (3.2,-2) -- (4.3,-2); 
            \draw[WildStrawberry, very thick] (3,-2) circle (0.7); 
            \draw[|->|] (3,-2) -- (3,-1.3) node[right, midway] {\footnotesize$\sqrt{1-\relaxationParameter}$} ; 
            \draw[|<->|] (1.7,-2.2) -- (2.8,-2.2); 
            \draw[<-, densely dotted] (2.25,-2.35) .. controls (2.2,-3.3)  .. (2.4,-3.7) node[below] {\footnotesize$2\sqrt{\relaxationParameter - 1 + \schemeMatrixFDBlockZeroEntry_{-1}\schemeMatrixFDBlockZeroEntry_{1}}$}; 

            \draw[ProcessBlue] (7,-2) circle (1.5); 
            \draw[WildStrawberry, very thick] (5.7,-2) -- (6.8,-2); 
            \draw[WildStrawberry, very thick] (7.2,-2) -- (8.3,-2); 
            \draw[|<->|] (7.2,-2.2) -- (8.3,-2.2) node[below, midway] {\footnotesize$2\sqrt{\schemeMatrixFDBlockZeroEntry_{-1}\schemeMatrixFDBlockZeroEntry_{1}}$}; 

            \draw[ProcessBlue] (11,-2) circle (1.5); 
            \draw [WildStrawberry, very thick,domain=-45:45] plot ({11+cos(\x)}, {-2+sin(\x)}); 
            \draw [WildStrawberry, very thick,domain=135:225] plot ({11+cos(\x)}, {-2+sin(\x)}); 
            \draw[|->|] (11,-2) -- ({11+cos(45)}, {-2+sin(45)}) node[above, midway, sloped] {\footnotesize$\sqrt{\relaxationParameter - 1}$} ; 
            \draw[<->, domain=0:-45] plot ({11+1.2*cos(\x)}, {-2+1.2*sin(\x)});
            \draw[<-, densely dotted] ({11+1.2*cos(-45/2) + 0.1}, {-2+1.2*sin(-45/2)-0.1}) .. controls (13.2,-3.3)  .. (11.4,-3.7) node[below] {\footnotesize$\text{arcsin}\Bigl (\sqrt{\tfrac{\schemeMatrixFDBlockZeroEntry_{-1}\schemeMatrixFDBlockZeroEntry_{1}}{1-\relaxationParameter}} \Bigr )$}; 

            \draw[WildStrawberry, very thick] (1,-6.75) circle (1.5); 
    
            \draw[ProcessBlue] (5,-6.75) circle (1.5); 
            \draw[WildStrawberry, very thick] (4.5,-6.75) -- (5.5,-6.75); 
            \draw[|<->|] (4.5,-6.95) -- (5.5,-6.95) node[below, midway] {\footnotesize$2\sqrt{1-\courantNumber^2}$}; 
    
            \draw[ProcessBlue] (9,-6.75) circle (1.5); 
            \fill[WildStrawberry] (8.5,0-6.75) circle (1.2pt); 
            \fill[WildStrawberry] (9.5,0-6.75) circle (1.2pt); 
            \draw[|<->|] (8.5,-6.95) -- (9.5,-6.95) node[below, midway] {\footnotesize$2\sqrt{1-\relaxationParameter}$}; 

            \draw[ProcessBlue] (13,-6.75) circle (1.5); 
            \draw [WildStrawberry, very thick,domain=-45:45] plot ({13+1.5*cos(\x)}, {-6.75+1.5*sin(\x)}); 
            \draw [WildStrawberry, very thick,domain=135:225] plot ({13+1.5*cos(\x)}, {-6.75+1.5*sin(\x)}); 
            \draw[<->, domain=0:-45] plot ({13+1.3*cos(\x)}, {-6.75+1.3*sin(\x)}); 
            \node[left] at ({13+1.2*cos(-45/2)}, {-6.75+1.2*sin(-45/2)}) {\footnotesize$\text{arcsin}(|\courantNumber|)$} ; 
    
        \end{tikzpicture}
    \end{center}\caption{\label{fig:asymptSpectrumNoBoundary}Sketch of the different shapes of the asymptotic spectrum of $\schemeMatrixFDToeplitz$ as $\numberSpacePoints\to+\infty$ (red color) for a given $\courantNumber$, varying $\relaxationParameter \in [0, 2]$. The light blue line is the unit circle.}
\end{figure}

As far as $\schemeMatrixFDToeplitz$ is concerned, we are only able to characterize its asymptotic spectrum---which is also the part of that of $\schemeMatrixFD$  \strong{independent} of the boundary conditions---with the following result.
\begin{lemma}[Asymptotic spectrum of $\schemeMatrixFDToeplitz$]\label{lemma:limitSpectrumToeplitz}
    Under the stability conditions given by \Cref{prop:stabilityConditionsPeriodic}, the asymptotic spectrum of $\schemeMatrixFDToeplitz$ is as follows.
    \begin{itemize}
        \item If $\schemeMatrixFDBlockZeroEntry_{\mp 1} = 0$, thus for $\relaxationParameter = \tfrac{2}{1\mp \courantNumber} \in [1, 2]$ (and $\mp\courantNumber\in[0, 1]$), then it is made up of isolated points on the real axis:
        \begin{equation}\label{eq:tmp10}
            \asymptoticSpectrum(\schemeMatrixFDToeplitz) \definitionEquality \Bigl \{ \sqrt{\tfrac{1+\courantNumber}{1-\courantNumber}}  = \sqrt{\relaxationParameter - 1}, -\sqrt{\tfrac{1+\courantNumber}{1-\courantNumber}}  = -\sqrt{\relaxationParameter - 1} \Bigr \}.
        \end{equation}
        \item Otherwise, with $\sqrt{\cdot}$ the principal square root:
        \begin{multline}\label{eq:limitSpectrumNoBoundary}
            \asymptoticSpectrum(\schemeMatrixFDToeplitz) \definitionEquality \Bigl \{  \frac{1}{2}\Bigl ( \Bigl ( \tfrac{\schemeMatrixFDBlockZeroEntry_{-1}}{\sqrt{\commonTerm(\relaxationParameter, \courantNumber)}}e^{-i \vartheta} + \schemeMatrixFDBlockZeroEntry_{1} \sqrt{\commonTerm(\relaxationParameter, \courantNumber)}e^{i \vartheta} \Bigr ) \\
            \pm  \sqrt{\Bigl ( \tfrac{\schemeMatrixFDBlockZeroEntry_{-1}}{\sqrt{\commonTerm(\relaxationParameter, \courantNumber)}}e^{-i \vartheta} + \schemeMatrixFDBlockZeroEntry_{1} \sqrt{\commonTerm(\relaxationParameter, \courantNumber)}e^{i \vartheta} \Bigr )^2 + 4\schemeMatrixFDBlockMinusOneEntry_0}   \Bigr ) \quad \text{for} \quad  \vartheta \in [0, \pi] \Bigr \}.
        \end{multline}
    \end{itemize}
\end{lemma}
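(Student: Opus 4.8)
\textbf{Proof strategy for \Cref{lemma:limitSpectrumToeplitz}.} The plan is to exploit the companion/Toeplitz structure of $\schemeMatrixFDToeplitz$ in order to write down \emph{all} of its eigenvalues explicitly at each finite $\numberSpacePoints$, and only then let $\numberSpacePoints\to+\infty$. By the same block-determinant manipulation that produced \eqref{eq:quadraticEigenvalue}, a number $\timeShiftOperator$ belongs to $\spectrum(\schemeMatrixFDToeplitz)$ if and only if the Schur complement $\matricial{M}(\timeShiftOperator)\definitionEquality \timeShiftOperator^2\identityMatrix{\numberSpacePoints} - \timeShiftOperator\schemeMatrixFDToeplitzBlockZero - \schemeMatrixFDToeplitzBlockMinusOne$ is singular. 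Since $\schemeMatrixFDToeplitzBlockZero$ is tridiagonal Toeplitz with null diagonal, sub-diagonal $\schemeMatrixFDBlockZeroEntry_{-1}$ and super-diagonal $\schemeMatrixFDBlockZeroEntry_{1}$, while $\schemeMatrixFDToeplitzBlockMinusOne = \schemeMatrixFDBlockMinusOneEntry_0\identityMatrix{\numberSpacePoints}$, the matrix $\matricial{M}(\timeShiftOperator)$ is again tridiagonal Toeplitz, with diagonal $\timeShiftOperator^2 - \schemeMatrixFDBlockMinusOneEntry_0$, sub-diagonal $-\timeShiftOperator\schemeMatrixFDBlockZeroEntry_{-1}$ and super-diagonal $-\timeShiftOperator\schemeMatrixFDBlockZeroEntry_{1}$.

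First I would dispose of the degenerate case $\schemeMatrixFDBlockZeroEntry_{\mp 1} = 0$, i.e.\ $\relaxationParameter = \tfrac{2}{1\mp\courantNumber}\in[1,2]$: there $\matricial{M}(\timeShiftOperator)$ is triangular Toeplitz, hence $\determinant\matricial{M}(\timeShiftOperator) = (\timeShiftOperator^2 - \schemeMatrixFDBlockMinusOneEntry_0)^{\numberSpacePoints}$ and $\spectrum(\schemeMatrixFDToeplitz) = \{\pm\sqrt{\schemeMatrixFDBlockMinusOneEntry_0}\} = \{\pm\sqrt{\relaxationParameter - 1}\}$ for \emph{every} $\numberSpacePoints$ (so already ``asymptotic''), which is \eqref{eq:tmp10} after rewriting $\relaxationParameter - 1$ in terms of $\courantNumber$ and using $\relaxationParameter - 1\ge0$. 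In the non-degenerate case $\schemeMatrixFDBlockZeroEntry_{-1}\schemeMatrixFDBlockZeroEntry_{1}\neq 0$ I would invoke the classical closed form for the determinant of a tridiagonal Toeplitz matrix — exactly as is done for the circulant matrix in \Cref{lemma:spectrumCirculant} and in \cite[Proposition 2.4]{lebarbenchon:tel-04214887} — to obtain, for $\timeShiftOperator\neq 0$,
\begin{equation*}
    \determinant\matricial{M}(\timeShiftOperator) = \prod_{k=1}^{\numberSpacePoints}\Bigl(\timeShiftOperator^2 + 2\sqrt{\schemeMatrixFDBlockZeroEntry_{-1}\schemeMatrixFDBlockZeroEntry_{1}}\,\cos\tfrac{k\pi}{\numberSpacePoints+1}\,\timeShiftOperator - \schemeMatrixFDBlockMinusOneEntry_0\Bigr),
\end{equation*}
the choice of branch of $\sqrt{\cdot}$ being irrelevant since $\{\cos\tfrac{k\pi}{\numberSpacePoints+1}\}_{k=1}^{\numberSpacePoints}$ is symmetric about $0$; the value $\timeShiftOperator = 0$ is checked by hand, $\determinant\matricial{M}(0) = (-\schemeMatrixFDBlockMinusOneEntry_0)^{\numberSpacePoints}$, so $0\in\spectrum(\schemeMatrixFDToeplitz)$ exactly when $\relaxationParameter = 1$, in agreement with the product above. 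Hence $\spectrum(\schemeMatrixFDToeplitz)$ is precisely the set of roots in $\timeShiftOperator$ of the $\numberSpacePoints$ quadratics indexed by $k\in\integerInterval{1}{\numberSpacePoints}$.

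Next I would pass to the limit. As $\numberSpacePoints\to+\infty$ the nodes $\tfrac{k\pi}{\numberSpacePoints+1}$, $k\in\integerInterval{1}{\numberSpacePoints}$, become dense in $[0,\pi]$, and the unordered pair of roots of $\timeShiftOperator^2 + 2\sqrt{\schemeMatrixFDBlockZeroEntry_{-1}\schemeMatrixFDBlockZeroEntry_{1}}\cos\vartheta\,\timeShiftOperator - \schemeMatrixFDBlockMinusOneEntry_0 = 0$ depends continuously on $\vartheta$; a routine density-plus-compactness argument (every point of the continuum is a limit of eigenvalues, every accumulation point of eigenvalues lies on it) then gives that $\asymptoticSpectrum(\schemeMatrixFDToeplitz)$ is the compact set of these roots for $\vartheta\in[0,\pi]$. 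Finally, setting $c\definitionEquality\schemeMatrixFDBlockZeroEntry_{1}\sqrt{\commonTerm(\relaxationParameter,\courantNumber)}$ with the principal root, one checks $c^2 = \schemeMatrixFDBlockZeroEntry_{-1}\schemeMatrixFDBlockZeroEntry_{1}$ and $\tfrac{\schemeMatrixFDBlockZeroEntry_{-1}}{\sqrt{\commonTerm(\relaxationParameter,\courantNumber)}} = \tfrac{\schemeMatrixFDBlockZeroEntry_{-1}\schemeMatrixFDBlockZeroEntry_{1}}{c} = c$, so that $2\sqrt{\schemeMatrixFDBlockZeroEntry_{-1}\schemeMatrixFDBlockZeroEntry_{1}}\cos\vartheta = c(e^{i\vartheta}+e^{-i\vartheta}) = \tfrac{\schemeMatrixFDBlockZeroEntry_{-1}}{\sqrt{\commonTerm(\relaxationParameter,\courantNumber)}}e^{-i\vartheta} + \schemeMatrixFDBlockZeroEntry_{1}\sqrt{\commonTerm(\relaxationParameter,\courantNumber)}e^{i\vartheta}$ up to the global sign, which is absorbed by $\vartheta\mapsto\pi-\vartheta$; substituting into the quadratic formula yields exactly \eqref{eq:limitSpectrumNoBoundary}.

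I expect the main obstacle to be almost entirely bookkeeping rather than anything conceptual: keeping track of the branch of $\sqrt{\commonTerm(\relaxationParameter,\courantNumber)}$ — which is genuinely negative over part of the parameter range by \Cref{lemma:propertiesCommonTerm}, so the principal root must be used consistently — in order to land precisely on the stated closed form, together with a clean treatment of the degenerate sub-cases $\schemeMatrixFDBlockZeroEntry_{\pm1} = 0$ and $\timeShiftOperator = 0$ where the tridiagonal-Toeplitz product formula does not apply verbatim. The convergence step is routine here precisely because the Schur-complement reduction makes all finite-$\numberSpacePoints$ eigenvalues explicit — in sharp contrast with $\schemeMatrixFD$, whose boundary rows $\canonicalBasisVector{1}\transpose{\parturbationBoundaryOutflow}$ and $\canonicalBasisVector{\numberSpacePoints}\transpose{\parturbationBoundaryInflow}$ destroy the Toeplitz structure and force the softer operator-theoretic arguments.
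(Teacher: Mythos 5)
Your proposal is correct, and it takes a genuinely different route from the paper. The paper follows the Beam--Warming recipe: it characterizes the boundary-independent asymptotic spectrum as the set of $\timeShiftOperator$ for which the bulk characteristic equation \eqref{eq:bulkCharEquation} admits two distinct roots $\fourierShift_a,\fourierShift_b$ of equal modulus, writes $\fourierShift_{a,b}=\tilde\fourierShift e^{\pm i\vartheta}$, derives $\schemeMatrixFDBlockZeroEntry_1\tilde\fourierShift^2=\schemeMatrixFDBlockZeroEntry_{-1}$, i.e.\ $\tilde\fourierShift=\pm\sqrt{\commonTerm(\relaxationParameter,\courantNumber)}$, and substitutes back; the degenerate cases $\schemeMatrixFDBlockZeroEntry_{\pm1}=0$ are handled separately just as you do. You instead make the finite-$\numberSpacePoints$ spectrum fully explicit: the Schur-complement reduction to the tridiagonal Toeplitz pencil $\matricial{M}(\timeShiftOperator)$ together with the closed form $d+2\sqrt{pq}\cos\tfrac{k\pi}{\numberSpacePoints+1}$ for its eigenvalues yields the factorization of $\determinant\matricial{M}(\timeShiftOperator)$ into $\numberSpacePoints$ quadratics, and the limit is then a genuine density-plus-continuity statement rather than an equal-modulus ansatz. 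Your branch bookkeeping checks out ($c\definitionEquality\schemeMatrixFDBlockZeroEntry_1\sqrt{\commonTerm}$ satisfies $c=\schemeMatrixFDBlockZeroEntry_{-1}/\sqrt{\commonTerm}$ and $c^2=\schemeMatrixFDBlockZeroEntry_{-1}\schemeMatrixFDBlockZeroEntry_1$, and the residual sign is absorbed by $\vartheta\mapsto\pi-\vartheta$), as does the triangular degenerate case giving $\{\pm\sqrt{\schemeMatrixFDBlockMinusOneEntry_0}\}=\{\pm\sqrt{\relaxationParameter-1}\}$ for every $\numberSpacePoints$. What each approach buys: yours is more elementary and delivers strictly more (the exact spectrum at every finite $\numberSpacePoints$, hence an unambiguous meaning for the limit set), whereas the paper's equal-modulus characterization plugs directly into the machinery used afterwards for the boundary-dependent isolated eigenvalues (\Cref{lemma:limitSpectrumOutflow} and \Cref{thm:countingThroughReflection}), where no finite-dimensional closed form is available and only the symbol-based viewpoint survives.
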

The asymptotic spectrum \eqref{eq:limitSpectrumNoBoundary} can be characterized in a more explicit fashion---see \Cref{fig:asymptSpectrumNoBoundary}---as follows.
    \begin{lemma}[Asymptotic spectrum of $\schemeMatrixFDToeplitz$]\label{lemma:limitSpectrumToeplitzMoreExplicit}
        Under the stability conditions given by \Cref{prop:stabilityConditionsPeriodic}, considering $\courantNumber$ as given, the asymptotic spectrum of $\schemeMatrixFDToeplitz$ is symmetric with respect both to the real and complex axes, and as follows.
        \begin{itemize}
            \item $\relaxationParameter = 0$. The asymptotic spectrum is the unit circle: $\asymptoticSpectrum(\schemeMatrixFDToeplitz)\definitionEquality \{ \timeShiftOperator \in \complex~:~|\timeShiftOperator| = 1\}$.
            \item $\relaxationParameter \in (0, 1)$. The asymptotic spectrum is the circle of radius $\sqrt{1-\relaxationParameter}$ and two segments on the real axis contained inside the unit closed disk:
            \begin{align*}
                \asymptoticSpectrum(\schemeMatrixFDToeplitz)\definitionEquality \{ \timeShiftOperator \in \complex~:~|\timeShiftOperator| = \sqrt{1-\relaxationParameter}\} &\cup [\sqrt{\schemeMatrixFDBlockZeroEntry_{-1}\schemeMatrixFDBlockZeroEntry_1} - \sqrt{\relaxationParameter-1 + \schemeMatrixFDBlockZeroEntry_{-1}\schemeMatrixFDBlockZeroEntry_1}, \sqrt{\schemeMatrixFDBlockZeroEntry_{-1}\schemeMatrixFDBlockZeroEntry_1} + \sqrt{\relaxationParameter-1 + \schemeMatrixFDBlockZeroEntry_{-1}\schemeMatrixFDBlockZeroEntry_1}] \\
                &\cup [-\sqrt{\schemeMatrixFDBlockZeroEntry_{-1}\schemeMatrixFDBlockZeroEntry_1} - \sqrt{\relaxationParameter-1 + \schemeMatrixFDBlockZeroEntry_{-1}\schemeMatrixFDBlockZeroEntry_1}, -\sqrt{\schemeMatrixFDBlockZeroEntry_{-1}\schemeMatrixFDBlockZeroEntry_1} + \sqrt{\relaxationParameter-1 + \schemeMatrixFDBlockZeroEntry_{-1}\schemeMatrixFDBlockZeroEntry_1}].
            \end{align*}
            \item $\relaxationParameter = 1$. The asymptotic spectrum is a segment on the real axis contained inside the unit closed disk:
            \begin{equation*}
                \asymptoticSpectrum(\schemeMatrixFDToeplitz)\definitionEquality [-\sqrt{1-\courantNumber^2}, \sqrt{1-\courantNumber^2}].
            \end{equation*}
            \item $\relaxationParameter \in (1, \tfrac{2}{1+|\courantNumber|})$. The asymptotic spectrum is made up of two segments on the real axis contained inside the unit closed disk:
            \begin{align*}
                \asymptoticSpectrum(\schemeMatrixFDToeplitz)\definitionEquality  &[-\sqrt{\schemeMatrixFDBlockZeroEntry_{-1}\schemeMatrixFDBlockZeroEntry_1} + \sqrt{\relaxationParameter-1 + \schemeMatrixFDBlockZeroEntry_{-1}\schemeMatrixFDBlockZeroEntry_1}, \sqrt{\schemeMatrixFDBlockZeroEntry_{-1}\schemeMatrixFDBlockZeroEntry_1} + \sqrt{\relaxationParameter-1 + \schemeMatrixFDBlockZeroEntry_{-1}\schemeMatrixFDBlockZeroEntry_1}] \\
                \cup &[-\sqrt{\schemeMatrixFDBlockZeroEntry_{-1}\schemeMatrixFDBlockZeroEntry_1} - \sqrt{\relaxationParameter-1 + \schemeMatrixFDBlockZeroEntry_{-1}\schemeMatrixFDBlockZeroEntry_1}, \sqrt{\schemeMatrixFDBlockZeroEntry_{-1}\schemeMatrixFDBlockZeroEntry_1} - \sqrt{\relaxationParameter-1 + \schemeMatrixFDBlockZeroEntry_{-1}\schemeMatrixFDBlockZeroEntry_1}].
            \end{align*}
            \item $\relaxationParameter = \tfrac{2}{1+|\courantNumber|}$. The asymptotic spectrum is made up of two real points: $\asymptoticSpectrum(\schemeMatrixFDToeplitz)\definitionEquality \{\sqrt{\relaxationParameter - 1}, -\sqrt{\relaxationParameter - 1}\}$.
            \item $\relaxationParameter\in (\tfrac{2}{1+|\courantNumber|}, 2)$. The asymptotic spectrum is made up of two circular arcs of radius $\sqrt{\relaxationParameter - 1}$:
            \begin{equation*}
                \asymptoticSpectrum(\schemeMatrixFDToeplitz)\definitionEquality \Bigl \{  \sqrt{\relaxationParameter - 1} \times e^{i\vartheta}~:~|\vartheta| \leq \textnormal{arcsin}\Bigl ( \sqrt{\tfrac{\schemeMatrixFDBlockZeroEntry_{-1}\schemeMatrixFDBlockZeroEntry_1}{1-\relaxationParameter} }\Bigr ) \Bigr \} \cup \Bigl \{  \sqrt{\relaxationParameter - 1}\times e^{i\vartheta + \pi}~:~|\vartheta| \leq \textnormal{arcsin}\Bigl ( \sqrt{\tfrac{\schemeMatrixFDBlockZeroEntry_{-1}\schemeMatrixFDBlockZeroEntry_1}{1-\relaxationParameter} }\Bigr ) \Bigr \}.
            \end{equation*}
            \item $\relaxationParameter = 2$. The asymptotic spectrum is made up of two circular arcs of radius one:
            \begin{equation*}
                \asymptoticSpectrum(\schemeMatrixFDToeplitz)\definitionEquality \{  e^{i\vartheta}~:~|\vartheta| \leq \textnormal{arcsin}(|\courantNumber|) \} \cup  \{  e^{i\vartheta + \pi}~:~|\vartheta| \leq \textnormal{arcsin}(|\courantNumber|)  \}.
            \end{equation*}
        \end{itemize}
    \end{lemma}

Notice that the result for $\relaxationParameter = 2$ in \Cref{lemma:limitSpectrumToeplitzMoreExplicit} is simply another parametrization of the limit profile in \eqref{eq:tmp4}.
This entails that---from the spectral standpoint, in the limit $\numberSpacePoints\to+\infty$, and when $\relaxationParameter = 2$---the periodic case $\schemeMatrixFDCirculant$ behaves like the Toeplitz case $\schemeMatrixFDToeplitz$.

\begin{proof}[Proof of \Cref{lemma:limitSpectrumToeplitz}]
    We follow \cite{beam1993asymptotic}.
    This part of the spectrum is associated with distinct $\fourierShift_{a}, \fourierShift_{b} \in \complex$ of same modulus $|\fourierShift_{a}| = |\fourierShift_{b}|$, both fulfilling \eqref{eq:bulkCharEquation}.
    Therefore, there exist a phase shift $ \vartheta \in [0, \pi]$ and $\tilde{\fourierShift} \in \complex$ such that $|\tilde{\fourierShift}| = |\fourierShift_{a}| = |\fourierShift_{b}|$, so that $\fourierShift_{a} = \tilde{\fourierShift}e^{i \vartheta}$ and $\fourierShift_{b} = \tilde{\fourierShift}e^{-i \vartheta}$.
    Inserting $\fourierShift_{a}$ and $\fourierShift_{b}$ into \eqref{eq:bulkCharEquation} provides
    \begin{equation*}
        \begin{cases}
            \modifiedTimeShiftOperator(\timeShiftOperator) = \schemeMatrixFDBlockZeroEntry_{-1}\tilde{\fourierShift}^{-1}e^{-i \vartheta} +\schemeMatrixFDBlockZeroEntry_{1} \tilde{\fourierShift}e^{i \vartheta},  \\
            \modifiedTimeShiftOperator(\timeShiftOperator) = \schemeMatrixFDBlockZeroEntry_{-1}\tilde{\fourierShift}^{-1}e^{i \vartheta} +\schemeMatrixFDBlockZeroEntry_{1} \tilde{\fourierShift}e^{-i \vartheta},
        \end{cases}
    \end{equation*}
    where we stress that the same $\timeShiftOperator$ corresponds to distinct  $\fourierShift_{a}$ and $\fourierShift_{b}$.
    Solving and simplifying yields $\schemeMatrixFDBlockZeroEntry_1 \tilde{\fourierShift}^2 = \schemeMatrixFDBlockZeroEntry_{-1}$: 
    \begin{equation*}
        \tilde{\fourierShift} = 
        \begin{cases}
            \pm \sqrt{\commonTerm(\relaxationParameter, \courantNumber)}, \qquad &\text{if}\quad \schemeMatrixFDBlockZeroEntry_1\neq 0, \\
            \textnormal{undefined}\qquad &\text{if} \quad \schemeMatrixFDBlockZeroEntry_1 = 0,
        \end{cases}
    \end{equation*}
    where in the first case, we have $\tilde{\fourierShift} \in \reals$ whenever $0<\relaxationParameter<\tfrac{1}{1+|\courantNumber|}$ and $\tilde{\fourierShift} \in i \reals$ for $\frac{2}{1+|\courantNumber|} < \relaxationParameter < 2$, see \Cref{lemma:propertiesCommonTerm}.
    When $\schemeMatrixFDBlockZeroEntry_{-1} = 0$, hence $\tilde{\fourierShift} = 0$, the asymptotic spectrum is given by $\timeShiftOperator^2  + (1-\relaxationParameter) = \timeShiftOperator^2  - \frac{1+\courantNumber}{1-\courantNumber}  = 0 $, which gives \eqref{eq:tmp10}.
    When $\schemeMatrixFDBlockZeroEntry_{1} = 0$, hence $\tilde{\fourierShift}$ is not defined, we also obtain \eqref{eq:tmp10}.
    Otherwise, we consider---without lack of generality---the principal square root with a plus sign in front, hence $\tilde{\fourierShift} = \sqrt{\commonTerm(\relaxationParameter, \courantNumber)}$. Solving a quadratic equation, the spectrum satisfies \eqref{eq:limitSpectrumNoBoundary}.
\end{proof}

We now study the isolated points due to the \strong{outflow} boundary condition, thus excluding the eigenvalue $\timeShiftOperator = 0$ from the inflow and the numerous ones unrelated to boundary conditions. The next claim is coherent with \Cref{fig:spectrum1}.
\begin{lemma}[Asymptotic spectrum of $\schemeMatrixFD$ linked with the outflow boundary condition \eqref{eq:extrapolationBoundaryCondition}]\label{lemma:limitSpectrumOutflow}
    Under the stability conditions given by \Cref{prop:stabilityConditionsPeriodic}.
    We have the following.
    \begin{itemize}
        \item For $\orderExtrapolation = 1$, the asymptotic spectrum of $\schemeMatrixFD$ created by \eqref{eq:papillon1} is given by
        \begin{equation*}
            \asymptoticSpectrum(\schemeMatrixFD) \smallsetminus \Bigl (\asymptoticSpectrum(\schemeMatrixFDToeplitz) \cup \{0\} \Bigr ) \definitionEquality 
            \begin{cases}
                \varnothing, \qquad &\text{if} \quad \relaxationParameter = 2, \\
                \{\relaxationParameter-1\}, \qquad &\text{if} \quad \relaxationParameter \in (0, 2) ~\text{and}~\courantNumber<0,  \\
                \{1\}, \qquad &\text{if} \quad \relaxationParameter \in (0, 2) ~\text{and}~\courantNumber>0.
            \end{cases}
        \end{equation*}
        \item For $\orderExtrapolation = 2$, the asymptotic spectrum of $\schemeMatrixFD$ created by \eqref{eq:papillon2} is given by
        \begin{equation*}
            \asymptoticSpectrum(\schemeMatrixFD) \smallsetminus \Bigl (\asymptoticSpectrum(\schemeMatrixFDToeplitz) \cup \{0\} \Bigr ) \definitionEquality 
            \begin{cases}
                \varnothing, \qquad &\text{if} \quad \relaxationParameter = 2, \\
                \{\relaxationParameter-1\}, \qquad &\text{if} \quad \relaxationParameter \in (0, 2) ~\text{and}~\courantNumber<0,  \\
                \{1, 1-\relaxationParameter\}, \qquad &\text{if} \quad \relaxationParameter \in (0, 2) ~\text{and}~\courantNumber>0.
            \end{cases}
        \end{equation*}
        \item For $\orderExtrapolation \geq 3$, the asymptotic spectrum of $\schemeMatrixFD$ created by \eqref{eq:extrapolationBoundaryCondition} includes
        \begin{equation*}
            \asymptoticSpectrum(\schemeMatrixFD) \smallsetminus \Bigl (\asymptoticSpectrum(\schemeMatrixFDToeplitz) \cup \{0\} \Bigr ) \supset 
            \begin{cases}
                \varnothing, \qquad &\text{if} \quad \relaxationParameter = 2, \\
                \{\relaxationParameter-1\}, \qquad &\text{if} \quad \relaxationParameter \in (0, 2) ~\text{and}~\courantNumber<0,  \\
                \{1, 1-\relaxationParameter\}, \qquad &\text{if} \quad \relaxationParameter \in (0, 2) ~\text{and}~\courantNumber>0.
            \end{cases}
        \end{equation*}
    \end{itemize}
\end{lemma}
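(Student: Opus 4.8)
The plan is to exploit the finite-rank structure recorded in \eqref{eq:boundaryEnteringAsPerturbations}, namely $\schemeMatrixFD = \schemeMatrixFDToeplitz + \canonicalBasisVector{1}\transpose{\parturbationBoundaryOutflow} + \canonicalBasisVector{\numberSpacePoints}\transpose{\parturbationBoundaryInflow}$, together with the theory of banded Toeplitz matrices perturbed by finite-rank terms at the corners, \confer{} \cite{bottcher2005spectral} and \cite[Chapter 2]{lebarbenchon:tel-04214887}. In the limit $\numberSpacePoints \to +\infty$ the two corners decouple, so that $\asymptoticSpectrum(\schemeMatrixFD)$ equals $\asymptoticSpectrum(\schemeMatrixFDToeplitz)$ together with the isolated eigenvalues of the two semi-infinite operators obtained by keeping only one of the two rank-one perturbations. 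The right-corner operator carries the inflow, which is the exact Dirichlet-type condition $\conservedMomentDiscrete_{\numberSpacePoints - 1}^{\indexTime + 1} = 0$ (the last rows of $\schemeMatrixFDBlockZero$ and $\schemeMatrixFDBlockMinusOne$ vanish), and thus contributes only $\timeShiftOperator = 0$, \confer{} \Cref{rem:zeroEigFD}; this point is removed by the set difference in the statement. Hence everything boils down to the left-corner (outflow) semi-infinite operator on $\indexSpace \in \naturals$.

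Next I would translate the eigenvalue equation of that operator into the $\timeShiftOperator$-transformed form: an isolated point $\timeShiftOperator \neq 0$ is a value for which the homogeneous half-line problem consisting of the resolvent equation \eqref{eq:resolventEquation} for $\indexSpace \in \naturalsWithoutZero$ and the homogeneous boundary scheme \eqref{eq:boundaryTransformed} at $\indexSpace = 0$ admits a nonzero solution in $L^2(\spaceStep\naturals)$. Away from $\asymptoticSpectrum(\schemeMatrixFDToeplitz)$---which, by the Schmidt--Spitzer-type characterization used in the proof of \Cref{lemma:limitSpectrumToeplitz} and in \cite{schmidt1960toeplitz, beam1993asymptotic}, is precisely the locus where the two roots of \eqref{eq:bulkCharEquation} have the same modulus---these roots have distinct moduli and product $\commonTerm(\relaxationParameter, \courantNumber)$, so an $L^2$ solution must be, as in \eqref{eq:generalSolutionResolvent} with $\coefficientUnstable(\timeShiftOperator) = 0$, of the form $\coefficientStable(\timeShiftOperator)\fourierShift(\timeShiftOperator)^{\indexSpace}$ with $\fourierShift(\timeShiftOperator)$ the root of strictly smaller modulus; inserting this ansatz into \eqref{eq:boundaryTransformed} yields exactly the characteristic equation of the GKS eigenvalue problem. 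Consequently, the candidate isolated points are exactly the pairs $(\timeShiftOperator, \fourierShift)$ delivered by \Cref{lemma:boundaryRoots}: $(1, 1)$ and $(\relaxationParameter - 1, -\commonTerm(\relaxationParameter, \courantNumber))$ for every $\orderExtrapolation \geq 1$, plus $(1-\relaxationParameter, 1)$ for $\orderExtrapolation \geq 2$. Since \Cref{lemma:boundaryRoots} asserts that this list is \emph{exhaustive} for $\orderExtrapolation = 1, 2$ whereas for $\orderExtrapolation \geq 3$ further roots may occur, one gets an equality in the first two cases and only the inclusion ``$\supset$'' in the third.

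It then remains to decide, for each candidate $(\timeShiftOperator_{\star}, \fourierShift_{\star})$, whether $\fourierShift_{\star}$ is the root of strictly smaller modulus, i.e.\ whether it actually generates an $L^2$ eigenvector. Since the two roots multiply to $\commonTerm(\relaxationParameter, \courantNumber)$, the companion root has modulus $|\commonTerm(\relaxationParameter, \courantNumber)|/|\fourierShift_{\star}|$, so $\fourierShift_{\star}$ is admissible iff $|\fourierShift_{\star}|^2 < |\commonTerm(\relaxationParameter, \courantNumber)|$. I would read the root values off \Cref{lemma:bulkStudy} and the modulus of $\commonTerm$ off the ``key values and bounds'' of \Cref{lemma:propertiesCommonTerm}, namely $\sign(\courantNumber)\,|\commonTerm(\relaxationParameter, \courantNumber)| > \sign(\courantNumber)$ for $\relaxationParameter \in (0, 2)$ and $\commonTerm(2, \courantNumber) = -1$. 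This gives: for $\fourierShift_{\star} = -\commonTerm(\relaxationParameter, \courantNumber)$ (the case $\timeShiftOperator_{\star} = \relaxationParameter - 1$) admissibility reduces to $|\commonTerm(\relaxationParameter, \courantNumber)| < 1$, hence holds exactly when $\courantNumber < 0$ and $\relaxationParameter \in (0, 2)$ (and then $\relaxationParameter - 1 \neq 0$); for $\fourierShift_{\star} = 1$ (the cases $\timeShiftOperator_{\star} = 1$ and $\timeShiftOperator_{\star} = 1-\relaxationParameter$) admissibility reduces to $|\commonTerm(\relaxationParameter, \courantNumber)| > 1$, hence holds exactly when $\courantNumber > 0$ and $\relaxationParameter \in (0, 2)$. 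When $\relaxationParameter = 2$ one has $|\commonTerm(\relaxationParameter, \courantNumber)| = 1$, so at each candidate $\timeShiftOperator_{\star}$ both roots of \eqref{eq:bulkCharEquation} lie on the unit circle, meaning $\timeShiftOperator_{\star} \in \asymptoticSpectrum(\schemeMatrixFDToeplitz)$ (the degenerate profile \eqref{eq:limitSpectrumNoBoundaryOmega2}); such points are swept away by the set difference, whence $\varnothing$. Collecting the three $\orderExtrapolation$-regimes reproduces the stated formulae, the surviving points being automatically those not already on $\asymptoticSpectrum(\schemeMatrixFDToeplitz)$.

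The main obstacle I anticipate is the borderline situation $\fourierShift_{\star} = 1$, where the putative eigenvector $\fourierShift_{\star}^{\indexSpace} \equiv 1$ fails to lie in $L^2(\spaceStep\naturals)$: one must argue---through the finite-rank Toeplitz machinery, not a bare eigenvector computation---that such a $\timeShiftOperator_{\star}$ on the unit circle is nevertheless a \emph{limit} of finite-$\numberSpacePoints$ eigenvalues approaching from inside the disk (this is exactly the kind of unit-circle GKS mode flagged in \Cref{prop:stabInstGKS} and in \Cref{sec:numericalSimulationsStability}). Secondary technical points are: justifying that away from $\asymptoticSpectrum(\schemeMatrixFDToeplitz)$ exactly one root of \eqref{eq:bulkCharEquation} sits inside the open unit disk, so that the $L^2$ solution space of the half-line problem is at most one-dimensional and the admissibility condition really is $\timeShiftOperator$-discrete; carrying the quadratic (companion) form \eqref{eq:quadraticEigenvalue} through the cited spectral-perturbation results; and, for $\orderExtrapolation \geq 3$, the fact that the extra roots of the higher-order eigenvalue problem of \Cref{lemma:boundaryRoots} are not controlled here, which is exactly why only ``$\supset$'' is asserted.
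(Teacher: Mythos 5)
Your proposal is correct and follows essentially the same route as the paper: the paper's proof likewise takes the candidate pairs $(\timeShiftOperator,\fourierShift)$ from \Cref{lemma:boundaryRoots} and applies the Beam--Warming criterion (\cite[Algorithm 4.2]{beam1993asymptotic}) that the boundary-selected root be strictly smaller in modulus than the companion root, deciding each case with \Cref{lemma:propertiesCommonTerm} exactly as you do. The only cosmetic difference is that the paper phrases the admissibility test as a direct comparison of the two roots rather than via $L^2$ half-line solutions, which sidesteps the $|\fourierShift_\star|=1$ wrinkle you flag, since the criterion is about limits of finite-$\numberSpacePoints$ eigenvalues and not genuine eigenvectors of the semi-infinite operator.
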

\begin{proof}[Proof of \Cref{lemma:limitSpectrumOutflow}]
    We follow the procedure by \cite[Algorithm 4.2]{beam1993asymptotic}, which---as observed by these authors---parallels the GKS analysis that we have already performed.

        Consider \eqref{eq:papillon1}.
        From \Cref{lemma:boundaryRoots}, the candidate eigenvalues are $\timeShiftOperator = 1, \relaxationParameter - 1$.
        By virtue of \Cref{lemma:bulkStudy}:
        \begin{align*}
            \timeShiftOperator = 1, \qquad &\fourierShift = 1 \quad \text{(boundary and bulk)}, \qquad \fourierShift = \commonTerm(\relaxationParameter, \courantNumber) \quad \text{(bulk)}, \\
            \timeShiftOperator = \relaxationParameter - 1, \qquad &\fourierShift = -\commonTerm(\relaxationParameter, \courantNumber) \quad \text{(boundary and bulk)}, \qquad \fourierShift = -1 \quad \text{(bulk)}.
        \end{align*}
        To find the boundary dependent asymptotic spectrum, we have to compare---for the same eigenvalue $\timeShiftOperator$---the boundary and the exclusively bulk $\fourierShift$'s, so that the former be strictly smaller than the latter in modulus.
        \begin{itemize}
            \item $\timeShiftOperator = 1$, we want $|1| = 1 < |-\commonTerm(\relaxationParameter, \courantNumber)| = |\commonTerm(\relaxationParameter, \courantNumber)|$.
            By virtue of \Cref{lemma:propertiesCommonTerm}, when $\courantNumber < 0$, the inequality cannot be fulfilled, whereas if $\courantNumber > 0$, then  the inequality is met for $\relaxationParameter \in (0, 2)$.
            \item $\timeShiftOperator = \relaxationParameter - 1$, we want $|-\commonTerm(\relaxationParameter, \courantNumber)| = |\commonTerm(\relaxationParameter, \courantNumber)| < |-1| = 1$.
            Again by virtue of \Cref{lemma:propertiesCommonTerm}, when $\courantNumber < 0$, the inequality is met for $\relaxationParameter \in (0, 2)$, whereas if $\courantNumber > 0$, then  the inequality cannot be fulfilled.
        \end{itemize}
        
        Consider \eqref{eq:extrapolationBoundaryCondition} for $\orderExtrapolation \geq 2$.
        \Cref{lemma:boundaryRoots} gives the candidates $\timeShiftOperator = 1,\pm(1-\relaxationParameter)$.
        For $\orderExtrapolation\geq 3$, we cannot guarantee that these are the only ones, yet it is likely to be the case.
        We are just left with the last one.
        \begin{equation*}
            \timeShiftOperator = 1-\relaxationParameter, \qquad \fourierShift = 1 \quad \text{(boundary and bulk)}, \qquad \fourierShift = \commonTerm(\relaxationParameter, \courantNumber) \quad \text{(bulk)}.
        \end{equation*}
        Proceeding as in the previous case yields the claim.
\end{proof}

\subsubsection{A link between the reflection coefficient and the transition to $\numberSpacePoints\to+\infty$}\label{sec:reflCoeff}

Let us finish with a result connecting the \strong{reflection coefficient}---\confer{} \Cref{sec:reflectionCoefficient}---with the number of eigenvalues of $\schemeMatrixFD$ relative to the outflow boundary condition tending to a single point in the asymptotic spectrum.

\begin{theorem}\label{thm:countingThroughReflection}
    Let $\targetEigenvalue \in\complex\smallsetminus \{ 0\}$ be an isolated point of the asymptotic spectrum of $\schemeMatrixFD$ prescribed by \Cref{lemma:limitSpectrumOutflow}.
    Assume that the stability conditions given by \Cref{prop:stabilityConditionsPeriodic} are satisfied.
    Take $\epsilon > 0$ small.
    Then, for $\numberSpacePoints$ large enough, the number of eigenvalues of $\schemeMatrixFD$, inside $\ball{\epsilon}{\targetEigenvalue}$,  is given as follows.
    \begin{itemize}
        \item When $\schemeMatrixFDBlockZeroEntry_{-1} = 0$, thus for $\relaxationParameter = \tfrac{2}{1-\courantNumber} \in [1, 2]$ (and $\courantNumber\in[-1, 0]$), the number of eigenvalues of $\schemeMatrixFD$, counted with their multiplicity, inside $\ball{\epsilon}{\targetEigenvalue}$, is the order of the zero of the function 
        \begin{equation*}
            \timeShiftOperator \mapsto \frac{\timeShiftOperator^2 - \coefficientOutflowFDZero_0\timeShiftOperator - \coefficientOutflowFDMinusOne_0}{(\timeShiftOperator - i\sqrt{\relaxationParameter-1})(\timeShiftOperator + i\sqrt{\relaxationParameter-1})} \qquad \text{at}\quad \timeShiftOperator = \targetEigenvalue.
        \end{equation*}
        \item Otherwise, the number of eigenvalues of $\schemeMatrixFD$, counted with their multiplicity, inside $\ball{\epsilon}{\targetEigenvalue}$ is the order of the zero of the function 
        \begin{equation*}
            \timeShiftOperator \mapsto \timeShiftOperator -\sum_{\indexRow = 0}^{\numberCoefficientsOutFlowFDZero - 1} \coefficientOutflowFDZero_{\indexRow}\fourierShift(\timeShiftOperator)^{\indexRow} - \timeShiftOperator^{-1} \sum_{\indexRow = 0}^{\numberCoefficientsOutFlowFDMinusOne- 1} \coefficientOutflowFDMinusOne_{\indexRow} \fourierShift(\timeShiftOperator)^{\indexRow}, \qquad \text{at} \quad \timeShiftOperator = \targetEigenvalue,
        \end{equation*}
        where $\fourierShift(\timeShiftOperator)$ is a solution of the characteristic equation \eqref{eq:bulkCharEquation} being 
        \begin{itemize}
            \item if $\schemeMatrixFDBlockZeroEntry_{1} = 0$, thus for $\relaxationParameter = \tfrac{2}{1+\courantNumber} \in [1, 2]$ (and $\courantNumber\in[0, 1]$), the only root $\fourierShift(\timeShiftOperator) = \schemeMatrixFDBlockZeroEntry_{-1}\modifiedTimeShiftOperator(\timeShiftOperator)^{-1} =  (2-\relaxationParameter)\modifiedTimeShiftOperator(\timeShiftOperator)^{-1}$;
            \item otherwise, the one such that it (certainly) exists $\tilde{\timeShiftOperator} \in \partial\ball{\epsilon}{\targetEigenvalue}$ such that
            \begin{equation}\label{eq:zeroToSeek}
                \lim_{\numberSpacePoints\to+\infty}\frac{1}{\sqrt{\schemeMatrixFDBlockZeroEntry_{-1}\schemeMatrixFDBlockZeroEntry_1}}\frac{\chebyshevPolynomialSecondKind{\numberSpacePoints-1}(\frac{\modifiedTimeShiftOperator(\tilde{\timeShiftOperator})}{2\sqrt{\schemeMatrixFDBlockZeroEntry_{-1}\schemeMatrixFDBlockZeroEntry_1}})}{\chebyshevPolynomialSecondKind{\numberSpacePoints}(\frac{\modifiedTimeShiftOperator(\tilde{\timeShiftOperator})}{2\sqrt{\schemeMatrixFDBlockZeroEntry_{-1}\schemeMatrixFDBlockZeroEntry_1}})} = \frac{\fourierShift(\tilde{\timeShiftOperator})}{\schemeMatrixFDBlockZeroEntry_{-1}},
            \end{equation}
            with $\chebyshevPolynomialSecondKind{\numberSpacePoints}$ the Chebyshev polynomial of second kind of degree $\numberSpacePoints$.
        \end{itemize}
    \end{itemize}
\end{theorem}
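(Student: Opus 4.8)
Here is how I would attack \Cref{thm:countingThroughReflection}.

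\textbf{Strategy.} The plan is to factor the characteristic polynomial of $\schemeMatrixFD$ near $\targetEigenvalue$ into three pieces — one harmless factor $\timeShiftOperator^2$ carrying the inflow eigenvalue $\timeShiftOperator=0$, one factor carrying the boundary-free (Toeplitz) spectrum and so having no zero near $\targetEigenvalue$, and one factor that converges as $\numberSpacePoints\to+\infty$ to the reciprocal of the outflow reflection coefficient $\reflectionCoefficientLetterOut$ of \eqref{eq:reflectionCoefficient} — and then to count zeros of the last factor by Rouché's theorem. First I would use \eqref{eq:quadraticEigenvalue} to write $\determinant(\timeShiftOperator\identityMatrix{2\numberSpacePoints}-\schemeMatrixFD)=\determinant(M(\timeShiftOperator))$ with $M(\timeShiftOperator)\definitionEquality\timeShiftOperator^2\identityMatrix{\numberSpacePoints}-\timeShiftOperator\schemeMatrixFDBlockZero-\schemeMatrixFDBlockMinusOne$. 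The last row of $M(\timeShiftOperator)$ is $(0,\dots,0,\timeShiftOperator^2)$ (the inflow row), so a cofactor expansion gives $\determinant(\timeShiftOperator\identityMatrix{2\numberSpacePoints}-\schemeMatrixFD)=\timeShiftOperator^2\determinant\hat{M}(\timeShiftOperator)$ with $\hat{M}(\timeShiftOperator)\in\matrixSpace{\numberSpacePoints-1}{\complex}$ the leading principal submatrix; near $\targetEigenvalue\neq 0$ the factor $\timeShiftOperator^2$ is irrelevant. By \eqref{eq:boundaryEnteringAsPerturbations}, $\hat{M}(\timeShiftOperator)=\hat{T}(\timeShiftOperator)+\canonicalBasisVector{1}\transpose{w(\timeShiftOperator)}$, where $\hat{T}(\timeShiftOperator)$ is tridiagonal Toeplitz with diagonal $\timeShiftOperator\modifiedTimeShiftOperator(\timeShiftOperator)$, subdiagonal $-\timeShiftOperator\schemeMatrixFDBlockZeroEntry_{-1}$, superdiagonal $-\timeShiftOperator\schemeMatrixFDBlockZeroEntry_1$, and $w(\timeShiftOperator)$ is the first-row perturbation of $M(\timeShiftOperator)$ relative to its Toeplitz part — a vector of fixed length, independent of $\numberSpacePoints$, with entries affine in $\timeShiftOperator$ built from $\coefficientOutflowFDZero_{\indexRow},\coefficientOutflowFDMinusOne_{\indexRow}$ and $\schemeMatrixFDBlockZeroEntry_{\pm1},\schemeMatrixFDBlockMinusOneEntry_0$. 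The matrix determinant lemma yields $\determinant\hat{M}=\determinant\hat{T}\cdot\bigl(1+\transpose{w}\hat{T}^{-1}\canonicalBasisVector{1}\bigr)$.

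\textbf{Making the factors explicit.} Next I would evaluate both factors through Chebyshev polynomials of the second kind: for a tridiagonal Toeplitz matrix the determinant of any contiguous block is such a polynomial evaluated at $w(\timeShiftOperator)\definitionEquality\modifiedTimeShiftOperator(\timeShiftOperator)/\bigl(2\sqrt{\schemeMatrixFDBlockZeroEntry_{-1}\schemeMatrixFDBlockZeroEntry_1}\bigr)$, so $\determinant\hat{T}(\timeShiftOperator)=(\timeShiftOperator\sqrt{\schemeMatrixFDBlockZeroEntry_{-1}\schemeMatrixFDBlockZeroEntry_1})^{\numberSpacePoints-1}\chebyshevPolynomialSecondKind{\numberSpacePoints-1}(w(\timeShiftOperator))$ and, by Cramer's rule, $(\hat{T}(\timeShiftOperator)^{-1}\canonicalBasisVector{1})_{\indexRow}=\frac{\commonTerm(\relaxationParameter,\courantNumber)^{\indexRow/2}}{\timeShiftOperator\schemeMatrixFDBlockZeroEntry_{-1}}\,\chebyshevPolynomialSecondKind{\numberSpacePoints-1-\indexRow}(w(\timeShiftOperator))/\chebyshevPolynomialSecondKind{\numberSpacePoints-1}(w(\timeShiftOperator))$. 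The degenerate cases $\schemeMatrixFDBlockZeroEntry_{\mp1}=0$, where $\hat{T}$ is triangular with constant diagonal $\timeShiftOperator\modifiedTimeShiftOperator(\timeShiftOperator)$, are treated directly and reproduce the explicit rational function of the first bullet. Because $\targetEigenvalue\notin\asymptoticSpectrum(\schemeMatrixFDToeplitz)$ and (by the proof of \Cref{lemma:limitSpectrumToeplitz}) this asymptotic set is exactly $\{\timeShiftOperator:w(\timeShiftOperator)\in[-1,1]\}$, one has $w(\targetEigenvalue)\notin[-1,1]$; hence on a small enough ball $\ball{\epsilon}{\targetEigenvalue}$ the value $w$ stays in a compact subset of $\complex\smallsetminus[-1,1]$, and by the Schmidt–Spitzer description used in \Cref{lemma:limitSpectrumToeplitz} (\confer{} \cite{beam1993asymptotic}) $\determinant\hat{T}$ has no zero in $\ball{\epsilon}{\targetEigenvalue}$ for $\numberSpacePoints$ large. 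Consequently the eigenvalues of $\schemeMatrixFD$ in $\ball{\epsilon}{\targetEigenvalue}$ coincide, with multiplicity, with the zeros of $\Phi_{\numberSpacePoints}(\timeShiftOperator)\definitionEquality\frac{\schemeMatrixFDBlockZeroEntry_{-1}}{\fourierShift(\timeShiftOperator)}\bigl(1+\transpose{w(\timeShiftOperator)}\hat{T}(\timeShiftOperator)^{-1}\canonicalBasisVector{1}\bigr)$, where $\fourierShift(\timeShiftOperator)$ is the root of \eqref{eq:bulkCharEquation} selected below — a nonzero analytic quantity near $\targetEigenvalue$ under the technical assumption that $\targetEigenvalue$ is not a branch point of \eqref{eq:bulkCharEquation}.

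\textbf{The limit and Rouché.} Writing $\chebyshevPolynomialSecondKind{n}$ through $\rho+\rho^{-1}=2w$, the ratio $\chebyshevPolynomialSecondKind{\numberSpacePoints-1-\indexRow}(w)/\chebyshevPolynomialSecondKind{\numberSpacePoints-1}(w)$ converges — uniformly on $\partial\ball{\epsilon}{\targetEigenvalue}$, since $w$ lies in a compact subset of $\complex\smallsetminus[-1,1]$ — to $r(w)^{\indexRow}$ with $r(w)$ the root of modulus $<1$; this is precisely the limit in \eqref{eq:zeroToSeek}, and via the correspondence $\fourierShift=\sqrt{\commonTerm(\relaxationParameter,\courantNumber)}\,r(w)$ (so that $\fourierShift/\schemeMatrixFDBlockZeroEntry_{-1}=r(w)/\sqrt{\schemeMatrixFDBlockZeroEntry_{-1}\schemeMatrixFDBlockZeroEntry_1}$) it singles out exactly the root $\fourierShift(\timeShiftOperator)$ named in the statement — the one which for $|\timeShiftOperator|>1$ is the strictly right-going $\solutionCharStable$ entering $\reflectionCoefficientLetterOut$. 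Substituting the explicit entries of $w(\timeShiftOperator)$ and using \eqref{eq:bulkCharEquation} to eliminate $\schemeMatrixFDBlockZeroEntry_1\fourierShift^2$ collapses the finite sum, giving $\Phi_{\numberSpacePoints}\to\Phi_\infty$ uniformly on $\partial\ball{\epsilon}{\targetEigenvalue}$, with $\Phi_\infty(\timeShiftOperator)=\timeShiftOperator-\sum_{\indexRow=0}^{\numberCoefficientsOutFlowFDZero-1}\coefficientOutflowFDZero_{\indexRow}\fourierShift(\timeShiftOperator)^{\indexRow}-\timeShiftOperator^{-1}\sum_{\indexRow=0}^{\numberCoefficientsOutFlowFDMinusOne-1}\coefficientOutflowFDMinusOne_{\indexRow}\fourierShift(\timeShiftOperator)^{\indexRow}$, i.e. the denominator of $\reflectionCoefficientLetterOut$ in \eqref{eq:reflectionCoefficient} with $\solutionCharStable$ replaced by $\fourierShift$. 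Now $\targetEigenvalue$ is a zero of $\Phi_\infty$ — this is exactly how the isolated points were produced in the proof of \Cref{lemma:limitSpectrumOutflow}, which mirrors the GKS eigenvalue problem — while $\Phi_\infty\not\equiv0$ (it behaves like $\timeShiftOperator$ at infinity), so after shrinking $\epsilon$ its only zero in $\overline{\ball{\epsilon}{\targetEigenvalue}}$ is $\targetEigenvalue$ and $\inf_{\partial\ball{\epsilon}{\targetEigenvalue}}|\Phi_\infty|>0$. Rouché's theorem applied to $\Phi_{\numberSpacePoints}$ and $\Phi_\infty$ on this circle then gives, for $\numberSpacePoints$ large, that the number of zeros of $\Phi_{\numberSpacePoints}$ inside — hence, by the factorization, the number of eigenvalues of $\schemeMatrixFD$ in $\ball{\epsilon}{\targetEigenvalue}$ counted with multiplicity — equals the order of the zero of $\Phi_\infty$ at $\targetEigenvalue$, which is the order of the pole of $\reflectionCoefficientLetterOut$ there.

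\textbf{Main obstacle.} The delicate point is the third step: controlling the Chebyshev-polynomial ratios \emph{uniformly} on $\partial\ball{\epsilon}{\targetEigenvalue}$ and pinning down which branch of $\fourierShift(\timeShiftOperator)$ survives in the limit — the dissipativity/stability dichotomy of \Cref{lemma:bulkStudy} is stated for $|\timeShiftOperator|>1$, whereas the isolated points $\relaxationParameter-1$, $1$, $1-\relaxationParameter$ sit on or inside the unit disk, so one must follow the analytic continuation of the roots and of the various square roots ($\sqrt{\schemeMatrixFDBlockZeroEntry_{-1}\schemeMatrixFDBlockZeroEntry_1}$, $\sqrt{\commonTerm}$) — together with ruling out that $\targetEigenvalue$ is a branch point of \eqref{eq:bulkCharEquation} or a pole of $\Phi_\infty$ (where $\timeShiftOperator=0$ or the two roots collide); these are precisely the ``suitable stability and technical assumptions'' of the statement. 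The bookkeeping that $\Phi_\infty$ genuinely coincides with the reflection-coefficient denominator, and that $\targetEigenvalue$ is one of its zeros, is routine given \eqref{eq:reflectionCoefficient} and \Cref{lemma:limitSpectrumOutflow}.
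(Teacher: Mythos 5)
Your argument is sound and reaches the stated conclusion, but it is organized differently from the paper's proof, and in one respect more cleanly. The paper counts eigenvalues with the argument principle applied to $\trace((\timeShiftOperator\identityMatrix{2\numberSpacePoints}-\schemeMatrixFD)^{-1})$, peels off the two boundary conditions with two applications of the Sherman--Morrison formula, reduces the $2\numberSpacePoints\times2\numberSpacePoints$ resolvent to the inverse of the tridiagonal Toeplitz matrix $\modifiedTimeShiftOperator(\timeShiftOperator)\identityMatrix{\numberSpacePoints}-\schemeMatrixFDToeplitzBlockZero$ via the block-inverse formula, and must then prove that the inflow--outflow coupling term vanishes as $\numberSpacePoints\to+\infty$ (the most laborious Chebyshev computation in that proof). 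You instead work directly with the quadratic pencil $\determinant(\timeShiftOperator^{2}\identityMatrix{\numberSpacePoints}-\timeShiftOperator\schemeMatrixFDBlockZero-\schemeMatrixFDBlockMinusOne)$ of \eqref{eq:quadraticEigenvalue}: the zero last rows of $\schemeMatrixFDBlockZero$ and $\schemeMatrixFDBlockMinusOne$ let you expel the inflow \emph{exactly at finite} $\numberSpacePoints$ by a cofactor expansion (leaving a factor $\timeShiftOperator^{2}$, harmless since $\targetEigenvalue\neq0$), a single matrix-determinant-lemma step isolates the outflow as a scalar factor, and Rouch\'e/Hurwitz replaces the argument principle. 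Both routes run on the same engine --- rank-one-update determinant identities, the Chebyshev representation of tridiagonal Toeplitz determinants and inverses, and the limit of $\chebyshevPolynomialSecondKind{\numberSpacePoints-1-\indexRow}/\chebyshevPolynomialSecondKind{\numberSpacePoints-1}$ selecting a root $\fourierShift(\timeShiftOperator)$ of \eqref{eq:bulkCharEquation} --- but your exact elimination of the inflow buys a shorter justification that the count depends on the outflow alone, at the price of checking separately that $\determinant\hat{T}(\timeShiftOperator)$ has no zero on $\ball{\epsilon}{\targetEigenvalue}$; this is immediate, as you note, because the zeros of $\chebyshevPolynomialSecondKind{\numberSpacePoints-1}$ lie in $(-1,1)$ while $\targetEigenvalue$ avoids the curve where $\modifiedTimeShiftOperator(\timeShiftOperator)/(2\sqrt{\schemeMatrixFDBlockZeroEntry_{-1}\schemeMatrixFDBlockZeroEntry_{1}})\in[-1,1]$. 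Your degenerate cases $\schemeMatrixFDBlockZeroEntry_{\mp1}=0$ reduce to triangular $\hat{T}$ and reproduce the explicit rational function of the first bullet, matching the paper. Two small points to tidy: you overload the symbol $w$ for both the first-row perturbation vector and the Chebyshev argument, and the branch selection for $\fourierShift(\timeShiftOperator)$ --- which you rightly flag as the delicate step --- is resolved exactly as in the paper, via the characterization \eqref{eq:zeroToSeek} at some $\tilde{\timeShiftOperator}$ on the contour, so your proof inherits the same technical caveat as the theorem itself rather than removing it.
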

The claim of \Cref{thm:countingThroughReflection} may look overly technical and maybe off-putting.
However, what it heralds is rather straightforward: if close to $\targetEigenvalue$, the root $\fourierShift(\timeShiftOperator)$ in \eqref{eq:zeroToSeek} is indeed $\solutionCharStable(\timeShiftOperator)$, and the numerator in the reflection coefficient \eqref{eq:reflectionCoefficient} does not vanish, then the order of the pole of $\reflectionCoefficientLetterOut(\timeShiftOperator)$ at $\targetEigenvalue$ readily provides the number of eigenvalues of $\schemeMatrixFD$ tending towards $\targetEigenvalue$ as $\numberSpacePoints\to+\infty$.
This defines \strong{(a sort of) algebraic multiplicity} of the asymptotic eigenvalue $\targetEigenvalue$, and easily predicts the \strong{growth rate} of the solution when $|\targetEigenvalue| = 1$.
    The proof of \Cref{thm:countingThroughReflection} (given in \Cref{proof:thm:countingThroughReflection}) being long and technical, we provide the main steps and ideas beforehand.
    \begin{enumerate}
        \item We use the \strong{Cauchy's argument principle} on the characteristic polynomial of $\schemeMatrixFD$ to count the number of eigenvalues of this matrix within the ball.
        This commands the study of the trace of the resolvent of $\schemeMatrixFD$.
        \item The \strong{clustering and separation of the eigenvalues} of $\schemeMatrixFD$ when $\numberSpacePoints$ increases allows to cancel several contributions, relative to boundary independent eigenvalues and eigenvalues relative to the inflow.
        \item Since boundary conditions enter into the resolvent of $\schemeMatrixFD$ as rank-one perturbations of $\schemeMatrixFDToeplitz$, \confer{} \eqref{eq:boundaryEnteringAsPerturbations}, we apply the \strong{Sherman-Morrison formula} to separate different contributions.
        \item Using the formula for the \strong{inverse of a block matrix} made up of four blocks, the resolvent of $\schemeMatrixFDToeplitz$ can be expressed in terms of the inverse of a tridiagonal Toeplitz matrix.
        \item The entries of the inverse of a tridiagonal Toeplitz matrix can be expressed in terms of \strong{Chebyshev polynomials of second kind}, of which we can compute limits as $\numberSpacePoints$ increases.
    \end{enumerate}

\subsubsection{Plots of spectra and pseudo-spectra}\label{sec:plotsPseudoSpectra}

\begin{figure} 
    \begin{center}
        \includegraphics[width = 0.45\textwidth]{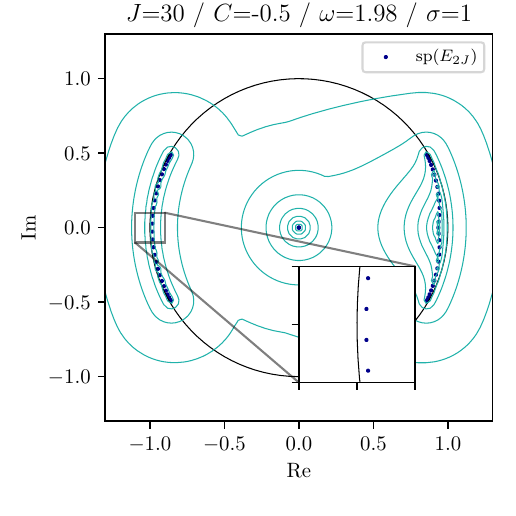}
        \includegraphics[width = 0.45\textwidth]{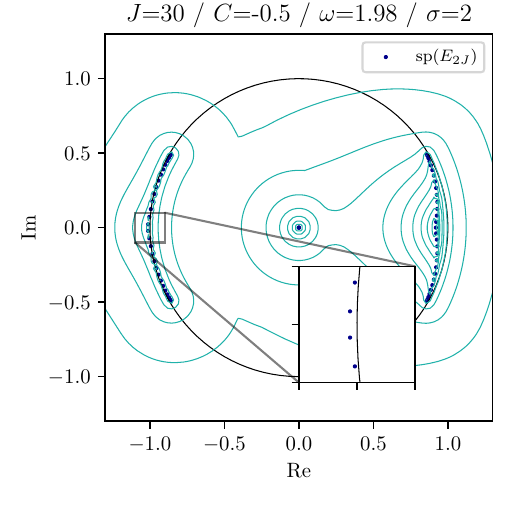} \\
        \includegraphics[width = 0.45\textwidth]{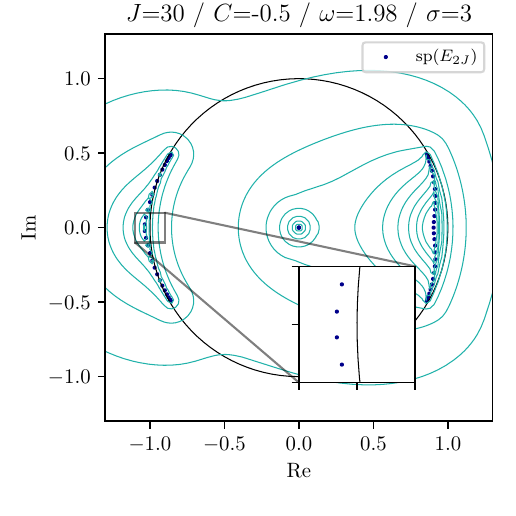}
        \includegraphics[width = 0.45\textwidth]{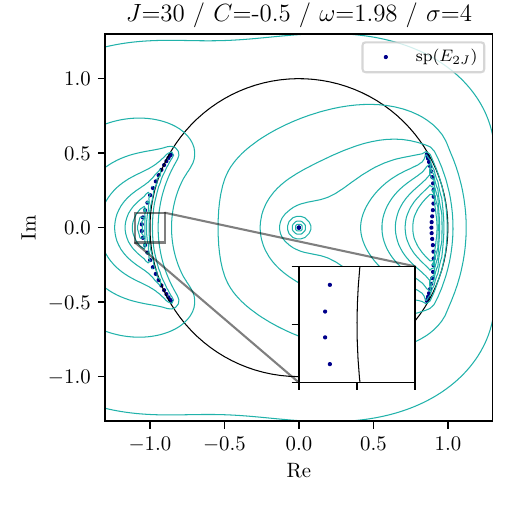} \\
        \includegraphics[width = 0.45\textwidth]{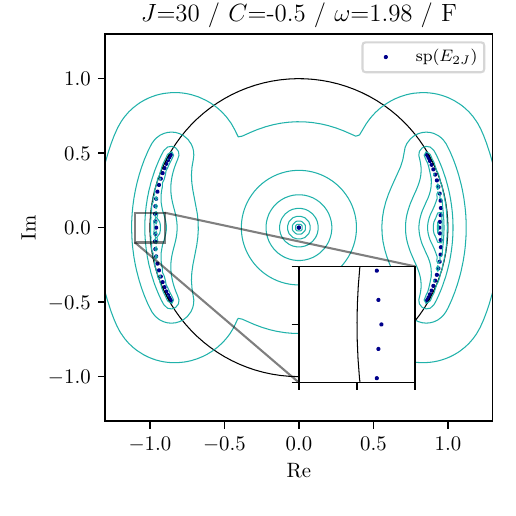} 
    \end{center}\caption{\label{fig:spectrum3}Spectra (dots) and $L^2$ pseudo-spectra (turquoise lines) when $\courantNumber<0$.}
\end{figure}

\begin{figure} 
    \begin{center}
        \includegraphics[width = 0.45\textwidth]{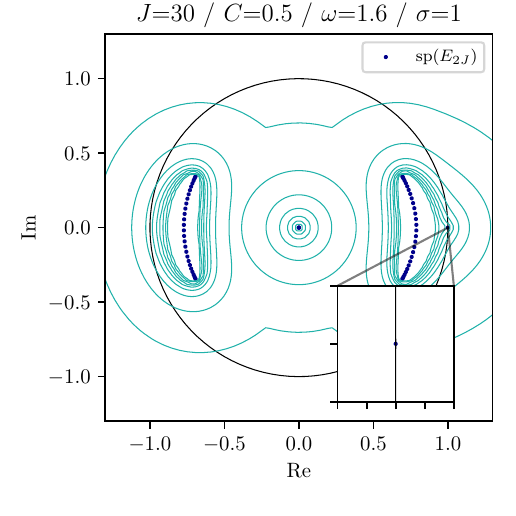}
        \includegraphics[width = 0.45\textwidth]{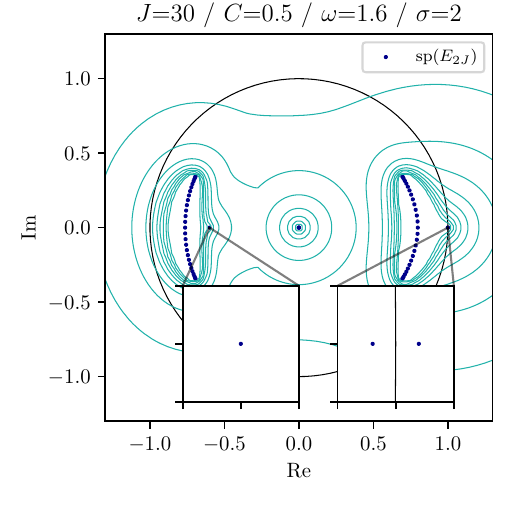} \\
        \includegraphics[width = 0.45\textwidth]{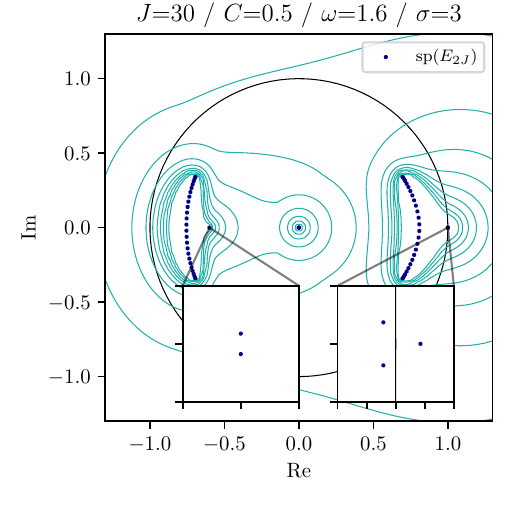}
        \includegraphics[width = 0.45\textwidth]{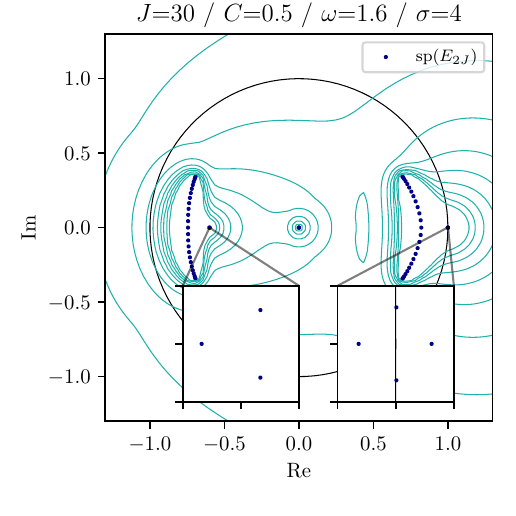} \\
        \includegraphics[width = 0.45\textwidth]{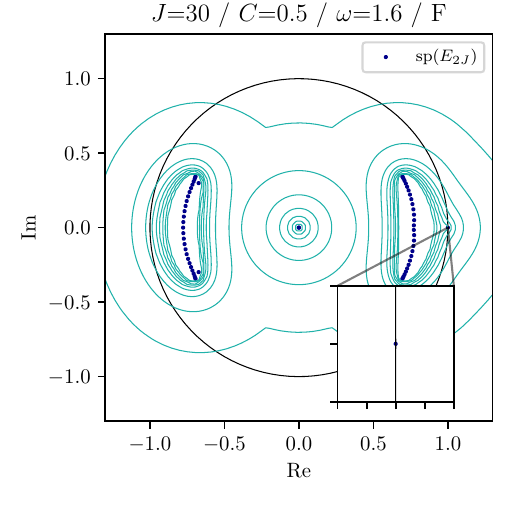} 
    \end{center}\caption{\label{fig:spectrum4}Spectra and $L^2$ pseudo-spectra (turquoise lines) when $\courantNumber > 0$.}
\end{figure}

As the structure of $\asymptoticSpectrumLetter_{\numberSpacePoints\to\infty}(\schemeMatrixFD)$, closely linked with GKS theory, has been described in \Cref{sec:asymptoticSpectra}, it is time to gain a deeper understanding on the surprising results gathered in \Cref{sec:numericalSimulationsStability}.
These atypical outcomes are  explained by finite dimensional effect with respect to $\numberSpacePoints$ and consequent deviations from the asymptotic spectra of \Cref{sec:asymptoticSpectra}.
Generally, problems arise on the real axis (see \Cref{fig:spectrum3} and \ref{fig:spectrum4}), as spectra are symmetric with respect to it, so we look at perturbations of a target \emph{quasi}-eigenvalue $\reals \ni \targetEigenvalue \not\in \spectrum(\schemeMatrixFD)$ to choose according to the situation at hand, such that $\targetEigenvalue \in \asymptoticSpectrumLetter_{\numberSpacePoints\to+\infty} (\schemeMatrixFD)$.
We would like to gauge the smallest deviation $\epsilon = \epsilon(\targetEigenvalue, \numberSpacePoints)$ such that $\targetEigenvalue+ \epsilon \in \spectrum(\schemeMatrixFD)$, which is linked with the problem of distance to singularity, see \cite{higham89matrix} and \cite[Section 49]{trefethen2005spectra}.
Notice that $\epsilon\in\complex$ \emph{a priori} to ensure that the previous problem has a solution. Nevertheless, we look for real approximations of it and their sign.
Into the characteristic equation \eqref{eq:quadraticEigenvalue}, assuming $|\epsilon|\ll 1$, and using a Taylor expansion, all this yields $0=\determinant ((\targetEigenvalue + \epsilon)\identityMatrix{2\numberSpacePoints} - \schemeMatrixFD) 
= \determinant (\targetEigenvalue\identityMatrix{2\numberSpacePoints} - \schemeMatrixFD) + \trace (\adjugate(\targetEigenvalue\identityMatrix{2\numberSpacePoints} - \schemeMatrixFD)) \epsilon + \bigO{\epsilon^2}$, thus, at leading order 
\begin{equation}\label{eq:bulgeEstimation}
    \epsilon \approx -\frac{\determinant (\targetEigenvalue\identityMatrix{2\numberSpacePoints} - \schemeMatrixFD)}{\trace (\adjugate(\targetEigenvalue\identityMatrix{2\numberSpacePoints} - \schemeMatrixFD))} = - (\trace ((\targetEigenvalue\identityMatrix{2\numberSpacePoints} - \schemeMatrixFD)^{-1}))^{-1} 
    =- \Biggl ( \sum_{\lambda \in \spectrum(\targetEigenvalue\identityMatrix{2\numberSpacePoints} - \schemeMatrixFD)} \frac{1}{\realPart{\lambda}}\Biggr )^{-1}.
\end{equation}
The approximation of $\epsilon$ in \eqref{eq:bulgeEstimation} is linked to the harmonic mean of the real part of the eigenvalues of the ``reminder'' matrix $\targetEigenvalue\identityMatrix{2\numberSpacePoints} - \schemeMatrixFD$, and is indeed the \strong{first iterate of a Newton's method}\footnote{Happily we neither consider $\epsilon \in \complex$, nor pursue Newton's iterations any further, avoiding to cope with Newton's fractals \cite{hubbard2001find}.} for \eqref{eq:quadraticEigenvalue}, with initial guess $\targetEigenvalue$.
For $\targetEigenvalue \in \asymptoticSpectrumLetter_{\numberSpacePoints\to\infty}(\schemeMatrixFD)$, then $\lim_{\numberSpacePoints \to +\infty}\epsilon = 0$, as---luckily---the right-hand side of \eqref{eq:bulgeEstimation}. Still, the fact that $\epsilon$ (or its approximation) converges to zero from above or below changes the situation and, when $|\targetEigenvalue| = 1$, the problem is very close to the one of checking \Cref{def:admissibleSolution}.
Observe that \eqref{eq:bulgeEstimation} is generally affected by an odd-even decoupling in $\numberSpacePoints$, for is looks at real perturbations $\epsilon$, whereas real eigenvalues close to the $\targetEigenvalue$ could alternatively appear and disappear according to the parity of $\numberSpacePoints$.
In the case $\relaxationParameter = 2$, much more can be said on \eqref{eq:bulgeEstimation} for small $\numberSpacePoints$'s, in particular when $\orderExtrapolation = 1$. This is the main advantage of this procedure against numerical computations of spectra or dominant eigenvalues through a power iteration. Showing this is the aim of the following result, proved in the Supplementary material.
\begin{proposition}\label{prop:perturbationSigma1}
    Let $\relaxationParameter = 2$ and consider \eqref{eq:papillon1}.
    For $\targetEigenvalue = -1$ (which is interesting whenever $\courantNumber < 0$), the estimation from \eqref{eq:bulgeEstimation} reads 
        \begin{equation*}
            \epsilon \approx 
            \begin{cases}
                \frac{\courantNumber + 2}{(\courantNumber + 3)\numberSpacePoints + \courantNumber + 1}, \qquad &\text{for} \quad \numberSpacePoints~\text{even},\\
                \frac{\courantNumber^2}{(\courantNumber^2 + \courantNumber + 2)\numberSpacePoints + \courantNumber^2 - \courantNumber - 2}, \qquad &\text{for} \quad \numberSpacePoints~\text{odd}.
            \end{cases}
        \end{equation*}
  On the other hand, for $\targetEigenvalue = 1$ (which is interesting whenever $\courantNumber > 0$), the estimation from \eqref{eq:bulgeEstimation} reads $\epsilon \approx -\tfrac{\courantNumber}{(\courantNumber-1)\numberSpacePoints + \courantNumber + 1}$.
\end{proposition}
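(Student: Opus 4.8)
The plan is to compute the right-hand side of \eqref{eq:bulgeEstimation} in closed form by reducing the $2\numberSpacePoints\times 2\numberSpacePoints$ characteristic determinant to an $\numberSpacePoints\times\numberSpacePoints$ tridiagonal one and then exploiting a collapse of the resulting continuant recursion at $\timeShiftOperator=\pm1$. First I would invoke \eqref{eq:quadraticEigenvalue} to write $\determinant(\timeShiftOperator\identityMatrix{2\numberSpacePoints}-\schemeMatrixFD)=\determinant(M(\timeShiftOperator))$ with $M(\timeShiftOperator)\definitionEquality\timeShiftOperator^2\identityMatrix{\numberSpacePoints}-\timeShiftOperator\schemeMatrixFDBlockZero-\schemeMatrixFDBlockMinusOne$, and Jacobi's formula $\differential_{\timeShiftOperator}\determinant(\timeShiftOperator\identityMatrix{2\numberSpacePoints}-\schemeMatrixFD)=\trace\adjugate(\timeShiftOperator\identityMatrix{2\numberSpacePoints}-\schemeMatrixFD)$, so that $\epsilon$ from \eqref{eq:bulgeEstimation} equals $-\determinant(M(\targetEigenvalue))$ divided by $\differential_{\timeShiftOperator}\determinant(M(\timeShiftOperator))$ evaluated at $\timeShiftOperator=\targetEigenvalue$. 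For $\relaxationParameter=2$ one reads off $\schemeMatrixFDBlockZeroEntry_{-1}=\courantNumber$, $\schemeMatrixFDBlockZeroEntry_{1}=-\courantNumber$, $\schemeMatrixFDBlockMinusOneEntry_0=1$, and \eqref{eq:papillon1}, through \eqref{eq:bulkBoundarySigma1} of \Cref{prop:outflowExtrapolation}, gives the outflow-row entries $\coefficientOutflowFDZero_0=1+\courantNumber$, $\coefficientOutflowFDZero_1=-\courantNumber$, $\coefficientOutflowFDMinusOne_{\indexSpace}=0$. Hence $M(\timeShiftOperator)$ is tridiagonal, with interior diagonal $\timeShiftOperator^2-1$, off-diagonal entries $-\courantNumber\timeShiftOperator$ below and $\courantNumber\timeShiftOperator$ above the diagonal, first row $(\timeShiftOperator^2-(1+\courantNumber)\timeShiftOperator,\ \courantNumber\timeShiftOperator,\ 0,\dots,0)$, and last row $\timeShiftOperator^2\transpose{\canonicalBasisVector{\numberSpacePoints}}$ coming from the homogeneous inflow condition built into \eqref{eq:schemeMatricialFD}.

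Expanding $\determinant(M(\timeShiftOperator))$ along that last row gives $\determinant(M(\timeShiftOperator))=\timeShiftOperator^2 R_{\numberSpacePoints-1}(\timeShiftOperator)$, where $R_k(\timeShiftOperator)$ is the leading $k\times k$ principal minor of the tridiagonal block and satisfies $R_0=1$, $R_1(\timeShiftOperator)=\timeShiftOperator^2-(1+\courantNumber)\timeShiftOperator$, and $R_k(\timeShiftOperator)=(\timeShiftOperator^2-1)R_{k-1}(\timeShiftOperator)+\courantNumber^2\timeShiftOperator^2 R_{k-2}(\timeShiftOperator)$ for $k\geq2$, the term $+\courantNumber^2\timeShiftOperator^2$ coming from $-(\courantNumber\timeShiftOperator)(-\courantNumber\timeShiftOperator)$. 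The decisive point is that $\relaxationParameter=2$ forces the interior diagonal of $M$ to be exactly $\timeShiftOperator^2-1$, which vanishes at the relevant values $\targetEigenvalue\in\{1,-1\}$ (equivalently $\modifiedTimeShiftOperator(\targetEigenvalue)=(2-\relaxationParameter)\targetEigenvalue=0$): there the recursion collapses to $R_k(\targetEigenvalue)=\courantNumber^2 R_{k-2}(\targetEigenvalue)$, so that, with $R_1(1)=-\courantNumber$, $R_1(-1)=2+\courantNumber$ and $R_2(\pm1)=\courantNumber^2$, the value $R_k(\targetEigenvalue)$ is explicit---a power of $\courantNumber^2$ times $R_1$ or $R_2$ according to the parity of $k$---and hence so is $\determinant(M(\targetEigenvalue))$.

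For the derivative I would use $\differential_{\timeShiftOperator}\determinant(M(\timeShiftOperator))=2\timeShiftOperator R_{\numberSpacePoints-1}(\timeShiftOperator)+\timeShiftOperator^2 R_{\numberSpacePoints-1}'(\timeShiftOperator)$ (prime denoting $\timeShiftOperator$-differentiation), differentiate the recursion for $R_k$, and evaluate at $\timeShiftOperator=\targetEigenvalue$ with $\timeShiftOperator^2-1=0$; this produces the inhomogeneous linear two-term recursion $R_k'(\targetEigenvalue)=\courantNumber^2 R_{k-2}'(\targetEigenvalue)+2\targetEigenvalue\bigl(R_{k-1}(\targetEigenvalue)+\courantNumber^2 R_{k-2}(\targetEigenvalue)\bigr)$, whose right-hand side is known from the previous step. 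Solving it---splitting even and odd $k$, with base cases $R_1'(\pm1)$ and $R_2'(\pm1)$ obtained directly---shows that $R_k'(\targetEigenvalue)$ is a power of $\courantNumber$ times a polynomial that is \emph{affine} in $k$, which is precisely what produces the terms linear in $\numberSpacePoints$ in the statement.

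It then remains to assemble $\epsilon\approx-\determinant(M(\targetEigenvalue))\big/\bigl(2\targetEigenvalue R_{\numberSpacePoints-1}(\targetEigenvalue)+R_{\numberSpacePoints-1}'(\targetEigenvalue)\bigr)$: the common powers of $\courantNumber$ in numerator and denominator cancel, and the residual algebra rearranges into $\tfrac{\courantNumber+2}{(\courantNumber+3)\numberSpacePoints+\courantNumber+1}$ and $\tfrac{\courantNumber^2}{(\courantNumber^2+\courantNumber+2)\numberSpacePoints+\courantNumber^2-\courantNumber-2}$ for $\targetEigenvalue=-1$ when $\numberSpacePoints$ is respectively even and odd, and into $-\tfrac{\courantNumber}{(\courantNumber-1)\numberSpacePoints+\courantNumber+1}$ for $\targetEigenvalue=1$; in the latter case the parity-dependent sign $(-1)^{\numberSpacePoints-1}$ carried by $\determinant(M(1))=(-\courantNumber)^{\numberSpacePoints-1}$ is also carried by the denominator and drops out, which is why no even/odd distinction survives. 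Along the way one records that $\differential_{\timeShiftOperator}\determinant(M(\timeShiftOperator))$ does not vanish at $\targetEigenvalue$, so that \eqref{eq:bulgeEstimation} is meaningful. I expect the main obstacle to be exactly this final bookkeeping: handling the four variants of the derivative recursion ($\targetEigenvalue=\pm1$ against the parity of $\numberSpacePoints$), tracking the powers of $\courantNumber$ so that they cancel cleanly, and checking that the leftover numerators and denominators collapse to the three announced closed forms---evaluating $\numberSpacePoints=2$ and $\numberSpacePoints=3$ by hand is a handy sanity check.
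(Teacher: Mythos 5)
Your proposal is correct, and I checked it reproduces the announced closed forms (e.g.\ $N=2,3$ for both $\targetEigenvalue=\pm1$, and the general affine-in-$\numberSpacePoints$ structure follows from your differentiated recursion as claimed). It is, however, a genuinely different route from the paper's. The paper splits off the outflow row as a rank-one perturbation, $\schemeMatrixFD = \schemeMatrixFDIndepOutflow + \canonicalBasisVector{1}\transpose{\vectorOutflowBis}$, applies the Sherman--Morrison formula to \eqref{eq:bulgeEstimation}, and then computes $\determinant(\targetEigenvalue\identityMatrix{2\numberSpacePoints}-\schemeMatrixFDIndepOutflow)$, its adjugate trace, and several adjugate--vector products entry by entry on the $2\numberSpacePoints\times2\numberSpacePoints$ matrix; this requires a separate treatment of odd $\numberSpacePoints$, where $\targetEigenvalue\identityMatrix{2\numberSpacePoints}-\schemeMatrixFDIndepOutflow$ is singular and a different column must be moved into the rank-one perturbation. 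You instead collapse everything onto the $\numberSpacePoints\times\numberSpacePoints$ quadratic pencil via \eqref{eq:quadraticEigenvalue}, expand along the inflow row, and exploit the fact that at $\relaxationParameter=2$ the interior diagonal $\timeShiftOperator^2-1$ of $M(\timeShiftOperator)$ vanishes at $\timeShiftOperator=\pm1$, so the three-term continuant recursion degenerates to $R_k=\courantNumber^2R_{k-2}$ and its derivative obeys an explicitly solvable inhomogeneous two-term recursion. Your approach buys a uniform treatment of both parities and both targets with no invertibility caveat and much lighter bookkeeping (the parity split appears only through $R_1$ versus $R_2$ as base case); the paper's approach is heavier but produces the individual adjugate columns, which is the same machinery it reuses in the proof of \Cref{thm:countingThroughReflection}. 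Do record, as you note, that the denominator $2\targetEigenvalue R_{\numberSpacePoints-1}(\targetEigenvalue)+R_{\numberSpacePoints-1}'(\targetEigenvalue)$ is nonzero under the stability conditions, so the leading-order Newton quotient in \eqref{eq:bulgeEstimation} is well defined.
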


\begin{figure}[h]
    \begin{center}
        \includegraphics[width = 0.48\textwidth]{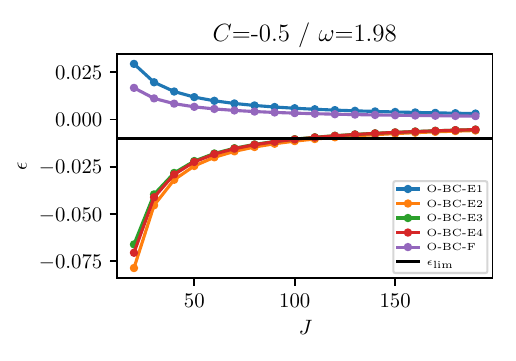}
        \includegraphics[width = 0.48\textwidth]{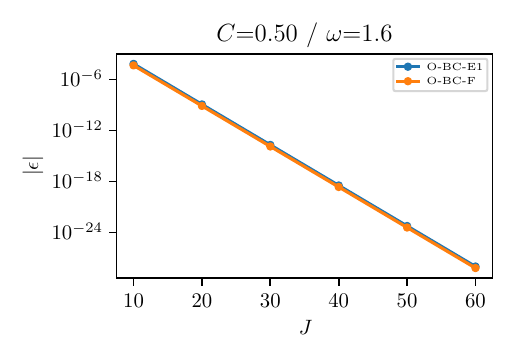}
        \includegraphics[width = 0.48\textwidth]{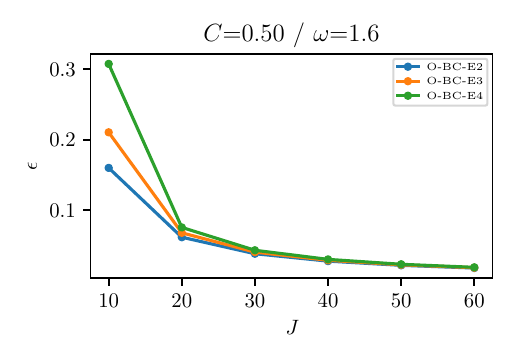}
    \end{center}\caption{\label{fig:bulges}Values of $\epsilon$ estimated using \eqref{eq:bulgeEstimation}. 
    For the first plot in the clockwise order, we observe that the conditions \eqref{eq:extrapolationBoundaryCondition} for $\orderExtrapolation\geq 2$ become stable starting from a certain $\numberSpacePoints\sim 100$, as empirically observed. For the second plot, we took the absolute value in order to use a logarithmic scale: the values corresponding to \eqref{eq:conditionKinRod} are indeed negative.}
\end{figure}

Let us now put all the previous analyses together to fill missing gaps in the understanding of the results of \Cref{sec:numericalSimulationsStability}.

\begin{itemize}
    \item For $\courantNumber < 0$. We focus on the case $\courantNumber = -1/2$, $\relaxationParameter = 1.98$, and $\numberSpacePoints = 30$, where \strong{unexpected instability} has been observed.

    The transient behavior $|\conservedMomentDiscrete_0^{\indexTime}|\propto \indexTime^{\orderExtrapolation - 2}$ can be explained by pseudo-spectra.
    Hinting at \Cref{fig:spectrum3}, we remark that the presence (resp. absence) of \strong{bulges} in pseudo-spectra, protruding outside the unit circle close to $\targetEigenvalue = -\sqrt{\relaxationParameter - 1}\approx - 1$ (this expression may lack justification at this stage), is coherent---as pointed out in \cite[Section 34]{trefethen2005spectra}, with the GKS instability (resp. stability) of the boundary conditions at hand when $\relaxationParameter = 2$.
    According to \cite[Section 14]{trefethen2005spectra}, the fact that protrusions outside the unit disk are increasingly severe while $\orderExtrapolation$ grows entails a more marked transient growth.

    Looking at the long-time behavior, thus at spectra, in the stable contexts \eqref{eq:papillon1} and \eqref{eq:conditionKinRod}, the eigenvalues tending towards the left asymptotic cluster do so \strong{from inside} the unit disk.
    Quite the opposite, in the unstable cases \eqref{eq:extrapolationBoundaryCondition} for $\orderExtrapolation \geq 2$, these eigenvalues converge to the limit object, located inside the unit disk, \strong{from outside}, yielding the instability at fixed $\numberSpacePoints$.
    More generally, when $\tfrac{2}{1-\courantNumber}\leq \relaxationParameter\leq 2$, issues arise because non-isolated eigenvalues with negative real part can be outside the unit circle for small $\numberSpacePoints$'s.
    Observing that one intersection of the asymptotic spectrum independent of the boundary condition---given by \Cref{lemma:limitSpectrumToeplitz}---with the real axis, is given by $-\sqrt{\relaxationParameter-1}$, we consider $\targetEigenvalue = -\sqrt{\relaxationParameter-1}$ to apply \eqref{eq:bulgeEstimation}.
    Thus, if $\epsilon$, practically estimated using \eqref{eq:bulgeEstimation}, is such that $\epsilon > 0$, we can infer that we are in a stable situation. Conversely, if $\epsilon < 0$, we can face instability if the amplitude of $\epsilon$ is too large.
    In particular, we look for $\epsilon < \sqrt{\relaxationParameter - 1} - 1 = \epsilon_{\text{lim}}\leq 0$.
    Of course, this is just a rough estimation by one iteration of the Newton's method from an educated asymptotically-inspired guess $\targetEigenvalue$.
    Still, the results from \Cref{fig:bulges} corroborate empirical observations, showing that while we expect \eqref{eq:papillon1} and \eqref{eq:conditionKinRod} be stable essentially for every $\numberSpacePoints$, the conditions \eqref{eq:extrapolationBoundaryCondition} for $\orderExtrapolation\geq 2$ become stable starting from a certain $\numberSpacePoints\sim 100$. Moreover, since $1.98\approx 2$, results are coherent with \Cref{prop:perturbationSigma1}.

    To summarize, we see that the GKS-unexpected instability comes from the impact of interactions between boundaries on the spectra of the scheme matrix when $\numberSpacePoints$ is small.
    The larger the extrapolation stencil, the later we converge to the asymptotic spectrum, with potentially more severe instability.
    Notice that the repeated-reflections theory by \cite{trefethen1985stability}, mentioned in \Cref{sec:numericalSimulationsStability}, analyzes interactions between boundaries in a time-extensive fashion, and is totally compatible with the inspection of the spectra, where interactions are ``instantaneously'' taken into consideration, for the latter conveys information away from the initial transient (\emph{i.e.} $\indexTime\gg 1$), after enough reflections have taken place.

    \item For $\courantNumber > 0$. We focus on the case $\courantNumber = 1/2$, $\relaxationParameter = 1.6$, and $\numberSpacePoints = 30$, where---unexpectedly---stable behaviors have been observed.

    In \Cref{fig:spectrum4}, all plots feature \strong{bulges} in the pseudo-spectra, protruding outside the unit circle close to $\targetEigenvalue = 1$. 
    This indicates that all boundary conditions are GKS-unstable.

    However in the long-time limit, for the empirically-stable cases \eqref{eq:papillon1} and \eqref{eq:conditionKinRod}, it could seem that $\targetEigenvalue = 1 \in \spectrum(\schemeMatrixFD)$  and simple.
    This is almost true, for we obtain (in exact arithmetic) $0\neq\determinant(\identityMatrix{60}-\schemeMatrixFDSize{60}) \sim -10^{-21}$ for the former and $0\neq\determinant(\identityMatrix{60}-\schemeMatrixFDSize{60}) \sim 10^{-21}$ for the latter boundary condition.
    These determinants being extremely tiny, one must be aware when utilizing floating point arithmetic. 
    From \Cref{fig:bulges}, we see that $\epsilon \sim 10^{-14}>0$ for the former and $\epsilon \sim -10^{-14}<0$ for the latter using \eqref{eq:bulgeEstimation}.
    Though these predictions need not be highly accurate, they suggest that \eqref{eq:papillon1} is theoretically unstable, whereas \eqref{eq:conditionKinRod} is stable. Practically, these isolated eigenvalues dwell so close ($10^{-14}$!) to $\targetEigenvalue = 1$ that simulations remain stable. Moreover, we see that $\epsilon$ goes to zero exponentially with $\numberSpacePoints$.
    For \eqref{eq:conditionKinRod}, \Cref{fig:spectrum4} features two isolated complex conjugate eigenvalues close to the left cluster, which are nothing but (almost) the two complex conjugate roots of \eqref{eq:tmp7}.

    The situation turns out radically different for \eqref{eq:extrapolationBoundaryCondition} with $\orderExtrapolation \geq 2$.
    Here, instabilities show up with growth $\propto \indexTime^{\orderExtrapolation - 1}$. Looking at \Cref{fig:spectrum4}, we see $\orderExtrapolation$ isolated eigenvalues close to the target $\targetEigenvalue = 1$, some outside the unit disk.
    They should theoretically generate an exponential growth; however, they are so close to $\targetEigenvalue = 1$ (although quite not as for $\orderExtrapolation = 1$, \confer{} \Cref{fig:bulges}) that their \emph{effective} role is the one of a multiple eigenvalue on the unit circle with algebraic multiplicity $\orderExtrapolation$, explaining the growth $\propto \indexTime^{\orderExtrapolation - 1}$.
    This asymptotic regime is rapidly reached, see \Cref{fig:instab_pos_vel_few_points}, analogously to the convergence of a power iteration, which is exponential in the base given by the ratio of the modulus of the second largest eigenvalue (in modulus) and the largest eigenvalue. For the considered parameters, the basis is roughly $\sqrt{\relaxationParameter-1} \approx 0.775$.
    The presence of $\orderExtrapolation$ eigenvalues close to $\targetEigenvalue = 1$ could have been predicted without plots of spectra, taking advantage of \Cref{thm:countingThroughReflection}, after a glance at \eqref{eq:tmp18}.
    A second explanation comes from observing the pseudo-spectra bulges around $\targetEigenvalue = 1$: the larger the number of almost-coinciding eigenvalues here, the ``more singular'' the resolvent $(\timeShiftOperator\identityMatrix{2\numberSpacePoints} - \schemeMatrixFD)^{-1}$ is in this neighborhood, due to the almost-superimposed poles associated to each eigenvalue. Thus, for a given level-set, the bulge around $\targetEigenvalue$ is larger.
\end{itemize}

\FloatBarrier

\section{Conclusions and perspectives}\label{sec:conclusions}

We have introduced \strong{boundary conditions} for a two-velocities 1D lattice Boltzmann scheme.
Rewriting things in terms of Finite Difference schemes on the \strong{variable of interest}, we have seen that GKS-stable boundary conditions \strong{decrease} the order of second-order accurate bulk schemes when initializing at equilibrium, whereas unstable boundary conditions do not.
Therefore, for the toy numerical scheme at hand, the best combination of boundary conditions in terms of consistency and stability, upon performing corrections due to the initilization at equilibrium, are either \eqref{eq:papillon1} or \eqref{eq:conditionKinRod}.
Inspired by numerical simulations, we have observed that GKS theory is \strong{not} the end of the story about stability when grids are coarse \strong{and} two boundaries are present---with waves having travelled several times through the domain.
    In this framework, there is \strong{no finalized decoupling} between inner/boundary schemes and left/right boundaries in terms of spectrum, and the choice of boundary condition has a significant impact.
Finally, we have linked---to the best of our knowledge for the first time---the order of poles of the \strong{reflection coefficient} with the number of eigenvalues in the scheme matrix tending to isolated points in the asymptotic spectrum.
This explains the growth of  instabilities in some circumstances.

Two possible perspectives are envisioned at this stage.
The first one will be to extend the work to \strong{more involved schemes}, such as three-velocities schemes and schemes with an arbitrary number of discrete velocities having a two-relaxation-times (TRT) link-structure with ``magic parameters'' equal to $1/4$.
We hope being able to turn these schemes into Finite Difference ones.
The second and more ambitious path will be to develop a GKS theory \strong{directly on lattice Boltzmann schemes} without having to transform them.
To this end, the modal ansatz must carefully identify unstable modes linked solely with non-conserved moments, which are not interesting, and filter them while considering the conserved moment.

\section*{Acknowledgements}

The author thanks Victor Michel-Dansac for useful advice on the manuscript and Jean-François Coulombel for the enlightening discussions on this topic.
This work of the Interdisciplinary Thematic Institute IRMIA++, as part of the ITI 2021-2028 program of the University of Strasbourg, CNRS and Inserm, was supported by IdEx Unistra (ANR-10-IDEX-0002), and by SFRI-STRAT’US project (ANR-20-SFRI-0012) under the framework of the French Investments for the Future Program.

\bibliographystyle{alpha}
\bibliography{biblio}

\appendix

\section{Proofs}

\subsection{Proof of \Cref{prop:bulkAndInflow}}\label{app:proof:bulkAndInflow}

\begin{proofWithoutProof}
    The bulk schemes in \eqref{eq:bulkFDInitial} and \eqref{eq:bulkFDScheme} can be found as described in \cite{bellotti2022finite, bellotti2024initialisation}, except for the one in \eqref{eq:bulkFDScheme} for $\indexSpace = 1, \numberSpacePoints - 2$. In this case, the scheme might be different  due to boundary conditions. 
    This is verified in the proof of \Cref{prop:outflowExtrapolation}.
    We now check $\indexSpace = \numberSpacePoints - 2$.
    The boundary scheme on the distribution functions reads 
    \begin{align}
        \distributionFunctionDiscrete^{+, \indexTime + 1}_{\numberSpacePoints - 1} &= \distributionFunctionDiscrete^{+, \indexTime \collided}_{\numberSpacePoints - 2} = \tfrac{1}{2} \conservedMomentDiscrete_{\numberSpacePoints - 2}^{\indexTime} + \tfrac{1-\relaxationParameter}{2\latticeVelocity} \nonConservedMomentDiscrete_{\numberSpacePoints - 2}^{\indexTime} + \tfrac{\relaxationParameter}{2\latticeVelocity} \flux(\conservedMomentDiscrete_{\numberSpacePoints - 2}^{\indexTime}), \label{eq:tmp8}\\
        \distributionFunctionDiscrete_{\numberSpacePoints - 1}^{-, \indexTime + 1} &= - \distributionFunctionDiscrete_{\numberSpacePoints - 2}^{+, \indexTime \collided} + \boundaryDatumInflow(\timeGridPoint{\indexTime + 1}) = -\tfrac{1}{2} \conservedMomentDiscrete_{\numberSpacePoints - 2}^{\indexTime} - \tfrac{1-\relaxationParameter}{2\latticeVelocity} \nonConservedMomentDiscrete_{\numberSpacePoints - 2}^{\indexTime} - \tfrac{\relaxationParameter}{2\latticeVelocity} \flux(\conservedMomentDiscrete_{\numberSpacePoints - 2}^{\indexTime}) + \boundaryDatumInflow(\timeGridPoint{\indexTime + 1}). \label{eq:tmp9}
    \end{align}
    Taking sum $\eqref{eq:tmp8} + \eqref{eq:tmp9}$ and difference  $\latticeVelocity\eqref{eq:tmp8} - \latticeVelocity \eqref{eq:tmp9}$ of these equations yields 
    \begin{align}
        \conservedMomentDiscrete_{\numberSpacePoints - 1}^{\indexTime + 1} &= \boundaryDatumInflow(\timeGridPoint{\indexTime + 1}),  \label{eq:boundaryRightU}\\
        \nonConservedMomentDiscrete_{\numberSpacePoints - 1}^{\indexTime + 1} &= \latticeVelocity\conservedMomentDiscrete_{\numberSpacePoints - 2}^{\indexTime} + (1-\relaxationParameter) \nonConservedMomentDiscrete_{\numberSpacePoints - 2}^{\indexTime} + \relaxationParameter \flux(\conservedMomentDiscrete_{\numberSpacePoints - 2}^{\indexTime}) - \latticeVelocity \boundaryDatumInflow(\timeGridPoint{\indexTime + 1}).\label{eq:boundaryRightV}
    \end{align}
    The bulk scheme written on the moments reads, for $\indexSpace \in \integerInterval{1}{\numberSpacePoints - 2}$
    \begin{align}
        \conservedMomentDiscrete_{\indexSpace}^{\indexTime + 1} &= \tfrac{1}{2} (\conservedMomentDiscrete_{\indexSpace-1}^{\indexTime} + \conservedMomentDiscrete_{\indexSpace+1}^{\indexTime}) + \tfrac{1-\relaxationParameter}{2\latticeVelocity} (\nonConservedMomentDiscrete_{\indexSpace-1}^{\indexTime} - \nonConservedMomentDiscrete_{\indexSpace+1}^{\indexTime}) + \tfrac{\relaxationParameter}{2\latticeVelocity} (\flux(\conservedMomentDiscrete_{\indexSpace-1}^{\indexTime}) - \flux(\conservedMomentDiscrete_{\indexSpace+1}^{\indexTime})), \label{eq:bulkSchemeForU}\\
        \nonConservedMomentDiscrete_{\indexSpace}^{\indexTime + 1} &= \tfrac{\latticeVelocity}{2} (\conservedMomentDiscrete_{\indexSpace-1}^{\indexTime} - \conservedMomentDiscrete_{\indexSpace+1}^{\indexTime}) + \tfrac{1-\relaxationParameter}{2} (\nonConservedMomentDiscrete_{\indexSpace-1}^{\indexTime} + \nonConservedMomentDiscrete_{\indexSpace+1}^{\indexTime}) + \tfrac{\relaxationParameter}{2} (\flux(\conservedMomentDiscrete_{\indexSpace-1}^{\indexTime}) + \flux(\conservedMomentDiscrete_{\indexSpace+1}^{\indexTime})). \label{eq:bulkSchemeForV}
    \end{align}
    Writing \eqref{eq:bulkSchemeForU} at $\indexSpace = \numberSpacePoints - 2$, we have to eliminate the term in $\nonConservedMomentDiscrete_{\numberSpacePoints - 3}^{\indexTime} - \nonConservedMomentDiscrete_{\numberSpacePoints - 1}^{\indexTime}$.
    Using \eqref{eq:bulkSchemeForV} and \eqref{eq:boundaryRightV} gives
    \begin{equation*}
        \nonConservedMomentDiscrete_{\numberSpacePoints - 3}^{\indexTime} - \nonConservedMomentDiscrete_{\numberSpacePoints - 1}^{\indexTime} = \tfrac{\latticeVelocity}{2} (\conservedMomentDiscrete_{\numberSpacePoints-4}^{\indexTime - 1} - 3\conservedMomentDiscrete_{\numberSpacePoints-2}^{\indexTime - 1}) + \tfrac{1-\relaxationParameter}{2} (\nonConservedMomentDiscrete_{\numberSpacePoints-4}^{\indexTime - 1} - \nonConservedMomentDiscrete_{\numberSpacePoints-2}^{\indexTime - 1}) + \tfrac{\relaxationParameter}{2} (\flux(\conservedMomentDiscrete_{\numberSpacePoints-4}^{\indexTime - 1}) - \flux(\conservedMomentDiscrete_{\numberSpacePoints-2}^{\indexTime - 1})) + \latticeVelocity \boundaryDatumInflow(\timeGridPoint{\indexTime}).
    \end{equation*}
    There is still the unknown $\nonConservedMomentDiscrete$ on the right-hand side. 
    This term can be obtained using \eqref{eq:bulkSchemeForU} written for $\indexSpace = \numberSpacePoints - 3$:
    \begin{equation*}
        \tfrac{1-\relaxationParameter}{2} (\nonConservedMomentDiscrete_{\numberSpacePoints - 4}^{\indexTime - 1} - \nonConservedMomentDiscrete_{\numberSpacePoints - 2}^{\indexTime - 1})  = \latticeVelocity \conservedMomentDiscrete_{\numberSpacePoints - 3}^{\indexTime} - \tfrac{\latticeVelocity}{2} (\conservedMomentDiscrete_{\numberSpacePoints - 4}^{\indexTime - 1} + \conservedMomentDiscrete_{\numberSpacePoints - 2}^{\indexTime - 1}) - \tfrac{\relaxationParameter}{2} (\flux(\conservedMomentDiscrete_{\numberSpacePoints - 4}^{\indexTime - 1}) - \flux(\conservedMomentDiscrete_{\numberSpacePoints - 2}^{\indexTime - 1})),
    \end{equation*}
    which thus gives $\nonConservedMomentDiscrete_{\numberSpacePoints - 3}^{\indexTime} - \nonConservedMomentDiscrete_{\numberSpacePoints - 1}^{\indexTime} = \latticeVelocity \conservedMomentDiscrete_{\numberSpacePoints - 3}^{\indexTime} -2 \latticeVelocity \conservedMomentDiscrete_{\numberSpacePoints-2}^{\indexTime - 1}+ \latticeVelocity \boundaryDatumInflow(\timeGridPoint{\indexTime}) = \latticeVelocity \conservedMomentDiscrete_{\numberSpacePoints - 3}^{\indexTime} -2 \latticeVelocity \conservedMomentDiscrete_{\numberSpacePoints-2}^{\indexTime - 1}+ \latticeVelocity \conservedMomentDiscrete_{\numberSpacePoints-1}^{\indexTime}$ using \eqref{eq:boundaryRightU} in the last inequality.
    Back into \eqref{eq:bulkSchemeForU} at $\indexSpace = \numberSpacePoints - 2$, this gives \eqref{eq:bulkFDScheme} as claimed.
\end{proofWithoutProof}

\subsection{Proof of \Cref{prop:outflowExtrapolation}}\label{app:proof:outflowExtrapolation}

\begin{proofWithoutProof}
    From \eqref{eq:extrapolationBoundaryCondition}, we obtain 
    \begin{align}
        \distributionFunctionDiscrete_{0}^{+, \indexTime +1} &=  \sum_{\indexSpace = 0}^{\orderExtrapolation - 1} \coefficientExtrapolation_{\indexSpace} \distributionFunctionDiscrete_{\indexSpace}^{+, \indexTime \collided} + \sourceTermBoundaryDiscrete_0^{\indexTime + 1} = \sum_{\indexSpace = 0}^{\orderExtrapolation - 1} \coefficientExtrapolation_{\indexSpace} \bigl ( \tfrac{1}{2} \conservedMomentDiscrete_{\indexSpace}^{\indexTime} + \tfrac{1-\relaxationParameter}{2\latticeVelocity}\nonConservedMomentDiscrete_{\indexSpace}^{\indexTime} + \tfrac{\relaxationParameter}{2\latticeVelocity} \flux(\conservedMomentDiscrete_{\indexSpace}^{\indexTime})\bigr )+ \sourceTermBoundaryDiscrete_0^{\indexTime + 1}, \label{eq:fPlusBoundaryGenericExtrap}\\
        \distributionFunctionDiscrete_{0}^{-, \indexTime +1} &= \tfrac{1}{2} \conservedMomentDiscrete_{1}^{\indexTime} - \tfrac{1-\relaxationParameter}{2\latticeVelocity}\nonConservedMomentDiscrete_{1}^{\indexTime} - \tfrac{\relaxationParameter}{2\latticeVelocity} \flux(\conservedMomentDiscrete_{1}^{\indexTime}).\label{eq:fMinusBoundaryGenericExtrap}
    \end{align}
    Taking $\eqref{eq:fPlusBoundaryGenericExtrap} + \eqref{eq:fMinusBoundaryGenericExtrap}$ and $\latticeVelocity\eqref{eq:fPlusBoundaryGenericExtrap} - \latticeVelocity \eqref{eq:fMinusBoundaryGenericExtrap}$ provides:
    \begin{align}
        \conservedMomentDiscrete_{0}^{\indexTime +1} &= \tfrac{1}{2} \Bigl ( \sum_{\substack{\indexSpace = 0 \\ \indexSpace \neq 1}}^{\orderExtrapolation - 1} \coefficientExtrapolation_{\indexSpace} \conservedMomentDiscrete_{\indexSpace}^{\indexTime} + (\coefficientExtrapolation_1 + 1) \conservedMomentDiscrete_{1}^{\indexTime} \Bigr ) + \tfrac{1-\relaxationParameter}{2\latticeVelocity} \Bigl ( \sum_{\substack{\indexSpace = 0 \\ \indexSpace \neq 1}}^{\orderExtrapolation - 1} \coefficientExtrapolation_{\indexSpace} \nonConservedMomentDiscrete_{\indexSpace}^{\indexTime} + (\coefficientExtrapolation_1 - 1) \nonConservedMomentDiscrete_{1}^{\indexTime} \Bigr ) + \tfrac{\relaxationParameter}{2\latticeVelocity} \Bigl ( \sum_{\substack{\indexSpace = 0 \\ \indexSpace \neq 1}}^{\orderExtrapolation - 1} \coefficientExtrapolation_{\indexSpace} \flux(\conservedMomentDiscrete_{\indexSpace}^{\indexTime}) + (\coefficientExtrapolation_1 - 1) \flux(\conservedMomentDiscrete_{1}^{\indexTime}) \Bigr )  + \sourceTermBoundaryDiscrete_0^{\indexTime + 1}, \label{eq:uBoundaryGenericExtrapolation} \\
        \nonConservedMomentDiscrete_{0}^{\indexTime +1} &= \tfrac{\latticeVelocity}{2} \Bigl ( \sum_{\substack{\indexSpace = 0 \\ \indexSpace \neq 1}}^{\orderExtrapolation - 1} \coefficientExtrapolation_{\indexSpace} \conservedMomentDiscrete_{\indexSpace}^{\indexTime} + (\coefficientExtrapolation_1 - 1) \conservedMomentDiscrete_{1}^{\indexTime} \Bigr ) + \tfrac{1-\relaxationParameter}{2} \Bigl ( \sum_{\substack{\indexSpace = 0 \\ \indexSpace \neq 1}}^{\orderExtrapolation - 1} \coefficientExtrapolation_{\indexSpace} \nonConservedMomentDiscrete_{\indexSpace}^{\indexTime} + (\coefficientExtrapolation_1 + 1) \nonConservedMomentDiscrete_{1}^{\indexTime} \Bigr ) + \tfrac{\relaxationParameter}{2} \Bigl ( \sum_{\substack{\indexSpace = 0 \\ \indexSpace \neq 1}}^{\orderExtrapolation - 1} \coefficientExtrapolation_{\indexSpace} \flux(\conservedMomentDiscrete_{\indexSpace}^{\indexTime}) + (\coefficientExtrapolation_1 + 1) \flux(\conservedMomentDiscrete_{1}^{\indexTime}) \Bigr )  + \latticeVelocity\sourceTermBoundaryDiscrete_0^{\indexTime + 1},\label{eq:vBoundaryGenericExtrapolation}
    \end{align}
    where in the case $\orderExtrapolation = 1$, we consider $\coefficientExtrapolation_1 = 0$.
    Let us first show that the scheme for $\indexSpace = 1$ is indeed given by \eqref{eq:bulkFDScheme} from \Cref{prop:bulkAndInflow}.
    When considering \eqref{eq:bulkSchemeForU} for $\indexSpace = 1$, we have to estimate the difference $\nonConservedMomentDiscrete_0^{\indexTime} - \nonConservedMomentDiscrete_2^{\indexTime}$, thus we obtain 
    \begin{multline*}
        \nonConservedMomentDiscrete_0^{\indexTime} - \nonConservedMomentDiscrete_2^{\indexTime} = \tfrac{\latticeVelocity}{2} \Bigl ( \sum_{\substack{\indexSpace = 0 \\ \indexSpace \neq 1}}^{\sigma - 1} \coefficientExtrapolation_{\indexSpace} \conservedMomentDiscrete_{\indexSpace}^{\indexTime-1} + (\coefficientExtrapolation_1 - 2) \conservedMomentDiscrete_{1}^{\indexTime-1} + \conservedMomentDiscrete_{3}^{\indexTime-1}  \Bigr ) \\
         + \tfrac{1-\relaxationParameter}{2} \Bigl ( \sum_{\substack{\indexSpace = 0 \\ \indexSpace \neq 1}}^{\sigma - 1} \coefficientExtrapolation_{\indexSpace} \nonConservedMomentDiscrete_{\indexSpace}^{\indexTime-1} + \coefficientExtrapolation_1 \nonConservedMomentDiscrete_{1}^{\indexTime-1} - \nonConservedMomentDiscrete_3^{\indexTime - 1}\Bigr ) + \tfrac{\relaxationParameter}{2} \Bigl ( \sum_{\substack{\indexSpace = 0 \\ \indexSpace \neq 1}}^{\sigma - 1} \coefficientExtrapolation_{\indexSpace} \flux(\conservedMomentDiscrete_{\indexSpace}^{ \indexTime-1}) + \coefficientExtrapolation_1 \flux(\conservedMomentDiscrete_{1}^{ \indexTime-1})  - \flux(\conservedMomentDiscrete_3^{ \indexTime - 1}) \Bigr )  + \latticeVelocity\sourceTermBoundaryDiscrete_0^{\indexTime}.
    \end{multline*}
    In order to get rid of the terms in $\nonConservedMomentDiscrete^{\indexTime - 1}$ on the right-hand side of this expression, we consider 
    \begin{multline*}
        \conservedMomentDiscrete_0^{\indexTime} + \conservedMomentDiscrete_2^{\indexTime} = 
        \tfrac{1}{2} \Bigl ( \sum_{\substack{\indexSpace = 0 \\ \indexSpace \neq 1}}^{\orderExtrapolation - 1} \coefficientExtrapolation_{\indexSpace} \conservedMomentDiscrete_{\indexSpace}^{\indexTime - 1} + (\coefficientExtrapolation_1 + 2) \conservedMomentDiscrete_{1}^{\indexTime-1} + \conservedMomentDiscrete_{3}^{\indexTime-1} \Bigr ) \\
        + \tfrac{1-\relaxationParameter}{2\latticeVelocity} \Bigl ( \sum_{\substack{\indexSpace = 0 \\ \indexSpace \neq 1}}^{\orderExtrapolation - 1} \coefficientExtrapolation_{\indexSpace} \nonConservedMomentDiscrete_{\indexSpace}^{\indexTime-1} + \coefficientExtrapolation_1 \nonConservedMomentDiscrete_{1}^{\indexTime-1} - \nonConservedMomentDiscrete_{3}^{\indexTime-1} \Bigr ) + \tfrac{\relaxationParameter}{2\latticeVelocity} \Bigl ( \sum_{\substack{\indexSpace = 0 \\ \indexSpace \neq 1}}^{\orderExtrapolation - 1} \coefficientExtrapolation_{\indexSpace} \flux(\conservedMomentDiscrete_{\indexSpace}^{\indexTime-1}) + \coefficientExtrapolation_1 \flux(\conservedMomentDiscrete_{1}^{\indexTime-1}) - \flux(\conservedMomentDiscrete_{3}^{\indexTime-1}) \Bigr )  + \sourceTermBoundaryDiscrete_0^{\indexTime}.
    \end{multline*}
    From this, we deduce that $\nonConservedMomentDiscrete_0^{\indexTime} - \nonConservedMomentDiscrete_2^{\indexTime} = -2\latticeVelocity \conservedMomentDiscrete_1^{\indexTime - 1} + \latticeVelocity (\conservedMomentDiscrete_0^{\indexTime} + \conservedMomentDiscrete_2^{\indexTime})$ thus the claim. 
    We now focus on the first cell, indexed by $\indexSpace = 0$.
    The expression involving the non-conserved moment on the right-hand side of \eqref{eq:uBoundaryGenericExtrapolation} reads
\begin{multline}\label{eq:tmp14}
    \sum_{\substack{\indexSpace = 0 \\ \indexSpace \neq 1}}^{\orderExtrapolation - 1} \coefficientExtrapolation_{\indexSpace} \nonConservedMomentDiscrete_{\indexSpace}^{\indexTime} + (\coefficientExtrapolation_1 - 1) \nonConservedMomentDiscrete_{1}^{\indexTime} = \\
    \tfrac{\latticeVelocity}{2} \Bigl ( (\coefficientExtrapolation_0^2 + \coefficientExtrapolation_1 - 1) \conservedMomentDiscrete_{0}^{\indexTime - 1} + (\coefficientExtrapolation_0 (\coefficientExtrapolation_1-1) + \coefficientExtrapolation_2) \conservedMomentDiscrete_{1}^{\indexTime - 1} + (\coefficientExtrapolation_0 \coefficientExtrapolation_2 - \coefficientExtrapolation_1  + 1 + \coefficientExtrapolation_3) \conservedMomentDiscrete_{2}^{\indexTime - 1} 
    + \sum_{\indexSpace = 3}^{\orderExtrapolation} (\coefficientExtrapolation_0 \coefficientExtrapolation_{\indexSpace} + \coefficientExtrapolation_{\indexSpace + 1} - \coefficientExtrapolation_{\indexSpace - 1}) \conservedMomentDiscrete_{\indexSpace}^{\indexTime - 1} \Bigr ) \\
    + \tfrac{1-\relaxationParameter}{2} \Bigl ( (\coefficientExtrapolation_0^2 + \coefficientExtrapolation_1 - 1) \nonConservedMomentDiscrete_{0}^{\indexTime - 1} + (\coefficientExtrapolation_0 (\coefficientExtrapolation_1+1) + \coefficientExtrapolation_2) \nonConservedMomentDiscrete_{1}^{\indexTime - 1} + (\coefficientExtrapolation_0 \coefficientExtrapolation_2 + \coefficientExtrapolation_1  - 1 + \coefficientExtrapolation_3) \nonConservedMomentDiscrete_{2}^{\indexTime - 1} 
    + \sum_{\indexSpace = 3}^{\orderExtrapolation } (\coefficientExtrapolation_0 \coefficientExtrapolation_{\indexSpace} + \coefficientExtrapolation_{\indexSpace + 1} + \coefficientExtrapolation_{\indexSpace - 1}) \nonConservedMomentDiscrete_{\indexSpace}^{\indexTime - 1} \Bigr ) \\
    + \tfrac{\relaxationParameter}{2} \Bigl ( (\coefficientExtrapolation_0^2 + \coefficientExtrapolation_1 - 1) \flux(\conservedMomentDiscrete_{0}^{ \indexTime - 1}) + (\coefficientExtrapolation_0 (\coefficientExtrapolation_1+1) + \coefficientExtrapolation_2) \flux(\conservedMomentDiscrete_{1}^{\indexTime - 1}) + (\coefficientExtrapolation_0 \coefficientExtrapolation_2 + \coefficientExtrapolation_1  - 1 + \coefficientExtrapolation_3) \flux(\conservedMomentDiscrete_{2}^{ \indexTime - 1}) 
    + \sum_{\indexSpace = 3}^{\orderExtrapolation} (\coefficientExtrapolation_0 \coefficientExtrapolation_{\indexSpace} + \coefficientExtrapolation_{\indexSpace + 1} + \coefficientExtrapolation_{\indexSpace - 1}) \flux(\conservedMomentDiscrete_{\indexSpace}^{\indexTime - 1})  \Bigr ) + \latticeVelocity \coefficientExtrapolation_0 \sourceTermBoundaryDiscrete_0^{\indexTime},
\end{multline}
where we used \eqref{eq:vBoundaryGenericExtrapolation} and \eqref{eq:bulkSchemeForV} at the previous time step.
In \eqref{eq:tmp14}, one has to set $\coefficientExtrapolation_{\indexSpace} = 0$ for $\indexSpace \geq \orderExtrapolation$.
With this in mind, we try to get rid of the expression in $\nonConservedMomentDiscrete^{\indexTime - 1}$ on the right-hand side of \eqref{eq:tmp14} using the equations on $\conservedMomentDiscrete^{\indexTime}$.
We introduce $\orderExtrapolationBiased \definitionEquality \max(1, \orderExtrapolation - 1) + 1$, the coefficients $\coefficientCombinationElimination_0, \dots \coefficientCombinationElimination_{\orderExtrapolationBiased - 1} \in \reals$, and write
\begin{multline*}
    \latticeVelocity \sum_{\indexSpace = 0}^{\orderExtrapolationBiased - 1}\coefficientCombinationElimination_{\indexSpace}\conservedMomentDiscrete_{\indexSpace}^{\indexTime} = 
    \tfrac{\latticeVelocity}{2} \Bigl ( (\coefficientExtrapolation_0 \coefficientCombinationElimination_0 + \coefficientCombinationElimination_1 )\conservedMomentDiscrete_0^{\indexTime - 1} + ((\coefficientExtrapolation_1 + 1)\coefficientCombinationElimination_0 + \coefficientCombinationElimination_2 ) \conservedMomentDiscrete_1^{\indexTime - 1} + (\coefficientExtrapolation_2 \coefficientCombinationElimination_0 + \coefficientCombinationElimination_3 + \coefficientCombinationElimination_1) \conservedMomentDiscrete_2^{\indexTime - 1} 
    + \sum_{\indexSpace = 3}^{\orderExtrapolationBiased} (\coefficientExtrapolation_{\indexSpace}\coefficientCombinationElimination_0 + \coefficientCombinationElimination_{\indexSpace + 1} + \coefficientCombinationElimination_{\indexSpace - 1}) \conservedMomentDiscrete_{\indexSpace}^{\indexTime - 1} \Bigr ) \\
    +\tfrac{1-\relaxationParameter}{2} \Bigl ( (\coefficientExtrapolation_0 \coefficientCombinationElimination_0 + \coefficientCombinationElimination_1 )\nonConservedMomentDiscrete_0^{\indexTime - 1} + ((\coefficientExtrapolation_1 - 1)\coefficientCombinationElimination_0 + \coefficientCombinationElimination_2 ) \nonConservedMomentDiscrete_1^{\indexTime - 1} + (\coefficientExtrapolation_2 \coefficientCombinationElimination_0 + \coefficientCombinationElimination_3 - \coefficientCombinationElimination_1) \nonConservedMomentDiscrete_2^{\indexTime - 1} 
    + \sum_{\indexSpace = 3}^{\orderExtrapolationBiased} (\coefficientExtrapolation_{\indexSpace}\coefficientCombinationElimination_0 + \coefficientCombinationElimination_{\indexSpace + 1} - \coefficientCombinationElimination_{\indexSpace - 1}) \nonConservedMomentDiscrete_{\indexSpace}^{\indexTime - 1} \Bigr ) \\
    +\tfrac{\relaxationParameter}{2} \Bigl ( (\coefficientExtrapolation_0 \coefficientCombinationElimination_0 + \coefficientCombinationElimination_1 )\flux(\conservedMomentDiscrete_0^{\indexTime - 1}) + ((\coefficientExtrapolation_1 - 1)\coefficientCombinationElimination_0 + \coefficientCombinationElimination_2 ) \flux(\conservedMomentDiscrete_1^{\indexTime - 1}) + (\coefficientExtrapolation_2 \coefficientCombinationElimination_0 + \coefficientCombinationElimination_3 - \coefficientCombinationElimination_1) \flux(\conservedMomentDiscrete_2^{ \indexTime - 1}) 
    + \sum_{\indexSpace = 3}^{\orderExtrapolationBiased} (\coefficientExtrapolation_{\indexSpace}\coefficientCombinationElimination_0 + \coefficientCombinationElimination_{\indexSpace + 1} - \coefficientCombinationElimination_{\indexSpace - 1}) \flux(\conservedMomentDiscrete_{\indexSpace}^{ \indexTime - 1})\Bigr ) \\
    + \latticeVelocity \coefficientCombinationElimination_0 \sourceTermBoundaryDiscrete_0^{\indexTime}.
\end{multline*}
using \eqref{eq:uBoundaryGenericExtrapolation} and \eqref{eq:bulkSchemeForU} at the previous time step.
As before, one can keep notations general by assuming that $\coefficientCombinationElimination_{\indexSpace} = 0$ for $\indexSpace \geq \orderExtrapolationBiased$.
We thus solve an over-determined linear system with $\orderExtrapolationBiased + 1$ equations on $\orderExtrapolationBiased$ unknowns $\coefficientCombinationElimination_0, \dots, \coefficientCombinationElimination_{\orderExtrapolationBiased - 1}$:
\begin{equation}\label{eq:systemNeededForExtrapolation}
    \begin{cases}
        \coefficientExtrapolation_0 \coefficientCombinationElimination_0 + \coefficientCombinationElimination_1 &= \coefficientExtrapolation_0^2 + \coefficientExtrapolation_1 - 1, \\
        (\coefficientExtrapolation_1 - 1)\coefficientCombinationElimination_0 + \coefficientCombinationElimination_2  &= \coefficientExtrapolation_0 (\coefficientExtrapolation_1+1) + \coefficientExtrapolation_2, \\
        \coefficientExtrapolation_2 \coefficientCombinationElimination_0 + \coefficientCombinationElimination_3 - \coefficientCombinationElimination_1 &= \coefficientExtrapolation_0 \coefficientExtrapolation_2 + \coefficientExtrapolation_1  - 1 + \coefficientExtrapolation_3,\\
        \coefficientExtrapolation_{\indexSpace}\coefficientCombinationElimination_0 + \coefficientCombinationElimination_{\indexSpace + 1} - \coefficientCombinationElimination_{\indexSpace - 1} &= \coefficientExtrapolation_0 \coefficientExtrapolation_{\indexSpace} + \coefficientExtrapolation_{\indexSpace + 1} + \coefficientExtrapolation_{\indexSpace - 1}, \qquad \indexSpace \in \integerInterval{3}{\orderExtrapolationBiased},
    \end{cases}
\end{equation}
where the fact that $\coefficientExtrapolation_{\indexSpace} = 0$ for $\indexSpace \geq \orderExtrapolation$ and $\coefficientCombinationElimination_{\indexSpace} = 0$ for $\indexSpace \geq \orderExtrapolationBiased$ is understood.
We now show that \eqref{eq:systemNeededForExtrapolation} admits a unique solution.
Summing all the equations in \eqref{eq:systemNeededForExtrapolation}, the left-hand side becomes zero, since $\sum_{\indexSpace = 0}^{\orderExtrapolation - 1}\coefficientExtrapolation_{\indexSpace} = 1$.
Concerning the right-hand side, it becomes
\begin{equation*}
    \coefficientExtrapolation_0^2 + \coefficientExtrapolation_1 - 1 + \coefficientExtrapolation_0 (\coefficientExtrapolation_1+1) + \coefficientExtrapolation_2 + \coefficientExtrapolation_0 \coefficientExtrapolation_2 + \coefficientExtrapolation_1  - 1 + \coefficientExtrapolation_3  + \sum_{\indexSpace = 3}^{\orderExtrapolationBiased} ( \coefficientExtrapolation_0 \coefficientExtrapolation_{\indexSpace} + \coefficientExtrapolation_{\indexSpace + 1} + \coefficientExtrapolation_{\indexSpace - 1})
    = \coefficientExtrapolation_0 \Bigl( \sum_{\indexSpace = 0}^{\orderExtrapolationBiased} \coefficientExtrapolation_{\indexSpace} +1 \Bigr) + 2\Bigl ( \sum_{\indexSpace = 1}^{\orderExtrapolationBiased} \coefficientExtrapolation_{\indexSpace}  - 1\Bigr ) = 0.
\end{equation*}
We can therefore remove the last equation and obtain a square system of the form $\matricial{A} \vectorial{\coefficientCombinationElimination} = \vectorial{b}$, where $\matricial{A} = \tilde{\matricial{A}} + \vectorial{\coefficientExtrapolation}\transpose{ \canonicalBasisVector{1}}$, with $\tilde{\matricial{A}}$ a tridiagonal Toeplitz matrix with zeros on the diagonal, $-1$ on the subdiagonal, and $1$ on the supradiagonal, and $\vectorial{\coefficientExtrapolation} = \transpose{(\coefficientExtrapolation_0, \dots, \coefficientExtrapolation_{\orderExtrapolation - 1})}$.
We show that $\matricial{A}$ has full rank to conclude.
Using the matrix-determinant lemma \cite[Chapter 0]{horn2012matrix}, we gain $\determinant (\matricial{A} ) = \determinant (\tilde{\matricial{A} }) + \transpose{ \canonicalBasisVector{1}} \adjugate (\tilde{\matricial{A}})\vectorial{\coefficientExtrapolation}$.
Straightforward computations deliver $\determinant (\tilde{\matricial{A}}) = \tfrac{1}{2} (1+(-1)^{\orderExtrapolationBiased})$ and $\transpose{ \canonicalBasisVector{1}} \adjugate (\tilde{\matricial{A}}) = \tfrac{1}{2}(1-(-1)^{\orderExtrapolationBiased}, -1-(-1)^{\orderExtrapolationBiased}, \dots)$, thus 
\begin{equation*}
    \determinant (\matricial{A}) = \tfrac{1}{2} \bigl  (1+(-1)^{\orderExtrapolationBiased} + \sum_{\indexSpace = 0}^{\orderExtrapolationBiased - 1} ((-1)^{\indexSpace} - (-1)^{\orderExtrapolationBiased})\coefficientExtrapolation_{\indexSpace} \bigr ) = \tfrac{1}{2} \Bigl  (1 + \sum_{\indexSpace = 0}^{\orderExtrapolationBiased - 1} \binom{\orderExtrapolationBiased}{\indexSpace} \Bigr ) = \tfrac{1}{2}(1 + 2^{\orderExtrapolationBiased} - 1) = 2^{\orderExtrapolationBiased - 1} \geq 1 > 0.
\end{equation*}
The unique solution $\coefficientCombinationElimination_0, \dots, \coefficientCombinationElimination_{\orderExtrapolationBiased - 1}$ is used and, equations from \eqref{eq:systemNeededForExtrapolation} yield
\begin{equation*}
    \sum_{\substack{\indexSpace = 0 \\ \indexSpace \neq 1}}^{\orderExtrapolation - 1} \coefficientExtrapolation_{\indexSpace} \nonConservedMomentDiscrete_{\indexSpace}^{\indexTime} + (\coefficientExtrapolation_1 - 1) \nonConservedMomentDiscrete_{1}^{\indexTime} = \latticeVelocity \sum_{\indexSpace = 0}^{\orderExtrapolationBiased - 1}\coefficientCombinationElimination_{\indexSpace}\conservedMomentDiscrete_{\indexSpace}^{\indexTime} -
    \latticeVelocity \Bigl ( (\coefficientExtrapolation_0 + \coefficientCombinationElimination_0 ) \conservedMomentDiscrete_{1}^{\indexTime - 1} + (\coefficientExtrapolation_1 + \coefficientCombinationElimination_1 - 1) \conservedMomentDiscrete_{2}^{\indexTime - 1} 
    + \sum_{\indexSpace = 3}^{\orderExtrapolation} (\coefficientExtrapolation_{\indexSpace - 1} + \coefficientCombinationElimination_{\indexSpace - 1}) \conservedMomentDiscrete_{\indexSpace}^{\indexTime - 1} \Bigr ) 
     + \latticeVelocity (\coefficientExtrapolation_0 - \coefficientCombinationElimination_0) \sourceTermBoundaryDiscrete_0^{\indexTime}.
\end{equation*}
Back into \eqref{eq:uBoundaryGenericExtrapolation} , we obtain
\begin{multline*}
    \conservedMomentDiscrete_{0}^{\indexTime +1} = \tfrac{1}{2} \Bigl ( \sum_{\substack{\indexSpace = 0 \\ \indexSpace \neq 1}}^{\orderExtrapolation - 1} (\coefficientExtrapolation_{\indexSpace} + (1-\relaxationParameter)\coefficientCombinationElimination_{\indexSpace})\conservedMomentDiscrete_{\indexSpace}^{\indexTime} + (\coefficientExtrapolation_1 + 1 + (1-\relaxationParameter)\coefficientCombinationElimination_1) \conservedMomentDiscrete_{1}^{\indexTime} \Bigr ) \\
    + \tfrac{\relaxationParameter - 1}{2} \Bigl ((\coefficientExtrapolation_0 + \coefficientCombinationElimination_0 ) \conservedMomentDiscrete_{1}^{\indexTime - 1} + (\coefficientExtrapolation_1 + \coefficientCombinationElimination_1 - 1) \conservedMomentDiscrete_{2}^{\indexTime - 1} 
    + \sum_{\indexSpace = 3}^{\orderExtrapolation} (\coefficientExtrapolation_{\indexSpace - 1} + \coefficientCombinationElimination_{\indexSpace - 1}) \conservedMomentDiscrete_{\indexSpace}^{\indexTime - 1}\Bigr ) \\
    + \tfrac{\relaxationParameter}{2\latticeVelocity} \Bigl ( \sum_{\substack{\indexSpace = 0 \\ \indexSpace \neq 1}}^{\orderExtrapolation - 1} \coefficientExtrapolation_{\indexSpace} \flux(\conservedMomentDiscrete_{\indexSpace}^{\indexTime}) + (\coefficientExtrapolation_1 - 1) \flux(\conservedMomentDiscrete_{1}^{\indexTime}) \Bigr )  + \sourceTermBoundaryDiscrete_0^{\indexTime + 1} + \tfrac{1-\relaxationParameter}{2} (\coefficientExtrapolation_0 -\coefficientCombinationElimination_0) \sourceTermBoundaryDiscrete_0^{\indexTime}.
\end{multline*}

By formally computing the coefficients for different $\orderExtrapolation$, we can prove that the solution of \eqref{eq:systemNeededForExtrapolation} for $\orderExtrapolation \geq 2$ is explicitly given by $\coefficientCombinationElimination_{0} = \catalanTwoIndices{\orderExtrapolation-2}{1}$, $ \coefficientCombinationElimination_1 = 1 - \catalanTwoIndices{\orderExtrapolation - 3}{2}$, $\coefficientCombinationElimination_{\indexSpace} = (-1)^{\indexSpace}\catalanTwoIndices{\orderExtrapolation-\indexSpace-2}{\indexSpace + 1}$ for $\indexSpace \in \integerInterval{2}{\lfloor \tfrac{\orderExtrapolation-1}{2}\rfloor - 1}$, and $\coefficientCombinationElimination_{\orderExtrapolation - \indexSpace} = (-1)^{\orderExtrapolation + 1 - \indexSpace} \catalanTwoIndices{\orderExtrapolation - \indexSpace}{\indexSpace - 1}$ for $\indexSpace \in \integerInterval{1}{\lfloor \tfrac{\orderExtrapolation-1}{2}\rfloor + 1 + \tfrac{1 + (-1)^{\orderExtrapolation}}{2}}$,
where $\catalanTwoIndices{n}{p}$ are the coefficients of the Catalan's triangle.
We observe that $\coefficientExtrapolation_0 - \coefficientCombinationElimination_0 = 2$, $\coefficientExtrapolation_0 + \coefficientCombinationElimination_0 = 2(\orderExtrapolation - 1)$, $\coefficientExtrapolation_1 + \coefficientCombinationElimination_1 - 1 = -(\orderExtrapolation-1)(\orderExtrapolation-2)$.
\end{proofWithoutProof}

\subsection{Proof of \Cref{thm:countingThroughReflection}}\label{proof:thm:countingThroughReflection}

\begin{proofWithoutProof}
    Let $\targetEigenvalue \in\complex\smallsetminus \{ 0\}$ be an isolated point of the asymptotic spectrum of $\schemeMatrixFD$ and $\epsilon > 0$.
The number of eigenvalues of $\schemeMatrixFD$ inside $\ball{\epsilon}{\targetEigenvalue}$, counted with their multiplicity, see \cite[Theorem 10.43]{rudin1987real}, is 
\begin{equation*}
    \frac{1}{2\pi i} \oint_{\partial \ball{\epsilon}{\targetEigenvalue} } \frac{\differential_{\timeShiftOperator}\determinant(\timeShiftOperator\identityMatrix{2\numberSpacePoints} - \schemeMatrixFD)}{\determinant(\timeShiftOperator\identityMatrix{2\numberSpacePoints} - \schemeMatrixFD)}\differential \timeShiftOperator = \frac{1}{2\pi i} \oint_{\partial \ball{\epsilon}{\targetEigenvalue} } \frac{\trace(\adjugate(\timeShiftOperator\identityMatrix{2\numberSpacePoints} - \schemeMatrixFD))}{\determinant(\timeShiftOperator\identityMatrix{2\numberSpacePoints} - \schemeMatrixFD)}\differential \timeShiftOperator = \frac{1}{2\pi i} \oint_{\partial \ball{\epsilon}{\targetEigenvalue} } \trace((\timeShiftOperator\identityMatrix{2\numberSpacePoints} - \schemeMatrixFD)^{-1})\differential \timeShiftOperator.
\end{equation*}
Using the Sherman-Morrison formula \cite[Chapter 0]{horn2012matrix} 
\begin{multline}\label{eq:generalContourIntegral}
    \oint_{\partial \ball{\epsilon}{\targetEigenvalue} } \trace((\timeShiftOperator\identityMatrix{2\numberSpacePoints} - \schemeMatrixFD)^{-1})\differential \timeShiftOperator 
    = \oint_{\partial \ball{\epsilon}{\targetEigenvalue} }  \trace((\timeShiftOperator\identityMatrix{2\numberSpacePoints} - \schemeMatrixFDToeplitz - \canonicalBasisVector{\numberSpacePoints}\transpose{\parturbationBoundaryInflow})^{-1}) \differential\timeShiftOperator\\
    + \oint_{\partial \ball{\epsilon}{\targetEigenvalue} } \frac{\trace((\timeShiftOperator\identityMatrix{2\numberSpacePoints} - \schemeMatrixFDToeplitz - \canonicalBasisVector{\numberSpacePoints}\transpose{\parturbationBoundaryInflow})^{-2}\canonicalBasisVector{1}\transpose{\parturbationBoundaryOutflow})}{1-\transpose{\parturbationBoundaryOutflow}(\timeShiftOperator\identityMatrix{2\numberSpacePoints} - \schemeMatrixFDToeplitz - \canonicalBasisVector{\numberSpacePoints}\transpose{\parturbationBoundaryInflow})^{-1}\canonicalBasisVector{1}}\differential\timeShiftOperator 
    =\oint_{\partial \ball{\epsilon}{\targetEigenvalue} } \frac{\trace((\timeShiftOperator\identityMatrix{2\numberSpacePoints} - \schemeMatrixFDToeplitz - \canonicalBasisVector{\numberSpacePoints}\transpose{\parturbationBoundaryInflow})^{-2}\canonicalBasisVector{1}\transpose{\parturbationBoundaryOutflow})}{1-\transpose{\parturbationBoundaryOutflow}(\timeShiftOperator\identityMatrix{2\numberSpacePoints} - \schemeMatrixFDToeplitz - \canonicalBasisVector{\numberSpacePoints}\transpose{\parturbationBoundaryInflow})^{-1}\canonicalBasisVector{1}}\differential\timeShiftOperator ,
\end{multline}
where the last equality comes from assuming that the considered $\numberSpacePoints$'s are large enough for the chosen $\epsilon$ so that no eigenvalue linked with $\schemeMatrixFDToeplitz$ and the inflow is in $\ball{\epsilon}{\targetEigenvalue}$.
In \eqref{eq:generalContourIntegral}, the only source of poles that would increment the count is in the denominator.
By the Sherman-Morrison formula:
\begin{multline}\label{eq:denominatorRemaining}
    1-\transpose{\parturbationBoundaryOutflow}(\timeShiftOperator\identityMatrix{2\numberSpacePoints} - \schemeMatrixFDToeplitz - \canonicalBasisVector{\numberSpacePoints}\transpose{\parturbationBoundaryInflow})^{-1}\canonicalBasisVector{1} \\
    = 1-\transpose{\parturbationBoundaryOutflow}(\timeShiftOperator\identityMatrix{2\numberSpacePoints} - \schemeMatrixFDToeplitz)^{-1}\canonicalBasisVector{1} \underbrace{-\frac{\transpose{\parturbationBoundaryOutflow}(\timeShiftOperator\identityMatrix{2\numberSpacePoints} - \schemeMatrixFDToeplitz)^{-1}\canonicalBasisVector{\numberSpacePoints}\transpose{\parturbationBoundaryInflow}(\timeShiftOperator\identityMatrix{2\numberSpacePoints} - \schemeMatrixFDToeplitz)^{-1}\canonicalBasisVector{1}}{1-\transpose{\parturbationBoundaryInflow}(\timeShiftOperator\identityMatrix{2\numberSpacePoints} - \schemeMatrixFDToeplitz)^{-1}\canonicalBasisVector{\numberSpacePoints}}}_{=:\text{coup}_{\text{in}}^{\text{out}}(\timeShiftOperator)}.
\end{multline}
We anticipate that---in \eqref{eq:denominatorRemaining}---the term $\text{coup}_{\text{in}}^{\text{out}}(\timeShiftOperator)$ is a coupling term between eigenvalues relative to opposite boundaries, and thus eventually vanishes as $\numberSpacePoints\to+\infty$.
Recalling that $\schemeMatrixFDToeplitzBlockMinusOne = (\relaxationParameter - 1)\identityMatrix{\numberSpacePoints}$ and using the formula for the inverse of a block matrix made up of four blocks, we gain\renewcommand{\arraystretch}{1.1}
\begin{equation}\label{eq:blockInverse}
    (\timeShiftOperator\identityMatrix{2\numberSpacePoints} - \schemeMatrixFDToeplitz)^{-1} = 
    \left [
        \begin{array}{c|c}
            (\modifiedTimeShiftOperator(\timeShiftOperator)\identityMatrix{\numberSpacePoints} - \schemeMatrixFDToeplitzBlockZero)^{-1} & (\modifiedTimeShiftOperator(\timeShiftOperator)\identityMatrix{\numberSpacePoints} - \schemeMatrixFDToeplitzBlockZero)^{-1}(\relaxationParameter-1)\timeShiftOperator^{-1} \\
            \hline
            \timeShiftOperator^{-1}(\modifiedTimeShiftOperator(\timeShiftOperator)\identityMatrix{\numberSpacePoints} - \schemeMatrixFDToeplitzBlockZero)^{-1} & \timeShiftOperator^{-1}\identityMatrix{\numberSpacePoints} + \timeShiftOperator^{-1} (\modifiedTimeShiftOperator(\timeShiftOperator)\identityMatrix{\numberSpacePoints} - \schemeMatrixFDToeplitzBlockZero)^{-1} (\relaxationParameter-1)\timeShiftOperator^{-1}
        \end{array} 
    \right ].
\end{equation}
The advantage of \eqref{eq:blockInverse} is that $\modifiedTimeShiftOperator(\timeShiftOperator)\identityMatrix{\numberSpacePoints} - \schemeMatrixFDToeplitzBlockZero$ is a tridiagonal Toeplitz matrix, for which several results are available.
Let $\indexColumn \in \integerInterval{1}{\numberSpacePoints}$. 
Equation \eqref{eq:blockInverse} entails
\begin{align*}
    \transpose{\parturbationBoundaryOutflow}(\timeShiftOperator\identityMatrix{2\numberSpacePoints} - \schemeMatrixFDToeplitz)^{-1}\canonicalBasisVector{\indexColumn} = &\sum_{\indexRow = 1}^{\numberCoefficientsOutFlowFDZero}\coefficientOutflowFDZero_{\indexRow - 1}\transpose{\canonicalBasisVector{\indexRow}} (\modifiedTimeShiftOperator(\timeShiftOperator)\identityMatrix{\numberSpacePoints} - \schemeMatrixFDToeplitzBlockZero)^{-1} \canonicalBasisVector{\indexColumn} - \schemeMatrixFDBlockZeroEntry_1 \transpose{\canonicalBasisVector{2}} (\modifiedTimeShiftOperator(\timeShiftOperator)\identityMatrix{\numberSpacePoints} - \schemeMatrixFDToeplitzBlockZero)^{-1} \canonicalBasisVector{\indexColumn}\\
    +\timeShiftOperator^{-1} \Bigl ( &\sum_{\indexRow = 1}^{\numberCoefficientsOutFlowFDMinusOne}\coefficientOutflowFDMinusOne_{\indexRow - 1}\transpose{\canonicalBasisVector{\indexRow}} (\modifiedTimeShiftOperator(\timeShiftOperator)\identityMatrix{\numberSpacePoints} - \schemeMatrixFDToeplitzBlockZero)^{-1} \canonicalBasisVector{\indexColumn} - \schemeMatrixFDBlockMinusOneEntry_0 \transpose{\canonicalBasisVector{1}} (\modifiedTimeShiftOperator(\timeShiftOperator)\identityMatrix{\numberSpacePoints} - \schemeMatrixFDToeplitzBlockZero)^{-1} \canonicalBasisVector{\indexColumn} \Bigr ), 
\end{align*}
and $\transpose{\parturbationBoundaryInflow}(\timeShiftOperator\identityMatrix{2\numberSpacePoints} - \schemeMatrixFDToeplitz)^{-1}\canonicalBasisVector{\indexColumn} = - \schemeMatrixFDBlockZeroEntry_{-1} \transpose{\canonicalBasisVector{\numberSpacePoints-1}} (\modifiedTimeShiftOperator(\timeShiftOperator)\identityMatrix{\numberSpacePoints} - \schemeMatrixFDToeplitzBlockZero)^{-1} \canonicalBasisVector{\indexColumn} - \timeShiftOperator^{-1} \schemeMatrixFDBlockMinusOneEntry_{0} \transpose{\canonicalBasisVector{\numberSpacePoints}} (\modifiedTimeShiftOperator(\timeShiftOperator)\identityMatrix{\numberSpacePoints} - \schemeMatrixFDToeplitzBlockZero)^{-1} \canonicalBasisVector{\indexColumn}$.

When either $\schemeMatrixFDBlockZeroEntry_{-1}$ or $\schemeMatrixFDBlockZeroEntry_{1}$ equals zero, $\modifiedTimeShiftOperator(\timeShiftOperator)\identityMatrix{\numberSpacePoints} - \schemeMatrixFDToeplitzBlockZero$ is lower or upper triangular, analogously to its inverse.
In these circumstances, explicit computations provide the following results.
\begin{itemize}
    \item If $\schemeMatrixFDBlockZeroEntry_{-1} = 0$, whence $\schemeMatrixFDBlockZeroEntry_1 = 2-\relaxationParameter \in [0, 1]$, we gain $\transpose{\canonicalBasisVector{\indexRow}} (\modifiedTimeShiftOperator(\timeShiftOperator)\identityMatrix{\numberSpacePoints} - \schemeMatrixFDToeplitzBlockZero)^{-1}\canonicalBasisVector{\indexColumn} = \schemeMatrixFDBlockZeroEntry_1^{\indexColumn - \indexRow} \modifiedTimeShiftOperator(\timeShiftOperator)^{\indexRow - \indexColumn - 1} \mathds{1}_{\indexColumn \geq \indexRow}$, and thus deduce 
    \begin{align*}
        \transpose{\parturbationBoundaryInflow}(\timeShiftOperator\identityMatrix{2\numberSpacePoints} - \schemeMatrixFDToeplitz)^{-1}\canonicalBasisVector{1} = 0 \qquad \text{and} \quad
        1 - \transpose{\parturbationBoundaryInflow}(\timeShiftOperator\identityMatrix{2\numberSpacePoints} - \schemeMatrixFDToeplitz)^{-1}\canonicalBasisVector{\numberSpacePoints} = 1 + \timeShiftOperator^{-1} \schemeMatrixFDBlockMinusOneEntry_{0} \modifiedTimeShiftOperator(\timeShiftOperator)^{-1} = \timeShiftOperator \modifiedTimeShiftOperator(\timeShiftOperator)^{-1}.
    \end{align*}
    The first equality above means that---even at finite $\numberSpacePoints$---the inflow does not impact the number of isolated eigenvalues linked with the outflow.
    Since we are integrating away from the origin, \eqref{eq:denominatorRemaining} becomes
    \begin{align*}
        1-\transpose{\parturbationBoundaryOutflow}(\timeShiftOperator\identityMatrix{2\numberSpacePoints} - \schemeMatrixFDToeplitz - \canonicalBasisVector{\numberSpacePoints}\transpose{\parturbationBoundaryInflow})^{-1}\canonicalBasisVector{1} = 1-\transpose{\parturbationBoundaryOutflow}(\timeShiftOperator\identityMatrix{2\numberSpacePoints} - \schemeMatrixFDToeplitz)^{-1}\canonicalBasisVector{1} 
        &= \timeShiftOperator^{-1}\modifiedTimeShiftOperator(\timeShiftOperator)^{-1}(\timeShiftOperator^2 -  \coefficientOutflowFDZero_{0}\timeShiftOperator - \coefficientOutflowFDMinusOne_{0}) \\
        &=\frac{\timeShiftOperator^2 -  \coefficientOutflowFDZero_{0}\timeShiftOperator - \coefficientOutflowFDMinusOne_{0}}{(\timeShiftOperator - i\sqrt{\relaxationParameter-1})(\timeShiftOperator + i\sqrt{\relaxationParameter-1})}.
    \end{align*}
    Back into \eqref{eq:generalContourIntegral}, we finally obtain:
    \begin{equation*}
        \oint_{\partial \ball{\epsilon}{\targetEigenvalue} } \trace((\timeShiftOperator\identityMatrix{2\numberSpacePoints} - \schemeMatrixFD)^{-1})\differential \timeShiftOperator 
        =\oint_{\partial \ball{\epsilon}{\targetEigenvalue} } \frac{(\timeShiftOperator - i\sqrt{\relaxationParameter-1})(\timeShiftOperator + i\sqrt{\relaxationParameter-1})\trace((\timeShiftOperator\identityMatrix{2\numberSpacePoints} - \schemeMatrixFDToeplitz - \canonicalBasisVector{\numberSpacePoints}\transpose{\parturbationBoundaryInflow})^{-2}\canonicalBasisVector{1}\transpose{\parturbationBoundaryOutflow})}{\timeShiftOperator^2 -  \coefficientOutflowFDZero_{0}\timeShiftOperator - \coefficientOutflowFDMinusOne_{0}}\differential\timeShiftOperator.
    \end{equation*}

    \item If $\schemeMatrixFDBlockZeroEntry_{1} = 0$, we gain $\transpose{\canonicalBasisVector{\indexRow}} (\modifiedTimeShiftOperator(\timeShiftOperator)\identityMatrix{\numberSpacePoints} - \schemeMatrixFDToeplitzBlockZero)^{-1}\canonicalBasisVector{\indexColumn} = \schemeMatrixFDBlockZeroEntry_{-1}^{\indexRow-\indexColumn}\modifiedTimeShiftOperator(\timeShiftOperator)^{ \indexColumn - \indexRow - 1} \mathds{1}_{\indexRow \geq \indexColumn}$, thus $\transpose{\parturbationBoundaryOutflow}(\timeShiftOperator\identityMatrix{2\numberSpacePoints} - \schemeMatrixFDToeplitz)^{-1}\canonicalBasisVector{\numberSpacePoints} = 0$ and $1 - \transpose{\parturbationBoundaryInflow}(\timeShiftOperator\identityMatrix{2\numberSpacePoints} - \schemeMatrixFDToeplitz)^{-1}\canonicalBasisVector{\numberSpacePoints} = 1 + \timeShiftOperator^{-1} \schemeMatrixFDBlockMinusOneEntry_{0} \modifiedTimeShiftOperator(\timeShiftOperator)^{-1} = \timeShiftOperator\modifiedTimeShiftOperator(\timeShiftOperator)^{-1}$.
    Eventually, \eqref{eq:denominatorRemaining} becomes
    \begin{align*}
        1-\transpose{\parturbationBoundaryOutflow}(\timeShiftOperator\identityMatrix{2\numberSpacePoints} - \schemeMatrixFDToeplitz - \canonicalBasisVector{\numberSpacePoints}\transpose{\parturbationBoundaryInflow})^{-1}\canonicalBasisVector{1} &= 1-\modifiedTimeShiftOperator(\timeShiftOperator)^{-1} \Bigl ( \sum_{\indexRow = 0}^{\numberCoefficientsOutFlowFDZero - 1} \coefficientOutflowFDZero_{\indexRow} (\schemeMatrixFDBlockZeroEntry_{-1} \modifiedTimeShiftOperator(\timeShiftOperator)^{-1} )^{\indexRow} + \timeShiftOperator^{-1} \sum_{\indexRow = 0}^{\numberCoefficientsOutFlowFDMinusOne- 1} \coefficientOutflowFDMinusOne_{\indexRow} (\schemeMatrixFDBlockZeroEntry_{-1} \modifiedTimeShiftOperator(\timeShiftOperator)^{-1} )^{\indexRow} - \timeShiftOperator^{-1} \schemeMatrixFDBlockMinusOneEntry_{0}\Bigr )\\
        &= \frac{\fourierShift(\timeShiftOperator)}{\schemeMatrixFDBlockZeroEntry_{-1}} \Bigl (\timeShiftOperator -\sum_{\indexRow = 0}^{\numberCoefficientsOutFlowFDZero - 1} \coefficientOutflowFDZero_{\indexRow}\fourierShift(\timeShiftOperator)^{\indexRow} - \timeShiftOperator^{-1} \sum_{\indexRow = 0}^{\numberCoefficientsOutFlowFDMinusOne- 1} \coefficientOutflowFDMinusOne_{\indexRow} \fourierShift(\timeShiftOperator)^{\indexRow} \Bigr ) + \underbrace{1 + \frac{\fourierShift(\timeShiftOperator)}{\schemeMatrixFDBlockZeroEntry_{-1}}  (\timeShiftOperator^{-1} \schemeMatrixFDBlockMinusOneEntry_{0} - \timeShiftOperator  )}_{=0},
    \end{align*}
    using that the bulk characteristic equation in $\fourierShift(\timeShiftOperator)$ reads $\modifiedTimeShiftOperator(\timeShiftOperator) = \schemeMatrixFDBlockZeroEntry_{-1}\fourierShift(\timeShiftOperator)^{-1}$.
    We finally obtain
    \begin{equation*}
        \oint_{\partial \ball{\epsilon}{\targetEigenvalue} } \trace((\timeShiftOperator\identityMatrix{2\numberSpacePoints} - \schemeMatrixFD)^{-1})\differential \timeShiftOperator = \oint_{\partial \ball{\epsilon}{\targetEigenvalue} } \frac{\schemeMatrixFDBlockZeroEntry_{-1} \trace((\timeShiftOperator\identityMatrix{2\numberSpacePoints} - \schemeMatrixFDToeplitz - \canonicalBasisVector{\numberSpacePoints}\transpose{\parturbationBoundaryInflow})^{-2}\canonicalBasisVector{1}\transpose{\parturbationBoundaryOutflow}) }{\fourierShift(\timeShiftOperator)\Bigl ( \timeShiftOperator -\sum_{\indexRow = 0}^{\numberCoefficientsOutFlowFDZero - 1} \coefficientOutflowFDZero_{\indexRow}\fourierShift(\timeShiftOperator)^{\indexRow} - \timeShiftOperator^{-1} \sum_{\indexRow = 0}^{\numberCoefficientsOutFlowFDMinusOne- 1} \coefficientOutflowFDMinusOne_{\indexRow} \fourierShift(\timeShiftOperator)^{\indexRow} \Bigr ) }\differential\timeShiftOperator.
    \end{equation*}
    We can forget about $\fourierShift(\timeShiftOperator)$ since it never vanishes close to $\targetEigenvalue$.
\end{itemize}

Let us now consider the case where both $\schemeMatrixFDBlockZeroEntry_{-1}, \schemeMatrixFDBlockZeroEntry_{1} \neq 0$.
In this case, generalizing \cite[Corollary 4.1]{DAFONSECA20017} to the case where $\schemeMatrixFDBlockZeroEntry_{-1}\schemeMatrixFDBlockZeroEntry_1$ is negative, we obtain, for $\indexRow \leq \indexColumn$
\begin{equation*}
    \transpose{\canonicalBasisVector{\indexRow}} (\modifiedTimeShiftOperator(\timeShiftOperator)\identityMatrix{\numberSpacePoints} - \schemeMatrixFDToeplitzBlockZero)^{-1} \canonicalBasisVector{\indexColumn} = \frac{\schemeMatrixFDBlockZeroEntry_1^{\indexColumn-\indexRow}}{(\sqrt{\schemeMatrixFDBlockZeroEntry_{-1}\schemeMatrixFDBlockZeroEntry_1})^{\indexColumn-\indexRow + 1}} \frac{\chebyshevPolynomialSecondKind{\indexRow-1}(\frac{\modifiedTimeShiftOperator(\timeShiftOperator) }{2\sqrt{\schemeMatrixFDBlockZeroEntry_{-1}\schemeMatrixFDBlockZeroEntry_1}}) \chebyshevPolynomialSecondKind{\numberSpacePoints-\indexColumn}(\frac{\modifiedTimeShiftOperator(\timeShiftOperator) }{2\sqrt{\schemeMatrixFDBlockZeroEntry_{-1}\schemeMatrixFDBlockZeroEntry_1}})}{\chebyshevPolynomialSecondKind{\numberSpacePoints}(\frac{\modifiedTimeShiftOperator(\timeShiftOperator) }{2\sqrt{\schemeMatrixFDBlockZeroEntry_{-1}\schemeMatrixFDBlockZeroEntry_1}})},
\end{equation*}
for $\modifiedTimeShiftOperator(\timeShiftOperator) \neq 0$.
Here $\chebyshevPolynomialSecondKind{\numberSpacePoints}$ is the Chebyshev polynomial of second kind of degree $\numberSpacePoints$.
Since $\chebyshevPolynomialSecondKind{\numberSpacePoints}(\timeShiftOperator) = ((\timeShiftOperator + \sqrt{\timeShiftOperator^2 - 1})^{\numberSpacePoints + 1} - (\timeShiftOperator - \sqrt{\timeShiftOperator^2 - 1})^{\numberSpacePoints + 1})/(2\sqrt{\timeShiftOperator^2 - 1})$, and we can easily see that 
\begin{align*}
    \{ \timeShiftOperator \in \complex ~:~ |\timeShiftOperator \pm \sqrt{\timeShiftOperator^2 - 1}|= 1 \} &= [-1, 1], \\
    \{ \timeShiftOperator \in \complex ~:~ |\timeShiftOperator \pm \sqrt{\timeShiftOperator^2 - 1}|>1 \} &= \{ \timeShiftOperator \in \complex ~:~ \mp\realPart{\timeShiftOperator}<0\} \cup \{ \timeShiftOperator \in i\reals ~:~ \mp\imagPart{\timeShiftOperator}<0\} \smallsetminus[-1, 1], \\
    \{ \timeShiftOperator \in \complex ~:~ |\timeShiftOperator \pm \sqrt{\timeShiftOperator^2 - 1}|<1 \} &= \{ \timeShiftOperator \in \complex ~:~ \mp\realPart{\timeShiftOperator}>0\} \cup \{ \timeShiftOperator \in i\reals ~:~ \mp\imagPart{\timeShiftOperator}>0\} \smallsetminus[-1, 1],
\end{align*}
we deduce that 
\begin{equation}\label{eq:limitCheby}
    \lim_{\numberSpacePoints\to+\infty} |\chebyshevPolynomialSecondKind{\numberSpacePoints}(\timeShiftOperator)|  = 
    \begin{cases}
        +\infty, \qquad &\timeShiftOperator \in \complex\smallsetminus (-1, 1), \\
        \text{does not exist}, \qquad &\timeShiftOperator \in (-1, 1). \\
    \end{cases}
\end{equation}
We would like to investigate the fact that we are allowed to consider the previous limit in the setting we are currently in---namely that of a contour integral on $\partial\ball{\epsilon}{\targetEigenvalue}$.
In particular, we would like to know if we are in the latter situation in \eqref{eq:limitCheby} only on a non-negligible set.
Let $\frequency \in \reals$, then
\begin{equation*}
    \frac{\modifiedTimeShiftOperator(\epsilon e^{i\frequency}) }{2\sqrt{\schemeMatrixFDBlockZeroEntry_{-1}\schemeMatrixFDBlockZeroEntry_1}} = \frac{\epsilon + \frac{1-\relaxationParameter}{\epsilon}}{2\sqrt{\schemeMatrixFDBlockZeroEntry_{-1}\schemeMatrixFDBlockZeroEntry_1}}\cos(\frequency) + i\frac{\epsilon + \frac{\relaxationParameter - 1}{\epsilon}}{2\sqrt{\schemeMatrixFDBlockZeroEntry_{-1}\schemeMatrixFDBlockZeroEntry_1}}\sin(\frequency).
\end{equation*}
If $\schemeMatrixFDBlockZeroEntry_{-1}\schemeMatrixFDBlockZeroEntry_1 > 0$, hence we would like to avoid $0 < \epsilon^2 = 1-\relaxationParameter$, which never happens if $\relaxationParameter \geq 1$. When $\relaxationParameter < 1$, we can always avoid this situation by taking $\epsilon$ small enough.
Otherwise, when $\schemeMatrixFDBlockZeroEntry_{-1}\schemeMatrixFDBlockZeroEntry_1 < 0$, the same way of reasoning yields the same conclusions.
We can therefore discard the case where we integrate along $(-1, 1)$ (where the zeros of any Chebyshev polynomial dwell) on a non-negligible set and thus safely consider limits of Chebyshev polynomials of second kind when their degree tends to infinity.
Let us consider the terms in \eqref{eq:denominatorRemaining}.
\begin{multline*}
    \transpose{\parturbationBoundaryOutflow}(\timeShiftOperator\identityMatrix{2\numberSpacePoints} - \schemeMatrixFDToeplitz)^{-1}\canonicalBasisVector{\numberSpacePoints}
    =\Bigl ( \frac{\schemeMatrixFDBlockZeroEntry_{1}}{\sqrt{\schemeMatrixFDBlockZeroEntry_{-1}\schemeMatrixFDBlockZeroEntry_1}} \Bigr )^{\numberSpacePoints}  \frac{1}{\chebyshevPolynomialSecondKind{\numberSpacePoints}(\frac{\modifiedTimeShiftOperator(\timeShiftOperator) }{2\sqrt{\schemeMatrixFDBlockZeroEntry_{-1}\schemeMatrixFDBlockZeroEntry_1}})} \Biggl [ \sum_{\indexRow = 1}^{\numberCoefficientsOutFlowFDZero}\coefficientOutflowFDZero_{\indexRow - 1} \frac{(\sqrt{\schemeMatrixFDBlockZeroEntry_{-1}\schemeMatrixFDBlockZeroEntry_1})^{\indexRow-1}}{\schemeMatrixFDBlockZeroEntry_1^{\indexRow}} \chebyshevPolynomialSecondKind{\indexRow-1}(\frac{\modifiedTimeShiftOperator(\timeShiftOperator) }{2\sqrt{\schemeMatrixFDBlockZeroEntry_{-1}\schemeMatrixFDBlockZeroEntry_1}}) \\
    - \frac{\sqrt{\schemeMatrixFDBlockZeroEntry_{-1}\schemeMatrixFDBlockZeroEntry_1}}{\schemeMatrixFDBlockZeroEntry_1} \chebyshevPolynomialSecondKind{1}(\frac{\modifiedTimeShiftOperator(\timeShiftOperator) }{2\sqrt{\schemeMatrixFDBlockZeroEntry_{-1}\schemeMatrixFDBlockZeroEntry_1}}) 
    + \timeShiftOperator^{-1}\Bigl ( \sum_{\indexRow = 1}^{\numberCoefficientsOutFlowFDMinusOne}\coefficientOutflowFDMinusOne_{\indexRow - 1} \frac{(\sqrt{\schemeMatrixFDBlockZeroEntry_{-1}\schemeMatrixFDBlockZeroEntry_1})^{\indexRow-1}}{\schemeMatrixFDBlockZeroEntry_1^{\indexRow}} \chebyshevPolynomialSecondKind{\indexRow-1}(\frac{\modifiedTimeShiftOperator(\timeShiftOperator) }{2\sqrt{\schemeMatrixFDBlockZeroEntry_{-1}\schemeMatrixFDBlockZeroEntry_1}}) - \frac{\schemeMatrixFDBlockMinusOneEntry_0}{\schemeMatrixFDBlockZeroEntry_1}\Bigr ) \Biggr ],
\end{multline*}
\begin{align*}
    \transpose{\parturbationBoundaryInflow}(\timeShiftOperator\identityMatrix{2\numberSpacePoints} - \schemeMatrixFDToeplitz)^{-1}\canonicalBasisVector{1} 
    &=-\Bigl ( \frac{\schemeMatrixFDBlockZeroEntry_{-1}}{\sqrt{\schemeMatrixFDBlockZeroEntry_{-1}\schemeMatrixFDBlockZeroEntry_1}} \Bigr )^{\numberSpacePoints} \frac{1}{\chebyshevPolynomialSecondKind{\numberSpacePoints}(\frac{\modifiedTimeShiftOperator(\timeShiftOperator) }{2\sqrt{\schemeMatrixFDBlockZeroEntry_{-1}\schemeMatrixFDBlockZeroEntry_1}})} \Bigl ( \frac{\sqrt{\schemeMatrixFDBlockZeroEntry_{-1}\schemeMatrixFDBlockZeroEntry_1}}{\schemeMatrixFDBlockZeroEntry_{-1}} + \timeShiftOperator^{-1}\frac{\schemeMatrixFDBlockMinusOneEntry_{0}}{\schemeMatrixFDBlockZeroEntry_{-1}}  \Bigr ),
\end{align*}
and finally
\begin{equation*}
    \transpose{\parturbationBoundaryInflow}(\timeShiftOperator\identityMatrix{2\numberSpacePoints} - \schemeMatrixFDToeplitz)^{-1}\canonicalBasisVector{\numberSpacePoints} =-\frac{\schemeMatrixFDBlockZeroEntry_{-1}\schemeMatrixFDBlockZeroEntry_{1}}{(\sqrt{\schemeMatrixFDBlockZeroEntry_{-1}\schemeMatrixFDBlockZeroEntry_1})^2} \frac{\chebyshevPolynomialSecondKind{\numberSpacePoints-2}(\frac{\modifiedTimeShiftOperator(\timeShiftOperator) }{2\sqrt{\schemeMatrixFDBlockZeroEntry_{-1}\schemeMatrixFDBlockZeroEntry_1}})}{\chebyshevPolynomialSecondKind{\numberSpacePoints}(\frac{\modifiedTimeShiftOperator(\timeShiftOperator) }{2\sqrt{\schemeMatrixFDBlockZeroEntry_{-1}\schemeMatrixFDBlockZeroEntry_1}})} - \timeShiftOperator^{-1}\frac{\schemeMatrixFDBlockMinusOneEntry_0}{\sqrt{\schemeMatrixFDBlockZeroEntry_{-1}\schemeMatrixFDBlockZeroEntry_1}} \frac{\chebyshevPolynomialSecondKind{\numberSpacePoints-1}(\frac{\modifiedTimeShiftOperator(\timeShiftOperator) }{2\sqrt{\schemeMatrixFDBlockZeroEntry_{-1}\schemeMatrixFDBlockZeroEntry_1}})}{\chebyshevPolynomialSecondKind{\numberSpacePoints}(\frac{\modifiedTimeShiftOperator(\timeShiftOperator) }{2\sqrt{\schemeMatrixFDBlockZeroEntry_{-1}\schemeMatrixFDBlockZeroEntry_1}})}.
\end{equation*}

Putting all together, we obtain
\begin{equation}\label{eq:generalCaseDen}
    \lim_{\numberSpacePoints\to+\infty} \text{coup}_{\text{in}}^{\text{out}}(\timeShiftOperator) 
    = \lim_{\numberSpacePoints\to+\infty} \frac{\Bigl ( \frac{\schemeMatrixFDBlockZeroEntry_{-1} \schemeMatrixFDBlockZeroEntry_{1}}{\sqrt{\schemeMatrixFDBlockZeroEntry_{-1}\schemeMatrixFDBlockZeroEntry_1}} \Bigr )^{\numberSpacePoints}  \frac{1}{\chebyshevPolynomialSecondKind{\numberSpacePoints}(\frac{\modifiedTimeShiftOperator(\timeShiftOperator) }{2\sqrt{\schemeMatrixFDBlockZeroEntry_{-1}\schemeMatrixFDBlockZeroEntry_1}})^2} [\cdots]}{1+\frac{\schemeMatrixFDBlockZeroEntry_{-1}\schemeMatrixFDBlockZeroEntry_{1}}{(\sqrt{\schemeMatrixFDBlockZeroEntry_{-1}\schemeMatrixFDBlockZeroEntry_1})^2} \frac{\chebyshevPolynomialSecondKind{\numberSpacePoints-2}(\frac{\modifiedTimeShiftOperator(\timeShiftOperator) }{2\sqrt{\schemeMatrixFDBlockZeroEntry_{-1}\schemeMatrixFDBlockZeroEntry_1}})}{\chebyshevPolynomialSecondKind{\numberSpacePoints}(\frac{\modifiedTimeShiftOperator(\timeShiftOperator) }{2\sqrt{\schemeMatrixFDBlockZeroEntry_{-1}\schemeMatrixFDBlockZeroEntry_1}})} + \timeShiftOperator^{-1}\frac{\schemeMatrixFDBlockMinusOneEntry_0}{\sqrt{\schemeMatrixFDBlockZeroEntry_{-1}\schemeMatrixFDBlockZeroEntry_1}} \frac{\chebyshevPolynomialSecondKind{\numberSpacePoints-1}(\frac{\modifiedTimeShiftOperator(\timeShiftOperator) }{2\sqrt{\schemeMatrixFDBlockZeroEntry_{-1}\schemeMatrixFDBlockZeroEntry_1}})}{\chebyshevPolynomialSecondKind{\numberSpacePoints}(\frac{\modifiedTimeShiftOperator(\timeShiftOperator) }{2\sqrt{\schemeMatrixFDBlockZeroEntry_{-1}\schemeMatrixFDBlockZeroEntry_1}})}},
\end{equation}
where the quantity $[\cdots]$ does not depend on $\numberSpacePoints$.
The numerator in the overall fraction tends to zero, for the Chebyshev polynomial tends to infinity with its degree $\numberSpacePoints\to+\infty$ and we have $ |{\schemeMatrixFDBlockZeroEntry_{-1} \schemeMatrixFDBlockZeroEntry_{1}}/{\sqrt{\schemeMatrixFDBlockZeroEntry_{-1}\schemeMatrixFDBlockZeroEntry_1}}  | = \sqrt{|\schemeMatrixFDBlockZeroEntry_{-1} | |\schemeMatrixFDBlockZeroEntry_{1}|}\leq 1$ under the stability conditions by \Cref{prop:stabilityConditionsPeriodic}.
We are left to check the denominator in \eqref{eq:generalCaseDen}.
One can show that, for $\indexRow \geq 1$ finite, we have 
\begin{equation*}
    \lim_{\numberSpacePoints\to+\infty}\frac{\chebyshevPolynomialSecondKind{\numberSpacePoints - \indexRow}(\timeShiftOperator)}{\chebyshevPolynomialSecondKind{\numberSpacePoints}(\timeShiftOperator)} = 
    \begin{cases}
        (\timeShiftOperator + \sqrt{\timeShiftOperator^2 - 1})^{\indexRow}, \qquad &\text{when}\quad \timeShiftOperator \in \{ \timeShiftOperator \in \complex ~:~ \realPart{\timeShiftOperator}<0\} \cup \{ \timeShiftOperator \in i\reals ~:~ \imagPart{\timeShiftOperator}<0\} \smallsetminus(-1, 1),  \\
        (\timeShiftOperator - \sqrt{\timeShiftOperator^2 - 1})^{\indexRow}, \qquad &\text{when}\quad \timeShiftOperator \in \{ \timeShiftOperator \in \complex ~:~ \realPart{\timeShiftOperator}>0\} \cup \{ \timeShiftOperator \in i\reals ~:~ \imagPart{\timeShiftOperator}>0\} \smallsetminus(-1, 1), \\
        \text{does not exist}, \qquad &\text{when}\quad \timeShiftOperator \in (-1, 1).
    \end{cases}
\end{equation*}
This expression leads us thinking that the limit function is not continuous at $\realPart{\timeShiftOperator} = 0$.
This is not what happens and the way of separating different formul\ae{} is just a matter of notational convenience, for we indeed have $\lim_{\numberSpacePoints\to+\infty}\frac{\chebyshevPolynomialSecondKind{\numberSpacePoints - \indexRow}(\timeShiftOperator)}{\chebyshevPolynomialSecondKind{\numberSpacePoints}(\timeShiftOperator)} = \overline{\fourierShift}(\timeShiftOperator)^{\indexRow}$ for $\timeShiftOperator \in \partial\ball{\epsilon}{\targetEigenvalue} \smallsetminus (-1, 1)$ where $\overline{\fourierShift}(\timeShiftOperator)$ is the root of $\overline{\fourierShift}(\timeShiftOperator)^2 - 2\timeShiftOperator\overline{\fourierShift}(\timeShiftOperator) + 1 = 0$ such that it exists $\tilde{\timeShiftOperator} \in \partial\ball{\epsilon}{\targetEigenvalue} \smallsetminus (-1, 1)$ characterized by the fact that $\lim_{\numberSpacePoints\to+\infty}\frac{\chebyshevPolynomialSecondKind{\numberSpacePoints - \indexRow}(\tilde{\timeShiftOperator})}{\chebyshevPolynomialSecondKind{\numberSpacePoints}(\tilde{\timeShiftOperator})} = \overline{\fourierShift}(\tilde{\timeShiftOperator})^{\indexRow}$.
This procedure is made possible since the roots of the polynomial equation depend continuously on $\timeShiftOperator$.
Therefore, the denominator in \eqref{eq:generalCaseDen} reads
\begin{multline*}
    \lim_{\numberSpacePoints\to+\infty} 1+\frac{\schemeMatrixFDBlockZeroEntry_{-1}\schemeMatrixFDBlockZeroEntry_{1}}{(\sqrt{\schemeMatrixFDBlockZeroEntry_{-1}\schemeMatrixFDBlockZeroEntry_1})^2} \frac{\chebyshevPolynomialSecondKind{\numberSpacePoints-2}(\frac{\modifiedTimeShiftOperator(\timeShiftOperator) }{2\sqrt{\schemeMatrixFDBlockZeroEntry_{-1}\schemeMatrixFDBlockZeroEntry_1}})}{\chebyshevPolynomialSecondKind{\numberSpacePoints}(\frac{\modifiedTimeShiftOperator(\timeShiftOperator) }{2\sqrt{\schemeMatrixFDBlockZeroEntry_{-1}\schemeMatrixFDBlockZeroEntry_1}})} + \timeShiftOperator^{-1}\frac{\schemeMatrixFDBlockMinusOneEntry_0}{\sqrt{\schemeMatrixFDBlockZeroEntry_{-1}\schemeMatrixFDBlockZeroEntry_1}} \frac{\chebyshevPolynomialSecondKind{\numberSpacePoints-1}(\frac{\modifiedTimeShiftOperator(\timeShiftOperator) }{2\sqrt{\schemeMatrixFDBlockZeroEntry_{-1}\schemeMatrixFDBlockZeroEntry_1}})}{\chebyshevPolynomialSecondKind{\numberSpacePoints}(\frac{\modifiedTimeShiftOperator(\timeShiftOperator) }{2\sqrt{\schemeMatrixFDBlockZeroEntry_{-1}\schemeMatrixFDBlockZeroEntry_1}})} \\
    =1+\frac{\schemeMatrixFDBlockZeroEntry_{1}}{\schemeMatrixFDBlockZeroEntry_{-1}} \Biggl (  {\frac{\modifiedTimeShiftOperator(\timeShiftOperator) }{2\schemeMatrixFDBlockZeroEntry_1} \pm \frac{1}{2}\sqrt{\Bigl (\frac{\modifiedTimeShiftOperator(\timeShiftOperator) }{\schemeMatrixFDBlockZeroEntry_1}  \Bigr )^2 - 4\frac{\schemeMatrixFDBlockZeroEntry_{-1}}{\schemeMatrixFDBlockZeroEntry_{1}}}} \Biggr )^2 + \timeShiftOperator^{-1}\frac{\schemeMatrixFDBlockMinusOneEntry_0}{\schemeMatrixFDBlockZeroEntry_{-1}} \Biggl (  {\frac{\modifiedTimeShiftOperator(\timeShiftOperator) }{2\schemeMatrixFDBlockZeroEntry_1} \pm \frac{1}{2}\sqrt{\Bigl (\frac{\modifiedTimeShiftOperator(\timeShiftOperator) }{\schemeMatrixFDBlockZeroEntry_1}  \Bigr )^2 - 4\frac{\schemeMatrixFDBlockZeroEntry_{-1}}{\schemeMatrixFDBlockZeroEntry_{1}}}} \Biggr )\\
    =1+ \frac{\fourierShift(\timeShiftOperator)}{\schemeMatrixFDBlockZeroEntry_{-1} } (\timeShiftOperator^{-1}\schemeMatrixFDBlockMinusOneEntry_0 + \schemeMatrixFDBlockZeroEntry_{1} \fourierShift(\timeShiftOperator)) 
    = \frac{\schemeMatrixFDBlockZeroEntry_{-1}}{\timeShiftOperator\fourierShift(\timeShiftOperator)},
\end{multline*}
where $\fourierShift(\timeShiftOperator)$ is the solution of \eqref{eq:bulkCharEquation} such that it exist (by all the previous arguments) $\tilde{\timeShiftOperator} \in \partial\ball{\epsilon}{\targetEigenvalue}$ so that 
            \begin{equation*}
                \lim_{\numberSpacePoints\to+\infty}\frac{1}{\sqrt{\schemeMatrixFDBlockZeroEntry_{-1}\schemeMatrixFDBlockZeroEntry_1}}\frac{\chebyshevPolynomialSecondKind{\numberSpacePoints-1}(\frac{\modifiedTimeShiftOperator(\tilde{\timeShiftOperator})}{2\sqrt{\schemeMatrixFDBlockZeroEntry_{-1}\schemeMatrixFDBlockZeroEntry_1}})}{\chebyshevPolynomialSecondKind{\numberSpacePoints}(\frac{\modifiedTimeShiftOperator(\tilde{\timeShiftOperator})}{2\sqrt{\schemeMatrixFDBlockZeroEntry_{-1}\schemeMatrixFDBlockZeroEntry_1}})} = \frac{{\fourierShift}(\tilde{\timeShiftOperator})}{\schemeMatrixFDBlockZeroEntry_{-1}}.
            \end{equation*}
This gives $\lim_{\numberSpacePoints\to+\infty} \text{coup}_{\text{in}}^{\text{out}}(\timeShiftOperator) = 0$.
Eventually, we are left with 
\begin{align*}
    1-\transpose{\parturbationBoundaryOutflow}(\timeShiftOperator\identityMatrix{2\numberSpacePoints} - \schemeMatrixFDToeplitz)^{-1}\canonicalBasisVector{1} =
    1 - &\sum_{\indexRow = 1}^{\numberCoefficientsOutFlowFDZero}\coefficientOutflowFDZero_{\indexRow - 1}\frac{\schemeMatrixFDBlockZeroEntry_{-1}^{\indexRow - 1}}{(\sqrt{\schemeMatrixFDBlockZeroEntry_{-1}\schemeMatrixFDBlockZeroEntry_1})^{\indexRow}} \frac{\chebyshevPolynomialSecondKind{\numberSpacePoints-\indexRow}(\frac{\modifiedTimeShiftOperator(\timeShiftOperator) }{2\sqrt{\schemeMatrixFDBlockZeroEntry_{-1}\schemeMatrixFDBlockZeroEntry_1}})}{\chebyshevPolynomialSecondKind{\numberSpacePoints}(\frac{\modifiedTimeShiftOperator(\timeShiftOperator) }{2\sqrt{\schemeMatrixFDBlockZeroEntry_{-1}\schemeMatrixFDBlockZeroEntry_1}})} +  \frac{\schemeMatrixFDBlockZeroEntry_{-1}\schemeMatrixFDBlockZeroEntry_1}{(\sqrt{\schemeMatrixFDBlockZeroEntry_{-1}\schemeMatrixFDBlockZeroEntry_1})^{2}} \frac{\chebyshevPolynomialSecondKind{\numberSpacePoints-2}(\frac{\modifiedTimeShiftOperator(\timeShiftOperator) }{2\sqrt{\schemeMatrixFDBlockZeroEntry_{-1}\schemeMatrixFDBlockZeroEntry_1}})}{\chebyshevPolynomialSecondKind{\numberSpacePoints}(\frac{\modifiedTimeShiftOperator(\timeShiftOperator) }{2\sqrt{\schemeMatrixFDBlockZeroEntry_{-1}\schemeMatrixFDBlockZeroEntry_1}})}\\
    -\timeShiftOperator^{-1} \Bigl ( &\sum_{\indexRow = 1}^{\numberCoefficientsOutFlowFDMinusOne}\coefficientOutflowFDMinusOne_{\indexRow - 1}\frac{\schemeMatrixFDBlockZeroEntry_{-1}^{\indexRow - 1}}{(\sqrt{\schemeMatrixFDBlockZeroEntry_{-1}\schemeMatrixFDBlockZeroEntry_1})^{\indexRow}} \frac{\chebyshevPolynomialSecondKind{\numberSpacePoints-\indexRow}(\frac{\modifiedTimeShiftOperator(\timeShiftOperator) }{2\sqrt{\schemeMatrixFDBlockZeroEntry_{-1}\schemeMatrixFDBlockZeroEntry_1}})}{\chebyshevPolynomialSecondKind{\numberSpacePoints}(\frac{\modifiedTimeShiftOperator(\timeShiftOperator) }{2\sqrt{\schemeMatrixFDBlockZeroEntry_{-1}\schemeMatrixFDBlockZeroEntry_1}})} - \schemeMatrixFDBlockMinusOneEntry_0 \frac{1}{\sqrt{\schemeMatrixFDBlockZeroEntry_{-1}\schemeMatrixFDBlockZeroEntry_1}} \frac{\chebyshevPolynomialSecondKind{\numberSpacePoints-1}(\frac{\modifiedTimeShiftOperator(\timeShiftOperator) }{2\sqrt{\schemeMatrixFDBlockZeroEntry_{-1}\schemeMatrixFDBlockZeroEntry_1}})}{\chebyshevPolynomialSecondKind{\numberSpacePoints}(\frac{\modifiedTimeShiftOperator(\timeShiftOperator) }{2\sqrt{\schemeMatrixFDBlockZeroEntry_{-1}\schemeMatrixFDBlockZeroEntry_1}})} \Bigr ),
\end{align*}
thus 
\begin{multline*}
    \lim_{\numberSpacePoints\to+\infty}1-\transpose{\parturbationBoundaryOutflow}(\timeShiftOperator\identityMatrix{2\numberSpacePoints} - \schemeMatrixFDToeplitz)^{-1}\canonicalBasisVector{1} =
    1 - \sum_{\indexRow = 1}^{\numberCoefficientsOutFlowFDZero}\coefficientOutflowFDZero_{\indexRow - 1}\frac{\fourierShift(\timeShiftOperator)^{\indexRow}}{\schemeMatrixFDBlockZeroEntry_{-1}} +  \frac{\schemeMatrixFDBlockZeroEntry_{1} \fourierShift(\timeShiftOperator)^{2}}{\schemeMatrixFDBlockZeroEntry_{-1}} 
    -\timeShiftOperator^{-1} \Bigl ( \sum_{\indexRow = 1}^{\numberCoefficientsOutFlowFDMinusOne}\coefficientOutflowFDMinusOne_{\indexRow - 1} \frac{\fourierShift(\timeShiftOperator)^{\indexRow}}{\schemeMatrixFDBlockZeroEntry_{-1}} - \schemeMatrixFDBlockMinusOneEntry_0 \frac{\fourierShift(\timeShiftOperator)}{\schemeMatrixFDBlockZeroEntry_{-1}} \Bigr ) \\
    = \frac{\fourierShift(\timeShiftOperator)}{\schemeMatrixFDBlockZeroEntry_{-1}} \Bigl  ( \timeShiftOperator - \sum_{\indexRow = 0}^{\numberCoefficientsOutFlowFDZero - 1}\coefficientOutflowFDZero_{\indexRow } \fourierShift(\timeShiftOperator)^{\indexRow} - \timeShiftOperator^{-1} \sum_{\indexRow = 0}^{\numberCoefficientsOutFlowFDMinusOne - 1}\coefficientOutflowFDMinusOne_{\indexRow} \fourierShift(\timeShiftOperator)^{\indexRow} \Bigr ) + \underbrace{1 + \frac{\fourierShift(\timeShiftOperator)}{\schemeMatrixFDBlockZeroEntry_{-1}} (\schemeMatrixFDBlockZeroEntry_{1} \fourierShift(\timeShiftOperator) + \timeShiftOperator^{-1}\schemeMatrixFDBlockMinusOneEntry_0 - \timeShiftOperator)}_{=0} \\
    = \frac{\fourierShift(\timeShiftOperator)}{\schemeMatrixFDBlockZeroEntry_{-1}} \Bigl  ( \timeShiftOperator - \sum_{\indexRow = 0}^{\numberCoefficientsOutFlowFDZero - 1}\coefficientOutflowFDZero_{\indexRow } \fourierShift(\timeShiftOperator)^{\indexRow} - \timeShiftOperator^{-1} \sum_{\indexRow = 0}^{\numberCoefficientsOutFlowFDMinusOne - 1}\coefficientOutflowFDMinusOne_{\indexRow} \fourierShift(\timeShiftOperator)^{\indexRow} \Bigr ).
\end{multline*}
This finally yields the desired result.
\end{proofWithoutProof}

\section{Expression of scheme matrix for the original \lbm{} scheme}\label{app:exprMatrixLBM}

The blocks in $\schemeMatrixLBM$ are made up by 
    \begin{align*}
        \schemeMatrixLBMBlock{++} &= \tfrac{1}{2}(2-\relaxationParameter+\relaxationParameter\courantNumber)
        \begin{pmatrix}
            \coefficientExtrapolation_0  & \coefficientExtrapolation_1 & \cdots & \cdots \\
            1 & 0 & &\\
            & \ddots & \ddots & \\
            & & 1 & 0
        \end{pmatrix}, \qquad 
        &&\schemeMatrixLBMBlock{+-}  = \tfrac{\relaxationParameter}{2}(1-\courantNumber)
        \begin{pmatrix}
            \coefficientExtrapolation_0  & \coefficientExtrapolation_1 & \cdots & \cdots \\
            1 & 0 & &\\
            & \ddots & \ddots & \\
            & & 1 & 0
        \end{pmatrix}, \\
        \schemeMatrixLBMBlock{-+} &= \tfrac{\relaxationParameter}{2}(1+\courantNumber)
        \begin{pmatrix}
            0 & 1 & & \\
            & \ddots & \ddots & \\
            & & 0 & 1\\
            & & -1& 0
        \end{pmatrix}, \qquad
        &&\schemeMatrixLBMBlock{--} = \tfrac{1}{2}(2-\relaxationParameter-\relaxationParameter\courantNumber)
        \begin{pmatrix}
            0 & 1 & & \\
            & \ddots & \ddots & \\
            & & 0 & 1\\
            & & -1& 0
        \end{pmatrix},
    \end{align*}
    in the case where we use \eqref{eq:extrapolationBoundaryCondition}.

\end{document}